\newtheorem{theorem}{Theorem}[section]
\newtheorem{lemma}[theorem]{Lemma}
\newtheorem{corollary}[theorem]{Corollary}
\theoremstyle{definition}
\newtheorem{definition}[theorem]{Definition}
\newtheorem{proposition}[theorem]{Proposition}
\newtheorem{warning}[theorem]{Warning}
\newtheorem{observation}[theorem]{Observation}
\newtheorem{notation}[theorem]{Notation}
\newtheorem{construction}[theorem]{Construction}
\theoremstyle{remark}
\newtheorem{remark}[theorem]{Remark}
\newenvironment{diag}{\begin{tikzcd}}{\end{tikzcd}}
\newcommand*{\Fin}{\operatorname{\mathbf{Fin}}}
\newcommand*{\Set}{\operatorname{\mathbf{Set}}}
\newcommand*{\Mon}{\operatorname{\mathbf{Mon}}}
\newcommand*{\CAlg}{\operatorname{CAlg}}
\newcommand*{\Dalg}{\operatorname{DAlg}}
\newcommand*{\CGrp}{\operatorname{CGrp}}
\newcommand*{\Mod}{\operatorname{Mod}}
\newcommand*{\Poly}{\operatorname{\mathbf{Poly}}}
\newcommand*{\CR}{\operatorname{\mathbf{CR}}}
\newcommand*{\PolyFp}{\operatorname{Poly}_{\Fp}}
\newcommand*{\lambdaPoly}{\operatorname{\lambda \mathbf{Poly}}}
\newcommand*{\lambdaPolyA}{\operatorname{\lambda Poly_\mathnormal{A}}}
\newcommand*{\lambdaCR}{\operatorname{\lambda \mathbf{CR}}}
\newcommand*{\deltaPoly}{\operatorname{\delta \mathbf{Poly}}}
\newcommand*{\deltaCR}{\operatorname{\delta \mathbf{CR}}}
\newcommand*{\An}{\operatorname{\mathbf{An}}}
\newcommand*{\Sp}{\operatorname{\mathbf{Sp}}}
\newcommand*{\Catinf}{\operatorname{\mathbf{Cat}_\infty}}
\newcommand*{\Prl}{\operatorname{Pr^L}}
\newcommand*{\Prr}{\operatorname{Pr^R}}
\newcommand*{\DZ}{\operatorname{\mathcal{D}(\Z)}}
\newcommand*{\DQ}{\operatorname{\mathcal{D}(\Q)}}
\newcommand*{\CRan}{\operatorname{\mathbf{CR}^{an}}}
\newcommand*{\CRFpan}{\operatorname{\mathbf{CR}_{\mathbf{F}_p}^{an}}}
\newcommand*{\CRFqan}{\operatorname{\mathbf{CR}_{\Fq}^{an}}}
\newcommand*{\CRQan}{\operatorname{\mathbf{CR}^{an}_{\Q}}}
\newcommand*{\CRpan}{\operatorname{\mathbf{CR}^{an}_{(\mathnormal{p})}}}
\newcommand*{\lambdaCRan}{\operatorname{\lambda \mathbf{CR}^{an}}}
\newcommand*{\lambdaCRAan}{\operatorname{\lambda \mathbf{CR}^{an}_\mathnormal{A}}}
\newcommand*{\lambdaCRQan}{\operatorname{\lambda \mathbf{CR}^{an}_{\Q}}}
\newcommand*{\lambdaCRpan}{\operatorname{\lambda \mathbf{CR}^{an}_{(\mathnormal{p})}}}
\newcommand*{\deltaCRan}{\operatorname{\delta \mathbf{CR}^{an}}}
\newcommand*{\deltaCRanWitt}{\operatorname{\delta \mathbf{CR}^{an}_{W_2}}}
\newcommand*{\deltapCRan}{\operatorname{\delta_\mathnormal{p}\mathbf{CR}^{an}}}
\newcommand*{\deltaCRpan}{\operatorname{\delta \mathbf{CR}^{an}_{(\mathnormal{p})}}}
\newcommand*{\C}{\operatorname{\mathcal{C}}}
\newcommand*{\D}{\operatorname{\mathcal{D}}}
\newcommand*{\LEq}{\operatorname{LEq}}
\newcommand*{\End}{\operatorname{End}}
\newcommand*{\forget}{\operatorname{Forget}}
\newcommand*{\free}{\operatorname{Free}}
\newcommand*{\cofreelambda}{\operatorname{Cofree_\lambda}}
\newcommand*{\freelambdap}{\operatorname{Free_\lambda^{(p)}}}
\newcommand*{\forgetlambdap}{\operatorname{Forget_\lambda^{(p)}}}
\newcommand*{\freelambdaQ}{\operatorname{Free_\lambda^\mathbf{Q}}}
\newcommand*{\forgetlambdaQ}{\operatorname{Forget_\lambda^\mathbf{Q}}}
\DeclareMathOperator{\Fr}{Fr_\mathnormal{p}}
\DeclareMathOperator{\Frq}{Fr_\mathnormal{q}}
\newcommand*{\Nadd}{\mathbb{N}_{0}}
\newcommand*{\Nmult}{\mathbb{N}_{>0}}
\newcommand*{\Z}{\mathbf{Z}}
\newcommand*{\Q}{\mathbf{Q}}
\newcommand*{\Fp}{\operatorname{\mathbf{F}}_p}
\newcommand*{\Fq}{\operatorname{\mathbf{F}}_q}
\DeclareMathOperator{\adj}{adj}
\DeclareMathOperator{\an}{an}
\DeclareMathOperator{\Ani}{Ani}
\DeclareMathOperator{\can}{can}
\DeclareMathOperator{\cut}{Cut}
\DeclareMathOperator{\cn}{cn}
\DeclareMathOperator{\cp}{cp}
\DeclareMathOperator{\comp}{comp}
\DeclareMathOperator{\ev}{ev}
\DeclareMathOperator{\fib}{fib}
\DeclareMathOperator{\Fun}{Fun}
\DeclareMathOperator{\id}{id}
\DeclareMathOperator{\op}{op}
\DeclareMathOperator{\pr}{pr}
\DeclareMathOperator{\res}{res}
\DeclareMathOperator{\red}{red}
\DeclareMathOperator{\Map}{Map}
\DeclareMathOperator{\Lan}{Lan}
\DeclareMathOperator{\shift}{shift}
\DeclareMathOperator{\triv}{triv}
\newcommand*{\diffp}{\frac{\operatorname{diff}}{p}}
\DeclareMathOperator{\Coalg}{CoAlg}
\newcommand*{\bigbrace}[1]{\big(#1\big)}
\newcommand\restr[2]{{
  \left.\kern-\nulldelimiterspace
  #1
  \vphantom{|}
  \right|_{#2}
  }}
\begin{document}

\title{Animated $\lambda$-rings and Frobenius lifts}
\author{Edith Hübner}
\address{Mathematisches Institut, Universität Münster, Germany}
\email{ehuebner@uni-muenster.de}
\date{}

\maketitle

\begin{abstract}
	In this note, we study an integral analogue of animated $\delta$-rings: \textit{Animated $\lambda$-rings}. We define animated $\lambda$-rings in terms of animated rings equipped with a structure of coherently compatible
	Frobenius lifts and show that the resulting $\infty$-category is obtained from animating the classical notion of a $\lambda$-ring. Our results build on the theory of animated $\delta$-rings developed by Bhatt and Lurie in the
	context of prismatic cohomology.
\end{abstract}

\tableofcontents

\newpage 
 
\section{Introduction}\label{Sect.: Introduction}

The present paper studies an $\infty$-categorical extension of the classical notions of a $\delta$-ring and $\lambda$-ring: Animated $\delta$-rings and $\lambda$-rings. A theory of animated $\delta$-rings intrinsic to animated rings has been developed by Bhatt and Lurie \cite[Appendix A]{BL} who give the following definition in terms of derived Frobenius lifts:

\begin{definition}[{\cite[A.15]{BL}}]\label{DefIntro: animated delta-rings}
	An animated $\delta$-ring is given by an animated ring $A$ together with an endomorphism $\phi_A \colon A \to A$ and a homotopy $h_A$ which witnesses $\phi_A$ as a lift of the $p$-Frobenius on $A \otimes^L_{\Z} \Fp$.
\end{definition}

Most of the results on animated $\delta$-rings presented in this paper have been shown in \cite{BL} using an equivalent definition based on animated Witt vectors. We stick to \cref{DefIntro: animated delta-rings} which adds a shift in
perspective and complements the exposition in \cite{BL}. Our results on animated $\lambda$-rings are new. We introduce the following definition which should be read as an integral analogue of the definition of an animated $\delta$-ring:

\begin{definition}[Animated $\lambda$-rings]\label{DefIntro.: animated lambda-structure}
	An animated $\lambda$-ring is given by an animated ring $A$ together with the datum of an action $\phi_A \colon B\Nmult \to \CRan$ of the multiplicative monoid $\Nmult$ on $A$ and a family of homotopies $h_{(A,p)}$ witnessing
	\begin{equation*}
		\phi_A(p) \otimes^L \id_{\Fp} \simeq \Fr
	\end{equation*}
	in $\Map_{(\CRFpan)^{B\langle p^c \rangle}}(A \otimes^L_{\Z} \Fp,A \otimes^L_{\Z} \Fp)$ for all primes $p$.
	Here, $\phi_A(p) \colon B\Nadd \to (\CRan)^{B\langle p^c \rangle}$ denotes the map which is classified by $\phi_A$ under the equivalence $B\Nmult \simeq B\Nadd \times B\langle p^c \rangle$ and $A \otimes^L_{\Z} \Fp$
	is endowed with the action $\restr{\phi_A \otimes^L_{\Z} \id_{\Fp}}{B\langle p^c \rangle}$ restricted to the submonoid of natural numbers coprime to $p$.
\end{definition}

We denote the $\infty$-category of animated $\lambda$-rings by $\lambdaCRan$. This definition

\begin{enumerate}
	\item Recovers the classical notion of a $\lambda$-ring precisely when $A$ is a discrete commutative ring, \cref{CorIntro.: discrete animated lambda-rings}.
	\item Gives an intrinsically $\infty$-categorical description of the animation $\Ani(\lambdaCR)$ of the category of ordinary $\lambda$-rings, \cref{ThmIntro.: animation of lambda-rings}.
\end{enumerate}

\subsection*{What do we mean by an intrinsically $\infty$-categorical description?}

There exists a general construction $\C \mapsto \mathcal{P}_\Sigma(\C)$ studied in \cite[5.5.8]{HTT} which assigns to a small $\infty$-category $\C$ with finite
coproducts a presentable $\infty$-category $\mathcal{P}_\Sigma(\C)$ characterised as its free cocompletion under sifted colimits. Setting $\C = \D^{\cp}$ for a $1$-category $\D$ with a sufficient supply of compact projective objects this yields a non-abelian derived $\infty$-category of $\D$, its animation $\Ani(\D)$. In particular, for $\D$ the category of abelian groups the animation recovers the connective part $\DZ_{\geq 0}$ of the unbounded derived $\infty$-category $\DZ$.\\
The question asking for a version of non-abelian derived $\infty$-categories which allow for non-connective objects has been approached by Brantner-Mathew and Bhatt-Mathew in terms of extending filtered monads, see \cite{BrantnerMathew,Raksit}.
Given a sufficiently nice $1$-category $\D$ which is monadic over abelian groups this produces a presentable, non-stable $\infty$-category $\Dalg_{\D}$ which comes with a canonical functor to $\DZ$ and satisfies $(\Dalg_{\D})_{\geq 0} \simeq \Ani(\D)$. This has been used by Raksit to define the $\infty$-category of derived rings \cite{Raksit}.\\

Both of the above constructions provide a widely applicable framework to define higher categorical extensions of algebraic objects. However, owing to their generality, they come with the drawback of leaving objects and mapping spaces rather inexplicit. It is therefore natural to ask for intrinsic descriptions of the such constructed $\infty$-categories in terms of simpler underlying derived objects. For animated $\delta$-rings and $\lambda$-rings this means asking for a description in terms of animated rings.

\subsection*{Let us motivate our definition of an animated $\lambda$-ring:} \Cref{DefIntro.: animated lambda-structure} makes precise the following interpretation of the classical notion of a $\lambda$-ring: A $\lambda$-structure on a
commutative ring $A$ provides the correct definition of the datum of pairwise commuting $p$-Frobenius lifts on $A$ indexed over the positive primes $p$ in the presence of torsion. Formally, our definition of an animated $\lambda$-ring combines the definition of an animated $\delta$-ring (\cref{DefIntro: animated delta-rings}) with a result of Joyal and Rezk (see \cref{Thm.: lambda-rings are global delta-rings} below). \\

The original definition of a $\lambda$-ring due to Grothendieck \cite{GrothChern} is formulated in terms of a family of maps of sets $\lambda^n \colon A \to A$ (for $n \geq 0$) subject to a set of equational axioms which abstract the properties of exterior powers. It was observed by Atiyah and Tall \cite{AtiyahTall} that $\lambda$-rings come with an action of $\Nmult$ defined by a family of pairwise commuting ring endomorphisms $\phi^p \colon A \to A$ (for all primes $p$) each of which lifts the $p$-Frobenius on $A/p$. These commonly go by the name of Adams operations as they recover the cohomology operations on topological K-theory studied by Adams in \cite{Adams}.
The first result pointing in the direction of \cref{DefIntro.: animated lambda-structure} is due to Wilkerson \cite{Wilkerson} who showed that the Adams operations are equivalent to the datum of a $\lambda$-structure whenever the underlying commutative ring is torsion free. This result has been lifted to general rings by Joyal using his notion of a $\delta$-ring. We state his theorem in the slightly simplified 
formulation as proved (much later) by Rezk:

\begin{theorem}[{\cite[Thm 3]{JoyalLambda}} and {\cite[Thm 4.10]{Rezk}}]\label{Thm.: lambda-rings are global delta-rings}
	Let $A$ be a commutative ring. The datum of a $\lambda$-structure on $A$ is equivalent to the datum of ($p$-typical) $\delta$-structures $\delta_p \colon A \to A$ on $A$ for all primes $p$ which satisfy
	$\phi^q \circ \delta_p = \delta_p \circ \phi^q$ for all primes $q$ and $p$ with $q$ unequal to $p$. Here, $\phi^q$ denotes the $q$-Frobenius lift on $A$ associated to $\delta_q$.
\end{theorem}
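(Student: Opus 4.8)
The plan is to reduce the theorem to the well-known $p$-local picture, where a $\lambda$-structure decomposes into a compatible family of $p$-typical $\delta$-structures, and then identify the compatibility condition between the different primes. First I would recall that a $\lambda$-structure on $A$ is equivalent to a (unital, associative) coaction $A \to \Lambda(A)$ for the big Witt vector comonad $\Lambda$ on commutative rings, and similarly a $p$-typical $\delta$-structure is a coaction for the $p$-typical Witt comonad $W_{(p)}$. The key structural input is the decomposition of $\Lambda$: as a functor, $\Lambda(A) \cong \prod_{p} W_{(p)}(A)$ after a suitable completion, or more precisely the comonad $\Lambda$ is built from the commuting family of comonads $W_{(p)}$ together with the "coprime-to-$p$" Adams operations that interchange them. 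Concretely, $\Lambda$-algebras are the same as commutative rings equipped with commuting ring endomorphisms $\phi^n$ for $n \in \Nmult$ (functorial in the monoid $\Nmult$), plus the constraint that each $\phi^p$ reduces mod $p$ to the Frobenius; this is the Atiyah--Tall / Wilkerson picture in the torsion-free case, and the content of the theorem is that the correct torsion-free-free reformulation of "$\phi^p$ lifts Frobenius" is "there is a $\delta_p$".

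Given that, I would argue as follows. From a $\lambda$-structure, restrict along the inclusion of comonads $W_{(p)} \hookrightarrow \Lambda$ (dual to the truncation/projection $\Lambda \twoheadrightarrow W_{(p)}$, which is a map of comonads) to obtain a $p$-typical $\delta$-structure $\delta_p$ for each $p$; its associated Frobenius lift $\phi^p$ agrees with the Adams operation $\phi^p$ coming from the $\Nmult$-action. The commutation $\phi^q \circ \delta_p = \delta_p \circ \phi^q$ for $q \neq p$ then follows because $\phi^q$ is, on the $W_{(p)}$-part, induced by a comonad endomorphism (the unit $q \in \Nmult$ acting, coprime to $p$) that commutes with the projection to $W_{(p)}$; equivalently, one checks it on ghost components, or reduces to the universal (torsion-free) case of a polynomial ring on the Newton/Witt generators, where everything is a bookkeeping identity among symmetric functions. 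Conversely, given a compatible family $(\delta_p)_p$ with the stated commutation, one reassembles a $\lambda$-structure: the $\delta_p$ give coactions $A \to W_{(p)}(A)$, the associated $\phi^p$ together with the requirement that they commute assemble into an $\Nmult$-action $B\Nmult \to \CR$ on $A$, and the commutation $\phi^q \delta_p = \delta_p \phi^q$ is exactly what is needed for these to glue to a single coaction $A \to \Lambda(A)$ for the big Witt comonad; one verifies the counit and coassociativity axioms prime-by-prime using that they hold for each $W_{(p)}$ and that the cross terms are controlled by the commutation hypothesis.

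To make the gluing precise without re-proving the Joyal--Rezk identity from scratch, I would instead cite it as an input and spend the writeup verifying that \emph{our} packaging of the data — an $\Nmult$-action plus Frobenius-lift witnesses — matches "$\lambda$-structure" as used by Joyal and Rezk, and that restricting this packaging to a single prime $p$ (forgetting the other Adams operations but remembering $\phi^p$ and its reduction data) yields exactly a $\delta_p$-structure in their sense. The two directions are then formal once one knows (i) $\lambda \Leftrightarrow (\phi^n)_{n}$ plus per-prime Frobenius-lift data, and (ii) $\delta_p \Leftrightarrow \phi^p$ plus Frobenius-lift data; the only genuine content is that the mixed Adams operations $\phi^q$ for $q\neq p$ act through $\delta_p$-morphisms, which is the commutation relation.

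The main obstacle I anticipate is bookkeeping the interaction between the two indexings of the $\Nmult$-action: the multiplicative monoid $\Nmult$ does not split as a product over primes (it is a free commutative monoid on the primes, so $B\Nmult \simeq \prod_p B\Z_{\geq 0}$ only after passing to classifying spaces), and one must be careful that "the $\delta_p$ commute with $\phi^q$ for $q\neq p$" is the precise amount of data that descends from / ascends to the full $\lambda$-structure — neither too little (which would only give the Adams operations, recovering the weaker Wilkerson statement) nor requiring artificial extra coherences. Handling the reductions mod $p$ for infinitely many primes simultaneously, and checking they are compatible with the endomorphisms $\phi^q$, is the technical heart; in the discrete setting this is manageable because everything is an equation rather than a homotopy, so I expect the argument to go through by a reduction to the torsion-free universal case plus a faithfully-flat or injectivity argument to descend the identities.
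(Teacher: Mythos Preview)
The paper does not give its own proof of this theorem. It is stated in the introduction as a classical result, attributed to Joyal \cite{JoyalLambda} and Rezk \cite{Rezk}, and serves purely as motivation for \cref{DefIntro.: animated lambda-structure}. The paper invokes it once more in the proof of \cref{Prop.: fully faithful embedding of lambdaCR}, again by citation to \cite[4.10]{Rezk}, to identify the discrete part of the pullback defining $\lambdaCRan$ with the ordinary category $\lambdaCR$. There is therefore no argument in the paper to compare your proposal against; the theorem is treated as a black box from the literature.

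For what it is worth, your outline is in the right spirit and correctly isolates the commutation relation $\phi^q \circ \delta_p = \delta_p \circ \phi^q$ as the key compatibility, but some of the structural claims are loose: the big Witt functor does not decompose as $\prod_p W_{(p)}$ integrally (the $p$-typical splitting only appears after $p$-localisation), and the reassembly of the per-prime coactions into a single coaction for the big Witt comonad is where the genuine content lies --- you flag this as the main obstacle but do not carry it out. If you want a self-contained discrete proof, Rezk's argument proceeds via an explicit analysis of free objects, which is also the template the present paper follows in the animated setting (cf.\ \cref{Prop.: free lambda ring free p-local} and \cref{Prop.: free animated lambda ring}); but for \cref{Thm.: lambda-rings are global delta-rings} itself the paper simply cites the literature.
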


Recall that the notion of a $\delta$-ring is tailored to capturing the datum of a single $p$-Frobenius lift in the presence of $p$-torsion \cite{JoyalDelta}: A $\delta$-structure on a commutative ring $A$ is given by a map $\delta \colon A \to A$ of sets satisfying the precise set of axioms which ensure that the assignment $x \mapsto \phi(x) = x^p + p\delta(x)$ defines a ring endomorphism on $A$. It is clear by definition that $\phi$ lifts the $p$-Frobenius on $A/p$. It is an observation of Bhatt-Scholze \cite{Prisms} which has been explored in much greater detail by Bhatt-Lurie \cite{BL} that upon passing to animated rings one can reformulate the definition of a $\delta$-ring entirely in terms of derived Frobenius lifts.
Our definition of an animated $\lambda$-structure extends this spirit to the multi-prime case. Here, it is essential not only to require the $\phi_A(p)$ to individually lift the $p$-Frobenius on the derived mod $p$ reduction $A \otimes^L_{\Z} \Fp$,
but to ensure that the remaining action simultaneously lifts to an action by maps of animated ($p$-typical) $\delta$-rings.
This is achieved by requiring the homotopies $h_{(A,p)}$ to live in the category of animated rings equipped with an action of the monoid of natural numbers coprime to $p$.

\subsection*{Why care about animated $\lambda$-rings?}

The notion of a $\delta$-ring is central to the theory of prismatic cohomology of Bhatt and Scholze \cite{Prisms}. Animated $\delta$-rings appear in derived extensions of the theory, most notably in work of Bhatt and Lurie on the Cartier-Witt
stack \cite{APC,BL} and in work of Antieau-Krause-Nikolaus \cite{AKN23}. More recently, derived $\delta$-rings following the extension of monads approach have been defined by Holeman \cite{Holeman}.
Derived $\lambda$-rings and derived binomial rings - a special kind of $\lambda$-ring whose Frobenius lifts are the identity - have been defined by Antieau \cite{Antieau23} and Kubrak, Shuklin and Zakharov \cite{Kubrak23DerivedBinom} following the ideas of Brantner-Mathew and used to model integral homotopy types.\\
There is speculation about $\lambda$-rings appearing in a global version of prismatic cohomology. This is certainly quite vague, but we hope the reader doesn't mind too much. More concretely,
we expect connective algebraic K-theory $K^{\cn}(A) \otimes_{\mathbf{S}} \Z$ of animated rings $A$ to provide examples of animated $\lambda$-rings. We further expect that the Frobenius lift description extends to the non-connective derived setting, see also \cite[Remark 5.14]{Antieau23}.

\subsection*{More details on the contents of this paper}

Let us recall the definition of an animated $\delta$-ring from \cite{BL} in a more diagrammatic fashion:

\begin{definition}[{\cite[A.15]{BL}}]\label{DefIntro.: animated delta-rings}
	An animated $\delta$-ring is given by an animated ring $A$ together with a commutative diagram
	\[\begin{diag}[column sep=2.7em]
		&  |[alias=Z]| A \ar[dr, "\red_A"] & \\[3ex]
		A \ar[ur, "\phi_A"] \ar[rr, "\Fr \circ \red_A"', ""{name=U, above}] \ar[r, from=Z, to=U, phantom, "h_A" {font=\footnotesize, yshift=-0.9ex, xshift=0.5ex}]&& A \otimes^L_{\Z} \Fp
	\end{diag}\]
	in the $\infty$-category of animated rings. We denote the $\infty$-category of animated $\delta$-rings by $\deltaCRan$, see \cref{Def.: deltapCRan via Frobenius lifts}.
\end{definition}

Bhatt and Lurie study animated $\delta$-rings from the vantage point of animated Witt vectors and prove the following theorem which justifies their definition:

\begin{theorem}[{\cite[A.22]{BL}}]\label{ThmIntro.: animation of delta-rings}
	The category $\deltaCRan$ of animated $\delta$-rings is equivalent to the animation $\Ani(\deltaCR)$ of the category of ordinary $\delta$-rings.
\end{theorem}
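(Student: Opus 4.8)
The plan is to exhibit the forgetful functor $\deltaCRan \to \CRan$ as monadic and to recognise the resulting monad through its value on polynomial rings. On the classical side, recall that $\deltaCR \to \CR$ is monadic with left adjoint the free $\delta$-ring functor $\Z\{-\}$, that the associated monad $T$ is finitary and preserves reflexive coequalizers, and that the underlying commutative ring of a free $\delta$-ring $\Z\{x_1,\dots,x_n\}$ is a polynomial $\Z$-algebra (on the countably many components $\delta^{j}(x_i)$). Hence the compact projective objects of $\deltaCR$ are precisely the retracts of the free $\delta$-rings on finite sets, so that $\Ani(\deltaCR)=\mathcal{P}_\Sigma(\deltaCR^{\cp})$ is, since $\mathcal{P}_\Sigma$ is insensitive to idempotent completion, the sifted cocompletion of the full subcategory of $\deltaCR$ on the free $\delta$-rings $\Z\{x_1,\dots,x_n\}$; this subcategory is closed under finite coproducts, a coproduct of free $\delta$-rings being the free $\delta$-ring on the disjoint union of generating sets.

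Next I would show that the forgetful functor $U\colon \deltaCRan\to\CRan$ is monadic. The $\infty$-category $\deltaCRan$ is presentable, and $U$ is conservative and preserves sifted colimits; one reads all of this off the explicit description of $\deltaCRan$ as a limit, along colimit-preserving functors, of presentable $\infty$-categories assembled from $\CRan$ and $\CRFpan$ via the derived mod-$p$ reduction and the Frobenius (the limit encoding the diagram of \cref{DefIntro.: animated delta-rings}), just as the analogous facts are established in \cite{BL}. Being moreover accessible and limit-preserving, $U$ admits a left adjoint $F$, and the $\infty$-categorical Barr--Beck theorem identifies $\deltaCRan\simeq\Mod_{M}(\CRan)$ for the monad $M=UF$; since $U$ and $F$ preserve sifted colimits, so does $M$.

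The heart of the matter is the computation $M(\Z[S])\simeq\Z\{S\}$ for a finite set $S$, where $\Z[S]$ is the polynomial $\Z$-algebra on $S$ and $\Z\{S\}$ denotes the classical free $\delta$-ring regarded as a discrete animated $\delta$-ring. One constructs this object directly: the discrete ring $\Z\{S\}$ carries its classical Frobenius lift $\phi$; as $\Z\{S\}$ is a polynomial $\Z$-algebra its derived reduction $\Z\{S\}\otimes^{L}_{\Z}\Fp$ is the discrete ring $\Z\{S\}/p$, the mapping space $\Map_{\CRan}(\Z\{S\},\Z\{S\}/p)$ is a discrete set, and in it $\red\circ\phi$ and $\Fr\circ\red$ coincide on the nose by the classical compatibility of $\phi$ with the Frobenius mod $p$; so there is an essentially unique homotopy upgrading $(\Z\{S\},\phi)$ to an object $Y_S\in\deltaCRan$ in the sense of \cref{DefIntro.: animated delta-rings}. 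Unwinding the limit description of $\deltaCRan$, one then verifies $\Map_{\deltaCRan}(Y_S,B)\simeq\Map_{\CRan}(\Z[S],UB)$ naturally in $B$, whence $Y_S\simeq F(\Z[S])$ and $M(\Z[S])\simeq\Z\{S\}$. From here the argument is formal. One has $\CRan\simeq\mathcal{P}_\Sigma(\CR^{\cp})$ with $\CR^{\cp}$ the polynomial $\Z$-algebras on finite sets; as $M$ preserves sifted colimits, the free modules $M(\Z[S])$ are compact projective and generate $\Mod_M(\CRan)$ under sifted colimits, and the full subcategory they span --- with mapping spaces $\Map_{\Mod_M(\CRan)}(M(\Z[S]),M(\Z[T]))\simeq\Map_{\CRan}(\Z[S],\Z\{T\})\simeq\operatorname{Hom}_{\deltaCR}(\Z\{S\},\Z\{T\})$ by freeness, and finite coproducts $M(\Z[S]\otimes_{\Z}\Z[T])\simeq M(\Z[S\sqcup T])\simeq\Z\{S\sqcup T\}$ --- is identified, compatibly with finite coproducts, with the full subcategory of $\deltaCR$ on the free $\delta$-rings on finite sets. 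Passing to sifted cocompletions and using \cite{HTT} together with the idempotent-completion invariance of $\mathcal{P}_\Sigma$ then gives $\deltaCRan\simeq\Mod_M(\CRan)\simeq\Ani(\deltaCR)$.

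The one genuinely nontrivial step is the computation $M(\Z[S])\simeq\Z\{S\}$: it says that the free animated $\delta$-ring on a polynomial ring creates no higher homotopy and recovers the classical free $\delta$-ring. This is exactly where the Frobenius-lift formulation of \cref{DefIntro.: animated delta-rings} pays off --- the coherence homotopy $h$, and all of its higher iterates appearing when one unwinds the limit presentation, are pinned down by the discreteness of the mapping anima out of polynomial $\Z$-algebras into their (discrete) mod-$p$ reductions --- and it is the counterpart, for the present definition, of the bookkeeping carried out by Bhatt and Lurie in their development of animated $\delta$-rings via animated Witt vectors in \cite{BL}; the remaining task is only to check that it survives the change of definition.
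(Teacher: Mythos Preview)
Your overall strategy coincides with the paper's: both reduce the theorem to identifying the free animated $\delta$-ring on a finite set of generators with the classical free $\delta$-ring (your $M(\Z[S])\simeq\Z\{S\}$ is the paper's \cref{Prop.: free animated delta ring}), and then use that these objects form a set of compact projective generators of $\deltaCRan$ to invoke \cite[5.5.8.22]{HTT}. Your monadicity step is the paper's \cref{Cor.: Free delta functor}, and the formal endgame is the same.

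Where you should be more careful is the heuristic you offer for the key step. You suggest that $\Map_{\deltaCRan}(Y_S,B)\simeq\Map_{\CRan}(\Z[S],UB)$ holds because the coherence data are ``pinned down by the discreteness of the mapping anima out of polynomial $\Z$-algebras into their (discrete) mod-$p$ reductions.'' This is misleading: for a general animated $\delta$-ring $B$, neither $B$ nor $B\otimes^L\Fp$ is discrete, so the mapping spaces appearing in the pullback description of $\Map_{\deltaCRan}(Y_S,B)$ (\cref{Prop.: mapping spaces in deltapCRan}) are not discrete either, and discreteness of the \emph{source} does not by itself collapse the computation. The paper's actual argument constructs a $\delta$-map on underlying anima (\cref{Sect.: The delta map}) and uses it to rewrite the relevant pullback as the fibre of an explicit self-map $(\delta_B)^{\Nadd}-\shift$ of $\prod_{n\ge 0}[B]$, which is then shown to have fibre $[B]$; this is where the content lies. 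You do flag the step as nontrivial and defer to \cite{BL}, so the issue is with the intuition you supply rather than with the logical skeleton of the outline.
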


The essential ingredient going into the proof of their theorem is \cite[A.19]{BL} which states that the free animated $\delta$-ring on a single generator $x$ - defined as the object corepresenting the functor
$\Map_{\CRan}(\Z[X],\forget_\delta(-)) \colon \deltaCRan \to \An$ - coincides with its ordinary counterpart $\delta\{x\} \in \deltaCR$. In the present paper we recover \cite[A.19]{BL} and \cref{ThmIntro.: animation of delta-rings} from the perspective of Frobenius lifts, see \cref{Prop.: free animated delta ring,Thm.: animated delta rings are the animation}. As in \cite{BL}, our proof of \cref{Prop.: free animated delta ring} is based on an analysis of the relevant mapping spaces. Here, we add some generality to \cite{BL} by providing a general formula describing the mapping spaces in the $\infty$-category $\deltaCRan$ in terms of maps between the underlying animated rings (\cref{Prop.: mapping spaces in deltapCRan}). This yields the following description of maps between a pair $(A,\phi_A,h_A)$ and $(B,\phi_B,h_B)$ of animated $\delta$-rings:

\begin{corollary}[{\cref{Rmk.: objects in map deltaCRan}}]
	A map from $(A,\phi_A,h_A)$ to $(B,\phi_B,h_B)$ in $\deltaCRan$ corresponds to the data of
	\begin{enumerate}[itemsep=1.5ex, topsep=2ex]
	\item a map $f\colon A \to B$ between the underlying animated rings
	\item a homotopy $\sigma \colon f \circ \phi_A \simeq \phi_B \circ f$ in $\Map_{\CRan}(A,B)$
	\item a 2-homotopy $\tau \colon {\red_B} \circ \sigma \simeq \sigma(h_A,h_B)$ in the space $\Map_{\CRan}(A,B \otimes^L \Fp)$
		\[\begin{diag}
			({\red_B} \circ {f} \circ \phi_A) \ar[r, bend left=25, ""{name=U, below}, "{\red_B} \circ \sigma"] \ar[r, bend right=25, ""{name=D}, "{\sigma(h_A,h_B)}"'] & ({\red_B} \circ {\phi_B} \circ f)
			\ar[Rightarrow, from=U, to=D, shorten=2mm, "\tau", outer sep=0.8mm]
		\end{diag}\]
	between the mod $p$ reduction of $\sigma$ and the homotopy induced by $h_A$ and $h_B$.
	\end{enumerate}
\end{corollary}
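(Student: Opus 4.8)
The plan is to deduce the description by unwinding \cref{Prop.: mapping spaces in deltapCRan}, which identifies the mapping space $\Map_{\deltaCRan}((A,\phi_A,h_A),(B,\phi_B,h_B))$ with an explicit finite limit of mapping spaces between the underlying animated rings; this limit arises from the presentation of $\deltaCRan$ in \cref{Def.: deltapCRan via Frobenius lifts} as a limit of $\infty$-categories assembled from $\CRan$, $\Fun(\Delta^1,\CRan)$ and $\Fun(\Delta^2,\CRan)$. I would organise this limit as a sequence of fibrations over $\Map_{\CRan}(A,B)$ and read off a point by peeling off its successive layers, which I claim are precisely the data $f$, $\sigma$ and $\tau$ of the statement.

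First, I would treat the contribution of the vertices of the defining $2$-simplex of \cref{DefIntro.: animated delta-rings}. Its top and bottom-left vertices are both the underlying ring, and its bottom-right vertex, together with the two maps into it, is pinned down by the canonical mod $p$ reduction; so the vertex layer of a morphism in $\deltaCRan$ contributes nothing beyond a single map $f \colon A \to B$ in $\CRan$, the induced map $f \otimes^L_{\Z} \id_{\Fp}$ and its coherences forming a contractible space of choices by functoriality of $- \otimes^L_{\Z} \Fp$. Fixing $f$, the edge $\phi$ contributes the homotopy $\sigma \colon f \circ \phi_A \simeq \phi_B \circ f$ in $\Map_{\CRan}(A,B)$, while the edges $\red$ and $\Fr \circ \red$ belong to the canonical structure rather than to the $\delta$-datum: the naturality $2$-cells $(f \otimes^L_{\Z} \id_{\Fp}) \circ \red_A \simeq \red_B \circ f$ and $(f \otimes^L_{\Z} \id_{\Fp}) \circ (\Fr \circ \red_A) \simeq (\Fr \circ \red_B) \circ f$ exist and are unique up to contractible ambiguity by naturality of the mod $p$ reduction and of the $p$-Frobenius, so they introduce no further parameters.

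Fixing $f$ and $\sigma$, the only remaining layer is the datum filling the $2$-simplex, which is a $2$-homotopy in $\Map_{\CRan}(A, B \otimes^L_{\Z} \Fp)$. After the identifications just described, pushing $h_A$ forward along $f \otimes^L_{\Z} \id_{\Fp}$ and the two naturality $2$-cells produces a path from $\red_B \circ f \circ \phi_A$ to $\Fr \circ \red_B \circ f$, while pre-composing $h_B$ with $f$ produces a path from $\red_B \circ \phi_B \circ f$ to $\Fr \circ \red_B \circ f$; concatenating the first with the reverse of the second gives the path $\sigma(h_A,h_B)$ from $\red_B \circ f \circ \phi_A$ to $\red_B \circ \phi_B \circ f$. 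Since $\red_B \circ \sigma$ is a path between the same two endpoints, the $2$-simplex datum is exactly a $2$-homotopy $\tau$ between $\red_B \circ \sigma$ and $\sigma(h_A,h_B)$, which is the asserted description.

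The one point requiring actual work is verifying that the transport prescribed by the limit cone of \cref{Prop.: mapping spaces in deltapCRan} — which is phrased via the twisted arrow category of $\Delta^2$ — genuinely agrees with the displayed concatenation of the images of $h_A$ and $h_B$ and with the endpoint identifications above. I expect this diagram chase, which uses only naturality of $\red$ and of $\Fr$ together with functoriality of $- \otimes^L_{\Z} \Fp$, to be the main, though entirely routine, obstacle.
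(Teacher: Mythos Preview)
Your argument is correct, but it is organised differently from the paper's and does not quite use \cref{Prop.: mapping spaces in deltapCRan} in the way you announce. The paper's Remark literally unwinds the specific pullback square of that Proposition: a point there is a pair $(f_1,f_2)$ in $\Map_{\CRan}(A,B)$ together with a path $\theta(\phi_A,\phi_B)(f_1)\simeq\Delta_\ast(f_2)$ in $\Map_{\CRan}(A,B\times_{B\otimes^L\Fp}B)$; projecting this path to the two $B$-components gives paths $f_1\circ\phi_A\simeq f_2$ and $\phi_B\circ f_1\simeq f_2$, and the datum $(f_2,\text{path to }f_2)$ is contractible, collapsing the pair of paths to a single $\sigma\colon f_1\circ\phi_A\simeq\phi_B\circ f_1$; the remaining $2$-cell over $B\otimes^L\Fp$ is then exactly $\tau$, with $\sigma(h_A,h_B)$ spelled out as $(h_B\circ f)\sigma_A(f)$ via \cref{Constr.: induced map into fibre product}. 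Your route instead reinterprets an animated $\delta$-ring as a constrained $\Delta^2$-diagram and peels off vertex, edge and $2$-cell layers, arguing contractibility of the canonical edges directly. That is a perfectly valid and arguably more transparent unpacking of \cref{Def.: deltapCRan via Frobenius lifts} and the Observation following it, but it bypasses rather than unwinds the simplified pullback of \cref{Prop.: mapping spaces in deltapCRan}. The trade-off: the paper's route is shorter once the Proposition is available, while yours makes explicit why the $\red$ and $\Fr\circ\red$ edges contribute no free parameters.
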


Let us reformulate the definition of an animated $\lambda$-ring in analogy to \cref{DefIntro.: animated delta-rings}:

\begin{definition}[Animated $\lambda$-rings]\label{DefIntro.: animated lambda-structure}
	An animated $\lambda$-ring is given by an animated ring $A$ together with the datum of an action $\phi_A \colon B\Nmult \to \CRan$ of the multiplicative monoid $\Nmult$ on $A$ and homotopies $h_{(A,p)}$
	\[\begin{diag}[column sep=2.7em]
		&  |[alias=Z]| A \ar[dr, "\red_A"] & \\[3ex]
		A \ar[ur, "\phi_A(p)"] \ar[rr, "\Fr \circ \red_A"', ""{name=U, above}] \ar[r, from=Z, to=U, phantom, "h_{(A,p)}" {font=\footnotesize, yshift=-0.9ex, xshift=0.7ex}]&& A \otimes^L_{\Z} \Fp
	\end{diag}\]
	in $\Map_{(\CRan)^{B\langle p^c \rangle}}(A,A\otimes^L_{\Z} \Fp)$ for all primes $p$. Here, $\phi_A(p) \colon \Nadd \to (\CRan)^{B\langle p^c \rangle}$ denotes the map classified by $\phi_A$, see
	\cref{DefIntro.: animated lambda-structure}.
\end{definition}

We denote the $\infty$-category of animated $\lambda$-rings by $\lambdaCRan$ (\cref{Def.: animated lambda-rings}) and prove our main theorem in the second part of the paper:

\begin{theorem}[{\cref{Thm.: lambdaCRan is the animation}}]\label{ThmIntro.: animation of lambda-rings}
	The category $\lambdaCRan$ of animated $\lambda$-rings is equivalent to the animation $\Ani(\lambdaCR)$ of the category of ordinary $\lambda$-rings.
\end{theorem}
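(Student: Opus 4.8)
The plan is to follow the same strategy that proves \cref{ThmIntro.: animation of delta-rings}, reducing the statement about the $\infty$-category $\lambdaCRan$ to an identification of free objects with their classical counterparts. Concretely, since $\lambdaCR$ is monadic over commutative rings (the forgetful functor $\lambdaCR \to \CR$ preserves and reflects limits and sifted colimits and admits a left adjoint $x \mapsto \lambda\{x\}$), the animation $\Ani(\lambdaCR)$ is computed as $\mathcal{P}_\Sigma$ of the full subcategory of compact projective objects, which are retracts of finite coproducts of free $\lambda$-rings $\lambda\{x_1,\dots,x_n\}$. I would first construct a forgetful functor $\forget_\lambda \colon \lambdaCRan \to \CRan$ (forgetting the $\Nmult$-action and the homotopies $h_{(A,p)}$) and show it is conservative and preserves sifted colimits; this is a formal consequence of the way $\lambdaCRan$ is built as a (homotopy) limit of categories over $\CRan$ via the Adams operations and Frobenius-lift witnessing data, exactly parallel to the treatment of $\deltaCRan$. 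Then $\forget_\lambda$ admits a left adjoint $\freelambda$, and the composite $\freelambda \circ (\Z[-]) \colon \Fin \to \lambdaCRan$ is the key object: by Yoneda it corepresents $A \mapsto \Map_{\CRan}(\Z[x], \forget_\lambda(A))$.

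The main step, and the one I expect to be the hard part, is to prove that the free animated $\lambda$-ring on one generator $\freelambda(\Z[x])$ is discrete and agrees with the classical free $\lambda$-ring $\lambda\{x\}$ — the analogue of \cite[A.19]{BL} in the $\lambda$-setting. I would prove this by computing the mapping spaces $\Map_{\lambdaCRan}(\freelambda(\Z[x]), A)$ directly. The definition of $\lambdaCRan$ as a (homotopy) limit over the decomposition $B\Nmult \simeq B\Nadd \times B\langle p^c\rangle$ should give, via a Bousfield–Kan style formula, a description of this mapping space as a limit over the primes $p$ of mapping spaces that look like the $\delta$-ring mapping spaces of \cref{Prop.: mapping spaces in deltapCRan}, glued along the shared Adams operations. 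Using \cref{Thm.: lambda-rings are global delta-rings} on the classical side — which says an ordinary $\lambda$-structure is precisely a compatible family of $\delta_p$-structures with commuting Frobenius lifts — together with the already-established fact that $\deltaCRan$ has discrete free objects coinciding with the classical $\delta\{x\}$ (\cref{Prop.: free animated delta ring}), I would argue that the homotopy groups of $\Map_{\lambdaCRan}(\freelambda(\Z[x]), A)$ above degree $0$ vanish when $A$ is discrete, and that $\pi_0$ recovers the underlying set of $A$, so that $\freelambda(\Z[x])$ corepresents the same functor on discrete rings as $\lambda\{x\}$ does; since both are connective and the functor is determined by its restriction to discrete rings (both sides being left Kan extended from $\Fin$), they coincide. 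The delicate point is bookkeeping the compatibility (the commutation $\phi^q \circ \delta_p = \delta_p \circ \phi^q$) coherently through the limit — one must check that the higher homotopies impose no extra conditions, which is where the torsion-freeness of $\lambda\{x\} = \Z[c_1, c_2, \dots]$ and the vanishing of the relevant cotangent/derived-functor obstructions enter.

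Once the free object on one generator is pinned down, the rest is formal and I would run it exactly as in the $\delta$-case. Finite coproducts of free $\lambda$-rings in $\lambdaCRan$ are computed as pushouts over $\Z$, and since $\forget_\lambda$ preserves sifted colimits and the classical $\lambda\{x_1,\dots,x_n\}$ is the $n$-fold coproduct of $\lambda\{x\}$ in $\lambdaCR$, one gets that $\freelambda(\Z[x_1,\dots,x_n])$ is discrete and equal to the classical free $\lambda$-ring on $n$ generators; the same holds for retracts since $\lambdaCRan$ is idempotent-complete. This identifies the compact projective generators of $\lambdaCR$ with a collection of objects in $\lambdaCRan$ that are themselves compact (because $\forget_\lambda$ preserves filtered colimits) and projective (because $\forget_\lambda$ preserves sifted colimits and $\Z[x_1,\dots,x_n]$ is compact projective in $\CRan$). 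The universal property of $\mathcal{P}_\Sigma$ then yields a functor $\Ani(\lambdaCR) \to \lambdaCRan$ extending this inclusion; it preserves sifted colimits by construction and all colimits since both sides are presentable and it preserves finite coproducts. To see it is an equivalence, I would check it is fully faithful on the generating compact projectives (which holds by the free-object computation, as $\Hom_{\lambdaCR}$ between free $\lambda$-rings matches $\Map_{\lambdaCRan}$ between their images) and essentially surjective, for which it suffices that the generators hit a collection of compact projective generators of $\lambdaCRan$ — equivalently that $\lambdaCRan$ is generated under sifted colimits by the $\freelambda(\Z[x_1,\dots,x_n])$, which follows since every animated $\lambda$-ring is a sifted colimit of such free objects via the bar resolution associated to the monadic adjunction $\freelambda \dashv \forget_\lambda$. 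Assembling these steps gives the claimed equivalence $\lambdaCRan \simeq \Ani(\lambdaCR)$.
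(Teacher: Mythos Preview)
Your overall architecture matches the paper's: both reduce \cref{ThmIntro.: animation of lambda-rings} to showing that the free animated $\lambda$-ring on one generator coincides with the classical $\lambda\{x\}$ (the paper's \cref{Prop.: free animated lambda ring}), and then conclude formally via $\mathcal{P}_\Sigma$ exactly as in the $\delta$-case. Your formal wrap-up is essentially identical to the paper's \cref{Thm.: lambdaCRan is the animation}.

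Where you diverge is in the key step. The paper explicitly does \emph{not} attempt a direct mapping-space analysis for $\lambda$-rings parallel to \cref{Prop.: mapping spaces in deltapCRan}. Instead it proves a fracture square (\cref{Lem.: fracture square in lambdaCRan}): every animated $\lambda$-ring $A$ is recovered as a pullback of $A \otimes^L \Q$ and $\prod_p(A \otimes^L \Z_{(p)})$ over their common rationalisation. This reduces the corepresentability of $\Map_{\CRan}(\Z[X],\forget_\lambda(-))$ by $\lambda\{x\}$ to the $p$-local and rational cases. Over $\Z_{(p)}$ the paper proves $\lambdaCRpan \simeq (\deltaCRpan)^{B\langle p^c\rangle}$ (\cref{Lem.: description animated lambda-Zp-algebras}), because $(-)\otimes^L \Fq$ vanishes on $\Z_{(p)}$-algebras for $q \neq p$; the free object is then $\coprod_{\langle p^c\rangle}(\delta\{x\}\otimes^L\Z_{(p)})$, visibly discrete and identified with $L_{(p)}\lambda\{x\}$ by an explicit polynomial computation. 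The rational case reduces to $(\CRQan)^{B\Nmult}$, which is even simpler.

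Your direct approach has a genuine gap precisely where you flag it as ``delicate''. Unlike $B\Nadd$, the space $B\Nmult \simeq \prod_p B\Nadd$ is an infinite product, so mapping spaces in $(\CRan)^{B\Nmult}$ are not equalizers but infinite limits; your Bousfield--Kan formula produces a tower with potential higher obstructions, and ``checking that the higher homotopies impose no extra conditions'' is exactly the content that is missing---no mechanism is given for why torsion-freeness of $\lambda\{x\}$ controls this. The argument that the corepresented functor is ``determined by its restriction to discrete rings'' is also not justified: you would need to know that $\Map_{\lambdaCRan}(\lambda\{x\},-)$ preserves sifted colimits, which is what you are trying to prove. (A minor point: $\lambda\{x\}$ is $\Z[X_\sigma\mid \sigma\in C(P)]$ indexed by non-decreasing prime sequences, not $\Z[c_1,c_2,\ldots]$.) The fracture argument sidesteps all of this: after $p$-localisation only one Frobenius-lift condition survives, and the residual $\langle p^c\rangle$-action is a free monoid action handled by a coproduct.
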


As in the case of animated $\delta$-rings, we deduce \cref{ThmIntro.: animation of lambda-rings} from the statement that the free animated $\lambda$-ring on a single generator $x$ is given by its
ordinary counterpart (\cref{Prop.: free animated lambda ring}). Unlike for animated $\delta$-rings, we prove this result using a fracture theorem for animated $\lambda$-rings. This reduces
\cref{Prop.: free animated lambda ring} to the question of determining the free animated $\lambda$-$\Z_{(p)}$-algebra which we show to be discrete. More precisely, it agrees with the $p$-localisation of the free $\lambda$-ring
$\lambda\{x\} \in \lambdaCR$ (\cref{Prop.: free lambda ring free p-local}) which follows from a general description of animated $\lambda$-$\Z_{(p)}$-algebra in terms of animated $\delta$-$\Z_{(p)}$-algebras, see
\cref{Lem.: description animated lambda-Zp-algebras}.

\begin{corollary}[{\cref{Cor.: discrete animated lambda-rings}}]\label{CorIntro.: discrete animated lambda-rings}
	The category $\lambdaCR$ of ordinary $\lambda$-rings is equivalent to the full subcategory $\tau_{\leq 0}(\lambdaCRan)$ of the category of animated $\lambda$-rings whose underlying animated
	ring is discrete.
\end{corollary}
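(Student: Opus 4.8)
The plan is to deduce this statement formally from our main theorem \cref{ThmIntro.: animation of lambda-rings}, which identifies $\lambdaCRan$ with the animation $\Ani(\lambdaCR) = \mathcal{P}_\Sigma(\lambdaCR^{\cp})$. Granting this, the corollary factors into two essentially soft assertions: that the $0$-truncated objects of $\lambdaCRan$ form a copy of $\lambdaCR$ sitting inside $\lambdaCRan$ via the canonical functor, and that an animated $\lambda$-ring is $0$-truncated exactly when its underlying animated ring is discrete.

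For the first assertion I would appeal to the standard behaviour of non-abelian derived categories under truncation. The category $\lambdaCR$ is the category of models of a finitary algebraic theory -- by Grothendieck's original definition it is cut out of $\Set$ by the ring operations together with the unary operations $\lambda^n$ ($n \geq 0$) subject to equational axioms -- so it is projectively generated and the restricted Yoneda embedding identifies it with the category $\Fun^{\times}((\lambdaCR^{\cp})^{\op}, \Set)$ of finite-product-preserving presheaves of sets on its compact projective objects. By \cite[5.5.8]{HTT} this is precisely the full subcategory of $0$-truncated objects of $\mathcal{P}_\Sigma(\lambdaCR^{\cp})$, and the identification is compatible with the functor $\lambdaCR \to \Ani(\lambdaCR)$ (which is fully faithful since $\lambdaCR$ is generated under sifted colimits by its compact projectives, and lands among $0$-truncated objects since $\lambdaCR$ is a $1$-category). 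Together with \cref{ThmIntro.: animation of lambda-rings} this yields an equivalence $\lambdaCR \xrightarrow{\ \sim\ } \tau_{\leq 0}(\lambdaCRan)$ implemented by the canonical functor.

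For the second assertion I would test $0$-truncatedness against a single free generator. Since finite coproducts and retracts of $\lambda\{x\}$ generate $\lambdaCR^{\cp}$, and an object of $\mathcal{P}_\Sigma(\lambdaCR^{\cp})$ carries finite coproducts to products while truncatedness passes to retracts, an animated $\lambda$-ring $A$ is $0$-truncated if and only if $\Map_{\lambdaCRan}(\lambda\{x\}, A)$ is discrete. By \cref{Prop.: free animated lambda ring} the object $\lambda\{x\}$ is the free animated $\lambda$-ring on one generator, i.e.\ $\lambda\{x\} = \free_\lambda(\Z[x])$, so by adjunction this mapping space is $\Map_{\CRan}(\Z[x], \forget_\lambda A)$, the anima underlying the animated ring $\forget_\lambda A$; and this anima is discrete precisely when $\forget_\lambda A$ is a discrete animated ring. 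Combining the two assertions, the canonical functor exhibits $\lambdaCR$ as the full subcategory of $\lambdaCRan$ spanned by the animated $\lambda$-rings whose underlying animated ring is discrete.

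The entire weight of the argument rests on \cref{ThmIntro.: animation of lambda-rings} (itself deduced from \cref{Prop.: free animated lambda ring}); the remaining steps are formal. The two points needing an argument rather than a citation are the translation of abstract $0$-truncatedness into the concrete condition on $\forget_\lambda A$ in the previous paragraph, and -- for the appeal to $\mathcal{P}_\Sigma$-theory in the paragraph before it -- the observation that $\lambda$-rings genuinely form an algebraic category, which is immediate from their equational definition.
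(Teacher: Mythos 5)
Your proof is correct and follows essentially the same strategy as the paper: both arguments deduce the equivalence $\lambdaCR \simeq \tau_{\leq 0}(\lambdaCRan)$ from \cref{ThmIntro.: animation of lambda-rings} combined with the identification of $0$-truncated objects in $\mathcal{P}_\Sigma$-categories from \cite[5.5.8.26]{HTT}. The only divergence is in the second assertion, where you test $0$-truncatedness against the corepresenting object $\lambda\{x\}$ via \cref{Prop.: free animated lambda ring}, whereas the paper instead observes that $\pi_0$ coincides with $0$-truncation and that $\forget_\lambda$ is conservative and commutes with both $\pi_0$ and the inclusion; the two routes are equally formal and both valid.
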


As an easy consequence, we recover Wilkerson's result:

\begin{corollary}[Wilkerson, {\cite[Proposition 1.2]{Wilkerson}}]
	Let $A$ be a torsion-free commutative ring. Then the datum of a $\lambda$-structure on $A$ is equivalent to the datum of pairwise commuting $p$-Frobenius lifts $\phi^p \colon A \to A$ on $A$ for all primes $p$.
\end{corollary}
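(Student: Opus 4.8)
The plan is to derive the statement by specialising \cref{CorIntro.: discrete animated lambda-rings} to a torsion-free ring. By that corollary, a $\lambda$-structure on a commutative ring $A$ is the same datum as an animated $\lambda$-structure whose underlying animated ring is the discrete ring $A$; by \cref{DefIntro.: animated lambda-structure} the latter consists of an action $\phi_A \colon B\Nmult \to \CRan$ on $A$ together with, for each prime $p$, a homotopy $h_{(A,p)}$ witnessing $\phi_A(p)\otimes^L_\Z\id_{\Fp}\simeq\Fr$ in $\Map_{(\CRFpan)^{B\langle p^c\rangle}}(A\otimes^L_\Z\Fp,\,A\otimes^L_\Z\Fp)$. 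I will show that when $A$ is torsion free both pieces of data are discrete and amount precisely to a family of pairwise commuting $p$-Frobenius lifts.

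For the action: since $A$ is discrete, the endomorphism monoid $\End_{\CRan}(A)$ coincides with the discrete monoid $\End_{\CR}(A)$, so an action of $B\Nmult$ on the object $A$ is classified by a map of monoid objects in anima $\Nmult \to \End_{\CR}(A)$, which, the target being discrete, is just a monoid homomorphism. As $\Nmult$ is the free commutative monoid on the set of primes, such a homomorphism is exactly a family $(\phi^p)_p$ of pairwise commuting ring endomorphisms of $A$ --- the Adams operations.

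For the homotopies: torsion-freeness of $A$ makes $A\otimes^L_\Z\Fp\simeq A/pA$ discrete, whence the mapping space $\Map_{(\CRFpan)^{B\langle p^c\rangle}}(A/pA, A/pA)$ is discrete as well --- between discrete $\Fp$-algebras the non-equivariant mapping space is the set $\operatorname{Hom}(A/pA, A/pA)$, and imposing $\langle p^c\rangle$-equivariance computes it as the totalization of a cosimplicial set (via the bar resolution of $\langle p^c\rangle$), again a set. The endomorphism $\phi^p\otimes^L_\Z\id_{\Fp}$ is $\langle p^c\rangle$-equivariant because $\phi^p$ commutes with every $\phi^q$, and $\Fr$ is $\langle p^c\rangle$-equivariant because the Frobenius is a natural endomorphism of the identity functor; both therefore give points of this discrete space. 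Hence a homotopy $h_{(A,p)}$ exists if and only if these points agree, i.e.\ $\phi^p(x)\equiv x^p\pmod{pA}$ for all $x\in A$ --- that is, $\phi^p$ is a $p$-Frobenius lift --- and when it exists it is essentially unique, the path space between equal points of a discrete space being contractible. Combining the two analyses shows that an animated $\lambda$-structure on a torsion-free discrete ring $A$ is the datum of a family of pairwise commuting $p$-Frobenius lifts, which is the assertion.

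The step requiring the most care is the discreteness of $\Map_{(\CRFpan)^{B\langle p^c\rangle}}(A/pA,A/pA)$, i.e.\ checking that passage to $\langle p^c\rangle$-equivariant objects introduces no higher homotopy here; this comes down to the fact that for a discrete monoid $M$ and discrete $M$-sets the $M$-equivariant mapping space is again a set. One can also avoid the animated formalism altogether: by \cref{Thm.: lambda-rings are global delta-rings} a $\lambda$-structure on $A$ is a family of $p$-typical $\delta$-structures $\delta_p$ with $\phi^q\circ\delta_p=\delta_p\circ\phi^q$ for $q\neq p$, and for torsion-free $A$ a $\delta_p$-structure is equivalent to a $p$-Frobenius lift $\phi^p$ via $\delta_p(x)=(\phi^p(x)-x^p)/p$; a direct computation then shows $\phi^q\circ\delta_p=\delta_p\circ\phi^q$ holds precisely when $\phi^p$ and $\phi^q$ commute, which recovers the corollary immediately.
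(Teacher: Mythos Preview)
Your proof is correct and follows the same approach as the paper, which simply says ``Combine \cref{CorIntro.: discrete animated lambda-rings} with the fact that $A \otimes^L_{\Z} \Fp \simeq A/p$ for a torsion-free ring $A$.'' You have unpacked in detail what this combination amounts to --- the discreteness of the relevant action and homotopy data --- and your justification that the equivariant mapping space stays discrete (a limit of discrete anima is discrete) is sound; the alternative classical argument via \cref{Thm.: lambda-rings are global delta-rings} at the end is a nice bonus.
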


\begin{proof}
	Combine \cref{CorIntro.: discrete animated lambda-rings} with the fact that $A \otimes^L_{\Z} \Fp \simeq A/p$ for a torsion-free ring $A$.
\end{proof}

\vspace{0.1in}

\subsection{Organisation of the paper}

In \cref{Sect.: Animated delta rings,Sect.: Animated delta rings via Witt vectors,Sect.: The delta map,Sect.: The animation of delta rings} we develop the theory of animated $\delta$-rings from the perspective of Frobenius lifts.
We add some new results to \cite{BL} in \cref{Sect.: Animated delta rings,Sect.: Animated delta rings via Witt vectors,Sect.: The delta map} and provide an alternative proof of \cite[A.19]{BL} on free animated
$\delta$-rings in \cref{Sect.: The animation of delta rings}. The reader familiar with animated $\delta$-rings from \cite{BL} may browse through \cref{Sect.: Animated delta rings via Witt vectors} for a comparison with Bhatt-Lurie and then directly jump ahead to \cref{Sect.: Animated lambda rings}.\\
We formally introduce animated $\lambda$-rings in \cref{Sect.: Animated lambda rings}. In  \cref{Sect.: Fracture square}, we study animated $\lambda$-$\Z_{(p)}$ and $\lambda$-$\Q$-algebras and prove a fracture theorem for animated
$\lambda$-rings. In \cref{Sect.: Free animated lambda rings}, we show that free animated $\lambda$-rings agree with their discrete counterparts and deduce the main theorem of the paper. We conclude with an approach to animated $\lambda$-rings via global animated Witt vectors in \cref{Sect.: Global animated Witt vectors}.

\subsection{Background and Notation}

We use the language of $\infty$-categories as developed in \cite{HTT,HA} throughout the paper and stick to most of the notation presented therein.
Deviating from \cite{HTT}, we follow Clausen and denote the $\infty$-category of spaces (anima) by $\An$. We denote the $\infty$-category of animated rings by $\CRan$ and choose the full subcategory of $\Fun(\Poly^{\op},\An)$ spanned by the functors which preserve finite products as a formal definition, see \cite[Section 25.1.1]{SAG} for a relative version of animated $R$-algebras.
Here, we denote by $\Poly$ the $1$-category of finitely generated polynomial $\Z$-algebras and $\Z$-algebra homomorphisms. Note that $\Poly$ admits finite coproducts which are given by the tensor product. The general theory behind this
construction is detailed in \cite[Section 5.5.8]{HTT}, for the animation of $1$-categories based on Lurie see \cite[Section 5.1.4]{ScholzeCesnavicius}. We content ourselves with listing the most important properties of animated rings and
sketch the proofs insofar as we are unaware of a reference:

\begin{proposition}\label{Prop.: Properties of animated rings} The $\infty$-category $\CRan$ of animated rings has the following properties:
	\begin{enumerate}
	\item The $\infty$-category of animated rings is presentable. Moreover, the Yoneda embedding induces a fully faithful functor $j \colon \Poly \to \CRan$ which preserves finite coproducts.
	\item For any $\infty$-category $\D$ with sifted colimits restriction along $j$ induces an equivalence
		\begin{equation*}
			j^\ast \colon \Fun^\Sigma(\CRan,\D) \xrightarrow{\sim} \Fun(\Poly,\D)
		\end{equation*}
		with inverse given by left Kan extension. Here, $\Fun^\Sigma(\CRan,\D)$ denotes the full subcategory of $\Fun(\CRan,\D)$ spanned by the sifted colimit preserving functors.
	\item \label{Animated rings are sifted colimits} For any $A \in \CRan$ there exists a sifted diagram $j/A \to \Poly$ whose colimit in $\CRan$ is $A$.
	\item \label{Inclusion CR into CRan}There exists a fully faithful functor $\iota \colon \CR \to \CRan$ which preserves small limits and filtered colimits. In particular, it preserves arbitrary coproducts of $\Z[X]$ and admits a left
		adjoint which we denote by $\pi_0$.
	\item \label{Projective objects} Small coproducts of $\Z[X]$ are projective in $\CRan$. Moreover, there exists an equivalence $\CRan \simeq \mathcal{P}_\Delta(\hat{\Poly})$ of $\infty$-categories. Here, the right hand side
		denotes the smallest full subcategory of the presheaf category $\mathcal{P}(\hat{\Poly})$ which contains the essential image of the Yoneda embedding and is closed under geometric realisations. $\hat{\Poly}$
		denotes the category of free $\Z$-algebras and $\Z$-algebra homomorphisms.
	\item \label{Universal property of iota}For any cocomplete $\infty$-category $\D$, the counit of the adjuction
		\begin{equation*}
			\Lan_\iota \colon \Fun(\CR,\D) \to \Fun(\CRan,\D) \colon \iota^\ast
		\end{equation*}
		is an equivalence on the full subcategory $\Fun^{\Delta}(\CRan,\D)$ of functors which preserve geometric realizations.
	\item \label{Coproduct in CRan} The coproduct in the $\infty$-category of animated rings coincides with the functor
		\begin{equation*}
			(-) \otimes^L_{\Z} (-) \colon \CRan \times \CRan \to \CRan
		\end{equation*}
		which is defined as the left Kan extension of the tensor product on commutative rings along $j$ via currying. We also write $\otimes^L$ instead of $\otimes^L_{\Z}$.
	\item \label{Derived mod p reduction} For any prime $p$, we refer to $(-) \otimes^L \Fp \colon \CRan \to \CRan$ as derived mod $p$ reduction. The functor $(-) \otimes^L \Fp$ factors through
		animated $\Fp$-algebras
		\begin{equation*}
			(-) \otimes^L \Fp \colon \CRan \to \CRFpan
		\end{equation*}
		and as such is left adjoint to the canonical functor from animated $\Fp$-algebras to animated rings which forgets the $\Fp$-algebra structure. We denote the unit of this adjunction by $\red$ and the
		homotopy witnessing naturality by $\sigma_{\red}$.
	\end{enumerate}
\end{proposition}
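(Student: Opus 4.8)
The plan is to read off all eight statements from the single identification $\CRan\simeq\mathcal{P}_\Sigma(\Poly)$, valid because $\Poly$ admits finite coproducts, so that a finite-product-preserving presheaf $\Poly^{\op}\to\An$ is the same datum as an object of $\mathcal{P}_\Sigma(\Poly)$; see \cite[Section 5.5.8]{HTT} and, for the animation formalism, \cite[Section 25.1.1]{SAG} and \cite[Section 5.1.4]{ScholzeCesnavicius}. Granting this, (1) is presentability of $\mathcal{P}_\Sigma(\Poly)$ together with full faithfulness of the Yoneda embedding $j$, which preserves finite coproducts since $\Poly$ has them; (2) is verbatim the universal property of $\mathcal{P}_\Sigma$ as the free cocompletion under sifted colimits; and (3) records that every $A$ is the colimit of the canonical diagram $j/A\to\Poly\xrightarrow{j}\CRan$, the indexing category $j/A$ being sifted because $\Poly$ has finite coproducts.

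For (4), I would identify $\CR$ with the $1$-category $\Fun^{\times}(\Poly^{\op},\Set)$ of finite-product-preserving set-valued presheaves, i.e.\ with models of the Lawvere theory $\Poly^{\op}$. Since $\Set\hookrightarrow\An$ is fully faithful and preserves small limits and filtered colimits, and both are computed pointwise in presheaf categories, the resulting functor $\iota$ is fully faithful and preserves small limits and filtered colimits; that it carries a coproduct of copies of $\Z[X]$ to the coproduct in $\CRan$ follows from flatness of polynomial rings over $\Z$ (so that derived and underived base change agree), and $\pi_0$ then exists by the adjoint functor theorem.

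For (5), $\Z[X]$ corepresents the forgetful functor $\CRan\to\An$, which is evaluation at $\Z[X]\in\Poly^{\op}$ and hence preserves all colimits computed pointwise in $\mathcal{P}_\Sigma(\Poly)\subseteq\mathcal{P}(\Poly)$, in particular sifted colimits; so $\Z[X]$ is compact projective, and a small coproduct of copies of it is projective (for the latter one may pass to underlying connective spectra, where arbitrary products are exact and hence commute with geometric realisations). The equivalence $\CRan\simeq\mathcal{P}_\Delta(\hat{\Poly})$ is the assertion that every animated ring is a geometric realisation of polynomial rings: one resolves $A$ by the bar construction relative to the free--forgetful adjunction between $\An$ and $\CRan$, each of whose terms is $\free$ of an anima and hence itself a geometric realisation of free rings on sets, using that every anima is a geometric realisation of sets and that an iterated geometric realisation is a geometric realisation. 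Property (6) will then follow from (5): by the universal property of $\mathcal{P}_\Delta(\hat{\Poly})$ any geometric-realisation-preserving $F$ is the left Kan extension of $F|_{\hat{\Poly}}$ along $\hat{\Poly}\hookrightarrow\CRan$; factoring that inclusion through $\iota$ and composing left Kan extensions reduces the claim to showing that $F\circ\iota$ is itself left Kan extended from $\hat{\Poly}$ along $\hat{\Poly}\hookrightarrow\CR$, which holds because the bar resolution of a discrete ring $R$ by free rings has an extra degeneracy, hence is a resolution whose geometric realisation in $\CRan$ is again $\iota(R)$, and the same resolution computes the left Kan extension.

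Finally, (7): the coproduct functor and $(-)\otimes^L_{\Z}(-)$ both preserve sifted colimits separately in each variable and restrict on $\Poly\times\Poly$ to the tensor product of polynomial rings, so they agree by the uniqueness clause of (2), applied through currying. And (8): by (7), $(-)\otimes^L_{\Z}\Fp$ is the coproduct with $\Fp$ over the initial object $\Z$, i.e.\ base change along $\Z\to\Fp$; it lands in $\CRFpan=(\CRan)_{\Fp/}$ tautologically and, by the universal property of coproducts in an undercategory, is left adjoint to the forgetful functor $(\CRan)_{\Fp/}\to\CRan$, with unit $\red$. The step I expect to cost the most care is the pair (5)--(6): fixing the model $\CRan\simeq\mathcal{P}_\Delta(\hat{\Poly})$ and extracting from it the universal property of $\iota$, the subtle point being that $\iota$ does \emph{not} preserve geometric realisations in general yet does preserve the bar resolutions of discrete rings.
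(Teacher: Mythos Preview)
Your proposal is correct and follows essentially the same route as the paper: both read everything off the identification $\CRan\simeq\mathcal{P}_\Sigma(\Poly)$ and the standard package of results from \cite[5.5.8]{HTT}. Two small points where you diverge are worth flagging.

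For (4), the paper argues that $\iota$ preserves arbitrary coproducts of $\Z[X]$ directly: $\iota$ preserves filtered colimits (inherited from $\Set\hookrightarrow\An$) and agrees with $j$ on $\Poly$, where finite coproducts are already preserved by (1); an arbitrary coproduct is then a filtered colimit of finite ones. Your flatness argument instead identifies the coproduct in $\CRan$ with a derived tensor product and compares it to the underived one---but that identification is precisely (7), so you have a forward reference. The paper also constructs $\pi_0$ explicitly (postcomposition with $\pi_0\colon\An\to\Set$) rather than invoking the adjoint functor theorem.

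For (6), both you and the paper reduce via transitivity of left Kan extensions along $\hat{\Poly}\subseteq\CR\subseteq\CRan$ together with the equivalence $\Fun^\Delta(\CRan,\D)\simeq\Fun(\hat{\Poly},\D)$ from (5). The paper stops there; you go one step further and try to justify why $F\circ\iota$ is itself left Kan extended from $\hat{\Poly}$, using that the bar resolution of a discrete ring has an extra degeneracy. That is a sound idea---split simplicial objects are absolute colimits, so any functor preserves them---but your sentence ``the same resolution computes the left Kan extension'' hides a cofinality claim (the bar resolution $\Delta^{\op}\to\hat{\Poly}/R$ is cofinal) that you do not spell out. The paper sidesteps this by leaving the corresponding step implicit; neither argument is fully detailed here, but the result is standard.
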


\begin{proof}
	(1) and (2) are \cite[5.5.8.10 and 5.5.8.15]{HTT}. Assertion (3) follows from \cite[5.1.5.3]{HTT} together with the observation that the slice $j/A$ admits finite coproducts for any animated ring $A$. This further uses that sifted colimits in
	$\CRan$ and $\mathcal{P}(\Poly)$ agree. The adjunction in part (4) is induced by the adjunction $\iota \colon \tau_{\leq 0}(\An) \to \An \colon \pi_0$ via postcomposition. This is well-defined since $\pi_0$ preserves small products.
	The stated properties of $\iota \colon \CR \to \CRan$ are inherited from the inclusion of discrete anima into $\An$ since both limits and sifted colimits are computed pointwise in the categories
	$\CR \simeq \Fun^{\prod}(\Poly^{\op},\tau_{\leq 0}(\An))$ and $\CRan$. Finally, $\iota$ preserves arbitrary coproducts of $\Z[X]$ as it preserves filtered colimits and coincides with $j$ on $\Poly$.
	Ad (5): For finite coproducts this is \cite[5.5.8.10]{HTT}. The general case follows from the fact that $\Map_{\CRan} \colon (\CRan)^{\op} \times \CRan \to \An$ factors through $\Sp_{\geq 0}$,
	see \cref{Prop.: underlying anima is a commutative group object}, and that geometric realisations commute with small products in connective spectra. The second part of (5) follows similarly to \cite[5.5.8.22]{HTT}: By
	\cite[5.3.5.9]{HTT} there exists an equivalence $\Fun^{\Delta}(\mathcal{P}_\Delta(\hat{\Poly}),\CRan) \simeq \Fun(\hat{\Poly},\CRan)$ of $\infty$-categories induced by left Kan extension. One argues that the functor
	$\mathcal{P}_\Delta(\hat{\Poly}) \to \CRan$ corresponding to the inclusion $\restr{\iota}{} \colon \hat{\Poly} \to \CRan$ is fully faithful since this holds for $\iota$ and is in fact an equivalence as the elements of
	$\hat{\Poly}$ are projective in $\CRan$ and generate the $\infty$-category of animated rings under geometric realisations, see \cite[5.5.8.13]{HTT}.
	(6) follows from transitivity of left Kan extensions (\cite[7.3.8]{Kerodon}) applied to the inclusions $\hat{\Poly} \subseteq \CR \subseteq \CRan$ together with the equivalence $\Fun^{\Delta}(\CRan,\D) \simeq \Fun(\hat{\Poly},\D)$ of (5).
	To show that $A \otimes^L_{\Z} B$ satisfies the universal property of the  coproduct in animated rings as claimed in (7), one observes that this holds true for $A,B \in \Poly$ and that the functor $(-) \otimes^L_{\Z} (-)$ preserves sifted
	colimits separately in both variables by construction. (8) follows from (7) together with \cite[25.1.4.2]{SAG}.
\end{proof}

\begin{notation}[Animation of a functor]
	We define the animation of an endofunctor $F \colon \CR \to \CR$ as the left Kan extension of the functor $\restr{(\iota \circ F)}{\Poly} \colon \Poly \to \CRan$ along $j$ and denote it by $F^{\an}$. Whenever we have an
	equivalence ${F^{\an}} \circ \iota \simeq \iota \circ F$, i.e. the animation restricts to $F$ on ordinary commutative rings we will denote it again by $F$.
\end{notation}

We assume familiarity of the reader with the classical notions of a $\delta$-ring and a $\lambda$-ring. A modern exposition of the former is given in \cite[Section 2]{Prisms}. We refer the reader to \cite{Yau} for an introduction to $\lambda$-rings. Classical references to the subject matter are \cite{GrothChern,AtiyahTall} for $\lambda$-rings and \cite{JoyalDelta,JoyalLambda} for $\delta$-rings and their connection to $\lambda$-rings. For the global $\delta$-ring perspective on
$\lambda$-rings see also \cite{Rezk}.
We denote the categories of ordinary $\delta$-rings and $\lambda$-rings by $\deltaCR$ and $\lambdaCR$. These are generated under colimits by their full subcategories $\deltaPoly$ and $\lambdaPoly$ of free $\delta$-rings and
$\lambda$-rings on finitely many generators (and retracts of these).

\begin{notation}[Commutative group objects]
	Let $\C$ be an $\infty$-category with finite products. We denote the $\infty$-category of $\mathbf{E}_\infty$-groups in $\C$ by $\CGrp(\C)$. This is an additive $\infty$-category with objects given by functors $X \colon \Fin_\ast \to \C$
	such that the composite $X \circ \cut \colon \Delta^{\op} \to \C$ defines an associative group object in $\C$. %See \cite{xxx}.
\end{notation}

\begin{notation}
	Let $\C$ be an $\infty$-category and $M \in \Mon$ a discrete monoid. We denote by $\C^{BM}$ the $\infty$-category of functors from $BM$ to $\C$ and write $\ev \colon \C^{BM} \to \C$ for the functor which is given by restriction
	along the essentially unique functor $\ast \to BM$.
\end{notation}

We denote the unbounded derived $\infty$-category of abelian groups by $\DZ$ and view it as equipped with its standard t-structure. $\Catinf$ denotes the $\infty$-category of not necessarily small $\infty$-categories.

\subsection*{Acknowledgements}

The author is grateful to Fabian Hebestreit, Thomas Nikolaus and Robert Szafarczyk for mathematical discussions. Special thanks go to Henning Krause for an invitation to Bonn, Jacob Lurie for suggesting fracture squares and Thomas Nikolaus for the generous amount of time he invested in this project. The author was funded by the Deutsche Forschungsgemeinschaft (DFG, German Research Foundation) under Germany's Excellence Strategy EXC 2044 –390685587, Mathematics Münster: Dynamics–Geometry–Structure.

\newpage
\section{Animated $\delta$-rings}\label{Sect.: Animated delta rings}

Fix a prime $p$. In the following sections, we give an exposition of the theory of animated $\delta$-rings from the perspective of Frobenius lifts. Starting point is the definition given at the beginning of \cite[Appendix A]{BL}: An animated $\delta$-ring is an animated ring $A$ together with an endomorphism and a homotopy witnessing it as a lift of the $p$-Frobenius on $A \otimes^L_\Z \Fp$. To make this precise, recall the $p$-Frobenius on animated $\Fp$-algebras:

\begin{definition}[$p$-Frobenius]\label{Def.: animated Frobenius}
	We denote by $\Fr: \CRFpan \to (\CRFpan)^{B\Nadd}$ the animation of the $p$-Frobenius on $\Fp$-algebras. More precisely, it is the left derived functor of the functor which assigns to a finitely generated polynomial $\Fp$-algebra
	the algebra endomorphism given by its $p$-Frobenius.
\end{definition}

\begin{proposition}\label{Prop.: properties Frobenius}
	There exists an equivalence $\ev \circ \Fr \simeq \id_{\CRFpan}$ of endofunctors on $\CRFpan$. In particular, $\Fr$ is conservative and preserves all limits and colimits.
\end{proposition}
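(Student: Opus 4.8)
The plan is to deduce everything from the universal property of animated $\Fp$-algebras, i.e.\ the evident $\CRFpan$-analogue of \cref{Prop.: Properties of animated rings}~(2). The first step is to record the relevant formal properties of the evaluation functor $\ev\colon(\CRFpan)^{B\Nadd}\to\CRFpan$: it is restriction along the unique functor $\ast\to B\Nadd$, hence admits both adjoints (left and right Kan extension, which exist since $\CRFpan$ is presentable and thus complete and cocomplete), and in particular preserves all small limits and colimits; moreover it is conservative, because an equivalence in a functor $\infty$-category is detected objectwise and $B\Nadd$ has a single object. In particular $\ev$ preserves sifted colimits.

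The second step is to compute $\ev\circ\Fr$ on polynomial algebras. The functor $\PolyFp\to(\CRFpan)^{B\Nadd}$ underlying $\Fr$ sends $P$ to $P$ equipped with the $\Nadd$-action generated by its Frobenius endomorphism; this is a well-defined functor because a homomorphism of $\Fp$-algebras automatically commutes with the $p$-th power map, so that the Frobenius is a natural endomorphism of $\id_{\PolyFp}$. Since $\Fr$ is the left Kan extension of this functor along the fully faithful $j\colon\PolyFp\to\CRFpan$, we have $\Fr\circ j\simeq(P\mapsto(P,\operatorname{Frob}_P))$, and applying the $\Nadd$-action-forgetting functor $\ev$ gives $\ev\circ\Fr\circ j\simeq j$. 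Now $\ev\circ\Fr$ and $\id_{\CRFpan}$ are both endofunctors of $\CRFpan$ preserving sifted colimits --- the former since $\Fr$ does by its definition as a left derived functor and $\ev$ does by the first step --- and they agree after restriction along $j$; by the equivalence $j^\ast\colon\Fun^\Sigma(\CRFpan,\CRFpan)\xrightarrow{\sim}\Fun(\PolyFp,\CRFpan)$ they are equivalent, which is the desired equivalence $\ev\circ\Fr\simeq\id_{\CRFpan}$.

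The third step handles the ``in particular''. Conservativity of $\Fr$ is immediate: if $\Fr(f)$ is an equivalence then so is $\ev(\Fr(f))\simeq f$. For (co)limit preservation, given $D\colon I\to\CRFpan$ one applies $\ev$ to the canonical comparison map $\colim(\Fr\circ D)\to\Fr(\colim D)$ (resp.\ $\Fr(\lim D)\to\lim(\Fr\circ D)$); using that $\ev$ preserves colimits (resp.\ limits) and $\ev\circ\Fr\simeq\id$, this becomes the identity of $\colim D$ (resp.\ $\lim D$), and since $\ev$ is conservative the comparison map was an equivalence to begin with.

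I expect the proof to be essentially formal once the universal property of $\CRFpan$ is in hand; the only point requiring a little care is naturality --- verifying that $P\mapsto(P,\operatorname{Frob}_P)$ genuinely assembles into a functor lifting $j$ along $\ev$, and that the identifications in the third step really transport the comparison maps to \emph{identity} morphisms rather than to some nontrivial automorphism. Both reduce to unwinding that the Frobenius is a natural transformation $\id\Rightarrow\id$ on $\Fp$-algebras.
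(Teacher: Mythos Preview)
Your proof is correct and follows essentially the same approach as the paper: both reduce the equivalence $\ev\circ\Fr\simeq\id$ to the restriction to $\PolyFp$ via the universal property of $\CRFpan$, and both deduce the ``in particular'' from the fact that $\ev$ and $\id$ are conservative and preserve limits and colimits. Your version simply spells out in more detail what the paper compresses into two sentences; the closing caveat about comparison maps becoming nontrivial automorphisms is harmless over-caution, since any self-equivalence would already suffice for the argument.
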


\begin{proof}
	All functors involved preserve sifted colimits. It thus suffices to show that $\ev \circ \Fr$ restricts to the identity on $\PolyFp$ which is immediate from
	the definition of the animated $p$-Frobenius. The second assertion follows as both evaluation and the identity satisfy the stated properties.
\end{proof}

\begin{notation}
	In view of \cref{Prop.: properties Frobenius} we denote the value of the animated $p$-Frobenius on $A \in \CRFpan$ by $(A,\Fr \colon A \to A)$. Given any morphism $f \colon A \to B$ of animated
	$\Fp$-algebras, we write $\sigma_{\Fr}(f)$ or in short $\sigma_{\Fr}$ for the homotopy witnessing ${\Fr} \circ f \simeq f \circ \Fr$ in $\Map_{\CRFpan}(A,B)$.
\end{notation}

\begin{definition}[Animated $\delta$-rings]\label{Def.: deltapCRan via Frobenius lifts}
	We define the category $\deltaCRan$ of $p$-typical animated $\delta$-rings as the pullback
	\[\begin{diag}[column sep=3em]
		\deltaCRan \ar[r] \ar[d] \ar[dr, phantom, "\lrcorner", near start]& \CRFpan \ar[d, "\Fr"] \\[4ex]
		(\CRan)^{B\Nadd} \ar[r, "(-) \otimes^L \Fp"] & (\CRFpan)^{B\Nadd}
	\end{diag}\]
	in $\Catinf$.
\end{definition}

\begin{remark}
	In the following, we will refer to objects in $\deltaCRan$ as animated $\delta$-rings. Whenever we need to discern animated $\delta$-rings with respect to different primes $p$ we add the prime
	in question as a subscript and write $\deltapCRan$ instead of $\deltaCRan$.
\end{remark}

\begin{observation}
	Unwinding \cref{Def.: deltapCRan via Frobenius lifts}, we recover the notion of animated $\delta$-ring presented in the introduction: Objects in $\deltaCRan$ are equivalent to triples $(A,\phi_A,h_A)$ consisting of an animated ring $A$
	together with an endomorphism
	$\phi_A\colon A \to A$ in $\CRan$ and a homotopy $h_A$
	\[\begin{diag}[column sep=2.7em]
		&  |[alias=Z]| A \ar[dr, "\red_A"] & \\[3ex]
		A \ar[ur, "\phi_A"] \ar[rr, "\Fr \circ \red_A"', ""{name=U, above}] \ar[r, from=Z, to=U, phantom, "h_A" {font=\footnotesize, yshift=-0.9ex, xshift=0.5ex}]&& A \otimes^L \Fp
	\end{diag}\]
	in $\Map_{\CRan}(A,A\otimes^L \Fp)$ which witnesses $\phi_A$ as a lift of the $p$-Frobenius on $A \otimes^L \Fp$.
\end{observation}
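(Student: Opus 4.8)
The plan is to unwind the pullback square of \cref{Def.: deltapCRan via Frobenius lifts} by hand. First I would recall that an object of a pullback of $\infty$-categories is the datum of an object in each of the two outer corners together with an equivalence between their images in the base. Hence an object of $\deltaCRan$ consists of an animated $\Fp$-algebra $C$, an object of $(\CRan)^{B\Nadd}$ — equivalently an animated ring $A$ equipped with a self-map $\phi_A\colon A\to A$ — and an equivalence $\alpha\colon\Fr(C)\simeq(A,\phi_A)\otimes^L\Fp$ in $(\CRFpan)^{B\Nadd}$, where the right-hand side denotes $A\otimes^L\Fp$ equipped with the endomorphism $\phi_A\otimes^L\id_{\Fp}$. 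By \cref{Prop.: properties Frobenius} the underlying $\Fp$-algebra of $\Fr(C)$ is $C$, with endomorphism its Frobenius, so unwinding $\alpha$ further yields an equivalence $\alpha_0\colon C\xrightarrow{\sim}A\otimes^L\Fp$ in $\CRFpan$ together with a homotopy witnessing compatibility of $\alpha_0$ with these two endomorphisms.

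Next I would eliminate the auxiliary datum $C$. The space of pairs $(C,\alpha_0)$ is contractible, with canonical point $(A\otimes^L\Fp,\id)$, so projecting an object of $\deltaCRan$ with prescribed underlying $(A,\phi_A)$ onto this pair identifies the space of such objects with the space of homotopies $\Fr\simeq\phi_A\otimes^L\id_{\Fp}$ in $\Map_{\CRFpan}(A\otimes^L\Fp,A\otimes^L\Fp)$, where $\Fr$ now denotes the Frobenius of $A\otimes^L\Fp$. Finally, I would invoke the adjunction of \cref{Prop.: Properties of animated rings} to rewrite this: it gives an equivalence $\Map_{\CRFpan}(A\otimes^L\Fp,A\otimes^L\Fp)\simeq\Map_{\CRan}(A,A\otimes^L\Fp)$, namely precomposition with the unit $\red_A$, which carries $\Fr$ to $\Fr\circ\red_A$ and $\phi_A\otimes^L\id_{\Fp}$ to $(\phi_A\otimes^L\id_{\Fp})\circ\red_A\simeq\red_A\circ\phi_A$, the last homotopy being the naturality datum $\sigma_{\red}(\phi_A)$. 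Transporting the homotopy along this equivalence turns it into precisely the homotopy $h_A$ displayed in the statement, so that an object of $\deltaCRan$ is the same as a triple $(A,\phi_A,h_A)$. (The analogous unwinding for morphisms is recorded later; see \cref{Rmk.: objects in map deltaCRan}.)

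Since everything here is a formal manipulation of the pullback, I do not expect a genuine obstacle. The one point to handle carefully is the coherence bookkeeping: one must check that contracting $(C,\alpha_0)$ to the point $(A\otimes^L\Fp,\id)$ really leaves the homotopy between $\Fr$ and $\phi_A\otimes^L\id_{\Fp}$ untwisted, and that the adjunction identification matches $\phi_A\otimes^L\id_{\Fp}$ with $\red_A\circ\phi_A$ specifically through $\sigma_{\red}$ rather than up to some unnamed homotopy.
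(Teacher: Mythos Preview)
Your unwinding is correct and is precisely what the paper intends; the paper itself gives no proof here and simply states the observation as an immediate unpacking of the pullback. The adjunction step you spell out --- transporting the homotopy $\Fr \simeq \phi_A \otimes^L \id_{\Fp}$ in $\Map_{\CRFpan}(A\otimes^L\Fp,A\otimes^L\Fp)$ across to $h_A$ in $\Map_{\CRan}(A,A\otimes^L\Fp)$ --- is exactly what the paper records separately in \cref{Rmk.: adjoint homotopies}, so your account is in fact slightly more explicit than the paper's own treatment.
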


\begin{remark}\label{Rmk.: adjoint homotopies}
	The homotopy $h_A$ appearing in the datum of an animated $\delta$-ring corresponds to a homotopy $\adj(h_A)$ witnessing $\phi_A \otimes^L \id_{\Fp} \simeq \Fr$
	in $\Map_{\CRFpan}(A\otimes^L \Fp,A\otimes^L \Fp)$ by adjunction.
\end{remark}

\begin{observation}\label{Rmk.: properties of fgt to CRan}
	The diagram in \cref{Def.: deltapCRan via Frobenius lifts} is a pullback square in $\Prl$. In particular, the category $\deltaCRan$ of animated $\delta$-rings is presentable and comes with a forgetful functor
	\begin{equation*}
		\forget_\delta \colon \deltaCRan \to (\CRan)^{B\Nadd} \xrightarrow{\ev} \CRan
	\end{equation*}
	to animated rings which is conservative and preserves small colimits. Here, we use that both $\Fr$ and $\ev$ are conservative left adjoint functors. %{\color{gray} Pullbacks of conservative functors are conservative.}
\end{observation}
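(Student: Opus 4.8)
The plan is to upgrade the defining pullback square of \cref{Def.: deltapCRan via Frobenius lifts} from $\Catinf$ to $\Prl$: once one knows that all functors in the cospan are left adjoints between presentable $\infty$-categories, presentability of $\deltaCRan$ and the asserted properties of $\forget_\delta$ follow formally from the closure of $\Prl$ under limits inside $\Catinf$ (\cite[5.5.3.13]{HTT}), together with the fact that the structure maps of such a limit are themselves morphisms in $\Prl$.

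Concretely, I would first record that the cospan
\[
	\CRFpan \xrightarrow{\ \Fr\ } (\CRFpan)^{B\Nadd} \xleftarrow{\ (-)\otimes^L\Fp\ } (\CRan)^{B\Nadd}
\]
lies in $\Prl$. All three categories are presentable: $\CRan$ and its $\Fp$-linear analogue $\CRFpan$ are presentable by \cref{Prop.: Properties of animated rings}, and functor categories from a small $\infty$-category into a presentable $\infty$-category are again presentable. The right-hand functor is the postcomposition functor induced by $(-)\otimes^L\Fp\colon \CRan \to \CRFpan$, which is a left adjoint by \cref{Prop.: Properties of animated rings}(8); postcomposition with a left adjoint is a left adjoint (with right adjoint given by postcomposition with the right adjoint). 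For the left-hand functor, $\Fr$ preserves small colimits by \cref{Prop.: properties Frobenius}, so by the adjoint functor theorem (\cite[5.5.2.9]{HTT}) it is a left adjoint out of the presentable $\infty$-category $\CRFpan$. Hence the square is a pullback in $\Prl$; in particular $\deltaCRan$ is presentable and the projection $\deltaCRan \to (\CRan)^{B\Nadd}$ is a morphism in $\Prl$, so it preserves small colimits.

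It then remains to deduce the properties of $\forget_\delta$, which by definition is the composite of this projection with $\ev\colon (\CRan)^{B\Nadd} \to \CRan$. The functor $\ev$ is restriction along $\ast \to B\Nadd$, hence admits both Kan extension adjoints and in particular preserves small colimits; thus $\forget_\delta$ preserves small colimits. For conservativity, $\ev$ is conservative because $B\Nadd$ has a single object and equivalences in a functor category are detected objectwise, while the projection $\deltaCRan \to (\CRan)^{B\Nadd}$ is conservative because in a pullback $\mathcal{A}\times_{\mathcal{C}}\mathcal{B}$ a morphism is an equivalence as soon as its images in $\mathcal{A}$ and $\mathcal{B}$ are: given a morphism of $\deltaCRan$ whose image in $(\CRan)^{B\Nadd}$ is an equivalence, its image in $(\CRFpan)^{B\Nadd}$ is an equivalence, hence so is its image in $\CRFpan$ by conservativity of $\Fr$ (\cref{Prop.: properties Frobenius}), and therefore the morphism itself is an equivalence. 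Composing conservative functors, $\forget_\delta$ is conservative.

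I do not expect any genuine obstacle here: the only step that is not purely formal is checking that each leg of the cospan is a morphism in $\Prl$, and for $\Fr$ this is exactly where the cocontinuity recorded in \cref{Prop.: properties Frobenius} together with the adjoint functor theorem enters; everything else is bookkeeping with limits in $\Prl$ and objectwise detection of equivalences in functor categories.
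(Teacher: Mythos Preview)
Your proposal is correct and is exactly the argument the paper has in mind: the statement is recorded as an observation with the justification ``we use that both $\Fr$ and $\ev$ are conservative left adjoint functors,'' and you have simply spelled out the standard bookkeeping behind this (presentability of the vertices, the adjoint functor theorem for $\Fr$ via \cref{Prop.: properties Frobenius}, closure of $\Prl$ under limits, and objectwise detection of equivalences to handle conservativity of the projection). There is nothing to add.
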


\begin{construction}\label{Constr.: induced map into fibre product}
	To any pair $(A,\phi_A,h_A)$ and $(B,\phi_B,h_B)$ of animated $\delta$-rings we associate a map
	\begin{equation*}
		\theta(\phi_A,\phi_B) \colon \Map_{\CRan}(A,B) \to \Map_{\CRan}(A,B \times_{B \otimes^L \Fp} B)
	\end{equation*}\\
	which sends $f \colon A \to B$ in $\CRan$ to the map induced by the commutative square
	\[\begin{diag}[column sep=large]
		A \ar[d, "f \circ \phi_A"'] \ar[r, "\phi_B \circ f"] \ar[dr] & B \ar[d, "\red_B"] \\[4ex]
		B \ar[r, "\red_B"] & B \otimes^L \Fp
	\end{diag}\]
	of animated rings with diagonal given by ${\Fr} \circ {\red_B} \circ f$. Commutativity is witnessed by $h_B \circ f$ and the homotopy $\sigma_A(f)$ induced by $h_A$ together with the homotopies $\sigma_{\red}$
	and $\sigma_{\Fr}$. Explicitly, $\sigma_A(f)$ is given by the concatenation of homotopies
	\begin{equation*}
		({\Fr} \circ \sigma_{\red}(f))(\sigma_{\Fr}(f \otimes^L \id_{\Fp}) \circ \red_A)({f\otimes^L \id_{\Fp}} \circ h_A)(\sigma_{\red}(f) \circ \phi_A)
	\end{equation*}
	which witness
		\begin{align*}
		 {\red_B} \circ {f} \circ \phi_A & \simeq {f \otimes^L \id_{\Fp}} \circ {\red_A} \circ \phi_A  \\[0.5ex]
		& \simeq {f \otimes^L \id_{\Fp}} \circ {\Fr} \circ \red_A \\[0.5ex]
		& \simeq {\Fr} \circ {f \otimes^L \id_{\Fp}} \circ \red_A  \\[0.5ex]
		& \simeq {\Fr} \circ {\red_B} \circ f 
	\end{align*}
	functorially in $f$. Formally, $\theta(\phi_A,\phi_B)$ corresponds to the map associated to the commutative square
	\[\begin{diag}[column sep=2em]
		\Map_{\CRan}(A,B) \ar[r, "(\phi_B)_\ast"] \ar[d, "(\phi_A)^\ast"'] \ar[dr, "(\Fr \circ \red_B)_\ast"] & \Map_{\CRan}(A,B) \ar[d, "(\red_B)_\ast"] \\[4ex]
		\Map_{\CRan}(A,B) \ar[r, "(\red_B)_\ast"] & \Map_{\CRan}(A,B \otimes^L \Fp)
	\end{diag}\]
	of anima with homotopies $(h_B)_\ast$ and $\sigma_A$.
\end{construction}

\begin{proposition}[Mapping spaces]\label{Prop.: mapping spaces in deltapCRan}
	The space of maps from $(A,\phi_A,h_A)$ to $(B,\phi_B,h_B)$ in $\deltaCRan$ is given by the pullback
	\[\begin{diag}[column sep=-2ex]
		\Map_{\deltaCRan}((A,\phi_A,h_A),(B,\phi_B,h_B)) \ar[r] \ar[d] \ar[dr, phantom, "\lrcorner", near start]& \Map_{\CRan}(A,B) \ar[d, "\Delta_\ast"] \\[5ex]
		\Map_{\CRan}(A,B) \ar[r, "{\theta(\phi_A,\phi_B)}"] & \Map_{\CRan}(A,B \times_{B \otimes^L \Fp} B)
	\end{diag}\]
	in $\An$. Here, the fibre product $B \times_{B \otimes^L \Fp} B$ is taken over the unit $\red_B \colon B \to B \otimes^L \Fp$
	in $\CRan$ and $\theta(\phi_A,\phi_B)$ denotes the map of \cref{Constr.: induced map into fibre product}.
\end{proposition}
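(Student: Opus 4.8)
The plan is to compute the mapping space by peeling off the pullback of \cref{Def.: deltapCRan via Frobenius lifts} one layer at a time. The basic tool is the fact that mapping spaces in a pullback of $\infty$-categories are pullbacks of mapping spaces: if $\mathcal{P} = \mathcal{C} \times_{\mathcal{E}} \mathcal{D}$ then $\Map_{\mathcal{P}}(x,y)$ is the iterated pullback $\{x\} \times_{\mathcal{P}} \Fun(\Delta^1,\mathcal{P}) \times_{\mathcal{P}} \{y\}$, and since $\Fun(\Delta^1,-)$ and the two endpoint evaluations preserve pullbacks, this limit commutes with the one defining $\mathcal{P}$, yielding $\Map_{\mathcal{P}}\big((c,d),(c',d')\big) \simeq \Map_{\mathcal{C}}(c,c') \times_{\Map_{\mathcal{E}}(e,e')} \Map_{\mathcal{D}}(d,d')$. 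Applying this to $\deltaCRan = (\CRan)^{B\Nadd} \times_{(\CRFpan)^{B\Nadd}} \CRFpan$, and using $\ev \circ \Fr \simeq \id$ (\cref{Prop.: properties Frobenius}) to identify the image of $(A,\phi_A,h_A)$ in $\CRFpan$ with $A \otimes^L \Fp$ and its image in $(\CRFpan)^{B\Nadd}$ with $(A \otimes^L \Fp, \Fr)$ via $\adj(h_A)$, I get
\[
\Map_{\deltaCRan}\big((A,\phi_A,h_A),(B,\phi_B,h_B)\big) \simeq \Map_{(\CRan)^{B\Nadd}}\big((A,\phi_A),(B,\phi_B)\big) \times_{\Map_{(\CRFpan)^{B\Nadd}}\big((A \otimes^L \Fp,\Fr),(B \otimes^L \Fp,\Fr)\big)} \Map_{\CRFpan}\big(A \otimes^L \Fp, B \otimes^L \Fp\big),
\]
where the first map is induced by postcomposition with $(-) \otimes^L \Fp$ (together with the identifications $\adj(h_A)$, $\adj(h_B)$) and the second by $\Fr$.

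Next I would identify the pieces. By the standard description of functor $\infty$-categories out of $B\Nadd$, a map in $(\CRan)^{B\Nadd}$ between $(A,\phi_A)$ and $(B,\phi_B)$ is a pair $(f,\sigma)$ of a map $f \colon A \to B$ of animated rings and a homotopy $\sigma \colon f \circ \phi_A \simeq \phi_B \circ f$, and similarly for the $\Fp$-variant. The map induced by $\Fr$ sends $\bar f$ to the pair $(\bar f, \sigma_{\Fr}(\bar f))$ — using that the animated Frobenius is canonically natural — and is therefore a section of the projection $(\bar f,\bar\sigma) \mapsto \bar f$. The map induced by $(-) \otimes^L \Fp$ sends $(f,\sigma)$ to $(f \otimes^L \Fp, \tilde\sigma)$, where $\tilde\sigma$ is the homotopy $\Fr \circ (f \otimes^L \Fp) \simeq (f \otimes^L \Fp) \circ \Fr$ obtained from $\sigma \otimes^L \Fp$ by conjugation with $\adj(h_A)$ and $\adj(h_B)$.

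It then remains to assemble this pullback and to recognise it as the stated square. Since the $\Fr$-map is a section of a projection, the pullback collapses to the space of triples $(f,\sigma,\tau)$ with $\tau$ a homotopy of homotopies between $\tilde\sigma$ and $\sigma_{\Fr}(f \otimes^L \Fp)$; passing through the adjunction between $(-) \otimes^L \Fp$ and the forgetful functor and the naturality homotopy $\sigma_{\red}$ (\cref{Prop.: Properties of animated rings}), these triples are exactly the data of a morphism of animated $\delta$-rings as recorded in \cref{Rmk.: objects in map deltaCRan} — a map $f$, a homotopy $\sigma \colon f \circ \phi_A \simeq \phi_B \circ f$, and a $2$-homotopy relating $\red_B \circ \sigma$ to the concatenation $\sigma_A(f)$ (corrected by $h_B$). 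On the other hand, unwinding the pullback in the statement using that $\Map_{\CRan}(A,-)$ preserves limits gives $\Map_{\CRan}(A, B \times_{B \otimes^L \Fp} B) \simeq \Map_{\CRan}(A,B) \times_{\Map_{\CRan}(A, B \otimes^L \Fp)} \Map_{\CRan}(A,B)$, and then the left-hand copy of $\Map_{\CRan}(A,B)$ contributes $f$, the pullback condition against $\Delta_\ast$ identifies the right-hand copy's value with $\phi_B \circ f$, the remaining homotopy data with $\sigma$, and the remaining $2$-cell with $\tau$ — all of which is exactly the content of the square defining $\theta(\phi_A,\phi_B)$ in \cref{Constr.: induced map into fibre product}.

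The only genuine work is the homotopy-coherent bookkeeping in the middle: threading $\sigma_{\red}$, $\sigma_{\Fr}$, $h_A$ and $h_B$ through the conjugations above, and ensuring throughout that $\adj(h_A)$ is treated as an equivalence in $(\CRFpan)^{B\Nadd}$ — i.e. compatibly with the Frobenius endomorphism rather than merely on underlying animated rings. This is precisely the coherence that \cref{Constr.: induced map into fibre product} already organises, so granting that construction the proposition follows by matching definitions.
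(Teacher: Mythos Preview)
Your proposal is correct and follows essentially the same route as the paper: both start from the defining pullback square for $\deltaCRan$, express the mapping spaces in the functor categories $(\CRan)^{B\Nadd}$ and $(\CRFpan)^{B\Nadd}$ as equalizer-type pullbacks, and then simplify using $\ev \circ \Fr \simeq \id$ together with the $(-)\otimes^L\Fp \dashv \text{forget}$ adjunction. The paper carries out your ``collapse to triples $(f,\sigma,\tau)$'' step by an explicit sequence of $3\times 3$ diagram manipulations (expanding the $\Fr$-column and then contracting horizontal pullbacks), whereas you argue more informally via the section property of $\Fr$; the mathematical content is the same and the bookkeeping you flag as ``the only genuine work'' is exactly what the paper's diagram chase makes precise.
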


\begin{proof}
	By \cref{Def.: deltapCRan via Frobenius lifts}, the space of maps from $(A,\phi_A,h_A)$ to $(B,\phi_B,h_B)$ sits in a pullback square
	\begin{equation}\label{Diag.: mapping space of animated delta rings}
	\begin{diag}
		\Map_{\deltaCRan}((A,\phi_A,h_A),(B,\phi_B,h_B)) \ar[r] \ar[d] &[-3ex] \Map_{\CRFpan}(A \otimes^L \Fp, B \otimes^L \Fp) \ar[d, "\Fr"] \\[4ex]
		\Map_{(\CRan)^{B\Nadd}}\bigbrace{(A,\phi_A),(B,\phi_B)} \ar[r] & \Map_{(\CRFpan)^{B\Nadd}}\bigbrace{(A \otimes^L \Fp,\Fr),(B \otimes^L \Fp,\Fr)}
	\end{diag}
	\end{equation}
	of anima. Here, the lower horizontal map is induced by $(-)\otimes^L \Fp$ together with the homotopies adjoint to $h_A$ and $h_B$, see \cref{Rmk.: adjoint homotopies}.
	We abbreviate
	\begin{equation*}
		X = \Map_{\CRan}(A,B) \quad \text{and} \quad Y = \Map_{\CRFpan}(A \otimes^L \Fp, B \otimes^L \Fp)
	\end{equation*}
	in the following.	Note that the mapping spaces in the functor categories are given by the pullbacks of the diagrams
	\begin{align*}
		\Map_{\CRan}(A,B) \xrightarrow{(\phi_A^\ast,\phi_{B\ast})} &X \times X \xleftarrow{\;\Delta\;} \Map_{\CRan}(A,B) \\[1.2ex]
		\Map_{\CRFpan}(A \otimes^L \Fp,B \otimes^L \Fp) \xrightarrow{(\Fr^\ast,\Fr_\ast)} &Y \times Y \xleftarrow{\;\Delta\;} \Map_{\CRFpan}(A \otimes^L \Fp,B \otimes^L \Fp).
	\end{align*}
	Inserting this in \cref{Diag.: mapping space of animated delta rings} identifies the mapping space in $\deltaCRan$ with the limit of the diagram
		\begin{equation}\label{Diag.: mapping space deltaCRan expanded}
	\begin{diag}[column sep=3.5em]
		X \ar[d, "{(\phi_A^\ast, \phi_{B \ast})}"'] \ar[r] & Y \ar[d, "{(\Fr^\ast, \,\Fr_\ast)}"'] &[3ex] Y \ar[l, "\sim"'] \ar[d, "\sim"'{anchor=north, rotate=90}] \\[6ex]
		X \times X \ar[r]	& Y \times Y & Y \ar[l, "\Delta \circ \Fr^\ast"'] \\[6ex]
		X \ar[r] \ar[u, "\Delta"] & Y \ar[u, "\Delta"] & Y \ar[u, "\sim"'{anchor=north, rotate=90}] \ar[l, "\Fr^\ast"']
	\end{diag}
	\end{equation}
	of anima. Here, all horizontal maps in the left hand column are induced by $(-) \otimes^L \Fp \colon X \to Y$. We expand the right hand column in \cref{Diag.: mapping space deltaCRan expanded}
	\begin{equation}\label{Diag.: mapping space deltaCRan expanded twice}
	\begin{diag}[column sep=3.5em]
		X \ar[d, "{(\phi_A^\ast, \phi_{B \ast})}"'] \ar[r] & Y \ar[d, "{(\Fr^\ast, \,\Fr_\ast)}"'] &[2ex] Y \ar[l, "\sim"'] \ar[d, "\Fr^\ast"] &[4ex] Y \ar[l, "\sim"'] \ar[d, "\sim"'{anchor=north, rotate=90}] \\[6ex]
		X \times X \ar[r]	& Y \times Y & Y \ar[l, "\Delta"'] & Y \ar[l, "\Fr^\ast"'] \\[6ex]
		X \ar[r] \ar[u, "\Delta"] & Y \ar[u, "\Delta"] & Y \ar[u, "\sim"'{anchor=north, rotate=90}] \ar[l, "\sim"'] & Y \ar[l, "\Fr^\ast"'] \ar[u, "\sim" {anchor=north, rotate=90}]
	\end{diag}
	\end{equation}
	and contract the horizontal pullbacks on the left to obtain the diagram
	\begin{equation}\label{Diag.: mapping space deltaCRan contracted once}
	\begin{diag}[column sep=3.5em]
		X \ar[d, "{\theta'(\phi_A,\phi_B)}"'] \ar[r] & Y \ar[d, "\Fr^\ast"'] &[4ex] Y \ar[l, "\sim"'] \ar[d, "\sim"'{anchor=north, rotate=90}] \\[6ex]
		X \times_Y X \ar[r] & Y & Y \ar[l, "\Fr^\ast"'] \\[6ex]
		X \ar[r] \ar[u, "\Delta"] & Y \ar[u, "\sim" {anchor=north, rotate=90}] & Y \ar[u, "\sim"'{anchor=north, rotate=90}] \ar[l, "\Fr^\ast"']
	\end{diag}
	\end{equation}
	of anima. Here, $\theta'(\phi_A,\phi_B)$ denotes the map induced by the commutative square
	\[\begin{diag}[column sep=5em]
		X \ar[r, "(\phi_B)_\ast"] \ar[d, "(\phi_A)^\ast"'] \ar[dr] & X \ar[d, "(-) \otimes^L \Fp"] \\[6ex]
		X \ar[r, "(-) \otimes^L \Fp"] & Y
	\end{diag}\]
	with diagonal given by ${\Fr_\ast} \circ (-) \otimes^L \Fp$. Commutativity is witnessed by the homotopies adjoint to $h_A$ and $h_B$ as well as $\sigma_{\Fr}$. The claim follows from observing that $\theta'(\phi_A,\phi_B)$ is equivalent to 
	$\theta(\phi_A,\phi_B)$ by adjunction and that the limit of \cref{Diag.: mapping space deltaCRan contracted once} is equivalent to the vertical pullback on its outermost left side.
\end{proof}

\begin{remark}\label{Rmk.: objects in map deltaCRan}
	Unwinding \cref{Prop.: mapping spaces in deltapCRan} shows that maps between the pair of animated $\delta$-rings $(A,\phi_A,h_A)$ to $(B,\phi_B,h_B)$ correspond to triples $(f,\sigma,\tau)$ which consist of
	\begin{enumerate}[itemsep=1.5ex, topsep=2ex]
	\item a map $f\colon A \to B$ between the underlying animated rings
	\item a homotopy $\sigma \colon f \circ \phi_A \simeq \phi_B \circ f$ in $\Map_{\CRan}(A,B)$
	\item a 2-homotopy $\tau \colon {\red_B} \circ \sigma \simeq (h_B \circ f)\sigma_A(f)$ in the space $\Map_{\CRan}(A,B \otimes^L \Fp)$
		\[\begin{diag}
			({\red_B} \circ {f} \circ \phi_A) \ar[r, bend left=25, ""{name=U, below}, "{\red_B} \circ \sigma"] \ar[r, bend right=25, ""{name=D}, "(h_B \circ f)\sigma_A(f)"'] & ({\red_B} \circ {\phi_B} \circ f)
			\ar[Rightarrow, from=U, to=D, shorten=2mm, "\tau", outer sep=0.8mm]
		\end{diag}\]
	between the mod $p$ reduction of $\sigma$ and the homotopy induced by $h_A$ and $h_B$, see \cref{Constr.: induced map into fibre product}.
	\end{enumerate}
\end{remark}

\newpage
\section{The $\delta$-map}\label{Sect.: The delta map}

To any animated ring $A$ one can associate an anima defined by $[A] = A(\Z[X]) \in \An$. We stick to the notation in \cite{BL} and refer to $[A]$ as the underlying anima of $A$. Equivalently, this is given by the space
$\Map_{\CRan}(\Z[X],A)$. It follows from the definition of animated rings that the assignment $A \mapsto [A]$ defines a functor
\begin{equation*}
	[-] \colon \CRan \to \An
\end{equation*}
which is conservative and preserves sifted colimits as well as small limits.

\begin{proposition}\label{Prop.: underlying anima is a commutative group object}
	The functor $[-] \colon \CRan \to \An$ factors through $\ev_{\langle 1 \rangle} \colon \CGrp(\An) \to \An$.
\end{proposition}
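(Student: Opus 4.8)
The plan is to exhibit the functor $[-]\colon \CRan \to \An$ as the restriction along $\ev_{\langle 1\rangle}$ of a functor landing in $\CGrp(\An)$, using the fact that $\CRan$ is generated under sifted colimits by $\Poly$ and that the target $\CGrp(\An) = \Fun^{\prod}(\Fin_\ast, \An)$ is closed under sifted colimits inside $\Fun(\Fin_\ast, \An)$. Concretely, I would first construct the lift on the generating subcategory: for a finitely generated polynomial $\Z$-algebra $R = \Z[x_1,\dots,x_n]$, the set $\Hom_{\CR}(\Z[X],R) = R$ carries the structure of (the underlying set of) a commutative ring, hence in particular an abelian group under addition, and this is visibly functorial in $R \in \Poly$. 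Packaging the additive group structure as a product-preserving functor $\Fin_\ast \to \Set \subseteq \An$ gives a functor $\Poly \to \CGrp(\An)$ whose composite with $\ev_{\langle 1\rangle}$ is $j^\ast[-]$, i.e. $R \mapsto R$ viewed as an anima.

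Next I would left Kan extend along $j\colon \Poly \to \CRan$. By \cref{Prop.: Properties of animated rings}(2), left Kan extension produces a sifted-colimit-preserving functor $\widetilde{[-]}\colon \CRan \to \CGrp(\An)$ — here one uses that $\CGrp(\An)$ admits sifted colimits (it is presentable, being the $\infty$-category of $\mathbf{E}_\infty$-groups in anima), so the universal property of \cite[5.5.8.15]{HTT} applies. The composite $\ev_{\langle 1\rangle} \circ \widetilde{[-]}\colon \CRan \to \An$ then preserves sifted colimits (since $\ev_{\langle 1\rangle}$, being evaluation of a functor out of $\Fin_\ast$, preserves all limits and colimits) and agrees with $[-]$ after restriction along $j$; by the uniqueness part of \cref{Prop.: Properties of animated rings}(2) applied to functors into $\An$, we conclude $\ev_{\langle 1\rangle} \circ \widetilde{[-]} \simeq [-]$, which is exactly the asserted factorisation.

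The main point to be careful about is that the lift $\Poly \to \CGrp(\An)$ genuinely extends by left Kan extension to a functor still landing in $\CGrp(\An)$ — that is, that sifted colimits computed in $\CRan$ are sent by $\widetilde{[-]}$ to the correct thing and that $\CGrp(\An) \hookrightarrow \Fun(\Fin_\ast,\An)$ detects sifted colimits, so that the Kan extension formula for $[-]$ is compatible with the group structure levelwise. This is where one invokes that $\CRan$ is generated under sifted colimits from $\Poly$ (\cref{Prop.: Properties of animated rings}(3)) and that both $\ev_{\langle 1\rangle}$ and the inclusion of commutative group objects commute with these colimits; no obstruction arises because the relevant diagrams are sifted and the addition maps assemble coherently from the polynomial-algebra level. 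An alternative, perhaps cleaner, route avoiding Kan extension bookkeeping: observe directly that for any $A \in \CRan$ the space $[A] = \Map_{\CRan}(\Z[X], A)$ is a module over the commutative monoid object $\Z[X]$ in $\An$ in the appropriate sense — but the Kan extension argument is the one I would write up, as it fits the framework already set up in \cref{Prop.: Properties of animated rings}.
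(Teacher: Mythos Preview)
Your argument is correct, modulo a small imprecision: you claim $\ev_{\langle 1\rangle}\colon \CGrp(\An)\to\An$ preserves \emph{all} colimits because it is an evaluation functor, but $\CGrp(\An)$ is only a full subcategory of $\Fun(\Fin_\ast,\An)$, and general colimits (e.g.\ finite coproducts) are not computed pointwise there. What you actually need, and what is true, is that $\ev_{\langle 1\rangle}$ preserves \emph{sifted} colimits, since those commute with finite products in $\An$ and are therefore pointwise in $\CGrp(\An)$. With that correction the Kan-extension argument goes through.

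The paper takes a more direct route that avoids Kan extensions entirely. It observes that $\Z[X]$ extends to a commutative group object $X\colon \Fin_\ast \to \Poly^{\op}$, sending $\langle n\rangle$ to $\Z[X_1,\dots,X_n]$ and a pointed map $\alpha\colon\langle m\rangle\to\langle n\rangle$ to the ring map $X_i\mapsto\sum_{j\in\alpha^{-1}(i)}Y_j$. Since any $A\in\CRan$ is by definition a product-preserving functor $\Poly^{\op}\to\An$, the composite $A\circ X$ is immediately a commutative group object in $\An$ with $(A\circ X)(\langle 1\rangle)=A(\Z[X])=[A]$, and the assignment $A\mapsto A\circ X$ is manifestly functorial. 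This gives the lift for every animated ring in one stroke, without any colimit bookkeeping. Your approach has the virtue of being a general recipe (lift on generators, Kan-extend), but the paper's is shorter and exploits the specific model of $\CRan$ as product-preserving presheaves.
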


\begin{proof}
	The polynomial ring $\Z[X]$ extends to a commutative group object $X$ in $\Poly^{\op}$
	\[\begin{diag}[column sep=small]
		\Fin_\ast \ar[rr, "X"] && \Poly^{\op} \\
		& \ast \ar[ur, "{\Z[X]}"'] \ar[ul, "\langle 1 \rangle"] &
	\end{diag}\]
	which sends a pointed map $\alpha \colon \langle m \rangle \to \langle n \rangle$ to the map $\Z[X_1,\ldots,X_n] \to \Z[Y_1,\ldots,Y_m]$ of $\Z$-algebras defined by $X_i \mapsto \sum_{j \in \alpha^{-1}(i)} Y_j$ for all $1 \leq i \leq n$.
	As animated rings are given by finite product preserving functors from $\Poly^{\op}$ to anima, the assignment $A \mapsto A \circ X$ defines a functor $\CRan \to \CGrp(\An)$ which satisfies
	$(A \circ X)(\langle 1 \rangle) = A(\Z[X])$.
\end{proof}

\begin{remark}\label{Rmk.: Underlying spectrum}
	Alternatively, \cref{Prop.: underlying anima is a commutative group object} follows from a more general factorisation of the underlying anima functor as detailed in the proof of \cite[25.1.2.2]{SAG}: Denote by
	$\theta \colon \CRan \to \CAlg_{\Z}^{\cn}$ the unique sifted colimit preserving functor which restricts on polynomial $\Z$-algebras to the inclusion into $\mathbf{E}_\infty$-algebras over $\Z$.
	Further, let $\phi \colon \CAlg_{\Z}^{\cn} \to \An$ denote the composite
	\begin{equation*}
		\CAlg_{\Z}^{\cn} \simeq \CAlg(\Mod_{\Z}^{\cn}) \to \Mod_{\Z}^{\cn} \simeq \DZ_{\geq 0} \to \Sp^{\cn} \xrightarrow{\Omega^\infty} \An
	\end{equation*}
	where the first two are forgetful functors. Arguing that the functor $\phi \circ \theta$ preserves sifted colimits and agrees with $[-]$ on polynomial algebras over $\Z$ yields an equivalence $[-] \simeq \phi \circ \theta$.
	%Use $\Sp^{\cn} \simeq \CGrp(\An)$, compatible with $\Omega^\infty$ and $\ev_{\langle 1 \range}$.
\end{remark}

\begin{notation}\label{Notation: underlying comm group functor}
	We write $[-]_{\mathbf{E}_\infty} \colon \CRan \to \Sp_{\geq 0} \simeq \CGrp(\An)$ for the functor of \cref{Prop.: underlying anima is a commutative group object}. Since we have an equivalence
	$[-] \simeq \Omega^\infty \circ [-]_{\mathbf{E}_\infty}$ of functors $\CRan \to \An$ it follows that $[-]_{\mathbf{E}_\infty}$ is conservative and preserves small limits as well as sifted colimits.
\end{notation}

\begin{corollary}\label{Cor.: mapping spaces are group objects}
	For any animated ring $A$ and any anima $X$ the space $\Map_{\An}(X,[A])$ lies in the image of the functor $\ev_{\langle 1 \rangle} \colon \CGrp(\An) \to \An$.
\end{corollary}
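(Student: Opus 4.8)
The plan is to use the factorisation $[A] \simeq \ev_{\langle 1 \rangle}\bigbrace{[A]_{\mathbf{E}_\infty}}$ of \cref{Notation: underlying comm group functor} together with the fact that $\Map_{\An}(X,-)$ is the power (cotensor) by $X$, and that powers of commutative group objects are again commutative group objects, computed on underlying anima by the same formula.

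First I would recall that $\CGrp(\An)$ is complete: it is the full subcategory of $\Fun(\Fin_\ast,\An)$ cut out by the Segal condition and the invertibility of the shear map, both of which are stable under limits, so $\CGrp(\An)\subseteq\Fun(\Fin_\ast,\An)$ is closed under all limits and these are computed pointwise. In particular the evaluation functor $\ev_{\langle 1 \rangle}\colon \CGrp(\An)\to\An$ preserves all limits. Next, for a fixed anima $X$ I would form the power $[A]_{\mathbf{E}_\infty}^{X}:=\lim_{X}[A]_{\mathbf{E}_\infty}\in\CGrp(\An)$, the limit of the constant $X$-indexed diagram on $[A]_{\mathbf{E}_\infty}$. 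Since $\ev_{\langle 1 \rangle}$ preserves limits and the power of an anima $Z$ by $X$ is just $\Map_{\An}(X,Z)$, we obtain
\begin{equation*}
	\ev_{\langle 1 \rangle}\bigbrace{[A]_{\mathbf{E}_\infty}^{X}}\simeq \lim_{X}\ev_{\langle 1 \rangle}\bigbrace{[A]_{\mathbf{E}_\infty}}\simeq \lim_{X}[A]\simeq \Map_{\An}(X,[A]).
\end{equation*}
Hence $\Map_{\An}(X,[A])$ is the image under $\ev_{\langle 1 \rangle}$ of the commutative group object $[A]_{\mathbf{E}_\infty}^{X}$, which is the assertion.

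The only (minor) thing to pin down is the bookkeeping above: that limits in $\CGrp(\An)$ are pointwise and hence preserved by $\ev_{\langle 1 \rangle}$, and that the power of an anima by $X$ is the mapping anima $\Map_{\An}(X,-)$. A slightly more hands-on alternative, which avoids mentioning cotensors, is to note that $\Map_{\An}(X,-)\colon\An\to\An$ preserves limits, in particular finite products and equivalences, so postcomposition sends the commutative group object $[A]_{\mathbf{E}_\infty}\colon\Fin_\ast\to\An$ to a functor $\Map_{\An}(X,-)\circ[A]_{\mathbf{E}_\infty}\colon\Fin_\ast\to\An$ which again satisfies the Segal and invertibility conditions, hence defines an object of $\CGrp(\An)$ whose value at $\langle 1 \rangle$ is $\Map_{\An}(X,[A])$.
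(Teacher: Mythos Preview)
The proposal is correct and takes essentially the same approach as the paper: your ``hands-on alternative'' of postcomposing the commutative group object $[A]_{\mathbf{E}_\infty}\colon\Fin_\ast\to\An$ with the limit-preserving functor $\Map_{\An}(X,-)$ is exactly the paper's argument, and your first formulation via powers in $\CGrp(\An)$ is just a repackaging of the same idea (since limits in $\CGrp(\An)$ are computed pointwise).
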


\begin{proof}
	This follows from observing that $\Map_{\An}(X,[A])$ equals the value of the composite functor
	\begin{equation*}
		\CRan \xrightarrow{[-]} \CGrp(\An) \xrightarrow{\Map_{\An}(X,-)_\ast} \CGrp(\An) \xrightarrow{\ev_{\langle1 \rangle}} \An
	\end{equation*}
	on $A \in \CRan$. Here, we use that $\Map_{\An}(X,-) \colon \An \to \An$ preserves small limits.
\end{proof}

\begin{notation}\label{Not.: nat transformation by endo}
	Any ring endomorphism $f \colon \Z[X] \to \Z[X]$ induces a natural transformation of the functor $[-] \colon \CRan \to \An$ by precomposition. Given any $B \in \CRan$, we will denote the respective
	component of the transformation by
	\begin{equation*}
		[B] \xrightarrow{f(X)_B} [B].
	\end{equation*}
	In the special case of the endomorphism of $\Z[X]$ defined by $X \mapsto pX$ we simply write $[B] \xrightarrow{\cdot p} [B]$. Note that this lifts to the underlying
	chain complex of $B$.
\end{notation}

\begin{proposition}\label{Prop.: fibre sequence on anima}
	For any animated ring $B$ there exists a fibre sequence
	\[\begin{diag}[column sep=3em]
		{[B]} \ar[d] \ar[r, "\cdot p"] & {[B]} \ar[d, "{[\red_B]}"] \\[4ex]
		\ast \ar[r, "0"] & {[B \otimes^L \Fp]}
	\end{diag}\]
	in $\An$ which is natural in $B$. Here, we denote by $0$ the map adjoint to $\Z \xrightarrow{0} B \otimes^L \Fp$ in $\DZ_{\geq 0}$. We write $\sigma_{\fib}^B$ for the null-homotopy witnessing $[\red_B] \circ (\cdot p) \simeq 0$.
\end{proposition}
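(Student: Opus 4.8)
The plan is to pass to underlying chain complexes, where the asserted fibre sequence becomes the tautological cofibre sequence $M \xrightarrow{\cdot p} M \to M \otimes^L_\Z \Fp$ in $\DZ$.

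Recall from \cref{Rmk.: Underlying spectrum} the underlying chain complex functor
\begin{equation*}
	M_{(-)} \colon \CRan \xrightarrow{\theta} \CAlg_\Z^{\cn} \simeq \CAlg(\Mod_\Z^{\cn}) \longrightarrow \Mod_\Z^{\cn} \simeq \DZ_{\geq 0},
\end{equation*}
together with the factorisation $[-] \simeq U \circ M_{(-)}$ for the limit-preserving functor $U \colon \DZ_{\geq 0} \simeq \Mod_\Z^{\cn} \to \Sp^{\cn} \xrightarrow{\Omega^\infty} \An$. I would first record three identifications, each of which is pinned down by restriction to $\Poly$ via the equivalence $j^\ast \colon \Fun^\Sigma(\CRan,\D) \xrightarrow{\sim} \Fun(\Poly,\D)$ of \cref{Prop.: Properties of animated rings}(2):
\begin{enumerate}
\item a natural equivalence $M_{(-) \otimes^L \Fp} \simeq M_{(-)} \otimes^L_\Z \Fp$, since both sides preserve sifted colimits and restrict to $\Z[x_1,\dots,x_n] \mapsto \Fp[x_1,\dots,x_n]$ on $\Poly$;
\item under (1), the map induced on underlying chain complexes by the unit $\red$ corresponds to the canonical map $\can \colon M_{(-)} \to M_{(-)} \otimes^L_\Z \Fp$, as both restrict on $\Poly$ to the reduction $\Z[x_1,\dots,x_n] \to \Fp[x_1,\dots,x_n]$;
\item the transformation $\cdot p \colon [-] \to [-]$ lifts, as already noted in \cref{Not.: nat transformation by endo}, to the underlying chain complex functor, and this lift is multiplication by $p$ on $M_{(-)}$, as both $\cdot p$ and multiplication by $p$ restrict on $\Poly$ to the endomorphism $r \mapsto p r$ of $\Z[x_1,\dots,x_n]$.
\end{enumerate}

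Granting these, the square in the statement is the image under $U$ of the commutative square of functors $\CRan \to \DZ_{\geq 0}$
\[\begin{diag}[column sep=3em]
	M_{(-)} \ar[d] \ar[r, "\cdot p"] & M_{(-)} \ar[d, "\can"] \\[3ex]
	0 \ar[r] & M_{(-)} \otimes^L_\Z \Fp
\end{diag}\]
exhibiting $M_{(-)} \otimes^L_\Z \Fp$ as the cofibre of $\cdot p$, which carries its canonical null-homotopy of $\can \circ (\cdot p)$ coming from the identification $\Fp \simeq \operatorname{cofib}(\Z \xrightarrow{p} \Z)$; the image of this null-homotopy is $\sigma_{\fib}^B$. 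It remains to check that the displayed square is pointwise a pullback. Fixing $B$, the sequence $M_B \xrightarrow{\cdot p} M_B \xrightarrow{\can} M_B \otimes^L_\Z \Fp$ is a cofibre sequence in the stable $\infty$-category $\DZ$, hence a fibre sequence, so the square is a pullback in $\DZ$ with cone point $M_B$. Since $M_B$ is connective and limits in $\DZ_{\geq 0}$ are computed by applying $\tau_{\geq 0}$ to limits taken in $\DZ$, this is also a pullback in $\DZ_{\geq 0}$. Applying the limit-preserving functor $U$ yields the asserted pullback square in $\An$, and naturality in $B$ is clear from the construction.

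The only genuine content is identification (1): the fact that the animated base change $(-) \otimes^L \Fp$ restricts, on underlying chain complexes, to the ordinary derived reduction $(-) \otimes^L_\Z \Fp$. Once this is in place --- together with the routine bookkeeping of (2) and (3), which merely rename the maps $[\red_B]$ and $\cdot p$ --- the statement is a formal consequence of the stability of $\DZ$, with the passage from $\DZ$ to $\DZ_{\geq 0}$ handled by the connectivity of the cone point $M_B$.
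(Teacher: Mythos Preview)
Your proof is correct and follows essentially the same route as the paper: factor $[-]$ through $\DZ_{\geq 0}$ via \cref{Rmk.: Underlying spectrum}, invoke the cofibre/fibre sequence $M \xrightarrow{\cdot p} M \to M \otimes^L_\Z \Fp$ in $\DZ$ (the paper isolates this as \cref{Lemma: mod p fibre sequence}), and use connectivity of the cone point to descend to $\DZ_{\geq 0}$ and then to $\An$. The paper's proof is a two-line citation of these ingredients, whereas you spell out the identifications (1)--(3) that the paper leaves implicit; your added bookkeeping is accurate and does not deviate from the intended argument.
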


\begin{proof}
	This follows from \cref{Lemma: mod p fibre sequence} together with the fact that the functor $[-]$ factors through $\DZ_{\geq 0}$ by \cref{Rmk.: Underlying spectrum}.
\end{proof}

\begin{construction}[Dividing through $p$]\label{Constr.: diffp}
	We define a natural transformation
	\begin{equation*}
		\textstyle \diffp \colon [(-) \times_{(-) \otimes^L \Fp} (-)] \to [-]
	\end{equation*}
	of functors $\CRan \to \An$ whose component associated to an animated ring $B$ is given by the limit of the commutative diagram
	\[\begin{diag}
		{[B]} \times {[B]} \ar[d, "{[\red_B] \times [\red_B]}"] \ar[r, "-"] & {[B]} \ar[d, "{[\red_B]}"] \\[3ex]
		{[B \otimes^L \Fp]} \times {[B \otimes^L \Fp]} \ar[r, "-"] & {[B \otimes^L \Fp]} \\[3ex]
		{[B \otimes^L \Fp]} \ar[u, "\Delta"'] \ar[r] & \ast \ar[u, "0"']
	\end{diag}\]
	in $\An$. Here, we identify $[B] \simeq \fib([\red_B])$ functorially in $B$ by \cref{Prop.: fibre sequence on anima} and denote by $-$ the map provided by
	the $\mathbf{E}_\infty$-group structure on $[B]$. Informally, one may think of the map
	\begin{equation*}
		\textstyle (\diffp)_B \colon [B] \times_{[B \otimes^L \Fp]} [B] \to [B]
	\end{equation*}
	as assigning to the element $(x,y,\sigma)$ with $\sigma \colon [\red_B] \circ x \simeq [\red_B] \circ y$ in $\Map_{\An}(\ast,[B \otimes^L \Fp])$ the element $\frac{x-y}{p}$ in the fibre.
\end{construction}

\begin{lemma}\label{Cor.: pb square underlying spaces}
	For any animated ring $B$, we have a pullback square
	\[\begin{diag}
		{[B]} \ar[r, "\Delta"] \ar[d] \ar[dr, phantom, "\lrcorner", near start] &[-1ex] {[B]} \times_{[B \otimes^L \Fp]} {[B]} \ar[d, "(\diffp)_B"] \\[3ex]
		\ast \ar[r, "0"] & {[B]}
	\end{diag}\]
	in $\An$ which is natural in $B$. Here, $\Delta$ is induced by the diagonal.
\end{lemma}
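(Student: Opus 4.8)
The plan is to compute everything after passing to underlying $\mathbf{E}_\infty$-groups, where the pullback $[B] \times_{[B \otimes^L \Fp]} [B]$ acquires a splitting that turns $(\diffp)_B$ into a projection, and $\Delta$ into the complementary coprojection.

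First I would reduce to $\Sp_{\geq 0}$. By \cref{Prop.: underlying anima is a commutative group object} and \cref{Notation: underlying comm group functor} the functor $[-]$ is $\Omega^\infty \circ [-]_{\mathbf{E}_\infty}$ for $[-]_{\mathbf{E}_\infty} \colon \CRan \to \Sp_{\geq 0} \simeq \CGrp(\An)$, and the diagonal, the basepoint $0$ and the transformation $\diffp$ of \cref{Constr.: diffp} are built entirely from the $\mathbf{E}_\infty$-group structure together with the fibre sequence of \cref{Prop.: fibre sequence on anima}, hence refine to maps in $\Sp_{\geq 0}$. Since $\Omega^\infty \colon \Sp_{\geq 0} \to \An$ preserves limits, it suffices to show the square becomes a pullback in $\Sp_{\geq 0}$ after applying $[-]_{\mathbf{E}_\infty}$. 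Abbreviate $M = [B]_{\mathbf{E}_\infty}$, $N = [B \otimes^L \Fp]_{\mathbf{E}_\infty}$ and $r = [\red_B]$; by \cref{Prop.: fibre sequence on anima} we have a natural fibre sequence $M \xrightarrow{\cdot p} M \xrightarrow{r} N$ in $\Sp_{\geq 0}$, and in particular an equivalence $\cdot p \colon M \xrightarrow{\sim} \fib(r)$.

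The heart of the argument is the identification of the pullback. Writing $\nabla \colon M \oplus M \to M$ for the difference map $(a_1,a_2) \mapsto a_1 - a_2$, we have $M \times_N M = \fib(r \circ \nabla)$, since $r \circ \nabla$ is the map $(a_1,a_2) \mapsto r a_1 - r a_2$. The shear automorphism $\Phi \colon M \oplus M \xrightarrow{\sim} M \oplus M$, $(a_1,a_2) \mapsto (a_1, a_1 - a_2)$, satisfies $\nabla \circ \Phi = \pr_2$, so it carries $\fib(r \circ \pr_2)$ onto $\fib(r \circ \nabla) = M \times_N M$. As $\pr_2$ is split surjective with $\fib(\pr_2) \simeq M$ and $\cdot p \colon M \xrightarrow{\sim} \fib(r)$, the split fibre sequence $\fib(\pr_2) \to \fib(r \circ \pr_2) \to \fib(r)$ gives $\fib(r \circ \pr_2) \simeq M \oplus M$; composing yields a natural equivalence $\Psi \colon M \oplus M \xrightarrow{\sim} M \times_N M$, explicitly $\Psi(a,c) = (a, a - pc)$. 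Now I would check, by unwinding \cref{Constr.: diffp}, that $(\diffp)_B \circ \Psi \simeq \pr_2$ — the construction of $\diffp$ divides $a_1 - a_2$ by $p$ along the equivalence $\cdot p$, and $\Psi(a,c)$ is the pair with difference $pc$ — and that $\Psi^{-1} \circ \Delta \simeq \iota_1$, since the diagonal lands in the locus where $a_1 - a_2 \simeq 0$. Given these, the square in question is carried by $\Psi$ to the tautological pullback square exhibiting $M = \fib(\pr_2 \colon M \oplus M \to M)$ via $\iota_1$. Applying $\Omega^\infty$ finishes the proof, and naturality in $B$ is automatic because $\Psi$, $\Phi$ and all the structure maps are natural and $(\diffp)_B$ is a natural transformation by \cref{Constr.: diffp}.

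The step I expect to be the actual obstacle is the last verification: matching the limit-theoretic definition of $(\diffp)_B$ in \cref{Constr.: diffp}, and the canonical identification $M \simeq \fib(r)$ of \cref{Prop.: fibre sequence on anima}, with the second-summand projection under $\Psi$ at the level of homotopy-coherent data — that is, tracking the null-homotopy $\sigma_{\fib}^B$ and the coherences built into the limit defining $\diffp$ through the shear $\Phi$ and the splitting of $r \circ \pr_2$, so that the equivalences $(\diffp)_B \circ \Psi \simeq \pr_2$ and $\Psi^{-1} \circ \Delta \simeq \iota_1$ hold with the actual data rather than merely up to some abstract equivalence. Everything else is formal manipulation of split fibre sequences in the additive $\infty$-category $\Sp_{\geq 0}$.
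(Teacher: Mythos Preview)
Your argument is correct, but it is considerably more elaborate than the paper's. The paper's proof is a one-liner: by \cref{Constr.: diffp}, the map $(\diffp)_B$ is defined as the map induced on limits of two cospans by a natural transformation whose top component is the subtraction $- \colon [B] \times [B] \to [B]$. Since fibres commute with limits, the fibre of $(\diffp)_B$ over $0$ is the limit of the fibres of the three horizontal maps in that construction; the fibre of subtraction over $0$ is $\Delta \colon [B] \hookrightarrow [B] \times [B]$, and the remaining rows contribute trivially, so one reads off $\fib\bigl((\diffp)_B\bigr) \simeq [B]$ with inclusion given by $\Delta$. This completely sidesteps the shear automorphism, the explicit splitting $\Psi$, and the coherence verification you correctly identify as the delicate point of your approach --- the paper never leaves the limit-theoretic description of $(\diffp)_B$, so there is nothing to match against explicit coordinates. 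Your route does give a concrete additive picture of the pullback and of $(\diffp)_B$ as a projection, which is pleasant, but it purchases nothing additional for this particular lemma.
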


\begin{proof}
	This follows from the definition of $\diffp$ together with the fact that $[B] \xrightarrow{\Delta} [B] \times [B] \xrightarrow{-} [B]$ is a fibre sequence in $\An$.
\end{proof}

\begin{remark}
	The assertions of \cref{Prop.: fibre sequence on anima,Cor.: pb square underlying spaces} hold true on the level of $\DZ_{\geq 0}$. In particular, the natural transformation of \cref{Constr.: diffp} exists as a map of chain complexes.
\end{remark}

\begin{proposition}\label{Prop.: Frobenius homotopy on underlying spaces}
	For any animated ring $B$ there exists a homotopy
	\[\begin{diag}[column sep=2em]
		& |[alias=Z]| {[B]} \ar[dr, "{[\red_B]}"]  & \\[3ex]
		 {[B]} \ar[ur, "X_B^p"] \ar[rr, "{[\Fr \circ \red_B]}"', ""{name=U, above}]  \ar[r, from=Z, to=U, phantom, "\sigma_{\can}^B" {font=\footnotesize, yshift=-0.9ex, xshift=0.5ex}] &&
		 {[B \otimes^L \Fp]}
	\end{diag}\]
	in $\An$ which is functorial in $B$. See \cref{Not.: nat transformation by endo} for the definition of $X_B^p$.
\end{proposition}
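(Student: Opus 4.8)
The plan is to reduce the construction of $\sigma_{\can}^B$ to the subcategory $\Poly$ of finitely generated polynomial $\Z$-algebras, where the two natural transformations in sight will turn out to agree on the nose. First I would observe that $[\red_{(-)}] \circ X_{(-)}^p$ and $[\Fr \circ \red_{(-)}]$ are both natural transformations from $[-] \colon \CRan \to \An$ to the functor $[(-) \otimes^L \Fp] \colon \CRan \to \An$: the first is the composite of the natural endotransformation $X_{(-)}^p$ of $[-]$ provided by \cref{Not.: nat transformation by endo} (applied to the ring endomorphism $X \mapsto X^p$ of $\Z[X]$) with the image under $[-]$ of the unit $\red$ from \cref{Derived mod p reduction}; the second is the image under $[-]$ of the composite $\Fr \circ \red_{(-)}$, where $\Fr$ is the animated $p$-Frobenius of \cref{Def.: animated Frobenius}. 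Recall that $[-]$ preserves sifted colimits and that $(-) \otimes^L \Fp$ preserves all colimits by \cref{Prop.: Properties of animated rings}; hence both the source $[-]$ and the target $[(-) \otimes^L \Fp]$ lie in $\Fun^\Sigma(\CRan, \An)$, and therefore so do the two transformations, viewed as morphisms of this $\infty$-category.

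Next I would invoke the equivalence $j^\ast \colon \Fun^\Sigma(\CRan, \An) \xrightarrow{\sim} \Fun(\Poly, \An)$ of \cref{Prop.: Properties of animated rings}(2): it identifies the mapping anima between the two objects $[\red_{(-)}] \circ X_{(-)}^p$ and $[\Fr \circ \red_{(-)}]$ of $\Fun^\Sigma(\CRan, \An)$ with the mapping anima between their restrictions along $j$, and it carries the two transformations to those restrictions. It therefore suffices to exhibit a homotopy between the two restrictions, functorially in $R \in \Poly$. But for a finitely generated polynomial $\Z$-algebra $R$ the anima $[R] = \Map_{\CRan}(\Z[X], R)$ is the underlying set of $R$, the ring $R \otimes^L \Fp$ is the discrete ring $R/p$ (as $R$ is torsion-free), and both $[\Fr \circ \red_R]$ and $[\red_R] \circ X_R^p$ are the map of sets sending $r$ to the class $\overline{r^p} = \overline r^p$. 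These maps are literally equal, and the identification is visibly natural in $R$ because reduction modulo $p$, formation of $p$-th powers, and ring homomorphisms all commute strictly for ordinary commutative rings. Thus the two restricted transformations coincide in $\Fun(\Poly, \An)$, which yields the desired functorial homotopy $\sigma_{\can}^B$.

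The one point requiring care is the bookkeeping of the first step: one must check that $[\red_{(-)}] \circ X_{(-)}^p$ and $[\Fr \circ \red_{(-)}]$ genuinely define morphisms in $\Fun^\Sigma(\CRan, \An)$ with the same source $[-]$ and the same target $[(-) \otimes^L \Fp]$, so that \cref{Prop.: Properties of animated rings}(2) applies to the mapping anima between them; granting this, the rest is a formal consequence of the universal property of $\CRan$ together with the trivial computation on $\Poly$. I do not expect any serious obstacle beyond this setup. Note in particular that, in contrast with the fibre sequence of \cref{Prop.: fibre sequence on anima}, we should not expect $\sigma_{\can}^B$ to refine to a homotopy of chain complexes, since $X_B^p$ is not additive; working at the level of the underlying anima is essential here.
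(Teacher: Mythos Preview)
Your argument is correct, but the paper takes a slightly different and more direct route. Instead of invoking the equivalence $\Fun^\Sigma(\CRan,\An)\simeq\Fun(\Poly,\An)$ and checking the identity on all of $\Poly$, the paper uses the Yoneda lemma: since $[-]=\Map_{\CRan}(\Z[X],-)$ is corepresentable, for \emph{any} functor $F\colon\CRan\to\An$ one has
\[
\Map_{\Fun(\CRan,\An)}\bigl([-],F\bigr)\;\simeq\;F(\Z[X]).
\]
Taking $F=[(-)\otimes^L\Fp]$ gives $F(\Z[X])=\Map_{\CRan}(\Z[X],\Fp[X])$, a discrete anima, and one only has to verify that the two natural transformations evaluate to the same map $\Z[X]\to\Fp[X]$ at $\id_{\Z[X]}$. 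The upshot is that the paper's argument does not need the target functor $[(-)\otimes^L\Fp]$ to preserve sifted colimits, and reduces everything to a single check rather than a family of checks over $\Poly$; your approach trades that economy for a more hands-on verification, which is perfectly fine and perhaps more transparent if one has not internalised the $\infty$-categorical Yoneda lemma.
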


\begin{proof}
	${[\red]} \circ X^p$ and $[\Fr \circ \red]$ define natural transformations from $[-]$ to $[(-) \otimes^L \Fp]$ as functors from animated rings to anima.
	By the Yoneda lemma, we have an equivalence
	\begin{equation*}
		\Map_{\Fun(\CRan,\An)}([-],[(-) \otimes^L \Fp]) \simeq \Map_{\CRan}(\Z[X],\Fp[X])
	\end{equation*}
	where the right hand side is discrete. The claim follows from observing that $[\red_{\Z[X]}] \circ X_{\Z[X]}^p$ and $[\Fr \circ \red_{\Z[X]}]$ send $\id_{\Z[X]}$ to the same map
	in $\Map_{\CRan}(\Z[X],\Fp[X])$.
\end{proof}

\begin{construction}[The $\delta$-map]\label{Constr.: delta-map on underlying anima}
	Let $(B,\phi_B,h_B)$ be an animated $\delta$-ring. Observe that $h_B$ defines a homotopy
	\[\begin{diag}[column sep=2em]
		& |[alias=Z]| {[B]} \ar[dr, "{[\red_B]}"]  & \\[3ex]
		 {[B]} \ar[ur, "{[\phi_B]}"] \ar[rr, "{[\Fr \circ \red_B]}"', ""{name=U, above}]  \ar[r, from=Z, to=U, phantom, "{[h_B]}" {font=\footnotesize, yshift=-0.9ex, xshift=0.5ex}] && {[B \otimes^L \Fp]}
	\end{diag}\]
	on underlying anima. Together with the homotopy of \cref{Prop.: Frobenius homotopy on underlying spaces} this gives a null-homotopy
	\begin{equation*}
		[h_B] - \sigma_{\can}^B \colon [\red_B] \circ ([\phi_B] - X_B^p) \simeq 0
	\end{equation*}
	in $\Map_{\An}([B],[B \otimes^L \Fp])$. %This is well-defined by \cref{Cor.: mapping spaces are group objects}.
	By \cref{Prop.: fibre sequence on anima}, we obtain the datum $(\delta_B,h^\delta_B,\tau^\delta_B)$ consisting of
	\begin{enumerate}
		\item a map $\delta_B \colon [B] \to [B]$ of anima
		\item a homotopy $h^\delta_B \colon (\cdot p) \circ \delta_B \simeq [\phi_B] - X_B^p$ in $\Map_{\An}([B],[B])$
		\item a $2$-homotopy $\tau_B^\delta$ in $\Map_{\An}([B],[B \otimes^L \Fp])$ which witnesses the equivalence
				\begin{equation*}
				([h_B] - \sigma_{\can}^B)([\red_B] \circ h_B^\delta) \simeq \sigma_{\fib}^B \circ \delta_B
				\end{equation*}
			of homotopies witnessing $[\red_B] \circ (\cdot p) \circ \delta_B \simeq 0$.
		%\item a $2$-homotopy $\tau_B^\delta$ which witnesses commutativity of the diagram
		%	\[\begin{diag}[column sep=1em]
		%		{[\red_B] \circ (\cdot p) \circ \delta_B} \ar[rr, "{[\red_B] \circ h_B^\delta}", "\sim"']  \ar[dr, "{\sigma_{\fib}^B} \circ \delta_B"', "\sim"] && {[\red_B] \circ ([\phi_B] - X_B^p)} \ar[dl, "{[h_B] - \sigma_{\can}^B}", "\sim"'] & \\[3ex]
		%		& 0 &
		%	\end{diag}\]
		%	in $\Map_{\An}([B],[B \otimes^L \Fp])$.
	\end{enumerate}
\end{construction}

\begin{remark}
	The map $\delta_B$ associated to the animated $\delta$-ring $(B,\phi_B,h_B)$ by \cref{Constr.: delta-map on underlying anima} is equivalent to the composite map
	${(\diffp)_B} \circ \varphi(\phi_B,X_B^p) \colon [B] \to [B]$ of anima. Here, $\varphi$ denotes the map induced by the commutative diagram
	\[\begin{diag}
		{[B]} \ar[r, "{[\phi_B]}"] \ar[d, "X_B^p"'] \ar[dr] &[2ex] {[B]} \ar[d, "{[\red_B]}"] \\[4ex]
		{[B]} \ar[r, "{[\red_B]}"] & {[B \otimes^L \Fp]}
	\end{diag}\]
	in $\An$ filled by the pair of homotopies $[h_B]$ and $\sigma_{\can}^B$, see \cref{Prop.: Frobenius homotopy on underlying spaces}.
\end{remark}

\begin{construction}\label{Constr.: theta on underlying anima}
	To any $(B,\phi_B,h_B) \in \deltaCRan$ and any $(X,\alpha) \in (\An)^{B\Nadd}$ we associate a map
	\begin{equation*}
		\theta(\alpha,\delta_B) \colon \Map_{\An}(X,{[B]}) \to \Map_{\An}(X,[B] \times_{[B \otimes^L \Fp]} [B])
	\end{equation*}
	of anima which sends $g \colon X \to [B]$ in $\An$ to the map induced by the commutative square
	\[\begin{diag}[column sep=3.5em]
		X \ar[r, "{[\phi_B] \circ g}"] \ar[d, "{X_B^p \circ g + p(g \circ \alpha)}"'] \ar[dr] & {[B]} \ar[d, "{[\red_B]}"] \\[5ex]
		{[B]} \ar[r, "{[\red_B]}"] & {[B \otimes^L \Fp]}
	\end{diag}\]
	of anima. Here, the diagonal is given by ${[\Fr \circ \red_B]} \circ g$ and commutativity is witnessed by the homotopies $[h_B] \circ g$ and $\sigma_{\can}^B \circ g + \sigma_{\fib}^B \circ (g \circ \alpha)$.
	More formally, we define $\theta(\alpha,\delta_B)$ as the map associated to the commutative square
	\[\begin{diag}[column sep=2em]
		\Map_{\An}(X,{[B]}) \ar[r,"{[\phi_B]_\ast}"] \ar[d, "(X_B^p)_\ast + (\cdot p)_\ast \circ \alpha^\ast"'] \ar[dr, "{[\Fr \circ \red_B]_\ast}"]& \Map_{\An}(X,{[B]}) \ar[d, "{[\red_B]}_\ast"] \\[5ex]
		\Map_{\An}(X,{[B]}) \ar[r, "{[\red_B]}_\ast" {yshift=0.7ex}] & \Map_{\An}(X,{[B \otimes^L \Fp]})
	\end{diag}\]
	of anima where commutativity is witnessed by $[h_B]_\ast$ and $(\sigma_{\can}^B)_\ast + (\sigma_{\fib}^B)_\ast \circ \alpha^\ast$.
\end{construction}

\begin{lemma}\label{Obs.: difference of deltas}
	There exists an equivalence
	\begin{equation*}
		\textstyle (\diffp)_{\ast} \circ \theta(\alpha,\delta_B) \simeq (\delta_B)_\ast - \alpha^\ast
	\end{equation*}
	of maps $\Map_{\An}(X,[B]) \to \Map_{\An}(X,[B])$ of anima.
\end{lemma}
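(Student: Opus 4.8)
The plan is to write $\theta(\alpha,\delta_B)$ as a sum of two maps and to exploit that $(\diffp)_B$ is a homomorphism of $\mathbf E_\infty$-groups. Let $\varphi\colon[B]\to[B]\times_{[B\otimes^L\Fp]}[B]$ be the map of the remark following \cref{Constr.: delta-map on underlying anima}, so that $\delta_B\simeq(\diffp)_B\circ\varphi$, and let $\psi\colon[B]\to[B]\times_{[B\otimes^L\Fp]}[B]$ be the map induced by the square with upper edge $0$, left edge $\cdot p$, right and lower edges $[\red_B]$ and diagonal $0$, the upper triangle filled trivially and the lower triangle filled by $\sigma_{\fib}^B$; equivalently, $\psi$ is the composite of the equivalence $[B]\xrightarrow{\sim}\fib([\red_B])$ of \cref{Prop.: fibre sequence on anima} (whose underlying map of anima is $\cdot p$) with the inclusion $\iota$ of $\fib([\red_B])$ into $[B]\times_{[B\otimes^L\Fp]}[B]$ as the second factor. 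The proof then comes down to the two claims
\begin{equation*}
	\theta(\alpha,\delta_B)\simeq\varphi_\ast+\psi_\ast\circ\alpha^\ast\qquad\text{and}\qquad(\diffp)_B\circ\psi\simeq-\id_{[B]} .
\end{equation*}
Indeed, since $\diffp$ is assembled in \cref{Constr.: diffp} from the $\mathbf E_\infty$-group homomorphisms $\pm$, $\Delta$, $0$ and $[\red_B]$, postcomposition with $(\diffp)_B$ is additive, so the first claim gives $(\diffp)_\ast\circ\theta(\alpha,\delta_B)\simeq\bigl((\diffp)_B\circ\varphi\bigr)_\ast+\bigl((\diffp)_B\circ\psi\bigr)_\ast\circ\alpha^\ast$, which by the second claim and $\delta_B\simeq(\diffp)_B\circ\varphi$ equals $(\delta_B)_\ast-\alpha^\ast$, using that postcomposition with $-\id_{[B]}$ is negation.

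For the first claim I would compare the defining squares of \cref{Constr.: theta on underlying anima}: for $g\colon X\to[B]$ the point $\theta(\alpha,\delta_B)(g)$ of $[B]\times_{[B\otimes^L\Fp]}[B]$ has components $[\phi_B]\circ g$ and $X_B^p\circ g+p(g\circ\alpha)$, and its filling path is assembled out of $[h_B]\circ g$, $\sigma_{\can}^B\circ g$ and $\sigma_{\fib}^B\circ(g\circ\alpha)$; componentwise this is precisely the sum, in the $\mathbf E_\infty$-group $\Map_{\An}(X,[B]\times_{[B\otimes^L\Fp]}[B])$, of $\varphi\circ g$ (which contributes $[\phi_B]\circ g$, $X_B^p\circ g$ and $[h_B]\circ g$, $\sigma_{\can}^B\circ g$) and of $\psi\circ(g\circ\alpha)$ (which contributes $0$, $p(g\circ\alpha)$ and the reverse of $\sigma_{\fib}^B\circ(g\circ\alpha)$). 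Making this precise uses only that $[\red_B]$ and $\cdot p$ are maps of $\mathbf E_\infty$-groups together with the coherent distributivity of the group addition on $[B\otimes^L\Fp]$ over concatenation of paths.

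The main obstacle is the second claim, which — by the remark following \cref{Cor.: pb square underlying spaces} — may be checked in $\DZ_{\geq 0}$, where it reduces to a short diagram chase. Let $\iota'\colon\fib([\red_B])\to[B]\times_{[B\otimes^L\Fp]}[B]$ be the inclusion as the first factor and $e\colon[B]\xrightarrow{\sim}\fib([\red_B])$ the equivalence of \cref{Prop.: fibre sequence on anima}, so that $\psi=\iota\circ e$. Unwinding \cref{Constr.: diffp} one reads off $(\diffp)_B\circ\iota'\simeq e^{-1}$ — here the difference $x-y$ entering the definition of $\diffp$ is simply $x$ on the nose, so no null-homotopy bookkeeping is needed — hence $(\diffp)_B\circ(\iota'\circ e)\simeq\id_{[B]}$. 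On the other hand, $\iota+\iota'\simeq\Delta\circ\pr$ with $\pr\colon\fib([\red_B])\to[B]$ the projection: on components this is the diagonal, and on the path-component it is the pointwise sum of a path $[\red_B]b\simeq 0$ with its reversal, which is null. Consequently $\psi+\iota'\circ e\simeq\Delta\circ(\cdot p)$, while \cref{Cor.: pb square underlying spaces} gives $(\diffp)_B\circ\Delta\simeq0$; adding and using additivity of postcomposition with $(\diffp)_B$ yields $(\diffp)_B\circ\psi\simeq-(\diffp)_B\circ(\iota'\circ e)\simeq-\id_{[B]}$, which is the second claim.
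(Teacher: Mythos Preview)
Your proof is correct, and the decomposition $\theta(\alpha,\delta_B)\simeq\varphi_\ast+\psi_\ast\circ\alpha^\ast$ together with the computation $(\diffp)_B\circ\psi\simeq-\id_{[B]}$ is a clean way to organise the argument. The passage to $\DZ_{\geq 0}$ for the second claim is legitimate and makes the identity $\iota+\iota'\simeq\Delta\circ\pr$ transparent (the two projections jointly detect maps into a fibre product in the stable setting), whereas your parenthetical justification in terms of ``the pointwise sum of a path with its reversal being null'' is slightly imprecise as stated in $\An$---it is really the additive/stable structure doing the work there.

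The paper takes a more direct route: rather than decomposing $\theta$ first, it unwinds $(\diffp)_\ast\circ\theta(\alpha,\delta_B)$ in one step using the definition of $\diffp$ as a map of limits, obtaining a map into $\fib([\red_B])\simeq[B]$ induced by the square with horizontal map $\beta=[\phi_B]_\ast-\bigl((X_B^p)_\ast+(\cdot p)_\ast\circ\alpha^\ast\bigr)$ and null-homotopy $[h_B]_\ast-\bigl((\sigma_{\can}^B)_\ast+(\sigma_{\fib}^B)_\ast\circ\alpha^\ast\bigr)$. It then invokes the full datum $(\delta_B,h_B^\delta,\tau_B^\delta)$ of \cref{Constr.: delta-map on underlying anima} to rewrite this square with $\beta$ replaced by $(\cdot p)_\ast\circ\bigl((\delta_B)_\ast-\alpha^\ast\bigr)$ and null-homotopy $(\sigma_{\fib}^B)_\ast\circ\bigl((\delta_B)_\ast-\alpha^\ast\bigr)$, from which the answer is read off. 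So the paper uses the homotopy $h_B^\delta$ and $2$-homotopy $\tau_B^\delta$ explicitly, while you only use the characterisation $\delta_B\simeq(\diffp)_B\circ\varphi$ from the subsequent remark and instead isolate the contribution of $\alpha^\ast$ via the auxiliary map $\psi$. Your approach makes the additivity of $(\diffp)_B$ the visible structural input; the paper's is shorter and avoids the separate computation of $(\diffp)_B\circ\psi$.
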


\begin{proof}
	It follows from \cref{Constr.: diffp} together with functoriality of limits that the left hand side is equivalent to the map induced by the diagram
	\[\begin{diag}[column sep=small]
		\Map_{\An}(X,{[B]}) \ar[r, "\beta"] \ar[d] & \Map_{\An}(X,{[B]}) \ar[d, "{[\red_B]}_\ast"] \\[4ex]
		\Map_{\An}(X,\ast) \ar[r, "0"] & \Map_{\An}(X,{[B \otimes^L \Fp]})
	\end{diag}\]
	in $\An$ with $\beta = [\phi_B]_\ast - ((X_B^p)_\ast + (\cdot p)_\ast \circ \alpha^\ast)$ and commutativity witnessed by $[h_B]_\ast - ((\sigma_{\can}^B)_\ast + (\sigma_{\fib}^B)_\ast \circ \alpha^\ast)$.
	By \cref{Constr.: delta-map on underlying anima}, this diagram is equivalent to the commutative square with $\beta$ replaced by $(\cdot p \delta_B)_\ast - (\cdot p)_\ast \circ \alpha^\ast$ and the null-homotopy
	given by $(\sigma_{\fib}^B)_\ast \circ ((\delta_B)_\ast - \alpha^\ast)$. It is clear that this induces the map $(\delta_B)_\ast - \alpha^\ast$ on the fibre.
\end{proof}

\begin{proposition}\label{Prop.: commutation rule theta maps and underlying anima}
	For any pair $(A,\phi_A,h_A)$ and $(B,\phi_B,h_B)$ of animated $\delta$-rings there exists an equivalence
	\begin{equation*}
		[-] \circ \theta(\phi_A,\phi_B) \simeq \theta(\delta_A,\delta_B) \circ [-]
	\end{equation*}
	of maps $\Map_{\CRan}(A,B) \to \Map_{\An}([A],[B \times_{B \otimes^L \Fp} B])$ of anima.
	Here, $ \theta(\phi_A,\phi_B)$ denotes the map of \cref{Constr.: induced map into fibre product} and $\theta(\delta_A,\delta_B)$ denotes the map of \cref{Constr.: theta on underlying anima} associated to the pair $([A],\delta_A)$.
\end{proposition}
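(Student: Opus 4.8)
We sketch a proof. Both $[-]\circ\theta(\phi_A,\phi_B)$ and $\theta(\delta_A,\delta_B)\circ[-]$ are maps from $\Map_{\CRan}(A,B)$ to the anima $\Map_{\An}([A],[B\times_{B\otimes^L\Fp}B])$. Since $[-]$ preserves small limits (it factors through $\DZ_{\geq 0}$, \cref{Rmk.: Underlying spectrum}) and $\Map_{\An}([A],-)$ does as well, this target is the pullback $\Map_{\An}([A],[B])\times_{\Map_{\An}([A],[B\otimes^L\Fp])}\Map_{\An}([A],[B])$ formed over postcomposition with $[\red_B]$ in both legs, so a map into it amounts to a pair of maps $\Map_{\CRan}(A,B)\to\Map_{\An}([A],[B])$ together with a homotopy between the two induced maps to $\Map_{\An}([A],[B\otimes^L\Fp])$. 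The plan is to check that these three pieces of data agree for the two maps in question. For the first component this is immediate: that of $[-]\circ\theta(\phi_A,\phi_B)$ is $[-]\circ(\phi_B)_\ast\simeq[\phi_B]_\ast\circ[-]$ by functoriality of $[-]$ on morphisms, which is exactly the first component of $\theta(\delta_A,\delta_B)\circ[-]$ supplied by \cref{Constr.: theta on underlying anima}.

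For the second component one must identify $[-]\circ(\phi_A)^\ast\simeq[\phi_A]^\ast\circ[-]$ with $\bigl((X_B^p)_\ast+(\cdot p)_\ast\circ\delta_A^\ast\bigr)\circ[-]$. On a morphism $f\colon A\to B$ this is the chain
\[
  [f]\circ[\phi_A]\;\simeq\;[f]\circ\bigl(X_A^p+(\cdot p)\circ\delta_A\bigr)\;\simeq\;[f]\circ X_A^p+[f]\circ(\cdot p)\circ\delta_A\;\simeq\;X_B^p\circ[f]+(\cdot p)\circ[f]\circ\delta_A,
\]
where the first equivalence is the homotopy $h_A^\delta$ of \cref{Constr.: delta-map on underlying anima}, the second uses that $[f]$ is a morphism of $\mathbf{E}_\infty$-groups (\cref{Prop.: underlying anima is a commutative group object}), so that postcomposition by $[f]$ is additive, and the third is the naturality of the transformations $X^p_{(-)}$ and $(\cdot p)_{(-)}$ of \cref{Not.: nat transformation by endo}. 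It is essential that the comparison is made only after precomposing with $[-]$: for a general map of anima $[A]\to[B]$ in place of $[f]$ the identity fails, the right-hand expression being built from the group structure whereas $[\phi_A]^\ast$ is precomposition by a non-additive map.

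It remains to match the connecting homotopies. Unwinding \cref{Constr.: induced map into fibre product}, that of $[-]\circ\theta(\phi_A,\phi_B)$ is the $[-]$-image of the concatenation $(h_B)_\ast\cdot\sigma_A^{-1}$ of the square's two $2$-cells; by \cref{Constr.: theta on underlying anima}, that of $\theta(\delta_A,\delta_B)\circ[-]$ is, after precomposition with $[-]$, the concatenation $[h_B]_\ast\cdot\bigl((\sigma_{\can}^B)_\ast+(\sigma_{\fib}^B)_\ast\circ\delta_A^\ast\bigr)^{-1}$. The $[h_B]_\ast$-parts agree, so the comparison reduces to the equivalence
\[
  [\sigma_A]\;\simeq\;(\sigma_{\can}^B)_\ast+(\sigma_{\fib}^B)_\ast\circ\delta_A^\ast
\]
(precomposed with $[-]$, and compatibly with the edge identification of the previous paragraph). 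This is the heart of the proof. I would establish it by unwinding $[\sigma_A(f)]$ along the explicit four-fold concatenation defining $\sigma_A(f)$ in \cref{Constr.: induced map into fibre product}: the naturality factors $[\sigma_{\red}(f)]$ and $[\sigma_{\Fr}(f\otimes^L\id_{\Fp})]$ do not involve $\delta_A$ and --- after the purely structural part of the coherence is reduced, by the Yoneda argument of \cref{Prop.: Frobenius homotopy on underlying spaces,Prop.: fibre sequence on anima}, to finitely generated polynomial rings over $\Z$, where the relevant mapping anima are discrete --- recombine with $\sigma_{\can}$ and the null-homotopies $\sigma_{\fib}$; the single $\delta_A$-dependent factor $[f\otimes^L\id_{\Fp}]\circ[h_A]$ is rewritten via the decomposition of $[h_A]$ into $\sigma_{\can}^A$ and $\sigma_{\fib}^A\circ\delta_A$ encoded by $h_A^\delta$ and the $2$-homotopy $\tau_A^\delta$ of \cref{Constr.: delta-map on underlying anima}; and the $\mathbf{E}_\infty$-group structures --- in particular additivity of $[f\otimes^L\id_{\Fp}]$ and $[\red_B]$ --- are invoked throughout to distribute over the ``$+$''. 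As a consistency check, postcomposing both sides of the displayed equivalence with $(\diffp)_\ast$ produces $(\delta_B)_\ast-\delta_A^\ast$ in the case of $\theta(\delta_A,\delta_B)$ by \cref{Obs.: difference of deltas}, and the same expression arises for $[-]\circ\theta(\phi_A,\phi_B)$ from the ``division by $p$'' description of $\delta_B$.

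I expect the last step to be the main obstacle. Faithfully tracking how the four concatenated homotopies comprising $\sigma_A(f)$ interact with the decomposition of $[h_A]$ and with the naturality $2$-cells of $\sigma_{\red}$, $\sigma_{\Fr}$, $\sigma_{\can}$ and $\sigma_{\fib}$ --- while keeping every sum that appears meaningful by working inside connective spectra throughout --- is coherence bookkeeping rather than hard mathematics, but it is where the argument carries its content; it is prudent to isolate the part of the coherence not involving $\delta_A$ and dispatch it by a Yoneda reduction to $\Poly$, as in \cref{Prop.: Frobenius homotopy on underlying spaces}.
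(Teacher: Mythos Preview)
Your proposal is correct and follows essentially the same approach as the paper. Both arguments observe that $[-]$ preserves limits, reduce to matching the two components and the connecting homotopy, produce the homotopy $\nu$ on the second component from $h_A^\delta$ together with the naturality of $X^p$ and $(\cdot p)$, and then obtain the required $2$-homotopy from $\tau_A^\delta$ combined with the structural homotopies of \cref{Prop.: fibre sequence on anima,Prop.: Frobenius homotopy on underlying spaces}; the paper dispatches this last step in a single sentence citing exactly these ingredients, while you spell out more of the bookkeeping and add a Yoneda-reduction heuristic, but the content is the same.
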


\begin{proof}
	The functor $[-]$ preserves small limits. Thus, the left hand map is induced by the commutative square
	\[\begin{diag}[column sep=1em]
		\Map_{\CRan}(A,B) \ar[r, "(\phi_B)_\ast"] \ar[d, "(\phi_A)^\ast"'] \ar[dr] & \Map_{\CRan}(A,B) \ar[d, "{[-] \circ (\red_B)_\ast}"] \\[6ex]
		\Map_{\CRan}(A,B) \ar[r, "{[-] \circ (\red_B)_\ast}" {yshift=1.5ex}] & \Map_{\An}({[A]},{[B \otimes^L \Fp]})
	\end{diag}\]
	of anima with diagonal $[-] \circ (\Fr \circ \red_B)_\ast$ and homotopies $[-] \circ \sigma_A$ and $[-] \circ (h_B)_\ast$. The right hand map is induced by the diagram
	\[\begin{diag}[column sep=1em]
		\Map_{\CRan}(A,B) \ar[r,"{[\phi_B]_\ast \circ [-]}" {yshift=1ex}] \ar[d, "\substack{(X^p_B)_\ast \circ [-] \\ \\+ {(\cdot p)_\ast} \circ {\delta_A^\ast} \circ [-]}"'] \ar[dr]& \Map_{\An}({[A]},{[B]}) \ar[d, "{[\red_B]}_\ast"] \\[6ex]
		\Map_{\An}({[A]},{[B]}) \ar[r, "{[\red_B]}_\ast" {yshift=1ex}] & \Map_{\An}({[A]},{[B \otimes^L \Fp]})
	\end{diag}\]
	with diagonal $[\Fr \circ \red_B]_\ast \circ [-]$ and homotopies $\xi = (\sigma_{\can}^B)_\ast \circ [-] + (\sigma_{\fib}^B)_\ast \circ \delta_A^\ast \circ [-]$ and $[h_B]_\ast \circ [-]$. It thus suffices to produce a homotopy
	\begin{equation*}
		\nu \colon [\phi_A]^\ast \circ [-] \simeq ((x_B^p)_\ast + (\cdot p)_\ast \circ \delta_A^\ast) \circ [-]
	\end{equation*}
	together with a $2$-homotopy $\tau \colon \xi([\red_B]_\ast \circ \nu) \simeq [\sigma_A] \circ [-]$. We define $\nu$ as the composite
	\begin{equation*}
		\nu = (\delta_A^\ast \circ \sigma_{\cdot p} + (X_B^p)_\ast \circ [-])((h^\delta_A)^\ast \circ [-] + \sigma_{X^p})
	\end{equation*}
	of homotopies witnessing
	\begingroup
	\addtolength{\jot}{1ex}
	\begin{align*}
		[\phi_A]^\ast \circ [-] &\simeq ([\phi_A] - X_A^p)^\ast \circ [-] + (X_A^p)^\ast \circ [-]\\
		&\simeq ((\cdot p) \circ \delta_A)^\ast \circ [-] + (X_B^p)_\ast \circ [-] \\
		&\simeq \delta_A^\ast \circ (\cdot p)_\ast \circ [-] + (X_B^p)_\ast \circ [-]
	\end{align*}
	\endgroup
	with $\sigma_{\cdot p}$ and $\sigma_{X^p}$ the homotopies witnessing naturality of $(\cdot p)$ and $X^p$. Existence of the 
	$2$-homotopy then follows from \cref{Prop.: fibre sequence on anima,Prop.: Frobenius homotopy on underlying spaces} together with the $2$-homotopy $\tau_A^\delta$ of \cref{Constr.: delta-map on underlying anima}.
\end{proof}

\newpage
\section{Animated $\delta$-rings via $p$-typical Witt vectors}\label{Sect.: Animated delta rings via Witt vectors}

$\delta$-structures on animated rings admit an equivalent definition in terms of animated Witt vectors. This relies on a remark in \cite[Remark 2.5]{Prisms} which states that $p$-typical length-$2$
Witt vectors $W_2(A)$ of an animated ring $A$ can be identified with a fibre product over $A \otimes^L_{\Z} \Fp$ in the category of animated rings. Bhatt and Lurie revisit the pullback square in detail in \cite[A.10]{BL} and formally set
up their study of animated $\delta$-rings in terms of animated Witt vectors. In this section, we recall the perspective on animated $\delta$-rings from \cite{BL} and prove that both definitions give rise to equivalent $\infty$-categories (\cref{Prop.: equivalence of definitions of delta rings via Witt and Frobenius}).

\begin{definition}[{\cite[A.11]{BL}}]
	An animated $\delta$-ring is an animated ring $A$ together with a commutative diagram
	\[\begin{diag}[column sep=2.7em]
		&  |[alias=Z]| W_2(A) \ar[dr, "w_0"] & \\[3ex]
		A \ar[ur, "s_A"] \ar[rr, "\id_A"', ""{name=U, above}] \ar[r, from=Z, to=U, phantom, "h_A" {font=\footnotesize, yshift=-0.9ex, xshift=0.1ex}]&& A
	\end{diag}\]
	in the category of animated rings.
\end{definition}
Here, $W_2$ denotes animated $2$-truncated $p$-typical Witt vectors and $w_0$ the zeroth animated ghost component, see \cite[A.1 and A.9]{BL} and \cite{Hesselholt} for details on the non-derived version.

\begin{remark}\label{Rmk.: animated delta rings via Witt vectors}
	We will denote the $\infty$-category of animated $\delta$-rings as defined in \cite{BL} by $\deltaCRanWitt$. This admits a formal definition in terms of the following pullback
	\begin{equation}\label{Eq.: dCRanWitt}
		\begin{diag}[column sep=3em]
			\deltaCRanWitt \ar[r] \ar[d] \ar[dr, phantom, "\lrcorner" near start] & \LEq(\id,W_2) \ar[d, "(w_0)_\ast"]\\[4ex]
			\CRan \ar[r, "\id"] & (\CRan)^{B\Nadd}
		\end{diag}
	\end{equation}
	in $\Catinf$. Here, $\LEq(\id,W_2)$ denotes the $\infty$-category of pairs $(A,s_A)$ which consist of an animated ring $A$ together with a morphism $s_A \colon A \to W_2(A)$ of animated rings. See
	\cite[II.1.4]{NikolausTC} for a general definition of lax equalizers. Note that the diagram defining $\deltaCRanWitt$ is in fact both a pullback square in $\Prl$ and $\Prr$ and that all functors in \cref{Eq.: dCRanWitt}
	are conservative.
\end{remark}

We recall some properties of animated truncated $p$-typical Witt vectors as are shown in \cite{BL}:

\begin{proposition}[{\cite[A.3]{BL}}]\label{Prop.: underlying anima of Witt}
	For any $n \geq 1$ and any animated ring $A$, the collection $\comp_m$ of animated Witt components ($1 \leq m \leq n$) induces an equivalence
	$[W_n(A)] \simeq [A]^n$ of underlying anima which is natural in $A$.
\end{proposition}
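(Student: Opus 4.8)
The plan is the standard ``check on polynomial algebras, then extend by sifted colimits'' argument. On the category $\Poly$ the truncated $p$-typical Witt vectors agree with the classical construction, and it is a classical fact that the Witt components assemble into a natural bijection of underlying sets $W_n(R) \xrightarrow{\sim} R^n$ for every commutative ring $R$; in particular this holds for $R \in \Poly$, where it reads as an equivalence $[W_n(R)] \simeq [R]^n$ in $\An$. Since the animated functor $W_n \colon \CRan \to \CRan$ is by construction the animation of its classical counterpart (see \cite[A.1 and A.9]{BL}), it preserves sifted colimits; composing with the sifted-colimit-preserving functor $[-]$ shows that $[W_n(-)] \colon \CRan \to \An$ preserves sifted colimits. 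Likewise $[-]^n \colon \CRan \to \An$ preserves sifted colimits, because $n$-fold products commute with sifted colimits in $\An$.

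Next I would set up the comparison map. By \cref{Prop.: Properties of animated rings}, restriction along $j \colon \Poly \to \CRan$ is an equivalence $\Fun^\Sigma(\CRan,\An) \xrightarrow{\sim} \Fun(\Poly,\An)$, so natural transformations between sifted-colimit-preserving functors are detected on $\Poly$. The classical Witt components $\comp_m \colon W_n(R) \to R$ form such a natural transformation on $\Poly$ and therefore extend uniquely to natural transformations $\comp_m \colon [W_n(-)] \Rightarrow [-]$ on all of $\CRan$ --- these are the animated Witt components. Assembling them yields a map $(\comp_1,\dots,\comp_n) \colon [W_n(-)] \Rightarrow [-]^n$ of functors in $\Fun^\Sigma(\CRan,\An)$.

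It remains to see that this transformation is an equivalence. By \cref{Prop.: Properties of animated rings}, every animated ring $A$ is the colimit of a sifted diagram $j/A \to \Poly$; since both source and target of the comparison map preserve sifted colimits, its value at $A$ is the sifted colimit of its values at the polynomial algebras in the diagram, each of which is an equivalence by the classical statement. As sifted colimits of equivalences are equivalences, we obtain the desired equivalence $[W_n(A)] \simeq [A]^n$, natural in $A$ by construction. I do not anticipate a genuine obstacle here; the only points that need care are the bookkeeping ones --- identifying the animated Witt components with the animation of the classical ones, and confirming that $[-]^n$ is sifted-colimit-preserving so that the comparison lives in the correct functor category.
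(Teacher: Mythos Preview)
Your argument is correct and is precisely the standard sifted-colimit extension argument. Note that the paper does not give its own proof of this statement---it simply records the result from \cite[A.3]{BL}---so there is no in-paper proof to compare against; your write-up is essentially a spelled-out version of what the cited reference does.
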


\begin{proposition}[{\cite[A.4]{BL}}]\label{Prop.: animated Witt vectors on CR}
	For any $n \geq 1$, animated $n$-truncated $p$-typical Witt vectors restrict to ordinary $p$-typical Witt vectors on commutative rings, i.e. there exists a natural equivalence
	$W_n \circ \iota \simeq \iota \circ W_n$ of functors $\CR \to \CRan$.
\end{proposition}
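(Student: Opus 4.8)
The plan is to reduce the assertion to the classical functor by combining the universal property of $W_n^{\an}$ as a left Kan extension with the computation of underlying anima in \cref{Prop.: underlying anima of Witt}, being careful throughout that the inclusion $\iota \colon \CR \to \CRan$ does \emph{not} preserve sifted colimits. Recall from the definition of the animation of a functor that $W_n^{\an}$ is the left Kan extension of $\restr{\iota \circ W_n}{\Poly}$ along $j$; hence by \cref{Prop.: Properties of animated rings}(2) it preserves sifted colimits, and $W_n^{\an} \circ j \simeq \iota \circ W_n$ as functors $\Poly \to \CRan$.

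First I would show that $W_n^{\an}(\iota(A))$ is discrete for every $A \in \CR$. By \cref{Prop.: underlying anima of Witt} there is a natural equivalence $[W_n^{\an}(\iota A)] \simeq [\iota A]^n$, and $[\iota A]$ is the underlying \emph{set} of $A$, hence $0$-truncated; therefore so is $[W_n^{\an}(\iota A)]$. Since every object of $\Poly$ is of the form $\Z[X]^{\otimes m}$ and animated rings are finite-product-preserving functors on $\Poly^{\op}$, an animated ring $B$ with $[B]$ $0$-truncated takes only $0$-truncated values on $\Poly^{\op}$, i.e. lies in the essential image of $\iota$. Thus $W_n^{\an} \circ \iota \simeq \iota \circ \widetilde{W}_n$ for an essentially unique functor $\widetilde{W}_n \colon \CR \to \CR$, which moreover equals $\pi_0 \circ W_n^{\an} \circ \iota$ because $\pi_0 \circ \iota \simeq \id_{\CR}$. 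Note that this step uses \cref{Prop.: underlying anima of Witt} in an essential way and needs no colimit-compatibility of $\iota$ at all.

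It then remains to identify $\widetilde{W}_n$ with the classical $W_n \colon \CR \to \CR$. Fix $A \in \CR$ and, using \cref{Prop.: Properties of animated rings}(3), write $\iota A \simeq \colim_{\mathcal{J}} (j \circ P')$ for $\mathcal{J} = j/\iota A$, where $P' \colon \mathcal{J} \to \Poly$ is the projection; here $\mathcal{J}$ is sifted since it has finite coproducts. Applying $W_n^{\an}$ (which preserves sifted colimits) and then $\pi_0$ (which preserves all colimits), and using $W_n^{\an}\circ j \simeq \iota \circ W_n$ together with $\pi_0\circ\iota \simeq \id$, gives
\begin{equation*}
	\widetilde{W}_n(A) \simeq \pi_0 W_n^{\an}(\iota A) \simeq \colim_{\mathcal{J}}^{\CR} \pi_0 W_n^{\an}(j \circ P') \simeq \colim_{\mathcal{J}}^{\CR} W_n P,
\end{equation*}
where $P \colon \mathcal{J} \to \Poly \hookrightarrow \CR$. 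On the other hand, applying $\pi_0$ to $\iota A \simeq \colim_{\mathcal{J}}(j\circ P')$ and using $\pi_0 \circ j \simeq \iota|_{\Poly}$ yields $A \simeq \colim_{\mathcal{J}}^{\CR} P$. Now the classical $W_n \colon \CR \to \CR$ preserves sifted colimits: its composite with the forgetful functor $\CR \to \Set$ agrees, via the Witt components, with $(-)^n$ composed with the forgetful functor, and both $(-)^n$ on $\Set$ and the forgetful functor preserve sifted colimits (the latter also reflecting them). Hence $\colim_{\mathcal{J}}^{\CR} W_n P \simeq W_n(\colim_{\mathcal{J}}^{\CR} P) \simeq W_n(A)$, naturally in $A$, and combining with the displayed chain finishes the proof.

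The main obstacle is precisely the failure of $\iota$ to commute with sifted colimits — already a reflexive coequalizer of discrete rings can acquire higher homotopy once computed in $\CRan$ — which is why one cannot naively ``evaluate $W_n^{\an}$ on a polynomial resolution of $A$'' and read off discreteness. The device that makes the argument go through is to extract discreteness of $W_n^{\an}(\iota A)$ from the underlying-anima formula \cref{Prop.: underlying anima of Witt} first, and only afterwards to transport the whole computation through $\pi_0$ into $\CR$, where $W_n$ genuinely preserves sifted colimits and where the presentation $A \simeq \colim_{\mathcal{J}}^{\CR} P$ (obtained by applying $\pi_0$ to the presentation of $\iota A$) is available.
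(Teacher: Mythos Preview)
The paper does not actually supply a proof of this proposition; it simply cites \cite[A.4]{BL} and moves on (the same is done for the global analogue in \cref{Prop.: properties animated n-truncated Witt vectors}). So there is no in-paper argument to compare against.

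Your argument is correct and is in fact the standard one. The two-step structure---first use \cref{Prop.: underlying anima of Witt} to see that $W_n^{\an}(\iota A)$ has discrete underlying anima and hence lies in the essential image of $\iota$, then identify the resulting endofunctor $\widetilde{W}_n = \pi_0 \circ W_n^{\an} \circ \iota$ of $\CR$ with the classical $W_n$ via a sifted presentation---is exactly the approach one expects (and, to the best of my knowledge, is the argument in \cite[A.4]{BL}). Your observation that the obstacle is the failure of $\iota$ to preserve sifted colimits, and that this is circumvented by passing through $\pi_0$ only after discreteness is established, is precisely the point. One minor remark: you might make the naturality in $A$ of the displayed chain a touch more explicit. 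It does hold, because $j/\iota A \simeq \Poly/A$ varies functorially in $A$ and the identifications $\colim^{\CR}_{\Poly/A} P \simeq A$ and $\colim^{\CR}_{\Poly/A} W_n P \simeq W_n(A)$ are instances of natural colimit formulae, but as written the word ``naturally'' is doing some work that a skeptical reader might want unpacked.
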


\begin{corollary}\label{Cor.: adjointable square Witt vectors}
	For any $n \geq 1$, the diagram
	\[\begin{diag}
		\CR \ar[r, "W_n"] \ar[d, "\iota"] & \CR \ar[d, "\iota"] \\[4ex]
		\CRan \ar[r, "W_n"] & \CRan
	\end{diag}\]
	in $\Catinf$ is vertically left adjointable.
\end{corollary}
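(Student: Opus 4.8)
The square commutes via the natural equivalence $W_n \circ \iota \simeq \iota \circ W_n$ of \cref{Prop.: animated Witt vectors on CR}, and since the vertical functor $\iota \colon \CR \to \CRan$ admits a left adjoint $\pi_0$ by \cref{Prop.: Properties of animated rings}(4), the square is vertically left adjointable exactly when the induced mate
\begin{equation*}
	M \colon \pi_0 \circ W_n \longrightarrow W_n \circ \pi_0
\end{equation*}
of functors $\CRan \to \CR$ is an equivalence. Here $M$ is assembled from the above equivalence together with the unit $\eta \colon \id_{\CRan} \to \iota\pi_0$ and the counit $\epsilon \colon \pi_0\iota \to \id_{\CR}$. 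The plan is to prove that $M$ is an equivalence by reducing to the subcategory $\Poly$.

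The first step is to check that both source and target of $M$ preserve sifted colimits. The functor $\pi_0$ preserves all small colimits by \cref{Prop.: Properties of animated rings}(4); the animated Witt vectors $W_n \colon \CRan \to \CRan$ preserve sifted colimits, being obtained by animating the classical functor, so that \cref{Prop.: Properties of animated rings}(2) applies; and the classical truncated Witt vectors $W_n \colon \CR \to \CR$ preserve sifted colimits as well. For the last point I would recall that, as a functor to sets, $W_n$ is corepresented by the finitely presented ring $\Z[x_0, \dots, x_{n-1}]$, so that $W_n$ preserves filtered colimits, and that $W_n$ preserves reflexive coequalizers: reflexive coequalizers in $\CR$ are computed on underlying sets, finite products of sets commute with reflexive coequalizers, and the functoriality of $W_n$ is componentwise in Witt coordinates, so one checks directly that the resulting coequalizer of sets again carries the structure of a coequalizer in $\CR$. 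Hence $\pi_0 \circ W_n$ and $W_n \circ \pi_0$ both preserve sifted colimits.

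By \cref{Prop.: Properties of animated rings}(3) every animated ring is a sifted colimit of objects of $\Poly$, so it now suffices to prove that $M_P$ is an equivalence for each $P \in \Poly$. For such $P$ the object $j(P) = \iota(P)$ of $\CRan$ is discrete, hence the unit $\eta_P \colon P \to \iota\pi_0 P$ is an equivalence, so the first leg $\pi_0 W_n(\eta_P)$ of the mate is an equivalence; the middle leg is $\pi_0$ applied to the equivalence of \cref{Prop.: animated Witt vectors on CR} at $\pi_0 P$, again an equivalence; and the last leg is the counit $\epsilon$ at $W_n(\pi_0 P) \in \CR$, which is an equivalence because $\iota$ is fully faithful. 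Therefore $M_P$ is an equivalence, and the proof is complete.

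I expect the only point that requires genuine work to be the claim that the classical truncated Witt vectors $W_n \colon \CR \to \CR$ preserve sifted colimits; the remaining steps are formal manipulations with mates and the universal property of $\CRan$. One can phrase the argument slightly differently: since $\pi_0$ is a left adjoint it commutes with left Kan extension along $j$, so that $\pi_0 \circ W_n \simeq \Lan_j(\restr{W_n}{\Poly})$ for $\restr{W_n}{\Poly} \colon \Poly \to \CR$; but recognizing $W_n \circ \pi_0$ as the same left Kan extension via \cref{Prop.: Properties of animated rings}(2) again rests on $W_n \colon \CR \to \CR$ preserving sifted colimits, together with the computation on $\Poly$ above.
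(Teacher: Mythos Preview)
Your proof is correct, but it takes a genuinely different route from the paper's. You reduce the mate to $\Poly$ by arguing that both $\pi_0 \circ W_n$ and $W_n \circ \pi_0$ preserve sifted colimits, which forces you to prove the auxiliary fact that the \emph{classical} truncated Witt vector functor $W_n \colon \CR \to \CR$ preserves sifted colimits. The paper avoids this entirely: after writing out the mate and noting that the counit leg is an equivalence (since $\iota$ is fully faithful), it shows the remaining leg $\pi_0 W_n(\eta)$ is an equivalence for \emph{every} animated ring at once by passing to underlying anima and invoking the product decomposition $[W_n(-)] \simeq [-]^n$ of \cref{Prop.: underlying anima of Witt}. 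This gives $[\pi_0 W_n(\eta)] \simeq [\pi_0(\eta)]^n$, and $\pi_0(\eta)$ is an equivalence by the triangle identities. No reduction to generators is needed, and no property of the classical $W_n$ beyond \cref{Prop.: underlying anima of Witt} enters.

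What each approach buys: the paper's argument is shorter and uses only the Witt-component decomposition already recorded in the paper, so it is the cleaner proof here. Your approach, on the other hand, isolates a fact of independent interest---that $W_n \colon \CR \to \CR$ commutes with sifted colimits---and illustrates the general pattern of checking natural transformations between sifted-colimit-preserving functors on $\Poly$. Your sketch of the reflexive-coequalizer step (underlying sets compute reflexive coequalizers in $\CR$, finite products commute with them in $\Set$, and the Witt ring structure is given componentwise by universal polynomials) is correct, though it deserves to be written out carefully if you keep this route.
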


\begin{proof}
	The left adjoint to the inclusion is given by $\pi_0$. We wish to show that the mate transformation
	\begin{equation*}
		\pi_0 W_n \xrightarrow{\pi_0 W_n \eta} \pi_0 W_n \iota \pi_0 \xrightarrow{\sim} \pi_0 \iota W_n \pi_0 \xrightarrow{\epsilon_{W_n \pi_0}} W_n \pi_0
	\end{equation*}
	is an equivalence. As the counit $\epsilon$ is an equivalence, it suffices to show that the unit $\eta$ induces an equivalence. This can be 
	checked after passing to underlying anima where \cref{Prop.: underlying anima of Witt} gives an equivalence $[\pi_0 W_n(\eta)] \simeq [\pi_0(\eta)]^n$. The claim follows as $\pi_0 (\eta)$ is an equivalence.
\end{proof}

\begin{corollary}\label{Cor.: fully faithful embedding of deltaCR}
	The inclusion $\iota \colon \CR \to \CRan$ induces a fully faithful functor $\deltaCR \to \deltaCRanWitt$ which preserves finite coproducts when restricted to $\deltaPoly$.
\end{corollary}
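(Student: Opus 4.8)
The plan is to realise both $\deltaCR$ and $\deltaCRanWitt$ as pullbacks of cospans which differ only by replacing $\CR$ with $\CRan$, and then to deduce the statement from the inclusion $\iota \colon \CR \to \CRan$ and the compatibility $W_2 \circ \iota \simeq \iota \circ W_2$ of \cref{Prop.: animated Witt vectors on CR}. By the classical identification of a $\delta$-structure on a commutative ring $A$ with a ring homomorphism $s_A \colon A \to W_2(A)$ splitting the zeroth ghost component $w_0$ (see \cite[Remark 2.5]{Prisms} and \cite{JoyalDelta}), the category $\deltaCR$ is the ordinary pullback of $\CR \xrightarrow{\id} \CR^{B\Nadd} \xleftarrow{(w_0)_\ast} \LEq(\id, W_2)$, exactly as in \cref{Rmk.: animated delta rings via Witt vectors} but over $\CR$. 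Since the nerve functor $\mathbf{Cat} \to \Catinf$ is fully faithful and preserves limits — in particular lax equalizers and functor categories out of $B\Nadd$ — this presents $\deltaCR$ as a pullback in $\Catinf$ of the indicated shape. The inclusion $\iota$, together with the equivalence $W_2 \circ \iota \simeq \iota \circ W_2$ and its compatibility with $w_0$, then assembles into a map of cospans whose induced functor on pullbacks is the desired $\deltaCR \to \deltaCRanWitt$; by construction it fits into a commuting square with the forgetful functors to $\CR$ and $\CRan$ and $\iota$.

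For full faithfulness I would use the elementary principle that a map of cospans of $\infty$-categories which is fully faithful at each of the three vertices induces a fully faithful functor on pullbacks, since mapping spaces in a pullback of $\infty$-categories are the pullback of the mapping spaces and a pullback of equivalences of anima is an equivalence. I apply this twice. First, writing $\LEq(\id, W_2) = \mathcal{C} \times_{\mathcal{C} \times \mathcal{C}} \Fun(\Delta^1, \mathcal{C})$ for $\mathcal{C} \in \{\CR, \CRan\}$, the comparison $\LEq(\id, W_2)_{\CR} \to \LEq(\id, W_2)_{\CRan}$ is induced by the cospan map given by $\iota$, $\iota \times \iota$, and postcomposition with $\iota$ on arrow categories — each fully faithful (postcomposition with a fully faithful functor is fully faithful), and compatible with the structure maps $(\id, W_2)$ and $(\mathrm{source}, \mathrm{target})$ by \cref{Prop.: animated Witt vectors on CR} — so this comparison is fully faithful. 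Second, $\deltaCR \to \deltaCRanWitt$ is induced on pullbacks of $\CR \to \CR^{B\Nadd} \leftarrow \LEq(\id, W_2)_{\CR}$ and $\CRan \to (\CRan)^{B\Nadd} \leftarrow \LEq(\id, W_2)_{\CRan}$ by the cospan map given by $\iota$ (fully faithful by \cref{Prop.: Properties of animated rings}), postcomposition with $\iota$ on the functor categories (fully faithful), and the comparison on lax equalizers just treated; hence $\deltaCR \to \deltaCRanWitt$ is fully faithful.

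For the statement about coproducts, I would first record that the underlying ring of a finitely generated free $\delta$-ring $\delta\{x_1,\dots,x_m\}$ is the polynomial ring on the countable set of symbols $\delta^k(x_i)$, hence a coproduct of copies of $\Z[X]$ in $\CR$, that the forgetful functor $\deltaCR \to \CR$ sends $\delta\{x_1,\dots,x_m\} \sqcup \delta\{y_1,\dots,y_n\} = \delta\{x_1,\dots,x_m, y_1,\dots,y_n\}$ to the corresponding coproduct of polynomial rings, and that the initial $\delta$-ring $\delta\{\varnothing\} = \Z$ has underlying ring $\Z$. Given $P, Q \in \deltaPoly$, I would apply the forgetful functor $\deltaCRanWitt \to \CRan$ — which is conservative by \cref{Rmk.: animated delta rings via Witt vectors} and, by the first paragraph, intertwined with $\deltaCR \to \CR$ via $\iota$ — to the canonical comparison map $\iota(P) \sqcup \iota(Q) \to \iota(P \sqcup Q)$ in $\deltaCRanWitt$. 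For $P, Q$ free this identifies the comparison with the canonical map $\iota(\forget P) \sqcup \iota(\forget Q) \to \iota(\forget P \sqcup_{\CR} \forget Q)$, an equivalence because $\iota$ preserves arbitrary coproducts of $\Z[X]$ by \cref{Prop.: Properties of animated rings}; by conservativity the original comparison is an equivalence. The empty coproduct is handled the same way, and the case of retracts then follows formally since $\deltaCR \to \deltaCRanWitt$ is fully faithful and $\deltaCRanWitt$ is idempotent complete.

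I expect the only genuinely delicate point to be the compatibility bookkeeping underlying the first paragraph: verifying that the equivalence $W_2 \circ \iota \simeq \iota \circ W_2$ of \cref{Prop.: animated Witt vectors on CR} is compatible with the ghost component $w_0$, so that it really assembles into a map of cospans over $\CR^{B\Nadd}$. This is implicit in \cite{BL} but deserves to be spelled out. Everything else is a combination of the classical $W_2$-description of $\delta$-rings, the general fact that pullbacks of fully faithful functors are fully faithful, and conservativity of the forgetful functors.
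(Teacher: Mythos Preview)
Your proof is correct and follows essentially the same approach as the paper: both realise $\deltaCR$ and $\deltaCRanWitt$ as pullbacks of analogous cospans, invoke the compatibility $W_2 \circ \iota \simeq \iota \circ W_2$ to produce a map of cospans with fully faithful components, and deduce full faithfulness on the limit; for coproducts both reduce via the conservative, colimit-preserving forgetful functor to the statement that $\iota$ preserves coproducts of copies of $\Z[X]$. Your version is slightly more explicit in unpacking $\LEq$ as a pullback and in treating retracts separately, and you rightly flag the compatibility of the equivalence from \cref{Prop.: animated Witt vectors on CR} with $w_0$ as the one point deserving care---the paper leaves this implicit as well.
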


\begin{proof}
	Replacing animated with ordinary commutative rings in \cref{Rmk.: animated delta rings via Witt vectors} gives a definition of ordinary $\delta$-rings, see e.g. \cite[Remark 2.4]{Prisms}.
	\Cref{Prop.: animated Witt vectors on CR} implies that the inclusion induces a fully faithful functor on lax equalizers which sends $(A,s_A)$ to $(\iota(A),\iota(A) \to \iota W_2(A) \simeq W_2(\iota A))$. This extends to a natural transformation
	of functors $\Lambda^2_2 \to \Catinf$
	\begin{equation}\label{Eq.: nat transformation inclusion deltaCR}
	\begin{diag}
		\LEq(\id_{\CR},W_2) \ar[r, "(w_0)_\ast"] \ar[d, "\text{ind}(\iota)"] & (\CR)^{B\Nadd} \ar[d, "\iota_\ast"]& \CR \ar[l, "\id"'] \ar[d, "\iota"]\\[4ex]
		\LEq(\id_{\CRan},W_2) \ar[r, "(w_0)_\ast"] & (\CRan)^{B\Nadd} & \CRan \ar[l, "\id"']
	\end{diag}
	\end{equation}
	which induces a fully faithful functor $\iota \colon \deltaCR \to \deltaCRanWitt$ on the limit. It suffices to prove the second assertion for the composite ${\forget_\delta} \circ \iota \colon \deltaCR \to \CRan$.
	Observe that ${\forget_\delta} \circ \iota \simeq \iota \circ \forget_\delta$ by construction where the right hand side forgets the ordinary $\delta$-structure and then includes into animated rings. The claim now follows as the underlying
	commutative ring of $A \in \deltaPoly$ is of the form $\Z[x_i |\, i \in I]$ for $I$ countably infinite and the inclusion $\iota \colon \CR \to \CRan$ preserves small coproducts of $\Z[x]$.
\end{proof}

\begin{proposition}\label{Prop.: pi0 is left adjoint to inclusion of deltaCR}
	The fully faithful functor $\deltaCR \to \deltaCRanWitt$ of \cref{Cor.: fully faithful embedding of deltaCR} admits a left adjoint which is given by the assignment $A \mapsto \pi_0(A)$ on underlying
	animated rings.
\end{proposition}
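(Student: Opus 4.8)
The plan is to obtain the left adjoint by passing to adjoints levelwise in the natural transformation of $\Lambda^2_2$-diagrams \cref{Eq.: nat transformation inclusion deltaCR} whose limit is the fully faithful functor $\iota\colon\deltaCR\to\deltaCRanWitt$ of \cref{Cor.: fully faithful embedding of deltaCR}. The three vertical functors there are $\operatorname{ind}(\iota)$ on lax equalizers, $\iota_*$ on $B\Nadd$-diagrams, and $\iota$ on animated rings. I would check that all three are right adjoints, verify the Beck--Chevalley condition for the two naturality squares, and then conclude that the levelwise left adjoints glue to a left adjoint $L$ of $\iota$ which on underlying animated rings is computed by $\pi_0$.

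The two easy cases are $\iota\colon\CR\to\CRan$, a right adjoint with left adjoint $\pi_0$ by \cref{Prop.: Properties of animated rings}, and $\iota_*\colon(\CR)^{B\Nadd}\to(\CRan)^{B\Nadd}$, a right adjoint with left adjoint $(\pi_0)_*$ by postcomposition. For $\operatorname{ind}(\iota)\colon\LEq(\id_{\CR},W_2)\to\LEq(\id_{\CRan},W_2)$ I would write the lax equalizer itself as a limit of the cospan $\mathcal{C}\xrightarrow{(\id,W_2)}\mathcal{C}\times\mathcal{C}\xleftarrow{(\mathrm{ev}_0,\mathrm{ev}_1)}\Fun(\Delta^1,\mathcal{C})$ and note that $\operatorname{ind}(\iota)$ is the limit of the evident transformation of cospans with components $\iota$, $\iota\times\iota$, $\Fun(\Delta^1,\iota)$, each a right adjoint. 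The only Beck--Chevalley condition here that is not formal is left adjointability of the square relating $(\id,W_2)$ to $\iota$, and in its second coordinate this mate is exactly the transformation $\pi_0 W_2\to W_2\pi_0$ that is an equivalence by \cref{Cor.: adjointable square Witt vectors}. Hence $\operatorname{ind}(\iota)$ has a left adjoint $\operatorname{ind}(\pi_0)$, computed componentwise, sending $(A,s_A)$ to $\pi_0 A$ equipped with the composite $\pi_0 A\xrightarrow{\pi_0 s_A}\pi_0 W_2(A)\xrightarrow{\sim}W_2(\pi_0 A)$.

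Next I would verify the Beck--Chevalley condition for the two naturality squares of \cref{Eq.: nat transformation inclusion deltaCR}. For the square built from the constant-endomorphism inclusion $\id\colon\CR\to(\CR)^{B\Nadd}$ this is formal, since $\pi_0$ and $(\pi_0)_*$ are visibly compatible with these inclusions. For the square built from $(w_0)_*$ one must compare $(\pi_0)_*\circ(w_0)_*$ with $(w_0)_*\circ\operatorname{ind}(\pi_0)$; by the formula for $\operatorname{ind}(\pi_0)$ above this reduces to the compatibility of the ghost component $w_0$ with the equivalence $\pi_0 W_2\simeq W_2\pi_0$, which I would extract from naturality of $w_0$ together with \cref{Prop.: animated Witt vectors on CR}.

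With all components right adjoints and all Beck--Chevalley conditions in place, the levelwise left adjoints $\operatorname{ind}(\pi_0)$, $(\pi_0)_*$, $\pi_0$ assemble (essentially uniquely) into a natural transformation of $\Lambda^2_2$-diagrams $\mathcal{D}^{\CRan}\Rightarrow\mathcal{D}^{\CR}$ with levelwise units and counits again natural in the $\Lambda^2_2$-variable; passing to limits then yields a functor $L\colon\deltaCRanWitt\to\deltaCR$ left adjoint to $\iota$, the triangle identities being inherited levelwise. Since $L$ is computed componentwise, its value in the $\CRan$-component, i.e.\ the underlying animated ring $\forget_\delta(L(A,s_A,h_A))$, is $\pi_0(A)$, which is the assertion. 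I expect the main obstacle to be the Beck--Chevalley verification for the $(w_0)_*$-square, i.e.\ pinning down the interaction of the ghost component with the Witt adjointability equivalence of \cref{Cor.: adjointable square Witt vectors}, as everything else is either cited or formal. (Alternatively one could deduce the mere existence of $L$ from the adjoint functor theorem, using that $\iota$ is fully faithful and preserves small limits and filtered colimits and that $\deltaCR$ is presentable, and then identify $L$ on underlying rings by evaluating on the free animated $\delta$-ring $\delta\{x\}$ — which is discrete — and passing to sifted colimits; but this requires colimit bookkeeping for $\pi_0$ that the diagrammatic argument avoids.)
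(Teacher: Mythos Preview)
Your proposal is correct and follows essentially the same approach as the paper: both pass to levelwise left adjoints in the natural transformation \cref{Eq.: nat transformation inclusion deltaCR}, verify the mate (Beck--Chevalley) conditions via \cref{Cor.: adjointable square Witt vectors}, and conclude that the induced functor on limits is the desired left adjoint. The paper's proof is terser, citing \cite[Corollary 5.2.9]{Rune24} for the abstract fact that $\lim$ preserves adjunctions as a functor of $(\infty,2)$-categories, whereas you unpack the assembly by hand and additionally decompose the lax equalizer itself as a further limit to handle $\operatorname{ind}(\iota)$; this extra layer is a reasonable way to justify what the paper simply asserts.
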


\begin{proof}
	The natural transformation defining the inclusion of \cref{Cor.: fully faithful embedding of deltaCR} admits pointwise left adjoints all of which are induced by $\pi_0 \colon \CRan \to \CR$ and satisfies
	the mate condition. This follows essentially from \cref{Cor.: adjointable square Witt vectors}. The claim thus follows from the description of adjoints in functor $(\infty,2)$-categories \cite[Corollary 5.2.9]{Rune24} together with the fact that
	$\lim$ preserves adjunctions as a functor of $(\infty,2)$-categories.
\end{proof}

\begin{notation}
	We denote the left adjoint of \cref{Prop.: pi0 is left adjoint to inclusion of deltaCR} again by $\pi_0 \colon \deltaCRanWitt \to \deltaCR$.
\end{notation}

\begin{lemma}\label{Lemma: mod p fibre sequence}
	For any $A \in \DZ$ there exists a natural fibre sequence
	\[\begin{diag}
		A \ar[r, "\cdot p"] & A \ar[r, "\red_A"] & A \otimes^L_\Z \Fp
	\end{diag}\]
 	in $\DZ$. This is a fibre sequence in $\DZ_{\geq 0}$ whenever $A$ is connective.
\end{lemma}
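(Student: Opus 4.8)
The plan is to produce the fibre sequence directly at the level of chain complexes by tensoring with a fixed free resolution of $\Fp$ over $\Z$. Concretely, I would use the two-term complex $K = (\Z \xrightarrow{\cdot p} \Z)$, placed in homological degrees $1$ and $0$, which is a resolution of $\Fp$ by free $\Z$-modules. For any $A \in \DZ$ we then have $A \otimes^L_{\Z} \Fp \simeq A \otimes_{\Z} K$, and the evident short exact sequence of complexes $0 \to \Z \xrightarrow{\cdot p} \Z \to \Fp \to 0$ — equivalently the fibre sequence $\Z \xrightarrow{\cdot p} \Z \to K$ in $\DZ$ — can be tensored with $A$. Since $\Z$ is flat, $(-)\otimes^L_{\Z} A$ preserves fibre sequences, and we obtain a natural fibre sequence
\[
	A \xrightarrow{\cdot p} A \xrightarrow{\red_A} A \otimes^L_{\Z} \Fp
\]
in $\DZ$, where the first map is multiplication by $p$ and the second is the canonical reduction. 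Naturality in $A$ is automatic since the construction is obtained by applying the functor $(-)\otimes^L_{\Z} A$, functorially in $A$, to a fixed fibre sequence in $\DZ$; alternatively one may phrase everything as a natural transformation of exact functors $\DZ \to \DZ$.

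For the second assertion, suppose $A$ is connective. The functor $\cdot p \colon A \to A$ is a map between connective objects, and $A \otimes^L_{\Z} \Fp$ is again connective since $\Fp$ is connective and the tensor product of connective objects is connective. A fibre sequence in $\DZ$ all of whose terms are connective is also a fibre sequence in $\DZ_{\geq 0}$: the inclusion $\DZ_{\geq 0} \hookrightarrow \DZ$ preserves and reflects the relevant pullback squares, because $\DZ_{\geq 0}$ is closed under limits and colimits indexed by the relevant diagram and the fibre computed in $\DZ$ of a map of connective objects — here $\fib(\red_A) \simeq A$ — is itself connective. Hence the same diagram exhibits a fibre sequence in $\DZ_{\geq 0}$.

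There is no real obstacle here; the only point requiring a moment's care is checking that the fibre of $\red_A$, computed in $\DZ$, is genuinely connective (equivalently that $\cdot p \colon A \to A$ has connective fibre), which is exactly what the tensored short exact sequence $0 \to \Z \to \Z \to \Fp \to 0$ gives us — the long exact sequence on homotopy groups shows $\pi_{-1}$ of the fibre vanishes because $\pi_0(A) \xrightarrow{\cdot p} \pi_0(A)$ need not be surjective but $\pi_{-1}(A) = 0$ already forces the fibre to be connective. So the argument reduces entirely to flatness of $\Z$ and the explicit resolution $\Z \xrightarrow{\cdot p} \Z$ of $\Fp$.
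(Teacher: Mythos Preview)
Your proposal is correct and follows essentially the same approach as the paper: both tensor the fibre sequence $\Z \xrightarrow{\cdot p} \Z \to \Fp$ with $A$ and use exactness of $(-)\otimes^L_{\Z} A$, then handle the connective case by noting that all terms remain in $\DZ_{\geq 0}$. Two minor remarks: the phrase ``since $\Z$ is flat'' is not quite the right justification for exactness of the derived tensor product (exactness holds because you are in a stable $\infty$-category and the functor preserves colimits), and your final paragraph is redundant since the fibre in question is literally $A$, which is connective by hypothesis.
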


\begin{proof}
	This follows from tensoring the fibre sequence $\Z \xrightarrow{\cdot p} \Z \to \Fp$ in $\DZ$ with $A$. Here we use that $(-) \otimes^L_\Z A \colon \DZ \to \DZ$ is exact as well as right t-exact whenever $A$ is connective.
\end{proof}

We end this section with showing that the $\infty$-categories of animated $\delta$-rings defined in terms of Frobenius lifts and animated Witt vectors are equivalent. To this end, we give a full proof of the result of Bhatt-Lurie-Scholze:

\begin{proposition}[{\cite[A.10]{BL} and \cite[Remark 2.5]{Prisms}}]\label{Prop.: pullback square for animated delta structures}
	For any animated ring $A$ there exists a pullback square
	\begin{equation}\label{Eq.: pullback square Witt}
	\begin{diag}[column sep=3em]
		W_2(A) \ar[d, "w_0"'] \ar[r, "w_1"] \ar[dr, phantom, "\lrcorner", near start] & A \ar[d, "\red_A"] \\[5ex]
		A \ar[r, "\Fr \circ \red_A"] & A \otimes^L \Fp
	\end{diag}
	\end{equation}
	in $\CRan$ which is functorial in $A$. Here, $w_0$ and $w_1$ denote animated ghost components.
\end{proposition}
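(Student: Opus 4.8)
The plan is to produce the square as a left Kan extension of its classical counterpart and then to check it is a pullback by reducing to finitely generated polynomial $\Z$-algebras. First I would construct the square and the comparison map. The functor $W_2$, the ghost components $w_0,w_1\colon W_2\Rightarrow\id$, the derived mod $p$ reduction $(-)\otimes^L\Fp$ with its unit $\red$, and the Frobenius $\Fr$ are all animated from classical constructions; in particular they preserve sifted colimits and restrict to the usual constructions on $\Poly$ (using \cref{Prop.: animated Witt vectors on CR}). The two natural transformations $\red\circ w_1$ and $(\Fr\circ\red)\circ w_0$ from $W_2$ to $(-)\otimes^L\Fp$ preserve sifted colimits and agree on $\Poly$, where both send a length-$2$ Witt vector $(a_0,a_1)$ of a polynomial ring to the class of $a_0^p+pa_1\equiv a_0^p$ modulo $p$; hence they are canonically homotopic by the equivalence of functor categories in \cref{Prop.: Properties of animated rings}. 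This homotopy fills the square naturally in $A$ and classifies a natural comparison map $\eta_A\colon W_2(A)\to A\times_{A\otimes^L\Fp}A$ into the pullback formed over $\red_A$ and $\Fr\circ\red_A$.

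Next I would reduce to $\Poly$. Since $W_2$ preserves sifted colimits by construction, it suffices to know that the target functor $P:=(-)\times_{(-)\otimes^L\Fp}(-)$ also does, for then $\eta$ is an equivalence as soon as each $\eta_P$ is (again by \cref{Prop.: Properties of animated rings}). To see this I would apply the conservative functor $[-]_{\mathbf{E}_\infty}\colon\CRan\to\Sp_{\geq 0}$, which preserves small limits and sifted colimits; since $\red_A$ is surjective on $\pi_0$, the spectrum $[P(A)]_{\mathbf{E}_\infty}$ is the pullback, computed in the stable category $\Sp$, of the sifted-colimit-preserving functors $[-]_{\mathbf{E}_\infty}$ and $[(-)\otimes^L\Fp]_{\mathbf{E}_\infty}$, and finite limits commute with sifted colimits in $\Sp$; conservativity of $[-]_{\mathbf{E}_\infty}$ then transports the conclusion back to $\CRan$.

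Finally, the polynomial case: for $P\in\Poly$ the ring $P$ is $p$-torsion-free, so $P\otimes^L\Fp\simeq P/p$ is discrete by \cref{Lemma: mod p fibre sequence}; the fibre product $P\times_{P/p}P$ formed in $\CRan$ then has discrete underlying anima (a homotopy pullback of discrete spaces along maps to a discrete space), hence is discrete, and since $\iota\colon\CR\to\CRan$ preserves limits it coincides with the ordinary fibre product in $\CR$. Using \cref{Prop.: animated Witt vectors on CR} to identify $W_2(P)$ with its classical value, checking that $\eta_P$ is an equivalence becomes the standard fact that the ghost-coordinate map $(w_0,w_1)\colon W_2(P)\to P\times_{P/p}P$ (over $\Fr\circ\red$ and $\red$) is an isomorphism for $p$-torsion-free $P$: it is injective because ghost coordinates embed $W_2$ of a $p$-torsion-free ring into $P\times P$, and surjective because for $(x,y)$ with $\bar y=\bar x^{\,p}$ the element $y-x^p$ lies in $pP$ and is uniquely divisible by $p$. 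Functoriality in $A$ is automatic from the construction. I expect the main obstacle to be the sifted-colimit claim for $P$: the subtlety is that one must pass to connective spectra, where pullbacks commute with sifted colimits, rather than argue in anima. An alternative that sidesteps the reduction is to compute $[\eta_A]$ directly on underlying anima for every $A$, using \cref{Prop.: underlying anima of Witt} to identify $[W_2(A)]\simeq[A]^2$ and \cref{Prop.: Frobenius homotopy on underlying spaces} together with \cref{Cor.: pb square underlying spaces} for the right-hand side; there the delicate point becomes matching the two identifications compatibly along $\eta_A$.
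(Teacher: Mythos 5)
Your proposal is correct and follows essentially the same strategy as the paper: animate the classical square, verify the pullback property on $\Poly$ using $p$-torsion-freeness, and propagate along sifted colimits by passing to a stable category where the connectivity of the pullback (your observation that $\red_A$ is $\pi_0$-surjective is exactly the paper's appeal to \cref{Lemma: mod p fibre sequence}) lets finite limits commute with sifted colimits. The only cosmetic differences are that you work in $\Sp_{\geq 0}$ via $[-]_{\mathbf{E}_\infty}$ where the paper uses the forgetful functor to $\DZ_{\geq 0}$, and that you spell out the comparison map and the polynomial-case ghost computation more explicitly.
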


\begin{proof}
	Denote by $F^{\an}$ the animation of the functor $\CR \to \Fun(\Delta^1 \times \Delta^1,\CR)$ which assigns to $A \in \CR$ the commutative square $F(A)$
	\[\begin{diag}[column sep=3em]
			W_2(A) \ar[d, "w_0"'] \ar[r, "w_1"] & A \ar[d, "\red_A"] \\[5ex]
			A \ar[r, "\Fr \circ \red_A"] & A \otimes_{\Z} \Fp
	\end{diag}\]
	in $\CR$. Left adjoints preserve left Kan extensions, thus $F^{\an}(A)$ agrees with \cref{Eq.: pullback square Witt} for any animated ring $A$. Note that $F^{\an}(A)$ is a pullback square in animated
	rings for any $A \in \Poly$ as polynomial $\Z$-algebras are $p$-torsion free. We show that the collection of animated rings $A \in \CRan$ for which $F^{\an}(A)$ is a pullback square is closed
	under sifted colimits in $\CRan$. For this, observe that \cref{Eq.: pullback square Witt} is a pullback in $\CRan$ if and only if it is a pullback underlying in $\DZ$. This uses
	\begin{enumerate}[itemsep=1.5ex, topsep=2ex]
	\item The forgetful functor from animated rings to $\DZ_{\geq 0}$ is conservative and preserves small limits.
	\item The limit of $\restr{F^{\an}(A)}{\Lambda^2_2}$ in $\DZ$ is connective for any animated ring $A$. This can be seen from \cref{Lemma: mod p fibre sequence} together with the pasting law for pullbacks.
	\end{enumerate}
	Closure under sifted colimits now follows from stability of $\DZ$ since $F^{\an}$ and the forgetful functor $\CRan \to \DZ$ preserve sifted colimits.
\end{proof}

\begin{proposition}\label{Prop.: equivalence of definitions of delta rings via Witt and Frobenius}
	There exists an equivalence of $\infty$-categories $\deltaCRan \simeq \deltaCRanWitt$.
\end{proposition}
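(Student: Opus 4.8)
The decisive input is \cref{Prop.: pullback square for animated delta structures}, which identifies $W_2$, naturally in the animated ring $A$, with the pullback of the cospan $A \xrightarrow{\Fr \circ \red_A} A \otimes^L \Fp \xleftarrow{\red_A} A$ in $\CRan$, the ghost components $w_0$ and $w_1$ corresponding to the two projections. The plan is to feed this into the two pullback presentations \cref{Def.: deltapCRan via Frobenius lifts} and \cref{Eq.: dCRanWitt} and exhibit both as the pullback of a single cospan. Write $\mathcal D$ for the $\infty$-category of pairs $(A, f \colon A \to A \otimes^L \Fp)$ with $A$ an animated ring and $f$ a map of animated rings --- equivalently $\LEq(\id_{\CRan}, (-)\otimes^L\Fp)$, viewing $(-)\otimes^L\Fp$ as an endofunctor of $\CRan$ via \cref{Prop.: Properties of animated rings}(7) and (8) --- and consider the cospan
\[
	\CRan \xrightarrow{\; p \;} \mathcal D \xleftarrow{\; q \;} (\CRan)^{B\Nadd}, \qquad p(A) = (A, \Fr \circ \red_A), \qquad q(A, \phi_A) = (A, \red_A \circ \phi_A).
\]
I claim that both $\deltaCRanWitt$ and $\deltaCRan$ are equivalent to the pullback $\CRan \times_{\mathcal D} (\CRan)^{B\Nadd}$ formed along $p$ and $q$.

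First I would handle $\deltaCRanWitt$. Regard the cospan of \cref{Prop.: pullback square for animated delta structures} as a diagram of endofunctors of $\CRan$, both outer terms equal to $\id_{\CRan}$, apex $(-)\otimes^L\Fp$, structure maps $\Fr\circ\red$ and $\red$; then \cref{Prop.: pullback square for animated delta structures} exhibits $W_2$ as the pullback $\id_{\CRan} \times_{(-)\otimes^L\Fp} \id_{\CRan}$ in $\Fun(\CRan,\CRan)$. Since $\LEq(\id_{\CRan},-)$ is assembled from limits it preserves pullbacks, so using $\LEq(\id_{\CRan},\id_{\CRan}) \simeq (\CRan)^{B\Nadd}$ one obtains $\LEq(\id, W_2) \simeq (\CRan)^{B\Nadd} \times_{\mathcal D} (\CRan)^{B\Nadd}$, where the first factor maps to $\mathcal D$ by $(A,\phi) \mapsto (A, \Fr\circ\red_A\circ\phi)$, the second by $(A,\phi)\mapsto(A,\red_A\circ\phi)=q(A,\phi)$, and $(w_0)_\ast$ becomes the projection onto the first factor. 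Substituting into \cref{Eq.: dCRanWitt} and forming the pullback of that first factor along the functor $A \mapsto (A,\id_A)$ collapses it to $\CRan$, its map to $\mathcal D$ becoming $A \mapsto (A,\Fr\circ\red_A)=p(A)$, while the second factor and $q$ are untouched; hence $\deltaCRanWitt \simeq \CRan \times_{\mathcal D} (\CRan)^{B\Nadd}$ along $p$ and $q$. On objects this records an animated ring $A$, an endomorphism $\phi_A$, and a homotopy $\Fr\circ\red_A \simeq \red_A\circ\phi_A$ --- the data of an object of $\deltaCRan$, as in the Observation following \cref{Def.: deltapCRan via Frobenius lifts}.

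Next, for $\deltaCRan$, the cospan of \cref{Def.: deltapCRan via Frobenius lifts} is $(\CRan)^{B\Nadd} \xrightarrow{(-)\otimes^L\Fp} (\CRFpan)^{B\Nadd} \xleftarrow{\Fr} \CRFpan$. Since $(-)\otimes^L\Fp \colon \CRan \to \CRFpan$ is left adjoint to the forgetful functor (\cref{Prop.: Properties of animated rings}(8)), the adjunction equivalence $\Map_{\CRFpan}(A\otimes^L\Fp, A\otimes^L\Fp) \simeq \Map_{\CRan}(A, A\otimes^L\Fp)$ --- which carries $\Fr$ to $\Fr\circ\red_A$ and $\phi_A\otimes^L\Fp$ to $\red_A\circ\phi_A$ by naturality of $\red$ --- rewrites this pullback, naturally in $A$, as $\CRan\times_{\mathcal D}(\CRan)^{B\Nadd}$ along $p$ and $q$ as well; this is the functorial form of \cref{Rmk.: adjoint homotopies}. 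Combining the two identifications gives $\deltaCRanWitt \simeq \CRan\times_{\mathcal D}(\CRan)^{B\Nadd} \simeq \deltaCRan$.

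Everything above is formal once set up; the part needing genuine care is coherence. In the Witt step one must verify that $\LEq(\id_{\CRan},-)$ really sends the functor-pullback $W_2 = \id_{\CRan}\times_{(-)\otimes^L\Fp}\id_{\CRan}$ to a pullback of $\infty$-categories and that the contraction against $A \mapsto (A,\id_A)$ is natural; in the Frobenius step one must check that the adjunction-rewriting is compatible with the $B\Nadd$-action and natural in $A$. A route that bypasses part of this bookkeeping is to construct the comparison functor $\deltaCRanWitt \to \deltaCRan$ directly --- sending $(A, s_A, h_A)$ to $(A,\, w_1\circ s_A,\, h_A')$, where $h_A'$ combines commutativity of the square in \cref{Prop.: pullback square for animated delta structures} with $h_A$ --- and then to prove it an equivalence by unwinding both sides on objects and, for full faithfulness, on mapping spaces via \cref{Prop.: mapping spaces in deltapCRan}: through the pullback square for $W_2$, a morphism in $\deltaCRanWitt$ unwinds to exactly a triple $(f,\sigma,\tau)$ as in \cref{Rmk.: objects in map deltaCRan}.
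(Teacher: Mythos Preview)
Your proof is correct and follows essentially the same strategy as the paper: both feed the pullback description of $W_2$ from \cref{Prop.: pullback square for animated delta structures} into the two definitions and identify $\deltaCRanWitt$ and $\deltaCRan$ with the pullback of a common cospan. The only notable difference is the choice of auxiliary category and the final step: your $\mathcal D = \LEq(\id_{\CRan},(-)\otimes^L\Fp)$ is equivalent (via the adjunction $(-)\otimes^L\Fp \dashv \text{forget}$) to the paper's $\CRan \times_{\CRFpan} (\CRFpan)^{B\Nadd}$, but the paper stays on the $\CRFpan$-side throughout and finishes by a further pasting of pullback squares that recovers the defining square of \cref{Def.: deltapCRan via Frobenius lifts} directly---thereby sidestepping the coherence check for the adjunction-rewriting that you correctly flag.
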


\begin{proof}
	\Cref{Prop.: pullback square for animated delta structures} immediately shows that the $\infty$-category $\LEq(\id,W_2)$ admits a description in terms of
	a pullback square
	\[\begin{diag}[column sep=small]
		\LEq(\id,W_2) \ar[r, "(w_1)_\ast"] \ar[d, "(w_0)_\ast"] \ar[dr, phantom, "\lrcorner" near start]& (\CRan)^{B\Nadd} \ar[d, "\ev \times (-) \otimes^L \Fp"]  \\[4ex]
		(\CRan)^{B\Nadd} \ar[r, "\alpha"] & \CRan \times_{\CRFpan} (\CRFpan)^{B\Nadd}
	\end{diag}\]
	in $\Catinf$. Here, the fibre product in the lower right hand corner is taken over $(-) \otimes^L \Fp$ and $\ev$ and $\alpha$ is induced by the pair of functors $(\ev,{\Fr} \circ (-) \otimes^L \Fp)$. Pasting with the pullback
	square in \cref{Rmk.: animated delta rings via Witt vectors} gives the following pullback
	\begin{equation}\label{Eq.: pb square for deltaCRanWitt}
	\begin{diag}[column sep=small]
		\deltaCRanWitt \ar[r] \ar[d] \ar[dr, phantom, "\lrcorner" near start] & (\CRan)^{B\Nadd} \ar[d, "\ev \times (-) \otimes^L \Fp"]  \\[4ex]
		\CRan \ar[r, "\beta"] &  \CRan \times_{\CRFpan} (\CRFpan)^{B\Nadd}
	\end{diag}
	\end{equation}
	in $\Catinf$. The lower horizontal functor in \cref{Eq.: pb square for deltaCRanWitt} maps $A \in \CRan$ to the object $(A,(A \otimes^L \Fp,\Fr))$ in the fibre product. Further observe that each square in the diagram below
	\[\begin{diag}[column sep=small]
		\CRan \ar[r, "\beta"] \ar[d, "(-)\otimes^L \Fp"] & \CRan \times_{\CRFpan} (\CRFpan)^{B\Nadd} \ar[r, "\pr"] \ar[d, "\pr"] & \CRan \ar[d, "(-) \otimes^L\Fp"] \\[3ex]
		\CRFpan \ar[r, "\Fr"] & (\CRFpan)^{B\Nadd} \ar[r, "\ev"] & \CRFpan
	\end{diag}\]
	is a pullback square: The right hand square is such by definition and the assertion for the outer square holds since ${\pr} \circ \beta \simeq \id$ and $\ev \circ \Fr \simeq \id$ by \cref{Prop.: properties Frobenius}.
	Pasting the left hand square and \cref{Eq.: pb square for deltaCRanWitt} thus yields the equivalence $\deltaCRan \simeq \deltaCRanWitt$.
\end{proof}

\newpage
\section{Free animated $\delta$-rings}\label{Sect.: The animation of delta rings}

Bhatt and Lurie show in \cite{BL} that the $\infty$-category of animated $\delta$-rings is equivalent to the animation of the $1$-category of $\delta$-rings. Their result rests crucially on identifying free
animated $\delta$-rings with their ordinary counterparts. In this section, we give an alternative proof of this result from the perspective of Frobenius lifts.

\begin{proposition}[Free $\delta$-rings]
	The free $\delta$-ring on a single generator $x$ is given by the polynomial ring $\Z[x_n |\, n\geq 0]$ equipped with the unique $\delta$-structure determined by the Frobenius lift $\phi$ defined by
	$\phi(x_n) = x_n^p + p x_{n+1}$ for all $n \geq 0$.  This identifies with the animated $\delta$-ring
	\[\begin{diag}[column sep=-0.5em]
		&  |[alias=Z]| \Z[x_n |\, n\geq 0] \ar[dr, "\red"] & \\[3ex]
		\Z[x_n |\, n\geq 0] \ar[ur, "\phi"] \ar[rr, "\Fr \circ \red"', ""{name=U, above}]  \ar[r, from=Z, to=U, phantom, "" {font=\footnotesize, yshift=-0.8ex, xshift=0.3ex}] && \Z[x_n |\, n\geq 0] \otimes^L \Fp
	\end{diag}\]
	which we denote by $\delta\{x\}$. Here, we drop the homotopy from notation as $\Z[x_n |\, n\geq 0] \otimes^L \Fp \simeq \Fp[x_n |\, n\geq 0]$ is discrete.
\end{proposition}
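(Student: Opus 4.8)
The plan is to deduce both assertions from the classical theory of $\delta$-rings together with the embedding $\deltaCR \hookrightarrow \deltaCRan$ already established. First I recall the computation of the free $\delta$-ring: a $\delta$-ring homomorphism out of $\delta\{x\}$ into a commutative ring $B$ is the same datum as an element of $B$, and since a $\delta$-operation on a polynomial ring is uniquely determined by its values on the generators through the additivity and Leibniz rules, the underlying commutative ring of $\delta\{x\}$ is the polynomial ring $\Z[x_n \mid n \geq 0]$ on the variables $x_n := \delta^n(x)$; see \cite[Section 2]{Prisms}. The associated Frobenius lift $\phi(a) = a^p + p\,\delta(a)$ then satisfies $\phi(x_n) = (\delta^n x)^p + p\,\delta^{n+1}(x) = x_n^p + p x_{n+1}$.

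Next I pass to animated $\delta$-rings through the fully faithful functor $\iota\colon \deltaCR \to \deltaCRanWitt$ of \cref{Cor.: fully faithful embedding of deltaCR} followed by the equivalence $\deltaCRanWitt \simeq \deltaCRan$ of \cref{Prop.: equivalence of definitions of delta rings via Witt and Frobenius}. Unwinding the latter, a Witt-style datum $(A, s_A, h_A)$ corresponds to the Frobenius-lift datum whose endomorphism is $w_1 \circ s_A$ and whose homotopy is induced by the pullback square $W_2(A) \simeq A \times_{A \otimes^L \Fp} A$ of \cref{Prop.: pullback square for animated delta structures}. Applied to $\delta\{x\}$: the underlying animated ring is $\iota(\Z[x_n \mid n \geq 0])$, which is the discrete ring $\Z[x_n \mid n \geq 0]$ since $\iota$ preserves arbitrary coproducts of $\Z[X]$ by \cref{Prop.: Properties of animated rings}; and since $W_2$ restricts to ordinary $2$-truncated Witt vectors on commutative rings (\cref{Prop.: animated Witt vectors on CR}), the endomorphism $w_1 \circ s_{\Z[x_n]}$ is precisely the classical Frobenius lift $\phi$ from the previous paragraph.

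It remains to check that the homotopy datum is forced, so that it may indeed be dropped from the notation. The ring $\Z[x_n \mid n \geq 0]$ is free as a $\Z$-module, hence by \cref{Lemma: mod p fibre sequence} its derived reduction $\Z[x_n \mid n \geq 0] \otimes^L \Fp$ is the discrete ring $\Fp[x_n \mid n \geq 0]$; and since the underlying anima of a discrete animated ring is a set and $\Z[x_n \mid n \geq 0]$ is a coproduct of copies of $\Z[X]$, the mapping anima $\Map_{\CRan}(\Z[x_n \mid n \geq 0], \Fp[x_n \mid n \geq 0])$ is itself discrete. Thus the space of homotopies witnessing $\red \circ \phi \simeq \Fr \circ \red$ in \cref{Def.: deltapCRan via Frobenius lifts} is empty or contractible, and it is nonempty since $\phi(x_n) = x_n^p + p x_{n+1}$ reduces modulo $p$ to the $p$-th power map. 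This gives the displayed animated $\delta$-ring. The only steps that will require care --- and the closest thing here to an obstacle --- are the bookkeeping in the equivalence $\deltaCRanWitt \simeq \deltaCRan$ and the two discreteness assertions that make the homotopy unique; neither presents real difficulty.
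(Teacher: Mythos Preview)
Your proof is correct and considerably more thorough than the paper's own treatment, which consists of the single sentence ``See e.g. \cite[Th\'eor\`eme 1]{JoyalDelta} or \cite[Lemma 2.11]{Prisms}.'' The paper regards the classical identification of the free $\delta$-ring as a known fact to be cited, and treats the passage to the animated setting as implicit in the already-established embedding $\deltaCR \hookrightarrow \deltaCRan$ of \cref{Cor.: fully faithful embedding of deltaCR,Prop.: equivalence of definitions of delta rings via Witt and Frobenius}, with the discreteness of the homotopy datum justified inside the statement itself rather than in the proof.

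What you do differently: you sketch the classical argument rather than cite it, explicitly trace through the equivalence $\deltaCRanWitt \simeq \deltaCRan$ to identify the Frobenius lift as $w_1 \circ s$, and spell out why the relevant mapping anima is discrete. None of this is wrong, and the bookkeeping you flag as the only potential obstacle is indeed routine. The main observation is that for this particular proposition the paper is content with a reference; your version would serve well as an expanded remark but is more than the paper deems necessary here. Note also that the really substantive claim---that $\delta\{x\}$ is free in the \emph{animated} sense---is the content of the \emph{next} proposition (\cref{Prop.: free animated delta ring}), not this one.
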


\begin{proof}
	See e.g. \cite[Théorème 1]{JoyalDelta} or \cite[Lemma 2.11]{Prisms}.
\end{proof}

\begin{proposition}[Free animated $\delta$-rings, {\cite[A.19.]{BL}}]\label{Prop.: free animated delta ring}
	The map $\eta \colon \Z[X] \to \forget_\delta (\delta\{x\}) \simeq \Z[x_n |\, n\geq 0]$ in $\CRan$ defined by $x \mapsto x_0$ exhibits $\delta\{x\}$ as the free animated $\delta$-ring.
	That is, $\eta$ induces an equivalence
	\begin{equation*}
		{\eta^\ast} \circ \forget_\delta \colon \Map_{\deltaCRan}(\delta\{x\},(B,\phi_B,h_B)) \xrightarrow{\sim} \Map_{\CRan}(\Z[X], B)
	\end{equation*}
	for any animated $\delta$-ring $(B,\phi_B,h_B)$.
\end{proposition}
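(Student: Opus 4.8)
The plan is to compute the mapping space $\Map_{\deltaCRan}(\delta\{x\}, (B,\phi_B,h_B))$ using the explicit pullback formula of \cref{Prop.: mapping spaces in deltapCRan} and to show that the projection $\eta^\ast \circ \forget_\delta$ exhibits it as equivalent to $\Map_{\CRan}(\Z[X],B) \simeq [B]$. Write $A = \Z[x_n \mid n \geq 0]$ for the underlying ring of $\delta\{x\}$, so that $\Map_{\CRan}(A,B) \simeq [B]^{\Nadd}$ via evaluation on the generators $x_n$, and under this identification the map $\eta^\ast$ is the projection to the $x_0$-coordinate, i.e. the $0$-th factor $[B]^{\Nadd} \to [B]$. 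By \cref{Prop.: mapping spaces in deltapCRan}, $\Map_{\deltaCRan}(\delta\{x\},(B,\phi_B,h_B))$ is the pullback of $\Map_{\CRan}(A,B) \xrightarrow{\theta(\phi,\phi_B)} \Map_{\CRan}(A, B\times_{B\otimes^L\Fp} B) \xleftarrow{\Delta_\ast} \Map_{\CRan}(A,B)$.

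The key computation is to identify $\theta(\phi,\phi_B)$ in coordinates. First I would pass to underlying anima: since $A$ and $B \times_{B \otimes^L \Fp} B$ are built from copies of $\Z[X]$ and the relevant functors preserve the limits involved, \cref{Prop.: commutation rule theta maps and underlying anima} lets me replace $\theta(\phi,\phi_B)$ by $\theta(\delta_A, \delta_B)$ on underlying anima; and on $A = \Z[x_n \mid n\geq 0]$ the $\delta$-operation sends $x_n \mapsto x_{n+1}$, so the induced map $\delta_A^\ast \colon [B]^{\Nadd} \to [B]^{\Nadd}$ is the shift $(b_n)_{n\geq 0} \mapsto (b_{n+1})_{n \geq 0}$. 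Feeding this into \cref{Constr.: theta on underlying anima} and using \cref{Lem.: Obs.: difference of deltas}, the map $(\diffp)_\ast \circ \theta(\delta_A,\delta_B)$ becomes $(\delta_B)_\ast - (\text{shift})^\ast$ on $[B]^{\Nadd}$. Combined with \cref{Cor.: pb square underlying spaces}, the pullback defining the mapping space is equivalent to the pullback of $[B]^{\Nadd} \xrightarrow{(\delta_B)_\ast - \mathrm{shift}} [B]^{\Nadd} \xleftarrow{0} \ast$, i.e. to the space of sequences $(b_n)_{n\geq 0}$ in $[B]$ together with coherent null-homotopies of $b_{n+1} \simeq \delta_B(b_n)$ for all $n$.

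From here the conclusion is a standard "the sequence is freely determined by its first term" argument: the forgetful-to-$[B]$ map picks out $b_0$, and I would exhibit an inverse by sending $b_0$ to the sequence $b_n := \delta_B^{\circ n}(b_0)$ with the tautological null-homotopies, checking this is an equivalence because the space of such sequences-with-homotopies is the limit of a tower each of whose stages is a trivial fibration (the data at stage $n+1$ is a point of $[B]$ together with a path to the prescribed point $\delta_B(b_n)$, hence contractible over stage $n$). The main obstacle I anticipate is keeping the higher coherences honest: one must verify that the identification of $\theta(\phi,\phi_B)$ via \cref{Prop.: commutation rule theta maps and underlying anima} is compatible with the pullback decomposition (so that no $2$-homotopy data is lost), and that the tower of stages really does have contractible transition fibers rather than merely $\pi_0$-surjective ones — this is where the $2$-homotopy $\tau^\delta_B$ from \cref{Constr.: delta-map on underlying anima} and the naturality of $\sigma_{\fib}$, $\sigma_{\can}$ enter. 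Once the tower is set up correctly, taking the limit over $n$ gives the equivalence and proves that $\delta\{x\}$ is the free animated $\delta$-ring on one generator.
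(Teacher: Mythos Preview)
Your proposal is correct and follows essentially the same route as the paper's proof: both start from the pullback formula of \cref{Prop.: mapping spaces in deltapCRan}, pass to underlying anima via \cref{Prop.: commutation rule theta maps and underlying anima} to replace $\theta(\phi,\phi_B)$ by $\theta(\shift,\delta_B)$, and then combine \cref{Cor.: pb square underlying spaces} with \cref{Obs.: difference of deltas} to identify the mapping space with the fibre of $(\delta_B)_\ast - \shift^\ast$ on $\prod_{n\geq 0}[B]$. For the last step the paper gives a slightly more explicit argument than your tower picture: it shows directly that $(\delta_B^n)_{n\geq 0} \colon [B] \to \prod_{n\geq 0}[B]$ identifies $[B]$ with this fibre by verifying, via the pasting law, that $(\pr_0,(\delta_B)_\ast - \shift^\ast) \colon \prod_{n\geq 0}[B] \to [B] \times \prod_{n\geq 0}[B]$ is an equivalence with an explicit recursively defined inverse --- but this and your contractible-transition-fibres argument are two phrasings of the same elementary observation.
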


\begin{proof}
	Fix $(B,\phi_B,h_B)$. By \cref{Prop.: mapping spaces in deltapCRan} the space of maps from $\delta\{x\}$ to $(B,\phi_B,h_B)$ in $\deltaCRan$ sits in the
	pullback square
	\[\begin{diag}[column sep=0.2em]
		\Map_{\deltaCRan}((\delta\{x\},\phi, \sigma_{\triv}),(B,\phi_B,h_B)) \ar[d] \ar[r] \ar[dr, phantom, "\lrcorner", near start] & \Map_{\CRan}(\Z[x_n | \,n \geq 0],B) \ar[d, "\Delta_\ast"] \\[4ex]
		\Map_{\CRan}(\Z[x_n | \,n \geq 0],B) \ar[r, "{\theta(\phi,\phi_B)}"] & \Map_{\CRan}(\Z[x_n | \,n \geq 0],B \times_{B \otimes^L \Fp} B) 
	\end{diag}\]
	of anima. Here the unlabeled arrows are induced by the forgetful functor from animated $\delta$-rings to animated rings.
	Observe that the functor $[-] \colon \CRan \to \An$ admits a left adjoint which sends $\ast \in \An$ to $\Z[X]$. Thus, by adjunction, there exist equivalences
	\begin{equation*}%\label{Diag.: adjoint diagram free delta ring}
	\begin{diag}
		\Map_{\CRan}(X,B) \ar[d, "\sim" {anchor=north, rotate=90}] \ar[r, "{\theta(\phi,\phi_B)}"] & \Map_{\CRan}(X,B \times_{B \otimes^L \Fp} B) \ar[d, "\sim" {anchor=north, rotate=90}] &
		\Map_{\CRan}(X,B) \ar[d, "\sim" {anchor=north, rotate=90}] \ar[l, "\Delta_\ast"'] \\[4ex]
		\Map_{\An}(\coprod_{n \geq 0} \ast, [B]) \ar[r, "\overline{\theta}"] & \Map_{\An}(\coprod_{n \geq 0} \ast, [B] \times_{[B \otimes^L \Fp]} [B]) & \Map_{\An}(\coprod_{n \geq 0} \ast, [B]) \ar[l, "\Delta_\ast"']
	\end{diag}\end{equation*}
	with $X = \Z[x_n | \, n \geq 0]$ in $\CRan$. The vertical equivalences are given by the composite $\gamma = u^\ast \circ [-]$ where we denote by $u \colon \coprod_{n \geq 0} \ast \to [X]$ the respective component of the unit.\\

	Let $\shift$ denote the endomorphism of the coproduct $\coprod_{n \geq 0} \ast$ in $\An$ which is defined by ${\shift} \circ \iota_{n} \simeq \iota_{n+1}$ for all $n \geq 0$. We claim that 
	$\overline{\theta}$ is equivalent to the map $\theta(\shift,\delta_B)$ of \cref{Constr.: theta on underlying anima}.	To this end, observe that we have equivalences
	\begin{align*}
		\overline{\theta} &\simeq \gamma \circ \theta(\phi,\phi_B) \circ \gamma^{-1} \\[1.2ex]
		&\simeq u^\ast \circ [-] \circ \theta(\phi,\phi_B) \circ \gamma^{-1} \\[1.2ex]
		& \simeq u^\ast \circ \theta(\delta,\delta_B) \circ [-] \circ \gamma^{-1}
	\end{align*}
	by \cref{Prop.: commutation rule theta maps and underlying anima} where $\delta \colon [X] \to [X]$ denotes the $\delta$-structure map on $\delta\{x\}$. It thus suffices to show that
	\begin{equation*}
		u^\ast \circ \theta(\delta,\delta_B) \simeq \theta(\shift,\delta_B) \circ u^\ast.
	\end{equation*}
	This follows from observing that the diagram of anima
	\[\begin{diag}[column sep=4em]
		\coprod_{n \geq 0} \ast \ar[r, "u"] \ar[d, "\shift"'] & {[X]} \ar[d, "\delta"] \\[4ex]
		\coprod_{n \geq 0} \ast \ar[r, "u"] & {[X]} 
	\end{diag}\]
	commutes. \Cref{Cor.: pb square underlying spaces} exhibits $\Delta_\ast$ as the fibre of $(\diffp)_\ast$:
	\begin{equation}\label{Diag.: pb for simplifying free delta}
	\begin{diag}
		& \Map_{\An}(\coprod_{n \geq 0} \ast, [B]) \ar[r] \ar[d, "\Delta_\ast"] \ar[dr, phantom, "\lrcorner", near start]& \ast \ar[d, "0"] \\[4ex]
		\Map_{\An}(\coprod_{n \geq 0} \ast, [B]) \ar[r, "\overline{\theta}"] & \Map_{\An}(\coprod_{n \geq 0} \ast, [B] \times_{[B \otimes^L \Fp]} [B]) \ar[r, "{(\diffp)_\ast}"] & \Map_{\An}(\coprod_{n \geq 0} \ast, [B])
	\end{diag}\end{equation}
	\Cref{Obs.: difference of deltas} together with the equivalence $\overline{\theta} \simeq \theta(\shift,\delta_B)$ shows that the lower horizontal composite in \cref{Diag.: pb for simplifying free delta} is equivalent to the map
 	\begin{equation*}
		(\delta_B)_\ast - \shift^\ast \colon \Map_{\An}(\textstyle\coprod_{n \geq 0} \ast, [B]) \to \Map_{\An}(\coprod_{n \geq 0} \ast, [B]).
	\end{equation*}
	This identifies the space of maps from $\delta\{x\}$ to $(B,\phi_B,h_B)$ in $\deltaCRan$ with the fibre
	\[\begin{diag}[column sep=2em]
		\Map_{\deltaCRan}(\delta\{x\},B) \ar[d] \ar[r] \ar[dr, phantom, "\lrcorner", near start] & \ast \ar[d, "0"] \\[5ex]
		\prod_{n \geq 0} {[B]} \ar[r, "{(\delta_B)^{\Nadd} - \shift}"] & \prod_{n \geq 0} {[B]}
	\end{diag}\]
	of the map $f= \prod_{n}\delta_B - \shift$ of anima. Further note that the map ${\eta^\ast} \circ {\forget_\delta}$ in the proposition translates to the composite
	\begin{equation*}
		\fib(f) \to \textstyle \prod_{n \geq 0} [B] \xrightarrow{\pr_0} [B].
	\end{equation*}
	We claim that the commutative diagram
	\begin{equation}\label{Diag.: fibre of deltaB - shift}
		\begin{diag}[column sep=2.5em]
		{[B]} \ar[r] \ar[d, "(\delta_B^n)_{n \geq 0}"'] & \ast \ar[d, "0"] \\[4ex]
		\prod_{n \geq 0} {[B]} \ar[r, "f"] & \prod_{n \geq 0} {[B]}
	\end{diag}
	\end{equation}
	identifies $[B] \simeq \fib(f)$ in $\An$. To see that \cref{Diag.: fibre of deltaB - shift} is a pullback square consider the extended diagram
		\[\begin{diag}[column sep=2.5em]
		{[B]} \ar[r] \ar[d, "(\delta_B^n)_{n \geq 0}"'] & \ast \ar[d, "0"] \\[5ex]
		\prod_{n \geq 0} {[B]} \ar[r, "f"] \ar[d, "\pr_0"'] & \prod_{n \geq 0} {[B]} \ar[d] \\[5ex]
		{[B]} \ar[r] & \ast
	\end{diag}\]
	of anima. The composite ${\pr_0} \circ (\delta_B^n)_{n \geq 0}$ is equivalent to $\id_{[B]}$. Thus, by the pasting law it suffices to show that the lower square is a
	pullback diagram. This follows from observing that the induced map $(\pr_0,f) \colon \prod_{n \geq 0} {[B]} \to {[B]} \times \prod_{n \geq 0} {[B]}$
	is an equivalence with inverse given by the map
	\begin{equation*}
		\textstyle {[B]} \times \prod_{n \geq 0} {[B]} \simeq \prod_{n \geq -1} {[B]} \xrightarrow{(g_n)_{n \geq 0}} \prod_{n \geq 0} {[B]}
	\end{equation*}
	inductively defined by $g_0 = \pr_{-1}$ and $g_n = \delta_B \circ g_{n-1} - \pr_{n-1}$ for all $n \geq 1$.
\end{proof}

\begin{corollary}\label{Cor.: Free delta functor}\cite[A.20]{BL}.
	\begin{enumerate}
	\item The forgetful functor $\forget_\delta \colon \deltaCRan \to \CRan$ admits a left adjoint $\free_\delta$.
	\item The category $\deltaCRan$ is equivalent to the category of algebras over the monad $\forget_\delta \circ \free_\delta$ on $\CRan$.
	\end{enumerate}
\end{corollary}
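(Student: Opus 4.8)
The plan is to deduce both parts formally, since the substantive work is already done in \cref{Prop.: free animated delta ring}: it says precisely that $\Map_{\CRan}(\Z[X],\forget_\delta(-)) \colon \deltaCRan \to \An$ is corepresented by $\delta\{x\}$. I will also use freely that $\deltaCRan$ is presentable and that $\forget_\delta$ is conservative and preserves all small colimits, all recorded in \cref{Rmk.: properties of fgt to CRan}.

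For part (1), I would first upgrade corepresentability from $\Z[X]$ to an arbitrary animated ring $A$. Let $S \subseteq \CRan$ be the full subcategory of those $A$ for which the functor $\Map_{\CRan}(A,\forget_\delta(-)) \colon \deltaCRan \to \An$ is corepresentable. This $S$ is closed under small colimits in $\CRan$: if $A \simeq \colim_\alpha A_\alpha$ with $\Map_{\CRan}(A_\alpha,\forget_\delta(-))$ corepresented by $C_\alpha \in \deltaCRan$, then the $C_\alpha$ form a diagram in $\deltaCRan$ by the Yoneda lemma, and since $\deltaCRan$ is cocomplete the object $\colim_\alpha C_\alpha$ corepresents $\lim_\alpha \Map_{\CRan}(A_\alpha,\forget_\delta(-)) \simeq \Map_{\CRan}(A,\forget_\delta(-))$. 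By \cref{Prop.: free animated delta ring}, $\Z[X] \in S$; since every animated ring is a sifted colimit of finite coproducts of copies of $\Z[X]$ by \cref{Prop.: Properties of animated rings}, and finite coproducts are finite colimits, the object $\Z[X]$ generates $\CRan$ under small colimits, so $S = \CRan$. Thus $\Map_{\CRan}(A,\forget_\delta(-))$ is corepresentable for every $A$, and by \cite[5.2.4.2]{HTT} the corepresenting objects assemble into a functor $\free_\delta \colon \CRan \to \deltaCRan$ left adjoint to $\forget_\delta$, necessarily with $\free_\delta(\Z[X]) \simeq \delta\{x\}$.

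For part (2), I would apply the Barr--Beck--Lurie monadicity theorem \cite[4.7.3.5]{HA} to the adjunction $\free_\delta \dashv \forget_\delta$. Its hypotheses all hold: $\forget_\delta$ has a left adjoint by part (1); $\forget_\delta$ is conservative by \cref{Rmk.: properties of fgt to CRan}; and $\forget_\delta$ preserves geometric realizations of $\forget_\delta$-split simplicial objects because, again by \cref{Rmk.: properties of fgt to CRan}, it preserves all small colimits. Hence the canonical comparison functor identifies $\deltaCRan$ with the $\infty$-category of algebras over the monad $\forget_\delta \circ \free_\delta$ on $\CRan$. Given \cref{Prop.: free animated delta ring} there is no real obstacle here: part (2) is a direct invocation of monadicity, and the only point in part (1) needing a little care is the functoriality in $A$ of the objectwise corepresenting objects, which is exactly \cite[5.2.4.2]{HTT}.
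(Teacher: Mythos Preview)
Your proposal is correct and follows essentially the same approach as the paper. The paper's proof is terser---it simply says that (1) follows from \cref{Prop.: free animated delta ring} together with the observation that $\Z[X]$ generates $\CRan$ under colimits, and that (2) follows from Barr--Beck--Lurie together with conservativity and colimit-preservation of $\forget_\delta$---but you have spelled out exactly the argument the paper intends.
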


\begin{proof}
	(1) follows from \cref{Prop.: free animated delta ring} together with the observation that $\Z[X]$ generates $\CRan$ under colimits. Part (2) follows from the Barr-Beck-Lurie theorem
	together with part (1) and the fact that $\forget_\delta$ is conservative and preserves arbitrary colimits, see \cref{Rmk.: properties of fgt to CRan}.
\end{proof}

\begin{theorem}[Animation, {\cite[A.22]{BL}}]\label{Thm.: animated delta rings are the animation}
	The category $\deltaCRan$ of animated $\delta$-rings is equivalent to the animation $\Ani(\deltaCR)$ of the $1$-category of $\delta$-rings.
\end{theorem}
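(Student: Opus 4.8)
The plan is to deduce the theorem formally from \cref{Prop.: free animated delta ring}, which identifies free animated $\delta$-rings with their classical counterparts, together with the monadicity of $\deltaCRan$ over $\CRan$ provided by \cref{Cor.: Free delta functor}. By definition $\Ani(\deltaCR)$ is the category $\mathcal{P}_\Sigma(\deltaPoly)$ freely generated under sifted colimits by the full subcategory $\deltaPoly \subseteq \deltaCR$ of compact projective objects, i.e.\ retracts of free $\delta$-rings $\delta\{x_1,\dots,x_n\}$ on finitely many generators. First I would promote the assignment $\delta\{x_1,\dots,x_n\} \mapsto \free_\delta(\Z[x_1,\dots,x_n])$ to a functor $\Phi\colon \deltaPoly \to \deltaCRan$: a morphism in $\deltaPoly$ out of $\delta\{x_1,\dots,x_n\}$ is a tuple of elements of the target, hence a map $\Z[x_1,\dots,x_n]\to\forget_\delta(-)$ in $\CRan$, and by the adjunction $\free_\delta \dashv \forget_\delta$ these correspond to maps out of $\free_\delta(\Z[x_1,\dots,x_n])$. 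Since $\free_\delta$ preserves coproducts, \cref{Prop.: free animated delta ring} gives $\free_\delta(\Z[x_1,\dots,x_n]) \simeq \delta\{x_1,\dots,x_n\}$, whose underlying animated ring is a \emph{discrete} polynomial ring on countably many variables. Hence $\Map_{\deltaCRan}(\Phi(P),\Phi(Q)) \simeq \Map_{\CRan}(\Z[x_1,\dots,x_n],\forget_\delta\Phi(Q))$ is discrete and agrees with $\operatorname{Hom}_{\deltaCR}(P,Q)$, so $\Phi$ is fully faithful; moreover $\Phi$ lands in the compact projective objects of $\deltaCRan$, since $\free_\delta(\Z[x_1,\dots,x_n])$ corepresents $\Map_{\CRan}(\Z[x_1,\dots,x_n],\forget_\delta(-))$, the functor $\forget_\delta$ preserves sifted colimits by \cref{Rmk.: properties of fgt to CRan}, and $\Z[x_1,\dots,x_n]$ is compact projective in $\CRan$.

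By the universal property of $\mathcal{P}_\Sigma$, the functor $\Phi$ extends uniquely to a sifted-colimit-preserving functor $F\colon \Ani(\deltaCR) \to \deltaCRan$, and I would check that $F$ is an equivalence. For full faithfulness, fix $Y,Y'\in\Ani(\deltaCR)$; since $F$ preserves sifted colimits, both $\Map_{\Ani(\deltaCR)}(-,Y')$ and $\Map_{\deltaCRan}(F(-),F(Y'))$ carry sifted colimits in the first variable to limits, so writing $Y$ as a sifted colimit of objects of $\deltaPoly$ reduces to $Y\in\deltaPoly$; then, as objects of $\deltaPoly$ are compact projective in $\Ani(\deltaCR)$ and their images are compact projective in $\deltaCRan$, both $\Map(Y,-)$ and $\Map(FY,F(-))$ preserve sifted colimits, so writing $Y'$ as a sifted colimit of objects of $\deltaPoly$ reduces to $Y,Y'\in\deltaPoly$, where the comparison is an equivalence because $\Phi$ is fully faithful. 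For essential surjectivity, I would observe that $\deltaCRan$ is generated under sifted colimits by the objects $\free_\delta(\Z[x_1,\dots,x_n])$: by monadicity (\cref{Cor.: Free delta functor}) every animated $\delta$-ring is a geometric realization of free animated $\delta$-rings $\free_\delta(A)$ with $A\in\CRan$, every such $A$ is a sifted colimit of polynomial rings $\Z[x_1,\dots,x_n]$ by \cref{Prop.: Properties of animated rings}, and $\free_\delta$ preserves colimits. Since the essential image of $F$ is closed under sifted colimits and contains these generators, $F$ is essentially surjective, hence an equivalence. By construction $F$ intertwines $\forget_\delta$ with the sifted-colimit-preserving extension of $\deltaPoly \to \CR \hookrightarrow \CRan$, so this is the expected equivalence.

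The substantive input is \cref{Prop.: free animated delta ring}; everything after it is soft. The step demanding the most care is the full faithfulness of $F$, specifically the double reduction of the mapping-space comparison to the generators $\deltaPoly$, which rests on the objects $\free_\delta(\Z[x_1,\dots,x_n])$ being compact projective in $\deltaCRan$ — and this is precisely where the conservativity and cocontinuity of $\forget_\delta$ recorded in \cref{Rmk.: properties of fgt to CRan} enter. Alternatively, the theorem can be packaged as a comparison of monads on $\CRan$: both $\deltaCRan$ and $\Ani(\deltaCR)$ are monadic over $\CRan$, and the two monads coincide because they are sifted-colimit-preserving endofunctors of $\CRan$ whose restrictions to $\Poly$ both compute $\Z[x_1,\dots,x_n]\mapsto\delta\{x_1,\dots,x_n\}$ with the classical monad structure; this is close to the route taken in \cite{BL}.
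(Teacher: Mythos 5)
Your proposal is correct and follows essentially the same route as the paper: both exhibit the (retracts of) free $\delta$-rings on finitely many generators as compact projective generators of $\deltaCRan$ via \cref{Prop.: free animated delta ring}, and conclude by the universal property of $\mathcal{P}_\Sigma(\deltaPoly)$, with the generation step handled exactly as you do via monadicity and the fact that every animated ring is a sifted colimit of polynomial rings. The only cosmetic difference is that the paper invokes \cite[5.5.8.22]{HTT} to package the full-faithfulness and essential-surjectivity checks that you carry out by hand, and it obtains the functor $\deltaPoly \to \deltaCRan$ from the inclusion of \cref{Cor.: fully faithful embedding of deltaCR} rather than from $\free_\delta$, identifying the two essential images afterwards.
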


\begin{proof}
	We recall the argument from \cite{BL}: Denote by $F \colon \mathcal{P}_\Sigma(\deltaPoly) \to \deltaCRan$ the left Kan extension of the fully faithful functor $f \colon \deltaPoly \to \deltaCRan$
	of \cref{Cor.: fully faithful embedding of deltaCR}. We claim that $F$ is an equivalence. By \cite[5.5.8.22]{HTT}, it will suffice to show that the essential image of $f$ consists of compact projective objects in $\deltaCRan$ which 
	generate the $\infty$-category of animated $\delta$-rings under sifted colimits.\\
	As $f$ preserves finite coproducts, \cref{Prop.: free animated delta ring} implies that its essential image is equivalent to the essential image of the free animated $\delta$-ring functor $\free_\delta$ restricted to
	$\Poly$. By construction, finitely generated polynomial algebras over $\Z$ are compact projective generators of $\CRan$ and are sent to compact projective objects in $\deltaCRan$ by $\free_\delta$. This uses that its right
	adjoint $\forget_\delta$ preserves sifted colimits.\\
	For the second assertion, consider the smallest full subcategory $\C$ of $\deltaCRan$ which contains the essential image of $f$ and is closed under sifted colimits. $\C$ contains the essential image of $\free_\delta$
	as any animated ring is a sifted colimit in $\CRan$ of elements in $\Poly$. \cite[4.7.3.14]{HA} implies that animated $\delta$-rings are generated by the essential image of
	$\free_\delta$ under geometric realizations which shows that $\C = \deltaCRan$.
\end{proof}

\begin{corollary}
	For any animated $\delta$-ring $(B,\phi_B,h_B)$ the $\delta$-structure on $\pi_0(B) \in \deltaCR$ is given by the map $\pi_0(\delta_B)$ of \cref{Constr.: delta-map on underlying anima}.
\end{corollary}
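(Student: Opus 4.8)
The plan is to reduce to the universal example $\delta\{x\}$ of the free animated $\delta$-ring and to feed in the mapping-space computation of \cref{Prop.: free animated delta ring}. One should first observe why a naive argument fails: applying $\pi_0$ to the relation $(\cdot p)\circ\delta_B\simeq[\phi_B]-X_B^p$ of \cref{Constr.: delta-map on underlying anima} only determines $\delta_{\pi_0(B)}$ up to $p$-torsion, and $\pi_0(B)$ need not be $p$-torsion free, so the full homotopy datum in \cref{Constr.: delta-map on underlying anima} is genuinely used — and it is precisely that datum which is repackaged into the equivalence of \cref{Prop.: free animated delta ring}.

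Next I would fix notation. By \cref{Prop.: pi0 is left adjoint to inclusion of deltaCR} (together with \cref{Prop.: equivalence of definitions of delta rings via Witt and Frobenius}) the functor $\pi_0\colon\deltaCRan\to\deltaCR$ is computed on underlying animated rings by $\pi_0\colon\CRan\to\CR$; in particular the underlying set of the $\delta$-ring $\pi_0(B)$ is $\pi_0(B)(\Z[X])=\pi_0([B])$, and $\delta_{\pi_0(B)}$ is an endomorphism of this set. Since $\delta\{x\}$ is the free ordinary $\delta$-ring on one generator and $\delta_{\delta\{x\}}(x_0)=x_1$ (because $\phi(x_0)=x_0^p+px_1$), a $\delta$-ring homomorphism $\bar g\colon\delta\{x\}\to\pi_0(B)$ is determined by $\bar b:=\bar g(x_0)\in\pi_0([B])$, the assignment $\bar g\mapsto\bar b$ is surjective, and $\delta_{\pi_0(B)}(\bar b)=\bar g(\delta_{\delta\{x\}}(x_0))=\bar g(x_1)$. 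It therefore suffices to identify $\bar g(x_1)$ with $\pi_0(\delta_B)(\bar b)$.

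To do this I would lift $\bar g$ to a morphism $g\colon\delta\{x\}\to B$ in $\deltaCRan$ with $\pi_0(g)=\bar g$: such a lift exists because evaluation at $x_0$ fits into a commutative square relating $\pi_0\colon\Map_{\deltaCRan}(\delta\{x\},B)\to\Hom_{\deltaCR}(\delta\{x\},\pi_0(B))$ to $\pi_0\colon[B]\to\pi_0([B])$, and the latter is surjective. The proof of \cref{Prop.: free animated delta ring} identifies the equivalence $\Map_{\deltaCRan}(\delta\{x\},B)\simeq[B]$ in such a way that evaluation at $x_n$ corresponds to the iterate $\delta_B^n\colon[B]\to[B]$ — the case $n=0$, that evaluation at $x_0$ is the identity, is what is spelled out there, and the general case is immediate from the identification of $\fib(f)$ with $[B]$ via $(\delta_B^n)_{n\ge 0}$. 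In particular $g(x_1)\simeq\delta_B(g(x_0))$ in $[B]$, so applying $\pi_0$ gives $\bar g(x_1)=\pi_0(g(x_1))=\pi_0(\delta_B)\bigl(\pi_0(g(x_0))\bigr)=\pi_0(\delta_B)(\bar b)$. Combined with the previous step this yields $\delta_{\pi_0(B)}=\pi_0(\delta_B)$.

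The only genuinely delicate point is the bookkeeping: extracting from \cref{Prop.: free animated delta ring} that evaluation at $x_1$ (and not merely $x_0$) corresponds to $\delta_B$, and keeping apart the two roles played by $\pi_0$ — on the mapping anima $\Map_{\deltaCRan}(\delta\{x\},B)$ and on its target $[B]$. Everything else is formal functoriality.
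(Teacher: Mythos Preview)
Your argument is correct. Both your proof and the paper's hinge on \cref{Prop.: free animated delta ring}, but the strategies differ in packaging. The paper argues functorially: it assembles the assignment $(B,\phi_B,h_B)\mapsto([B],\delta_B)$ into a functor $\theta\colon\deltaCRan\to(\An)^{B\Nadd}$, does the same for ordinary $\delta$-rings to get $\theta'$, and then shows that the square with $\pi_0$ commutes by invoking \cref{Thm.: animated delta rings are the animation} together with the observation that all four functors preserve sifted colimits and agree on $\deltaPoly$. Your approach is pointwise and more elementary: you work directly with a single $B$, lift an arbitrary element $\bar b\in\pi_0([B])$ to a map $g\colon\delta\{x\}\to B$, and read off $\delta_{\pi_0(B)}(\bar b)=\bar g(x_1)=\pi_0(\delta_B)(\bar b)$ from the explicit identification $\fib(f)\simeq[B]$ via $(\delta_B^n)_{n\ge 0}$ in the proof of \cref{Prop.: free animated delta ring}. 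Your route avoids the animation theorem entirely and makes the dependence on the full homotopy datum of \cref{Constr.: delta-map on underlying anima} more visible; the paper's route is terser but leans on a heavier result.
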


\begin{proof}
	The assignment $(B,\phi_B,h_B) \mapsto ([B],\delta_B)$ defines a functor $\theta \colon \deltaCRan \to (\An)^{B\Nadd}$ which satisfies ${\ev} \circ \theta \simeq \Map_{\deltaCRan}(\delta\{x\},-)$. Denote the analogous functor
	for ordinary $\delta$-rings by $\theta'$. We show that the diagram
	\[\begin{diag}
		\deltaCRan \ar[d, "\theta"'] \ar[r, "\pi_0"] & \deltaCR \ar[d, "{\theta'}"] \\[4ex]
		(\An)^{B\Nadd} \ar[r, "\pi_0"] & (\Set)^{B\Nadd}
	\end{diag}\]
	in $\Catinf$ commutes: This follows immediately from \cref{Thm.: animated delta rings are the animation} since all functors preserve sifted colimits and we have an equivalence ${\pi_0} \circ \theta \simeq {\theta'} \circ \pi_0$ on $\deltaPoly$.
\end{proof}

\newpage
\section{Animated $\lambda$-rings}\label{Sect.: Animated lambda rings}

The following sections deal with animated $\lambda$-rings and lead to the main theorem (\cref{Thm.: lambdaCRan is the animation}) of the paper. We start with formally defining the $\infty$-category
of animated $\lambda$-rings and showing that ordinary $\lambda$-rings embed as a full subcategory.

\begin{notation}
	We refer to the multiplicative monoid of natural numbers by $\Nmult$ and denote the submonoid generated by the primes $q$ unequal to
	fixed prime $p$ by $\langle p^c \rangle$. This contains all natural numbers $n > 0$ which are coprime to $p$.
\end{notation}

\begin{definition}[Animated $\lambda$-rings]\label{Def.: animated lambda-rings}
	We define the category $\lambdaCRan$ of animated $\lambda$-rings as the pullback
	\[\begin{diag}[column sep=4em]
		\lambdaCRan \ar[r] \ar[d] \ar[dr, phantom, "\lrcorner", near start]& \prod_p (\CRFpan)^{B\langle p^c \rangle} \ar[d, "\prod_p \Fr"] \\[5ex]
		(\CRan)^{B\Nmult} \ar[r, "\prod_p (-) \otimes^L \Fp"] & \prod_p (\CRFpan)^{B\Nmult}
	\end{diag}\]
	in $\Catinf$. Here, the products on the right are indexed over the set of all positive primes $p$.
\end{definition}

\begin{remark}
	Unwinding \cref{Def.: animated lambda-rings} gives the notion of animated $\lambda$-ring introduced in \cref{Sect.: Introduction}: Objects in $\lambdaCRan$ correspond to triples $(A, \phi_A, (h_{(A,p)})_p)$ consisting of an animated ring
	$A$ together with an action $\phi_A\colon B\Nmult \to \CRan$ of the multiplicative monoid of natural numbers on $A$ and a family of homotopies
	\[\begin{diag}[column sep=2.7em]
		&  |[alias=Z]| A \ar[dr, "\red_A"] & \\[3ex]
		A \ar[ur, "\phi_A(p)"] \ar[rr, "\Fr \circ \red_A"', ""{name=U, above}] \ar[r, from=Z, to=U, phantom, "h_{(A,p)}" {font=\footnotesize, yshift=-0.9ex, xshift=0.7ex}]&& A \otimes^L \Fp
	\end{diag}\]
	in $\Map_{(\CRan)^{B\langle p^c \rangle}}(A,A\otimes^L \Fp)$ which witness $\phi_A(p) \in \Fun(B\Nadd,\Fun(B \langle p^c \rangle ,\CRan))$ as a lift of the $p$-Frobenius for each prime $p$.
	Here, the above diagram lives in the category $(\CRan)^{B \langle p^c \rangle}$ of animated rings equipped with an action of $\langle p^c \rangle$, that is both $A$ and $A \otimes^L \Fp$ carry the actions $\phi_A$ and
	$\phi_A \otimes^L \id_{\Fp}$ restricted to the submonoid of $\Nmult$ generated by the complement of $p$.
\end{remark}

\begin{warning}
	It is essential to retain the restricted actions in the definition of animated $\lambda$-rings. This ensures that each endomorphism $\phi_A(p)$ (viewed as an endomorphism of $A$ in $\CRan$) defines a $p$-typical
	$\delta$-structure on $A$ which is respected by the $\phi_A(q)$ in the sense that each is a morphism of animated $\delta$-rings relative to $p$.
	Discarding the restricted actions in \cref{Def.: animated lambda-rings} would lead to the weaker notion of an animated ring equipped with a family of $p$-typical $\delta$-structures for all primes $p$ whose associated Frobenius
	lifts pairwise commute.
\end{warning}

\begin{remark}\label{Rmk.: properties lambdaCRan}
	The diagram in \cref{Def.: animated lambda-rings} is a pullback square in $\Prl$. Thus, the category $\lambdaCRan$ of animated $\lambda$-rings is presentable and
	comes with a forgetful functor
	\begin{equation*}
		\forget_\lambda \colon \lambdaCRan \to (\CRan)^{B\Nmult} \xrightarrow{\ev} \CRan
	\end{equation*}
	to animated rings which is conservative and preserves small colimits. Inserting \cref{Def.: deltapCRan via Frobenius lifts} of $p$-typical animated $\delta$-rings gives a second pullback square 
	\begin{equation}\label{Eq.: animated lambda rings via delta rings}
	\begin{diag}[column sep=2em]
		\lambdaCRan \ar[r] \ar[d] \ar[dr, phantom, "\lrcorner", near start]& \prod_p (\deltapCRan)^{B\langle p^c \rangle} \ar[d, "\prod_p \forget_\delta"] \\[4ex]
		(\CRan)^{B\Nmult} \ar[r, "\Delta"] & \prod_p (\CRan)^{B\Nmult}
	\end{diag}
	\end{equation}
	in $\Catinf$ which belongs to $\Prr$ by \cref{Cor.: Free delta functor}.	
	This shows that the forgetful functor $\forget_\lambda$ is both a left and right adjoint. It follows formally from the Barr-Beck-Lurie theorem that the forgetful functor exhibits $\lambdaCRan$ as both monadic and
	comonadic over animated rings.
\end{remark}

\begin{remark}[Mapping spaces in $\lambdaCRan$]
	A map between animated $\lambda$-rings $(A, \phi_A, h_{(A,p)})$ and $(B, \phi_B,h_{(B,p)})$ is given by the datum $(f,(\tau_p)_p)$ consisting of
	\begin{enumerate}
	\item A natural transformation $f \colon (A,\phi_A) \to (B,\phi_B)$ in $(\CRan)^{B\Nmult}$. This identifies with a functor $\Delta^1 \times B\Nadd \to (\CRan)^{B\langle p^c \rangle}$
		showing that $f$ corresponds to a commutative diagram
				\[\begin{diag}[column sep=4em]
			A \ar[r, "\phi_A(p)"] \ar[d, "f"'] \ar[dr, phantom, "\sigma_p" {font=\footnotesize}]& A \ar[d, "f"] \\[4ex]
			B  \ar[r, "\phi_B(p)"] & B
		\end{diag}\]
		in $(\CRan)^{B\langle p^c \rangle}$ for any prime $p$.
	\item For each prime $p$ a $2$-homotopy $\tau_p$ which witnesses commutativity of the diagram
		\[\begin{diag}[column sep=-1em]
			{{\red_B} \circ f \circ \phi_A(p)} \ar[rr, "\sim"', "{{\red_B} \circ \sigma_p}"] \ar[dr, "\sim", "{\text{induced by $h_{(A,p)}$}}"'] && {{\red_B} \circ {\phi_B(p)} \circ f} \ar[dl, "\sim"', "{h_{(B,p)} \circ f}"] \\[3ex]
			& {{\Fr} \circ {\red_B} \circ f}
		\end{diag}\]
		 in $\Map_{(\CRan)^{B\langle p^c \rangle}}((A,\restr{\phi_A}{\langle p^c \rangle}),(B \otimes^L \Fp,\restr{(\phi_B \otimes^L \id_{\Fp})}{\langle p^c \rangle}))$.
	\end{enumerate}
	This follows immediately from \cref{Eq.: animated lambda rings via delta rings} together with the description of maps between animated $\delta$-rings, see \cref{Prop.: mapping spaces in deltapCRan}.
\end{remark}

\begin{notation}
	We will denote the coproduct in $\lambdaCRan$ again by $\otimes^L$.
\end{notation}

\begin{proposition}\label{Prop.: fully faithful embedding of lambdaCR}
	The inclusion $\iota \colon \CR \to \CRan$ induces a fully faithful functor $\lambdaCR \to \lambdaCRan$ which preserves finite coproducts when restricted to $\lambdaPoly$.
\end{proposition}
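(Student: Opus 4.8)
The plan is to adapt the proof of \cref{Cor.: fully faithful embedding of deltaCR} to the multi-prime setting, using the description of $\lambdaCRan$ as a pullback in terms of $p$-typical animated $\delta$-rings from \cref{Eq.: animated lambda rings via delta rings}. The first step is to establish the ordinary analogue of that square: by the theorem of Joyal and Rezk (\cref{Thm.: lambda-rings are global delta-rings}) the category $\lambdaCR$ of ordinary $\lambda$-rings is the pullback
\[
\lambdaCR \;\simeq\; (\CR)^{B\Nmult} \times_{\prod_p (\CR)^{B\Nmult}} \prod_p (\deltaCR)^{B\langle p^c \rangle}
\]
in $\Catinf$, where the right vertical functor sends a $p$-typical $\delta$-ring equipped with a $\langle p^c \rangle$-action to its underlying commutative ring with the $\Nmult = \langle p \rangle \times \langle p^c \rangle$-action assembled from the associated Frobenius lift $\phi^p$ and the given action, and the left horizontal functor is the diagonal. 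Indeed, the relation $\phi^q \circ \delta_p = \delta_p \circ \phi^q$ for $q \neq p$ in \cref{Thm.: lambda-rings are global delta-rings} says exactly that each $\phi^q$ is a morphism of $p$-typical $\delta$-rings, i.e. that $(A,\delta_p)$ together with the restricted action is an object of $(\deltaCR)^{B\langle p^c \rangle}$, and the remaining datum of a point of the pullback is an $\Nmult$-action on $A$ whose $\langle p \rangle$-component recovers the Frobenius lift of $\delta_p$ for each $p$ --- which is precisely the bookkeeping in the Joyal--Rezk correspondence.

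Next I would produce a natural transformation of cospan diagrams $\Lambda^2_2 \to \Catinf$, from the cospan defining $\lambdaCR$ above to the one defining $\lambdaCRan$ in \cref{Eq.: animated lambda rings via delta rings}, induced by the inclusion $\iota$: postcomposition $\iota_\ast \colon (\CR)^{B\Nmult} \to (\CRan)^{B\Nmult}$ at the lower-left vertex, $\prod_p \iota_\ast$ at the apex, and at the upper-right vertex the product over $p$ of the functor obtained from the fully faithful embedding $\deltaCR \to \deltaCRanWitt \simeq \deltaCRan$ of \cref{Cor.: fully faithful embedding of deltaCR,Prop.: equivalence of definitions of delta rings via Witt and Frobenius} by passing to functor categories $(-)^{B\langle p^c \rangle}$. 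Commutativity of the two squares of this transformation is inherited from the compatibilities already used in \cref{Cor.: fully faithful embedding of deltaCR}: the animated Frobenius and animated Witt vectors restrict to their classical versions on discrete rings (\cref{Prop.: properties Frobenius,Prop.: animated Witt vectors on CR}), so the forgetful functors out of $\deltaCRan$ --- to animated rings and to animated rings with their Frobenius action --- commute with $\iota$, and $\iota$ commutes with the formation of functor categories $(-)^{BM}$. All three components of the transformation are fully faithful, since $\iota$ is and $\Fun(BM,-)$ preserves full faithfulness. Because mapping spaces in a pullback of $\infty$-categories are the corresponding pullbacks of mapping spaces and a pullback of equivalences is an equivalence, the induced functor $\lambdaCR \to \lambdaCRan$ is fully faithful.

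For the coproduct assertion, conservativity of $\forget_\lambda$ reduces the claim to checking it after applying $\forget_\lambda$; since $\forget_\lambda \circ \iota \simeq \iota \circ \forget_\lambda$ and both forgetful functors preserve coproducts, one is left with the identity $\iota(R) \otimes^L \iota(R') \simeq \iota(R \otimes_{\Z} R')$ for $R, R'$ the underlying rings of objects of $\lambdaPoly$; these are flat over $\Z$ (polynomial rings on countably many generators, or retracts thereof), so the derived tensor product is discrete and the identification holds just as at the end of \cref{Cor.: fully faithful embedding of deltaCR}. The hard part will be the first step: upgrading the object-level statement of \cref{Thm.: lambda-rings are global delta-rings} to the stated equivalence of $\infty$-categories compatible with the forgetful functors, i.e. identifying the two cospans --- everything afterwards is the same formal pullback manipulation as in the $\delta$-ring case.
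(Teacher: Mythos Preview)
Your proposal is correct and follows essentially the same approach as the paper: both arguments use the pullback description \cref{Eq.: animated lambda rings via delta rings} together with its discrete analogue (the paper cites \cite[4.10]{Rezk} directly where you unpack \cref{Thm.: lambda-rings are global delta-rings}), construct the natural transformation of cospans with fully faithful components via \cref{Cor.: fully faithful embedding of deltaCR}, and reduce the coproduct assertion to the underlying animated rings via ${\forget_\lambda} \circ \iota \simeq \iota \circ \forget_\lambda$. The only cosmetic difference is in the last step, where the paper invokes the explicit polynomial description of free $\lambda$-rings and \cref{Prop.: Properties of animated rings}(\ref{Inclusion CR into CRan}) while you use flatness; both work.
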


\begin{proof}
	Replacing animated rings by ordinary commutative rings in \cref{Eq.: animated lambda rings via delta rings} gives a definition of the $1$-category $\lambdaCR$ of $\lambda$-rings, see
	\cite[4.10]{Rezk}. It thus suffices to show that the inclusion induces a natural transformation of diagrams $\Lambda^2_2 \to \Catinf$ whose components are fully faithful. This follows since
	the inclusion induces a fully faithful functor $\deltaCR \to \deltaCRan$ by \cref{Cor.: fully faithful embedding of deltaCR} which commutes with $\forget_\delta$.\\
	The assertion concerning finite coproducts follows analogously to the proof of \cref{Cor.: fully faithful embedding of deltaCR}: It suffices to show the statement for the composite
	${\forget_\lambda} \circ \iota \colon \lambdaPoly \to \lambdaCRan \to \CRan$ which is equivalent to $\iota \circ \forget \colon \lambdaPoly \to \CR \to \CRan$ by construction. But the underlying
	commutative ring of $A \in \lambdaPoly$ is a polynomial ring on a countably infinite set of variables, see \cite[\S 2]{AtiyahTall} or \cite[Théorème 3]{JoyalLambda} and the inclusion $\iota$ preserves
	small coproducts of $\Z[X]$.
\end{proof}

\begin{proposition}\label{Prop.: pi0 is left adjoint to inclusion of lambdaCR}
	The fully faithful functor $\lambdaCR \to \lambdaCRan$ of \cref{Prop.: fully faithful embedding of lambdaCR} admits a left adjoint which is given by the assignment $A \mapsto \pi_0(A)$ on underlying
	animated rings.
\end{proposition}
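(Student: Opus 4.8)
The plan is to follow the proof of \cref{Prop.: pi0 is left adjoint to inclusion of deltaCR} verbatim, one level up. Recall from the proof of \cref{Prop.: fully faithful embedding of lambdaCR} that the inclusion $\lambdaCR \to \lambdaCRan$ is realised as the limit over $\Lambda^2_2$ of a natural transformation from the cospan defining the $1$-category $\lambdaCR$ to the cospan of \cref{Eq.: animated lambda rings via delta rings}, and that on each of its three vertices this natural transformation is postcomposition with a fully faithful inclusion $\iota$ --- either the inclusion $\CR \to \CRan$ of \cref{Prop.: Properties of animated rings}, or the inclusion $\deltaCR \to \deltapCRan$ of \cref{Cor.: fully faithful embedding of deltaCR} (and its $\langle p^c \rangle$-equivariant and product variants). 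I would check that this natural transformation admits pointwise left adjoints satisfying the Beck--Chevalley (mate) condition, and then invoke the $(\infty,2)$-categorical formalism for adjoints in diagram categories.

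First I would exhibit the pointwise left adjoints. Since postcomposition along a functor and products of $\infty$-categories both preserve adjunctions, the adjunction $\pi_0 \dashv \iota$ of \cref{Prop.: Properties of animated rings} induces left adjoints on the two vertices built from $(\CRan)^{B\Nmult}$, while the adjunction $\pi_0 \dashv \iota$ on $p$-typical animated $\delta$-rings of \cref{Prop.: pi0 is left adjoint to inclusion of deltaCR} induces a left adjoint on the vertex $\prod_p (\deltapCRan)^{B\langle p^c \rangle}$; by that proposition the latter is again computed by $\pi_0$ on underlying animated rings. Next I would verify the mate condition for the two arrows of $\Lambda^2_2$. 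For the diagonal arrow the relevant naturality square, together with the square formed by the left adjoints, commutes strictly, so its mate is an equivalence. For the arrow labelled $\prod_p \forget_\delta$ it suffices, after discarding the product and the functor-category decorations, to verify that the square with $\forget_\delta$ on both horizontal edges and $\iota$ on both vertical edges is vertically left adjointable; this is built into \cref{Prop.: pi0 is left adjoint to inclusion of deltaCR} --- which identifies the $\delta$-ring left adjoint as one induced by $\pi_0$ on underlying rings --- together with $\forget_\delta \circ \iota \simeq \iota \circ \forget_\delta$, so that the mate $\pi_0 \circ \forget_\delta \to \forget_\delta \circ \pi_0$ is the whiskering by $\forget_\delta$ of the counit $\pi_0 \iota \xrightarrow{\sim} \id_{\CR}$ and hence an equivalence. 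Applying $(-)^{B\langle p^c \rangle}$, $(-)^{B\Nmult}$ and $\prod_p$ preserves all of this.

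With pointwise left adjoints and the mate condition in hand, I would conclude with \cite[Corollary 5.2.9]{Rune24} together with the fact that $\lim \colon \Fun(\Lambda^2_2, \Catinf) \to \Catinf$ preserves adjunctions as a functor of $(\infty,2)$-categories: the inclusion $\lambdaCR \to \lambdaCRan$ acquires a left adjoint, computed as the limit of the pointwise left adjoints. Since $\forget_\lambda$ factors as $\lambdaCRan \to (\CRan)^{B\Nmult} \xrightarrow{\ev} \CRan$ through the base vertex of the cospan, this left adjoint is carried by postcomposition with $\pi_0$ there, so evaluating recovers $\pi_0 \colon \CRan \to \CR$ on underlying animated rings. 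The only non-formal ingredient is the mate condition for the $\forget_\delta$-square, which I expect to be the main obstacle but which, as sketched, is essentially already contained in \cref{Prop.: pi0 is left adjoint to inclusion of deltaCR}.
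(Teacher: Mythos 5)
Your proposal is correct and follows essentially the same route as the paper: the paper likewise realises the inclusion as the limit over $\Lambda^2_2$ of a natural transformation of cospans, observes via \cref{Prop.: pi0 is left adjoint to inclusion of deltaCR} that it admits pointwise left adjoints induced by $\pi_0$ satisfying the mate condition, and concludes with \cite[Corollary 5.2.9]{Rune24} and the fact that $\lim$ preserves adjunctions of $(\infty,2)$-categories. Your write-up simply makes explicit the mate-condition check that the paper leaves implicit.
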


\begin{proof}
	By \cref{Prop.: pi0 is left adjoint to inclusion of deltaCR}, the natural transformation defining the inclusion of \cref{Prop.: fully faithful embedding of lambdaCR} admits pointwise left adjoints which are induced by $\pi_0$ and satisfy
	the mate condition. The claim thus follows from the description of adjoints in functor $(\infty,2)$-categories \cite[Corollary 5.2.9]{Rune24} together with the fact that
	$\lim$ preserves adjunctions as a functor of $(\infty,2)$-categories.
\end{proof}

\newpage
\section{A Fracture Square}\label{Sect.: Fracture square}

In this section, we lift a standard fracture square from $\DZ$ to animated $\lambda$-rings (\cref{Lem.: fracture square in lambdaCRan}). We will make use of the notion of idempotent objects and algebras in monoidal $\infty$-categories as defined in \cite[4.8.2.1 and 4.8.2.8]{HA} throughout the section. We begin with a study of animated $\lambda$-$\Z_{(p)}$-algebras and $\lambda$-$\Q$-algebras:

\begin{definition}\label{Def.: animated algebras}
	Let $A$ be an animated $\lambda$-ring. We define the category $\lambdaCRAan$ of animated $\lambda$-$A$-algebras as the slice $(\lambdaCRan)_{A/}$ under $A$.
\end{definition}

\begin{notation}\label{Rmk.: coCartesian symm mon structure}
	We equip the categories $\CRan$, $\deltaCRan$ and $\lambdaCRan$ with the cocartesian symmetric monoidal structure for the remainder of this section but suppress this from notation. In particular, we have
	equivalences $\CAlg(\CRan) \simeq \CRan$ and $\Mod_A(\CRan) \simeq (\CRan)_{A/}$ for any animated ring $A$ by \cite[2.4.3.10]{HA} and \cite[3.4.1.7]{HA}. Analogous results hold for animated $\delta$-rings and $\lambda$-rings.
\end{notation}

\begin{lemma}\label{Rmk.: Zp and Q idempotent in DZ}
	The commutative rings $\Z_{(p)}$ and $\Q$ are idempotent algebras in $\DZ_{\geq 0}$.
\end{lemma}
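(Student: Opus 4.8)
The plan is to check the condition defining an idempotent algebra directly, reducing it to the classical fact that localisations of $\Z$ are idempotent in the $1$-category of commutative rings. Recall from \cite[4.8.2.1 and 4.8.2.8]{HA} that, for an object $E$ of a symmetric monoidal $\infty$-category equipped with a map from the unit, being an idempotent algebra is equivalent to the multiplication $\mu_E \colon E \otimes E \to E$ being an equivalence. In our situation the unit of $\DZ_{\geq 0}$ is $\Z$, the map is the canonical ring homomorphism $\Z \to E$, and $\otimes = \otimes^L_{\Z}$; moreover, since the derived tensor product of connective complexes is connective, the monoidal structure on $\DZ_{\geq 0}$ is the restriction of the standard one on $\DZ$, so the condition may be checked in $\DZ$.

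I would then specialise to $E = \Z_{(p)}$ and $E = \Q$. In both cases $E$ is a localisation $S^{-1}\Z$ (with $S = \Z \setminus p\Z$ and $S = \Z \setminus \{0\}$ respectively), hence a flat $\Z$-module, so $\operatorname{Tor}_i^{\Z}(E, E) = 0$ for $i > 0$ and the derived tensor product $E \otimes^L_{\Z} E$ is discrete, equal to the ordinary tensor product $E \otimes_{\Z} E$. Now $S^{-1}\Z \otimes_{\Z} S^{-1}\Z \cong S^{-1}(S^{-1}\Z) = S^{-1}\Z$, and under this isomorphism the structure map is precisely the multiplication map, since every element of $S$ is already invertible in $S^{-1}\Z$. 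Therefore $\mu_E$ is an isomorphism of discrete rings, hence an equivalence in $\DZ_{\geq 0}$, and $E$ is an idempotent algebra.

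There is essentially no obstacle in this argument: the only point that needs attention is the identification of the derived tensor product $E \otimes^L_{\Z} E$ with the underived one, and this is immediate from flatness of localisations. It is worth recording, looking ahead to the fracture square, that the idempotent algebra structures produced this way underlie the evident commutative ring structures on $\Z_{(p)}$ and $\Q$, so that $(-) \otimes^L_{\Z} \Z_{(p)}$ and $(-) \otimes^L_{\Z} \Q$ are the associated smashing localisations of $\DZ_{\geq 0}$.
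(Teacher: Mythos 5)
Your proof is correct and follows essentially the same route as the paper: both reduce to flatness of $\Z_{(p)}$ and $\Q$ over $\Z$ to identify the derived tensor product with the ordinary one, and then invoke the classical fact that localisations of $\Z$ satisfy $S^{-1}\Z \otimes_{\Z} S^{-1}\Z \cong S^{-1}\Z$. The only cosmetic difference is that you verify the multiplication map is an equivalence (via \cite[4.8.2.9]{HA}) whereas the paper verifies that the unit-induced map $\Z_{(p)} \to \Z_{(p)} \otimes^L \Z_{(p)}$ is an equivalence with inverse the multiplication; these are equivalent characterisations of idempotent algebras.
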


\begin{proof}
	This follows immediately from flatness of $\Z_{(p)}$ and $\Q$ over $\Z$: The map $\Z_{(p)} \to \Z_{(p)} \otimes^L \Z_{(p)}$ in $\DZ_{\geq 0}$ which is induced by the unit $e \colon \Z \to \Z_{(p)}$ is equivalent to the map
	$\Z_{(p)} \to \Z_{(p)} \otimes_\Z \Z_{(p)}$ of commutative rings sending $x \in \Z_{(p)}$ to $1 \otimes x$. This is an equivalence with inverse is given by multiplication. The same holds true for $\Q$.
\end{proof}

\begin{notation}
	In the following, we view the commutative rings $\Z$, $\Z_{(p)}$ and $\Q$ as animated $\lambda$-rings via \cref{Prop.: fully faithful embedding of lambdaCR}. Note that the (animated) $\lambda$-structures on $\Z$, $\Z_{(p)}$ and $\Q$ are
	essentially unique and determined by each Frobenius lift being an identity. We will write $\lambdaCRpan$ for the category of animated $\lambda$-$\Z_{(p)}$-algebras.
\end{notation}

\begin{notation}
	We write $\deltaCRpan$ for the category of animated $\delta$-$\Z_{(p)}$-algebras defined as the slice under $\Z_{(p)}$ viewed as an animated $p$-typical $\delta$-ring via the forgetful functor
	$\lambdaCRan \to \deltapCRan$.
\end{notation}

\begin{proposition}[Idempotent algebras]\label{Prop.: idempotent algebras}
	The animated $\lambda$-rings $\Z_{(p)}$ and $\Q$ are idempotent algebras in $\lambdaCRan$, $\deltaCRan$ and $\CRan$.
\end{proposition}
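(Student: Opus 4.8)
The plan is to reduce everything, via conservative colimit-preserving forgetful functors, to the flatness computation already recorded in \cref{Rmk.: Zp and Q idempotent in DZ}. Recall the standard characterisation \cite[4.8.2.9]{HA}: an object $R$ of a symmetric monoidal $\infty$-category, regarded together with the unit map out of the monoidal unit, is an idempotent algebra if and only if the multiplication $R \otimes R \to R$ is an equivalence. With respect to the cocartesian monoidal structures fixed in \cref{Rmk.: coCartesian symm mon structure} this multiplication is just the fold map $R \sqcup R \to R$. So in each of $\CRan$, $\deltaCRan$ and $\lambdaCRan$ it suffices to show that the fold maps $\Z_{(p)} \sqcup \Z_{(p)} \to \Z_{(p)}$ and $\Q \sqcup \Q \to \Q$ are equivalences.

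I would treat $\CRan$ first. The underlying-chain-complex functor $\CRan \to \DZ_{\geq 0}$ is conservative (it factors the conservative underlying anima functor of \cref{Rmk.: Underlying spectrum}, as already used in the proof of \cref{Prop.: pullback square for animated delta structures}) and carries the coproduct of animated rings to the derived tensor product over $\Z$ (part~(7) of \cref{Prop.: Properties of animated rings}); under it the fold map $\Z_{(p)} \sqcup \Z_{(p)} \to \Z_{(p)}$ becomes the multiplication $\Z_{(p)} \otimes^L_\Z \Z_{(p)} \to \Z_{(p)}$, which is an equivalence since $\Z_{(p)}$ is an idempotent algebra in $\DZ_{\geq 0}$ by \cref{Rmk.: Zp and Q idempotent in DZ}. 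Conservativity then forces the fold map to be an equivalence already in $\CRan$, and the same argument applies verbatim to $\Q$; thus $\Z_{(p)}$ and $\Q$ are idempotent algebras in $\CRan$.

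For $\deltaCRan$ and $\lambdaCRan$ I would push this back up. By construction (see the proofs of \cref{Cor.: fully faithful embedding of deltaCR} and \cref{Prop.: fully faithful embedding of lambdaCR}) the functors $\forget_\delta$ and $\forget_\lambda$ send $\Z_{(p)}$ and $\Q$, viewed as animated $\delta$- resp.\ $\lambda$-rings, back to the animated rings $\Z_{(p)}$ and $\Q$ with their evident units. Both $\forget_\delta$ and $\forget_\lambda$ are conservative and preserve small colimits (\cref{Rmk.: properties of fgt to CRan} and \cref{Rmk.: properties lambdaCRan}), hence preserve coproducts and carry fold maps to fold maps; so they take the fold map $\Z_{(p)} \sqcup \Z_{(p)} \to \Z_{(p)}$ in $\deltaCRan$, resp.\ $\lambdaCRan$, to the corresponding fold map in $\CRan$, which we have just shown is an equivalence, and conservativity gives the equivalence upstairs. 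Applying the criterion of the first paragraph once more finishes the proof, both for $\Z_{(p)}$ and for $\Q$. The only point needing any care — and it is minor, everything being supplied by the excerpt — is the compatibility of $\CRan \to \DZ_{\geq 0}$ with the monoidal structures, namely that it is conservative and sends the cocartesian coproduct to the derived tensor product; once this is granted, the argument is simply a two-step conservativity descent of \cref{Rmk.: Zp and Q idempotent in DZ} along $\lambdaCRan \to \deltaCRan \to \CRan \to \DZ_{\geq 0}$.
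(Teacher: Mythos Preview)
Your proof is correct and follows essentially the same approach as the paper: reduce to \cref{Rmk.: Zp and Q idempotent in DZ} via the conservative, coproduct-preserving (hence symmetric monoidal for the cocartesian structures) forgetful functors to $\DZ_{\geq 0}$. The paper's proof compresses this into a single sentence, while you spell out the fold-map criterion and perform the descent in two stages $\lambdaCRan \to \CRan \to \DZ_{\geq 0}$, but the content is the same.
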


\begin{proof}
	This follows from \cref{Rmk.: Zp and Q idempotent in DZ} since the forgetful functors to $\DZ_{\geq 0}$ are symmetric monoidal and conservative.
\end{proof}

\begin{proposition}\label{Prop.: localisations lambdap and lambdaQ}
	Denote by $e \colon \Z \to \Z_{(p)}$ and $e' \colon \Z \to \Q$ the unique maps of animated $\lambda$-rings.
	\begin{enumerate}
		\item The category of animated $\lambda$-$\Z_{(p)}$-algebras is equivalent to the full subcategory of $\lambdaCRan$ spanned by those animated $\lambda$-rings $A$ for which the map
		$\eta_{A,(p)} \colon A \to A \otimes^L \Z_{(p)}$ induced by ${\id_A} \otimes^L e$ is an equivalence in $\lambdaCRan$.
		\item The category $\lambdaCRpan$ is a localisation of the category of animated $\lambda$-rings. We write
				\begin{equation*}
				\begin{diag}
				\lambdaCRan \ar[r, bend left=20, ""{name=U, below}, "L_{(p)}"] & \lambdaCRpan \ar[l, hook', bend left=20, ""{name=D, above}, "\iota_{(p)}" {yshift=-0.4ex, xshift=2.5ex}]
				\ar[from=U, to=D, phantom, "\dashv" {rotate=270, yshift=0.3ex, xshift=-0.1ex}]
				\end{diag}
				\end{equation*}
				for the adjunction induced by the idempotent monad $\iota_{(p)} \circ L_{(p)} \simeq (-) \otimes^L \Z_{(p)}$ on $\lambdaCRan$ with unit $\eta_A$.
		\item The category $\lambdaCRQan$ of animated $\lambda$-$\Q$-algebras is equivalent to the full subcategory of $\lambdaCRan$ spanned by those $A \in \lambdaCRan$ for which the map
			$\eta_{A,\Q} \colon A \to A \otimes^L \Q$ induced by $e'$ is an equivalence of animated $\lambda$-rings. We write
			\[\begin{diag}
				\lambdaCRan \ar[r, bend left=20, ""{name=U, below}, "L_{\Q}"] & \lambdaCRQan \ar[l, hook', bend left=20, ""{name=D, above}, "\iota_\Q" {yshift=-0.4ex, xshift=0.5ex}]
				\ar[from=U, to=D, phantom, "\dashv" {rotate=270, yshift=0.3ex, xshift=-0.1ex}]
			\end{diag}\]
			for the localisation with unit $\eta_\Q$ which is induced by the idempotent monad $\iota_\Q \circ L_{\Q} \simeq (-) \otimes^L \Q$ on $\lambdaCRan$.
	\end{enumerate}
\end{proposition}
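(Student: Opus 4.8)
The plan is to obtain all three assertions as instances of the general machinery of idempotent algebras, applied to $\lambdaCRan$ equipped with the cocartesian symmetric monoidal structure fixed in \cref{Rmk.: coCartesian symm mon structure}. Recall that under this structure the tensor product is the coproduct $\otimes^L$ and the monoidal unit is the initial object $\Z$; in particular the algebra unit map of any object $E \in \CAlg(\lambdaCRan) \simeq \lambdaCRan$ is the unique morphism $\Z \to E$ of animated $\lambda$-rings, and \cref{Rmk.: coCartesian symm mon structure} (via \cite[3.4.1.7]{HA}) provides equivalences $\Mod_{\Z_{(p)}}(\lambdaCRan) \simeq \lambdaCRpan$ and $\Mod_{\Q}(\lambdaCRan) \simeq \lambdaCRQan$ compatible with the forgetful functors to $\lambdaCRan$.

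First I would invoke \cref{Prop.: idempotent algebras}: the animated $\lambda$-rings $\Z_{(p)}$ and $\Q$ are idempotent algebras in $\lambdaCRan$. Hence, by the theory of idempotent objects and algebras \cite[Section 4.8.2]{HA}, for $E \in \{\Z_{(p)},\Q\}$ the forgetful functor $\Mod_E(\lambdaCRan) \to \lambdaCRan$ is fully faithful with essential image the full subcategory of those $A$ for which the unit map $A \simeq A \otimes^L \Z \to A \otimes^L E$ is an equivalence; moreover the endofunctor $(-) \otimes^L E$ underlies an idempotent monad on $\lambdaCRan$, and the associated adjunction exhibits this full subcategory — equivalently $\Mod_E(\lambdaCRan)$ — as a localisation of $\lambdaCRan$ with localisation functor $(-) \otimes^L E$ and unit the unit transformation of the idempotent monad.

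Taking $E = \Z_{(p)}$ and transporting along $\Mod_{\Z_{(p)}}(\lambdaCRan) \simeq \lambdaCRpan$ then yields (1) and (2): the abstract unit map is precisely $\eta_{A,(p)}\colon A \to A \otimes^L \Z_{(p)}$ induced by $\id_A \otimes^L e$, the idempotent monad is $\iota_{(p)} \circ L_{(p)} \simeq (-) \otimes^L \Z_{(p)}$, and $\eta_A$ is its unit, with $L_{(p)}$ the corestriction of $(-)\otimes^L \Z_{(p)}$ to the local subcategory and $\iota_{(p)}$ the inclusion. Running the identical argument with $E = \Q$ and $e'\colon \Z \to \Q$ gives (3).

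Little genuine work remains: the substance is already contained in \cref{Prop.: idempotent algebras}, and ultimately in the flatness of $\Z_{(p)}$ and $\Q$ over $\Z$ through \cref{Rmk.: Zp and Q idempotent in DZ}, together with the module/slice dictionary of \cref{Rmk.: coCartesian symm mon structure}. The only point requiring a moment of care is the bookkeeping identifying the abstract unit morphism $A \otimes^L \Z \to A \otimes^L E$ of the idempotent-algebra formalism with the concrete maps $\eta_{A,(p)}$ and $\eta_{A,\Q}$ of the statement, which amounts to recording that the monoidal unit of the cocartesian structure on $\lambdaCRan$ is $\Z$ and that $\otimes$ is the coproduct $\otimes^L$.
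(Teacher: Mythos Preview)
Your proposal is correct and follows exactly the same route as the paper, which simply records that the claim follows from \cref{Prop.: idempotent algebras} together with \cite[4.8.2.10]{HA}. You have merely unpacked this reference in more detail, spelling out the module/slice identification from \cref{Rmk.: coCartesian symm mon structure} and the identification of the unit maps.
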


\begin{proof}
	This follows from \cref{Prop.: idempotent algebras} and \cite[4.8.2.10]{HA}.
\end{proof}

\begin{remark}
	The statements of \cref{Prop.: localisations lambdap and lambdaQ} carry over verbatim to the $\infty$-categories $\CRpan$, $\CRQan$ and $\deltaCRpan$.
\end{remark}

\begin{remark}\label{Rmk.: animated lambda-Q-algebras are lambda-Zp-algebras}
	We have an equivalence $\Q \otimes^L \Z_{(p)} \simeq \Q$ in $\lambdaCRan$ for any prime $p$. It follows that animated $\lambda$-$\Q$-algebras form a full subcategory of the category of animated $\lambda$-$\Z_{(p)}$-algebras
	for all primes $p$. We denote the fully faithful inclusion by
	\begin{equation*}
		\iota_\Q^{(p)} \colon \lambdaCRQan \to \lambdaCRpan.
	\end{equation*}
	Note that $\iota_\Q^{(p)} $ admits a left adjoint which is given by the composite ${L_\Q} \circ \iota_{(p)}$, see \cref{Prop.: localisations lambdap and lambdaQ} for notation.
\end{remark}

\begin{notation}
	Let $\C$ and $\D$ be $\infty$-categories and assume that $\D$ admits a terminal object $0$. We say that a functor $F \colon \C \to \D$ is \textit{null-homotopic} if it factors through $0 \colon \Delta^0 \to \D$ in $\Catinf$.
\end{notation}

\begin{lemma}\label{Lem.: p-local tensored with Fq is zero}
	The functor ${(-) \otimes^L \Fq} \circ \iota_{(p)} \colon \CRpan \hookrightarrow \CRan \to \CRFqan$ is null-homotopic for any pair of distinct primes $p$ and $q$.
\end{lemma}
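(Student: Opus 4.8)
The plan is to show that $A \otimes^L \Fq$ is the zero (hence terminal) animated $\Fq$-algebra for every animated $\Z_{(p)}$-algebra $A$, and then to promote this pointwise vanishing to a factorisation of the functor through $\Delta^0$.

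First I would observe that for $A$ in $\CRpan$ the unit $A \to A \otimes^L \Z_{(p)}$ is an equivalence, so on $\pi_0$ the localisation map $\pi_0(A) \to \pi_0(A)_{(p)}$ is an isomorphism; hence $\pi_0(A)$ is $p$-local and, since $q$ is a unit in $\Z_{(p)}$, its image is a unit in the ring $\pi_0(A)$. Multiplication by $q$ is therefore an equivalence on the underlying object of $A$ in $\DZ_{\geq 0}$, so \cref{Lemma: mod p fibre sequence} with $p$ replaced by $q$ — the fibre sequence $A \xrightarrow{\cdot q} A \to A \otimes^L \Fq$ — exhibits the underlying object of $A \otimes^L \Fq$ as a vanishing cofibre. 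Since the forgetful functors to $\DZ_{\geq 0}$ are conservative (as used already in the proof of \cref{Prop.: pullback square for animated delta structures}), the canonical map $A \otimes^L \Fq \to 0$ to the terminal animated $\Fq$-algebra is an equivalence.

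It remains to conclude that the composite $F \colon \CRpan \xrightarrow{\iota_{(p)}} \CRan \xrightarrow{(-) \otimes^L \Fq} \CRFqan$ is null-homotopic. The full subcategory of $\CRFqan$ spanned by the terminal object $0$ is equivalent to $\Delta^0$, and by the previous step $F$ takes values in it; hence $F$ is equivalent to the composite $\CRpan \to \Delta^0 \xrightarrow{0} \CRFqan$ in $\Catinf$, which is exactly the assertion. The argument is essentially formal; the only steps demanding a little care are the reduction of ``all values terminal'' to an honest factorisation through $\Delta^0$ and the conservativity input identifying a ring with vanishing homotopy as the zero ring, so there is no genuine obstacle.
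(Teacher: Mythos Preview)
Your proof is correct and takes a genuinely different, more direct route than the paper's. The paper argues that the functor $\theta = (-)\otimes^L\Fq \circ \iota_{(p)}$ preserves sifted colimits and is therefore left Kan extended from its restriction to $\Poly_{\Z_{(p)}}$; it then checks vanishing only on finitely generated polynomial $\Z_{(p)}$-algebras (using the standard resolution of $\Fq$) and deduces the null-homotopy from the colimit formula together with the observation that each slice $\Poly_{\Z_{(p)}}/A \to \Delta^0$ is right cofinal. You instead establish $A \otimes^L \Fq \simeq 0$ for \emph{every} $A \in \CRpan$ at once, using that $q$ is a unit in $\pi_0(A)$ so that $\cdot q$ is an equivalence on the underlying object in $\DZ_{\geq 0}$, and then invoke the (completely formal) fact that a functor whose values all lie in the contractible full subcategory on the terminal object automatically factors through $\Delta^0$.

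Your argument is cleaner for this particular lemma: it avoids the left Kan extension machinery and the cofinality step entirely. The paper's approach, by contrast, fits the general pattern used throughout of reducing statements about animated objects to their compact projective generators; that pattern pays off elsewhere but is overkill here. One small expository point: your passage from ``$q$ is a unit in $\pi_0(A)$'' to ``$\cdot q$ is an equivalence on the underlying object in $\DZ_{\geq 0}$'' is true but implicitly uses that each $\pi_n(A)$ is a $\pi_0(A)$-module (or, equivalently, that the underlying object of $A \simeq A \otimes^L \Z_{(p)}$ is a $\Z_{(p)}$-module in $\DZ_{\geq 0}$); it would be worth making one of these justifications explicit.
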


\begin{proof}
	The functor $\theta = {(-) \otimes^L \Fq} \circ \iota_{(p)}$ preserves sifted colimits and is thus left Kan extended from its restriction to finitely generated polynomial $\Z_{(p)}$-algebras. But $(-) \otimes^L \Fq$ is constant on $\Poly_{\Z_{(p)}}$:
	The standard resolution of $\Fq$ shows
	\begin{equation*}
		\Z_{(p)}[x_1,\ldots,x_n] \otimes^L_\Z \Fq \simeq 0
	\end{equation*}
	underlying in $\DZ_{\geq 0}$ for any $n \geq 0$ and the forgetful functor from animated $\Fq$-algebras to connective chain complexes reflects terminal objects. %all limits as it is conservative and preserves them
	It thus follows that the functor
	\begin{equation*}
		\Poly_{\Z_{(p)}}/A \xrightarrow{\forget} \Poly_{\Z_{(p)}} \xrightarrow{\theta \circ j} \CRFqan
	\end{equation*}
	is null-homotopic for any $A \in \CRpan$. Further note that the functor $\Poly_{\Z_{(p)}}/A \to \Delta^0$ is right cofinal since the slice admits an initial object. The claim now follows from the colimit formula for pointwise left Kan extensions.
\end{proof}

\begin{corollary}\label{Cor.: tensoring with Fq is zero on Q-algebras}
	The composite ${(-) \otimes^L \Fq} \circ \iota_\Q \colon \CRQan \hookrightarrow \CRan \to \CRFqan$ is null-homotopic for any prime $q$.
\end{corollary}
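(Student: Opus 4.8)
The plan is to reduce to \cref{Lem.: p-local tensored with Fq is zero} by factoring $\iota_\Q$ through the subcategory of $p$-local animated rings for a suitably chosen prime $p$. First I would fix the prime $q$ and pick any prime $p \neq q$; such a prime of course exists. Then I would invoke the plain-ring analogue of \cref{Rmk.: animated lambda-Q-algebras are lambda-Zp-algebras}: by flatness of $\Z_{(p)}$ over $\Z$ (as in \cref{Rmk.: Zp and Q idempotent in DZ}) one has $\Q \otimes^L \Z_{(p)} \simeq \Q$ in $\CRan$, so for every animated $\Q$-algebra $A$ the localisation map $A \to A \otimes^L \Z_{(p)}$ is an equivalence. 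By the plain-ring version of \cref{Prop.: localisations lambdap and lambdaQ}(1), this means that $\CRQan$ is contained, as a full subcategory of $\CRan$, in $\CRpan$; hence the inclusion $\iota_\Q$ factors up to equivalence as
\begin{equation*}
	\CRQan \hookrightarrow \CRpan \xrightarrow{\iota_{(p)}} \CRan.
\end{equation*}

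Next I would observe that ${(-) \otimes^L \Fq} \circ \iota_\Q$ is therefore equivalent to the restriction along $\CRQan \hookrightarrow \CRpan$ of the functor ${(-) \otimes^L \Fq} \circ \iota_{(p)} \colon \CRpan \to \CRFqan$. Since $p \neq q$, the latter is null-homotopic by \cref{Lem.: p-local tensored with Fq is zero}, i.e.\ it factors through the terminal object $0 \colon \Delta^0 \to \CRFqan$ in $\Catinf$. Precomposing a functor that factors through $\Delta^0$ with any functor again factors through $\Delta^0$, so ${(-) \otimes^L \Fq} \circ \iota_\Q$ is null-homotopic.

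I do not anticipate a real obstacle: the only non-formal input is the containment $\CRQan \subseteq \CRpan$, which is immediate once one has $\Q \otimes^L \Z_{(p)} \simeq \Q$ together with the characterisation of $p$-localisation as a full subcategory of $\CRan$; everything else is bookkeeping with null-homotopic functors.
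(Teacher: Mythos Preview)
Your argument is correct and follows exactly the paper's approach: factor $\iota_\Q$ through $\CRpan$ for some prime $p \neq q$ and then apply \cref{Lem.: p-local tensored with Fq is zero}. You are slightly more explicit than the paper in noting that one needs the plain-ring (rather than $\lambda$-ring) version of the containment $\CRQan \subseteq \CRpan$ and in singling out the choice $p \neq q$, but this is just added detail rather than a different route.
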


\begin{proof}
	This follows from \cref{Lem.: p-local tensored with Fq is zero} together with the observation that the inclusion $\iota_\Q$ factors through $\CRpan$ for any prime $p$, see \cref{Rmk.: animated lambda-Q-algebras are lambda-Zp-algebras}.
\end{proof}

We next prove the classically known descriptions of $\lambda$-$\Z_{(p)}$-algebras in terms of $\delta$-$\Z_{(p)}$-algebras and $\lambda$-$\Q$-algebras in terms of $\Q$-algebras in the animated setting:

\begin{lemma}\label{Lem.: pullback idempotent algebras}
	Let $\C^\otimes$ and $\D^\otimes$ be symmetric monoidal $\infty$-categories and let $F \colon \C^\otimes \to \D^\otimes$ be a symmetric monoidal and conservative functor. Let $A \in \CAlg(\C)$ be an
	idempotent algebra in $\C$. Then the commutative diagram
	\[\begin{diag}
		\Mod_A(\C) \ar[r] \ar[d, "F'"] & \C \ar[d, "F"] \\[3ex]
		\Mod_{F(A)}(\D) \ar[r] & \D
	\end{diag}\]
	is a pullback square in $\Catinf$. Here, the horizontal functors forget the module structures and $F'$ is induced by $F$.
\end{lemma}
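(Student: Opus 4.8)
The plan is to use that, since $A$ is idempotent, the forgetful functor $\Mod_A(\C)\to\C$ is the inclusion of the full subcategory of \emph{$A$-local} objects, namely those $X\in\C$ for which the unit map $u_X\colon X\to A\otimes X$ is an equivalence (\cite[4.8.2.10]{HA}, cf. the proof of \cref{Prop.: localisations lambdap and lambdaQ}). First I would check that $F(A)$ is again idempotent in $\D$: idempotence of $A$ says that $u_A\colon A\simeq\mathbf{1}_\C\otimes A\to A\otimes A$ is an equivalence, and a symmetric monoidal functor sends $\mathbf{1}_\C$ to $\mathbf{1}_\D$ and $\otimes$ to $\otimes$, so $F(u_A)$ is identified with the analogous unit map $u_{F(A)}$ for $F(A)$ and is therefore an equivalence. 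Hence $\Mod_{F(A)}(\D)\to\D$ is likewise the inclusion of the full subcategory of $F(A)$-local objects of $\D$. Since $F$ is symmetric monoidal it induces the functor $F'$ together with a homotopy making the square in the statement commute, so there is a canonical comparison functor $G\colon\Mod_A(\C)\to\mathcal{P}:=\C\times_\D\Mod_{F(A)}(\D)$ lying over $\C$.

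The next step is to describe the pullback $\mathcal{P}$ concretely. As full-subcategory inclusions are stable under base change in $\Catinf$, the projection $\mathcal{P}\to\C$ is fully faithful with essential image the objects $X\in\C$ for which $F(X)\in\Mod_{F(A)}(\D)$, i.e. for which $u_{F(X)}\colon F(X)\to F(A)\otimes F(X)$ is an equivalence. The (symmetric) monoidal structure of $F$ supplies a natural equivalence $F(A)\otimes F(X)\simeq F(A\otimes X)$ identifying $u_{F(X)}$ with $F(u_X)$; since $F$ is conservative, $F(u_X)$ is an equivalence precisely when $u_X$ is. Hence the essential image of $\mathcal{P}$ inside $\C$ is exactly the full subcategory of $A$-local objects, which is the essential image of $\Mod_A(\C)\to\C$.

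It remains to conclude. The composite $\Mod_A(\C)\xrightarrow{G}\mathcal{P}\to\C$ is the (fully faithful) inclusion of the $A$-local objects, and $\mathcal{P}\to\C$ is fully faithful, so $G$ is fully faithful; and $G$ is essentially surjective because, by the previous paragraph, $\Mod_A(\C)$ and $\mathcal{P}$ have the same essential image in $\C$ while $\mathcal{P}\to\C$ is fully faithful. Therefore $G$ is an equivalence, which proves the lemma. The argument is essentially bookkeeping: the two genuinely external inputs are \cite[4.8.2.10]{HA} and the stability of fully faithful functors under pullback, and the point to be careful about is that the square in the statement is precisely the one induced by $F$, so that the comparison functor $G$ exists and is compatible with the projections to $\C$.
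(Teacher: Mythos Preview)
Your proof is correct and follows essentially the same approach as the paper: both invoke \cite[4.8.2.10]{HA} to identify the module categories with full subcategories of local objects, observe that $F(A)$ is again idempotent, and then use conservativity of $F$ together with the symmetric monoidal structure to see that $X\in\C$ is $A$-local if and only if $F(X)$ is $F(A)$-local. Your version is slightly more explicit about why the pullback is a full subcategory of $\C$ (via stability of fully faithful functors under base change), whereas the paper phrases this step by describing objects of the pullback as triples $(D,C,\alpha)$, but the argument is the same.
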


\begin{proof}
	It is clear that $F(A) \in \CAlg(\D)$ is an idempotent algebra. By \cite[4.8.2.10]{HA}, the forgetful functors are fully faithful and induce equivalences $\Mod_A(\C) \simeq L_A\C$ and $\Mod_{F(A)}(\D) \simeq L_{F(A)}\D$ of symmetric
	monoidal $\infty$-categories. Here, $L_A\C$ denotes the essential image of the localisation functor determined by $A$ which coincides with the full subcategory of $\C$ spanned by those $C \in \C$ for which the unit
	$\eta_{A,C} \colon C \to L_A(C) \simeq C \otimes A$ is an equivalence in $\C$. It thus suffices to see that the induced functor to the pullback is essentially surjective. For this, observe that objects in the pullback can be identified with
	triples $(D,C,\alpha)$ where $D \in L_{FA}(\D)$, $C \in \C$ and $\alpha \colon D \simeq F(C)$ is an equivalence in $\D$. But this immediately shows that $C \in L_A(\C)$ since $F(\eta_{A,C}) \simeq \eta_{FA,FC} \simeq \eta_{FA,D}$
	is an equivalence in $\D$.
\end{proof}

\begin{lemma}[Animated $\lambda$-$\Z_{(p)}$-algebras]\label{Lem.: description animated lambda-Zp-algebras}
	 There exists an equivalence $\lambdaCRpan \simeq (\deltaCRpan)^{B\langle p^c \rangle}$ of $\infty$-categories for any prime $p$.
\end{lemma}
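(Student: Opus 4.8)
The plan is to obtain the equivalence by localizing the pullback square defining $\lambdaCRan$ (\cref{Def.: animated lambda-rings}) at the idempotent algebra $\Z_{(p)}$ and comparing the outcome with the analogous localization of the square defining $\deltapCRan$ (\cref{Def.: deltapCRan via Frobenius lifts}).

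First I would record that, for the cocartesian symmetric monoidal structures of \cref{Rmk.: coCartesian symm mon structure}, one has $\lambdaCRpan \simeq \Mod_{\Z_{(p)}}(\lambdaCRan)$ and $\deltaCRpan \simeq \Mod_{\Z_{(p)}}(\deltapCRan)$, and that by \cref{Prop.: idempotent algebras} the algebra $\Z_{(p)}$ is idempotent in each of $\CRan$, $\CRFpan$, $\deltapCRan$ and $\lambdaCRan$. Since for an idempotent algebra $\Mod_{\Z_{(p)}}(-)$ is the inclusion of the full subcategory of $\Z_{(p)}$-local objects — those whose unit map to the base change along $\Z \to \Z_{(p)}$ is invertible — and since this condition is detected componentwise, the operation $\Mod_{\Z_{(p)}}(-)$ commutes with the products, the functor categories $(-)^{BM}$ and the pullbacks along coproduct-preserving functors that make up the two defining squares. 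For the pullbacks this is \cref{Lem.: pullback idempotent algebras}; for functor categories one uses $\Mod_{A}(\C^{BM}) \simeq (\Mod_{A}\C)^{BM}$ for a constant algebra $A$; and \cref{Prop.: properties Frobenius}, \cref{Rmk.: properties lambdaCRan} and \cref{Cor.: Free delta functor} ensure that all functors involved preserve coproducts.

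Next I would carry out the computation, tracking the images of $\Z_{(p)}$ in the corners of the square of \cref{Def.: animated lambda-rings}. In $(\CRan)^{B\Nmult}$ it is $\Z_{(p)}$ with trivial action, with localization $(\CRpan)^{B\Nmult}$; in the $q$-th factors of $\prod_q (\CRFqan)^{B\Nmult}$ and $\prod_q (\CRFqan)^{B\langle q^c \rangle}$ it is $\Z_{(p)} \otimes^L \Fq$, which is the monoidal unit $\Fp$ of $\CRFpan$ when $q = p$ (so those factors are unchanged) and the zero ring when $q \neq p$ (by \cref{Lem.: p-local tensored with Fq is zero}), so that $\Mod_{0}(\CRFqan) \simeq \ast$ and every factor with $q \neq p$ collapses. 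This should yield
\[
	\lambdaCRpan \;\simeq\; (\CRpan)^{B\Nmult} \times_{(\CRFpan)^{B\Nmult}} (\CRFpan)^{B\langle p^c \rangle},
\]
the maps being induced by $(-)\otimes^L \Fp$ and by the $p$-Frobenius in the $B\Nadd$-direction, where I use the decomposition $\Nmult \simeq \langle p \rangle \times \langle p^c \rangle$. On the other side, localizing the square of \cref{Def.: deltapCRan via Frobenius lifts} (again $\Z_{(p)} \otimes^L \Fp$ is the unit of $\CRFpan$) gives $\deltaCRpan \simeq (\CRpan)^{B\Nadd} \times_{(\CRFpan)^{B\Nadd}} \CRFpan$; applying $(-)^{B\langle p^c \rangle}$, which commutes with this pullback, and using $B\Nadd \times B\langle p^c \rangle \simeq B\Nmult$, I would match the result against the display above, check that the structure maps agree, and conclude $\lambdaCRpan \simeq (\deltaCRpan)^{B\langle p^c \rangle}$.

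The hard part is the second step: making precise that $\Mod_{\Z_{(p)}}(-)$ commutes with all the limit constructions in play and that the structure maps on the two sides really coincide under the identification $\langle p \rangle \times \langle p^c \rangle = \Nmult$. Everything else is bookkeeping once \cref{Lem.: pullback idempotent algebras} and the vanishing $\Z_{(p)} \otimes^L \Fq \simeq 0$ of \cref{Lem.: p-local tensored with Fq is zero} are available.
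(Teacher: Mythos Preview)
Your proposal is correct and follows essentially the same strategy as the paper: both use \cref{Lem.: pullback idempotent algebras} to pass to $\Z_{(p)}$-local objects in the defining pullback square, invoke \cref{Lem.: p-local tensored with Fq is zero} to collapse the factors with $q \neq p$, and then identify the remaining $p$-typical piece with $(\deltaCRpan)^{B\langle p^c \rangle}$ via the decomposition $\Nmult \simeq \langle p \rangle \times \langle p^c \rangle$. The paper organizes the argument as an iterated pasting of pullback squares (first localizing along the forgetful functor, then pasting with \cref{Def.: animated lambda-rings}, then trimming the product, and finally applying \cref{Lem.: pullback idempotent algebras} once more to the $\delta$-side) rather than localizing the full diagram at once, but the content is the same.
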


\begin{proof}
	Fix a prime $p$. The forgetful functor $\lambdaCRan \to (\CRan)^{B\Nmult}$ preserves small coproducts and is conservative. Thus, it induces a pullback square
	\begin{equation}\label{Eq.: prelim pb for animated lambda Zp}
	\begin{diag}[column sep=2em]
		\lambdaCRpan \ar[r, "\iota_{(p)}"] \ar[d]  \ar[dr, phantom, "\lrcorner", near start] & \lambdaCRan \ar[d] \\[4ex]
		(\CRpan)^{B\Nmult} \ar[r, "\iota_{(p)}"] & (\CRan)^{B\Nmult}
	\end{diag}
	\end{equation}
	in $\Catinf$ by \cref{Lem.: pullback idempotent algebras} applied to $\Z_{(p)}$. Here, we identify $\Mod_{\Z_{(p)}}((\CRan)^{B\Nmult}) \simeq (\CRpan)^{B\Nmult}$.
	Pasting \cref{Eq.: prelim pb for animated lambda Zp} and the pullback square from \cref{Def.: animated lambda-rings} gives a second pullback square
	\begin{equation}\label{Diag.: pullback describing p-local global delta rings}
	\begin{diag}[column sep=2em]
		\lambdaCRpan \ar[r] \ar[d] \ar[dr, phantom, "\lrcorner", near start] & \prod_q (\CRFqan)^{B\langle q^c \rangle} \ar[d, "\prod_q \Frq"] \\[5ex]
		(\CRpan)^{B\Nmult} \ar[r, "\alpha"]  & \prod_q (\CRFqan)^{B\Nmult}.
	\end{diag}
	\end{equation}
	in $\Catinf$. Here, $\alpha$ is induced by the functors ${(-) \otimes^L \Fq} \circ \iota_{(p)} \colon \CRpan \to \CRFqan$ ranging over all primes $q$. Recall that ${(-) \otimes^L \Fq} \circ \iota_{(p)}$ is null-homotopic
	for any $q$ distinct from $p$ by \cref{Lem.: p-local tensored with Fq is zero}. It immediately follows that $\alpha$ is equivalent to the composite
	 \begin{equation*}
	 	(\CRpan)^{B\Nmult} \xrightarrow{{(-) \otimes^L \Fp} \circ \iota_{(p)}} (\CRFpan)^{B\Nmult} \xrightarrow{(\id,0)} \textstyle \prod_q (\CRFqan)^{B\Nmult}
	 \end{equation*}
	 where $(\id,0)$ denotes the functor induced by the identity for $q = p$ and a null-homotopic functor for any prime $q$ unequal to $p$. As the Frobenius reflects terminal objects the diagram
	 \[\begin{diag}
		(\CRFpan)^{B\langle p^c \rangle} \ar[d, "\Fr"] \ar[r, "{(\id,0)}"] &[-1ex] \prod_q (\CRFqan)^{B\langle q^c \rangle} \ar[d, "\prod_q \Frq"]\\[4ex]
		(\CRFpan)^{B\Nmult} \ar[r, "{(\id,0)}"] & \prod_q (\CRFqan)^{B\Nmult}
	 \end{diag}\]
	is a pullback square. Inserting the definition of animated $\delta$-rings thus gives a pullback square
	\[\begin{diag}
		\lambdaCRpan \ar[r] \ar[d] \ar[dr, phantom, "\lrcorner", near start]& (\deltaCRan)^{B\langle p^c \rangle} \ar[d] \\[4ex]
		(\CRpan)^{B\Nmult} \ar[r,"\iota_{(p)}"] & (\CRan)^{B\Nmult}
	\end{diag}\]
	in $\Catinf$ whose limit is equivalent to $(\deltaCRpan)^{B\langle p^c \rangle}$ by \cref{Lem.: pullback idempotent algebras} applied to the forgetful functor $\deltaCRan \to (\CRan)^{B\Nadd}$.
\end{proof}

\begin{corollary}[Animated $\lambda$-$\Q$-algebras]\label{Cor.: animated lambda-Q-algebras}
	There exists an equivalence $\lambdaCRQan \simeq (\CRQan)^{B\Nmult}$ of $\infty$-categories.
\end{corollary}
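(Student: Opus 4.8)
The plan is to repeat the proof of \cref{Lem.: description animated lambda-Zp-algebras} with $\Q$ in place of $\Z_{(p)}$, exploiting that the functor ${(-) \otimes^L \Fq} \circ \iota_\Q$ is null-homotopic for \emph{every} prime $q$ rather than for all but one.

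Since $\Q$ is an idempotent algebra in $\lambdaCRan$ by \cref{Prop.: idempotent algebras}, I would first apply \cref{Lem.: pullback idempotent algebras} to the forgetful functor $\lambdaCRan \to (\CRan)^{B\Nmult}$, which is conservative and preserves small coproducts and is therefore symmetric monoidal for the cocartesian monoidal structures. Identifying $\Mod_{\Q}\bigl((\CRan)^{B\Nmult}\bigr) \simeq (\CRQan)^{B\Nmult}$ exactly as in the $\Z_{(p)}$-case of \cref{Lem.: description animated lambda-Zp-algebras}, this yields a pullback square
\[\begin{diag}[column sep=2em]
	\lambdaCRQan \ar[r, "\iota_\Q"] \ar[d] \ar[dr, phantom, "\lrcorner", near start] & \lambdaCRan \ar[d] \\[4ex]
	(\CRQan)^{B\Nmult} \ar[r, "\iota_\Q"] & (\CRan)^{B\Nmult}
\end{diag}\]
in $\Catinf$. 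Pasting this with the pullback square of \cref{Def.: animated lambda-rings} produces a pullback square
\[\begin{diag}[column sep=2em]
	\lambdaCRQan \ar[r] \ar[d] \ar[dr, phantom, "\lrcorner", near start] & \prod_p (\CRFpan)^{B\langle p^c \rangle} \ar[d, "\prod_p \Fr"] \\[5ex]
	(\CRQan)^{B\Nmult} \ar[r, "\alpha"] & \prod_p (\CRFpan)^{B\Nmult}
\end{diag}\]
in $\Catinf$, in which $\alpha$ is induced by the collection of functors ${(-) \otimes^L \Fp} \circ \iota_\Q \colon \CRQan \to \CRFpan$ ranging over all primes $p$.

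To finish, \cref{Cor.: tensoring with Fq is zero on Q-algebras} shows that each of these functors is null-homotopic, so $\alpha$ factors through the terminal object of $\prod_p (\CRFpan)^{B\Nmult}$. Since the Frobenius is conservative and preserves terminal objects by \cref{Prop.: properties Frobenius}, it (and hence $\prod_p \Fr$) also reflects terminal objects, so the base change of $\prod_p \Fr$ along the inclusion of the terminal object is the terminal $\infty$-category; feeding this back into the pasted pullback square identifies $\lambdaCRQan$ with $(\CRQan)^{B\Nmult}$. The only point requiring any care is the monoidal bookkeeping needed for the very first step --- that the forgetful functor is symmetric monoidal for the cocartesian structure and that modules over the constant algebra $\Q$ in $(\CRan)^{B\Nmult}$ are exactly the functors $B\Nmult \to \CRQan$ --- and this is routine, being already implicitly invoked in the proof of \cref{Lem.: description animated lambda-Zp-algebras}. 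So I expect no genuine obstacle.
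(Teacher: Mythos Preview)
Your argument is correct. You directly repeat the proof of \cref{Lem.: description animated lambda-Zp-algebras} with $\Q$ in place of $\Z_{(p)}$, pasting the localisation square against the full defining pullback of $\lambdaCRan$ and killing the entire product $\prod_p (\CRFpan)^{B\Nmult}$ at once via \cref{Cor.: tensoring with Fq is zero on Q-algebras}. The paper instead bootstraps from \cref{Lem.: description animated lambda-Zp-algebras}: it fixes a single prime $p$, compares $\lambdaCRQan$ to $\lambdaCRpan \simeq (\deltaCRpan)^{B\langle p^c \rangle}$ via \cref{Lem.: pullback idempotent algebras}, and then only needs to collapse the single factor $(\CRFpan)^{B\Nmult}$. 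Your route is more self-contained and does not invoke \cref{Lem.: description animated lambda-Zp-algebras} as an input, at the cost of handling the infinite product; the paper's route reuses that lemma and thereby reduces to a single prime. Both arguments use the same ingredients (\cref{Lem.: pullback idempotent algebras}, null-homotopy of the mod $p$ reduction on $\Q$-algebras, and that $\Fr$ reflects terminal objects), so the difference is organisational rather than substantive.
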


\begin{proof}
	Fix a prime $p$. \Cref{Lem.: pullback idempotent algebras} applied to the idempotent algebras $\Z_{(p)}$ and $\Q$ in $\lambdaCRan$ shows that there exists a pullback square
	\[\begin{diag}[column sep=2em]
		\lambdaCRQan \ar[d] \ar[r, "\iota_\Q^{(p)}"] & \lambdaCRpan \ar[d] \\[4ex]
		(\CRQan)^{B\Nmult} \ar[r, "\iota_\Q^{(p)}"] & (\CRpan)^{B\Nmult}
	\end{diag}\]
	of $\infty$-categories. Using the equivalence $\lambdaCRpan \simeq (\deltaCRpan)^{B\langle p^c \rangle}$ of \cref{Lem.: description animated lambda-Zp-algebras} and inserting the definition of $p$-typical animated $\delta$-rings
	gives a second pullback square
	\[\begin{diag}
		\lambdaCRQan \ar[d] \ar[r] & (\CRFpan)^{B\langle p^c \rangle} \ar[d, "\Fr"] \\[4ex]
		(\CRQan)^{B\Nmult} \ar[r] & (\CRFpan)^{B\Nmult}
	\end{diag}\]
	in $\Catinf$ where the lower horizontal functor is induced by ${(-) \otimes^L \Fp} \circ \iota_\Q \colon \CRQan \to \CRFpan$. Since the functor ${(-) \otimes^L \Fp} \circ \iota_\Q$ is null-homotopic by
	\cref{Cor.: tensoring with Fq is zero on Q-algebras} and the Frobenius reflects terminal objects it follows that the left hand map is an equivalence.
\end{proof}

\begin{lemma}[Fracture square]\label{Lem.: fracture square in lambdaCRan}
	For any animated $\lambda$-ring $A$, there exists a pullback square
	\begin{equation}\label{Eq.: fracture square}
	\begin{diag}[column sep=3ex]
	A \ar[r, "\prod_p \eta_{A,(p)}"] \ar[d, "\eta_{A,\Q}"'] \ar[dr, phantom, "\lrcorner" {xshift=-3ex, yshift=1ex}] &[2ex] \prod_p (A \otimes^L \Z_{(p)}) \ar[d, "\eta_{\prod_p (A \otimes^L \Z_{(p)}),\Q}"] \\[4ex]
	A \otimes^L \Q \ar[r] & \left(\prod_p A \otimes^L \Z_{(p)} \right) \otimes^L \Q
	\end{diag}
	\end{equation}
	in $\lambdaCRan$. Here, $\eta_{(p)}$ and $\eta_\Q$ denote the units of \cref{Prop.: localisations lambdap and lambdaQ}. The lower horizontal map is given by $(\prod_p \eta_{A,(p)}) \otimes^L \id_{\Q}$.
\end{lemma}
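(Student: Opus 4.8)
The plan is to lift the statement from the classical arithmetic fracture square in $\DZ$ along the conservative forgetful functor to connective chain complexes, in the spirit of the proof of \cref{Prop.: pullback square for animated delta structures}. The composite $\lambdaCRan \xrightarrow{\forget_\lambda} \CRan \to \DZ_{\geq 0}$ is conservative and preserves small limits: the functor $\forget_\lambda$ is conservative and a right adjoint by \cref{Rmk.: properties lambdaCRan} (the square \cref{Eq.: animated lambda rings via delta rings} lies in $\Prr$), while the underlying chain complex functor $\CRan \to \DZ_{\geq 0}$ is conservative and preserves small limits. Moreover $\forget_\lambda$ preserves colimits, so it is symmetric monoidal for the cocartesian monoidal structures and sends the idempotent $\lambda$-algebras $\Z_{(p)},\Q$ of \cref{Prop.: idempotent algebras} to the idempotent animated algebras $\Z_{(p)},\Q$; combined with the fact that the underlying complex functor carries the coproduct $\otimes^L$ of animated rings to the derived tensor product $\otimes^L_\Z$ over $\Z$ (both being left Kan extended from $\Poly$, cf. \cref{Prop.: Properties of animated rings}), this identifies the image of \cref{Eq.: fracture square} under the composite with the square
\[\begin{diag}[column sep=4em]
	M \ar[r, "f"] \ar[d] & \prod_p \bigbrace{M \otimes^L_\Z \Z_{(p)}} \ar[d] \\[4ex]
	M \otimes^L_\Z \Q \ar[r] & \bigbrace{\prod_p M \otimes^L_\Z \Z_{(p)}} \otimes^L_\Z \Q
\end{diag}\]
in $\DZ_{\geq 0}$, where $M$ denotes the image of $A$ in $\DZ_{\geq 0}$, the vertical arrows are the rationalization units, and $f$ is the product of the localization maps $M \to M \otimes^L_\Z \Z_{(p)}$. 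All four corners are connective, so it will suffice to prove that this square is cartesian in $\DZ$: its limit there is then connective and therefore agrees with the pullback formed in $\DZ_{\geq 0}$, and the assertion follows from conservativity and limit-preservation of the composite functor.

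To check that the displayed square is cartesian in $\DZ$, I would compare the cofibers of its two vertical arrows, using that in a stable $\infty$-category a square is cartesian exactly when the map induced on these cofibers is an equivalence. Tensoring the cofiber sequence $\Z \to \Q \to \Q/\Z$ with $M$, respectively with $\prod_p (M \otimes^L_\Z \Z_{(p)})$, identifies this map with $f \otimes^L_\Z \Q/\Z$; as $(-) \otimes^L_\Z \Q/\Z$ is exact, it is an equivalence if and only if $\mathrm{cofib}(f) \otimes^L_\Z \Q/\Z \simeq 0$. For this it is enough to show that $\mathrm{cofib}(f) \otimes^L_\Z \Z/\ell \simeq 0$ for every prime $\ell$: induction along the cofiber sequences $\Z/\ell \to \Z/\ell^{k+1} \to \Z/\ell^k$ then gives $\mathrm{cofib}(f) \otimes^L_\Z \Z/\ell^k \simeq 0$ for all $k$, and since the Prüfer group $\Z(\ell^\infty) \simeq \colim_k \Z/\ell^k$ and $\Q/\Z \simeq \bigoplus_\ell \Z(\ell^\infty)$, this yields $\mathrm{cofib}(f) \otimes^L_\Z \Q/\Z \simeq 0$. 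Equivalently, one must check that $f \otimes^L_\Z \Z/\ell$ is an equivalence. Now $(-) \otimes^L_\Z \Z/\ell$ is the cofiber of multiplication by $\ell$, hence a finite colimit, and therefore commutes with the infinite product $\prod_p$; since $\Z_{(p)} \otimes^L_\Z \Z/\ell$ vanishes for $p \neq \ell$ while $\Z_{(\ell)}/\ell \simeq \Z/\ell$, this gives $\prod_p (M \otimes^L_\Z \Z_{(p)}) \otimes^L_\Z \Z/\ell \simeq M \otimes^L_\Z \Z/\ell$ via the projection onto the $\ell$-th factor, under which $f \otimes^L_\Z \Z/\ell$ is identified with $M \otimes^L_\Z g$, where $g \colon \Z/\ell \to \Z_{(\ell)}/\ell$ is the reduction mod $\ell$ of the localization $\Z \to \Z_{(\ell)}$. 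Since $g$ is an isomorphism, $f \otimes^L_\Z \Z/\ell$ is an equivalence, and the square is cartesian in $\DZ$.

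The main obstacle is the bookkeeping in the last step: one must resist commuting the infinite product $\prod_p$ past the filtered colimit defining $\Z(\ell^\infty)$, and instead carry out the product computation at the finite stage $\Z/\ell$ — where the product is a finite colimit and genuinely commutes — before passing to the colimit. One should also verify that the chain of identifications above is realized by the canonical comparison map of vertical cofibers, not merely by an abstract equivalence, so that the conclusion that the square is cartesian is legitimate. The remaining descent of a pullback square from $\DZ$ to $\DZ_{\geq 0}$, and thence to $\lambdaCRan$, via connectivity of the limit is the device already employed in \cref{Prop.: pullback square for animated delta structures} and poses no difficulty.
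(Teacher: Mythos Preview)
Your proof is correct and follows essentially the same strategy as the paper: reduce to $\DZ$ via the conservative limit-preserving forgetful functor, then verify the square is cartesian there by showing that the map $f = \prod_p \eta_{A,(p)}$ becomes an equivalence after $(-)\otimes^L_\Z \Fp$ for every prime, using that $\Fp$ is perfect so this functor commutes with the infinite product. The only cosmetic difference is that the paper compares fibres of the \emph{horizontal} maps and invokes the criterion that $\fib(f)$ lies in $\DQ$ iff $\fib(f)\otimes^L\Fp \simeq 0$ for all $p$, whereas you compare cofibres of the \emph{vertical} maps and reach the same endpoint by decomposing $\Q/\Z$ into its $\ell$-primary pieces; both routes terminate in the identical computation.
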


\begin{proof}
	The diagram is functorial in $A$ and commutes by naturality of the unit $\eta_\Q$. It suffices to show that \cref{Eq.: fracture square} is a pullback square underlying in $\DZ$. Abbreviate $\prod_p \eta_{A,(p)}$ by $\alpha$. We
	claim that the induced map between fibres $\gamma \colon \fib(\alpha) \to \fib(\alpha \otimes^L \id_{\Q})$ is an equivalence in $\DZ$. This is equivalent to $\fib(\alpha) \in \DQ$ since exactness of
	$(-) \otimes^L \Q$ shows that $\gamma$ is induced by the unit $e \colon \Z \to \Q$ of the idempotent algebra $\Q \in \DZ$. It thus suffices to show that
	$\fib(\alpha) \otimes^L \Fp \simeq 0$ for all primes $p$. But this holds true since
	\[\begin{diag}[column sep=-1em]
	 	\fib(\alpha) \otimes^L \Fp \ar[r] \ar[d] & A \otimes^L \Fp \ar[d, "\alpha \otimes^L \id_{\Fp}"] \\[3.5ex]
		0 \ar[r] & (\prod_q A \otimes^L \Z_{(q)}) \otimes^L \Fp
	\end{diag}\]
	is a pullback square in $\DZ$ for any $p$ and the map $\alpha \otimes^L \id_{\Fp}$ is an equivalence. This uses that small products in $\DZ_{\geq 0}$ coincide with products in $\DZ$ and that
	$\Fp$ is perfect in $\DZ$.
\end{proof}

\newpage
\section{Free animated $\lambda$-rings}\label{Sect.: Free animated lambda rings}

In this section, we prove that our notion of animated $\lambda$-rings (\cref{Def.: animated lambda-rings}) coincides with the animation $\Ani(\lambdaCR)$ of the $1$-category of $\lambda$-rings
(\cref{Thm.: lambdaCRan is the animation}). We reduce this to identifying the free animated $\lambda$-$\Z_{(p)}$-algebra on a single generator with the $p$-localisation of the ordinary free $\lambda$-ring via the
results of \cref{Sect.: Fracture square}.

\begin{notation}\label{Def.: forgetlambdap}
	We denote by $\forgetlambdap$ the forgetful functor from animated $\lambda$-$\Z_{(p)}$-algebras to animated rings. Formally, it is given by the composite
	\begin{equation*}
		{\forget_\lambda} \circ \iota_{(p)} \colon \lambdaCRpan \to \lambdaCRan \to \CRan.
	\end{equation*}
	Analogously, we write $\forgetlambdaQ$ for the functor which assigns an animated $\lambda$-$\Q$-algebra
	its underlying animated ring. Formally, we have $\forgetlambdaQ \simeq {\forget_\lambda} \circ \iota_\Q \colon \lambdaCRQan \to \CRan$.
\end{notation}

\begin{lemma}[Free animated $\lambda$-$\Z_{(p)}$-algebras]\label{Lem.: left adjoint to forgetlambdap}
	The forgetful functor $\forgetlambdap$ admits a left adjoint $\freelambdap$ which satisfies
	\begin{equation*}
		\freelambdap(\Z[X]) \simeq \coprod_{\langle p^c \rangle} (\delta\{x\} \otimes^L \Z_{(p)})
	\end{equation*}
	in $(\deltaCRan)^{B\langle p^c \rangle}$. Here, $\delta\{x\}$ denotes the free animated $p$-typcial $\delta$-ring and $\langle p^c \rangle$ acts by shifting the factors of the coproduct. In particular, the free
	animated $\lambda$-$\Z_{(p)}$-algebra is discrete.
\end{lemma}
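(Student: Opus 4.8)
The plan is to transport the claim along the equivalence $\lambdaCRpan \simeq (\deltaCRpan)^{B\langle p^c \rangle}$ of \cref{Lem.: description animated lambda-Zp-algebras} and to realise $\freelambdap$ as a composite of explicit left adjoints. Tracing $\forgetlambdap$ through the pullback squares appearing in the proof of that lemma, it is identified under the equivalence with the composite
\[
(\deltaCRpan)^{B\langle p^c \rangle} \xrightarrow{\ \ev\ } \deltaCRpan \xrightarrow{\ U\ } \CRan,
\]
where $U$ forgets both the $p$-typical $\delta$-structure and the $\Z_{(p)}$-algebra structure; this holds because the equivalence is set up precisely so that evaluating a $B\langle p^c \rangle$-diagram at the basepoint returns the underlying animated $\delta$-$\Z_{(p)}$-algebra. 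Since $\forgetlambdap = {\forget_\lambda} \circ \iota_{(p)}$ is a composite of right adjoints (\cref{Rmk.: properties lambdaCRan} and \cref{Prop.: localisations lambdap and lambdaQ}), it admits a left adjoint $\freelambdap$, and it remains to identify $\freelambdap(\Z[X])$.

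First I would compute the left adjoints of the two displayed functors. The functor $\ev$ preserves all limits and colimits, so its left adjoint is left Kan extension along $\ast \to B\langle p^c \rangle$; since the relevant comma category is the discrete category on the underlying set of $\langle p^c \rangle$, this left adjoint sends $D$ to $\coprod_{\langle p^c \rangle} D$ with $\langle p^c \rangle$ acting by left translation on the indexing set, that is, by shifting the factors. For $U$, I would identify the object of $\deltaCRpan$ corepresenting $D \mapsto \Map_{\CRan}(\Z[X], U(D))$. Realising $\deltaCRpan$ as the full subcategory of $\Z_{(p)}$-local objects of $\deltaCRan$ (\cref{Prop.: localisations lambdap and lambdaQ} and the following remark), for $\Z_{(p)}$-local $D$ one has
\[
\Map_{\deltaCRan}\bigl(\delta\{x\} \otimes^L \Z_{(p)},\, D\bigr) \simeq \Map_{\deltaCRan}(\delta\{x\}, D) \times \Map_{\deltaCRan}(\Z_{(p)}, D) \simeq \Map_{\CRan}(\Z[X], U(D)),
\]
using that $(-) \otimes^L \Z_{(p)}$ is the idempotent localization, so $\Map_{\deltaCRan}(\Z_{(p)}, D) \simeq \ast$, together with \cref{Prop.: free animated delta ring}. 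Hence $\delta\{x\} \otimes^L \Z_{(p)}$ is the free animated $\delta$-$\Z_{(p)}$-algebra on $x$, and combining the two adjunctions by the Yoneda lemma gives $\freelambdap(\Z[X]) \simeq \coprod_{\langle p^c \rangle}(\delta\{x\} \otimes^L \Z_{(p)})$ with the shift action, as asserted.

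For discreteness: by \cref{Prop.: free animated delta ring} the ring $\delta\{x\}$ is the discrete polynomial ring $\Z[x_n \mid n \ge 0]$, which is flat over $\Z$, so $\delta\{x\} \otimes^L \Z_{(p)} \simeq \Z_{(p)}[x_n \mid n \ge 0]$ is discrete and flat over $\Z$. The underlying animated ring of $\freelambdap(\Z[X])$ is the coproduct $\coprod_{\langle p^c \rangle}(\delta\{x\} \otimes^L \Z_{(p)})$ formed in $\CRan$, since $\forgetlambdap$ preserves small colimits. This coproduct is a filtered colimit of finite coproducts, and a finite coproduct is a finite derived tensor product over $\Z$ of copies of the flat discrete $\Z$-algebra $\Z_{(p)}[x_n \mid n \ge 0]$, hence again discrete and flat; a filtered colimit of discrete animated rings is discrete, so the free animated $\lambda$-$\Z_{(p)}$-algebra is discrete.

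The step I expect to be the main obstacle is the first one: carefully matching $\forgetlambdap$ with $U \circ \ev$ through the cascade of pullback squares of \cref{Lem.: description animated lambda-Zp-algebras}, and verifying that left Kan extension along $\ast \to B\langle p^c \rangle$ yields exactly the translation (shift) action on the coproduct. Once this bookkeeping is in place, the remainder is a routine assembly of adjunctions together with the flatness computation.
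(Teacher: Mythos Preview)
Your proposal is correct and follows essentially the same approach as the paper: both use the equivalence $\lambdaCRpan \simeq (\deltaCRpan)^{B\langle p^c \rangle}$ of \cref{Lem.: description animated lambda-Zp-algebras} and factor the adjunction as (left Kan extension along $\ast \to B\langle p^c\rangle$) composed with (free animated $\delta$-$\Z_{(p)}$-algebra), then invoke \cref{Prop.: free animated delta ring} and flatness of $\Z_{(p)}[x_n\mid n\ge 0]$ for discreteness. The only cosmetic difference is that the paper writes the composite of left adjoints $\Lan \circ L_{(p)} \circ \free_\delta$ directly, whereas you decompose the right adjoint $\forgetlambdap \simeq U \circ \ev$ and then pass to left adjoints, identifying $\delta\{x\}\otimes^L\Z_{(p)}$ as the corepresenting object by a short mapping-space argument; these are mirror images of the same computation.
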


\begin{proof}
	\Cref{Lem.: description animated lambda-Zp-algebras} allows to identify the left adjoint $\freelambdap$ with the composite
	\begin{equation*}
		\CRan \xrightarrow{\free_\delta} \deltaCRan \xrightarrow{L_{(p)}} \deltaCRpan \xrightarrow{\Lan} (\deltaCRpan)^{B\langle p^c \rangle} \simeq \lambdaCRpan
	\end{equation*}
	of left adjoints. Recall that $\free_\delta(\Z[X]) \simeq \delta\{x\}$ by \cref{Prop.: free animated delta ring}. Thus, the colimit formula for pointwise left Kan extensions shows
	\begin{equation*}
		\ev \bigbrace{\free_\lambda^{(p)}(\Z[X])} \simeq \coprod_{\langle p^c \rangle} L_{(p)}(\delta\{x\}) \simeq L_{(p)} \bigbrace{\coprod_{\langle p^c \rangle} \delta\{x\} \otimes^L \Z_{(p)} }
	\end{equation*}
	in $\deltaCRpan$. Here, the coproduct on the right is taken in animated $\delta$-rings and $\langle p^c \rangle$ acts by shifting the factors. The equivalence ${\iota_{(p)}} \circ L_{(p)} \simeq (-) \otimes^L \Z_{(p)}$ shows
	\begin{align*}
	\ev \bigbrace{\free_\lambda^{(p)}(\Z[X])} &\simeq \bigbrace{\coprod_{\langle p^c \rangle} \delta\{x\} \otimes^L \Z_{(p)} } \otimes^L \Z_{(p)} \\
	&\simeq \coprod_{\langle p^c \rangle} \bigbrace{ \delta\{x\} \otimes^L \Z_{(p)}}
	\end{align*}
	in $\deltaCRan$. Here, we have used that $(-) \otimes^L \Z_{(p)} \colon \deltaCRan \to \deltaCRan$ preserves arbitrary colimits since $\Z_{(p)}$ is an idempotent algebra in animated $\delta$-rings. The second assertion
	follows as ${\delta\{x\}} \otimes^L \Z_{(p)} \simeq \Z_{(p)}[x_n |\,n \geq 0]$ in $\CRan$ and coproducts of polynomial algebras are discrete.
\end{proof}

\begin{lemma}\label{Lem.: forgetlambdap commutes with Lp}
	There exists an equivalence
	\begin{equation*}
		{\forgetlambdap} \circ {L_{(p)}} \simeq (-) \otimes^L \Z_{(p)} \circ \forget_\lambda
	\end{equation*}
	of functors $\lambdaCRan \to \CRan$ for any prime $p$. Here, we view $(-) \otimes^L \Z_{(p)}$ as an endofunctor on animated rings and denote by $L_{(p)}$ the localisation of \cref{Prop.: localisations lambdap and lambdaQ}.
\end{lemma}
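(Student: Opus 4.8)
The plan is to reduce the statement to the fact that $\forget_\lambda$ preserves coproducts, by unwinding the definition of $\forgetlambdap$ and invoking the description of $L_{(p)}$ as an idempotent monad. First I would write $\forgetlambdap \circ L_{(p)} = \forget_\lambda \circ \iota_{(p)} \circ L_{(p)}$ using \cref{Def.: forgetlambdap}, and then apply \cref{Prop.: localisations lambdap and lambdaQ}(2), which identifies the composite $\iota_{(p)} \circ L_{(p)}$ with the endofunctor $(-) \otimes^L \Z_{(p)}$ on $\lambdaCRan$, i.e. with the functor sending an animated $\lambda$-ring $A$ to its coproduct with $\Z_{(p)}$ (recall from \cref{Rmk.: coCartesian symm mon structure} that $\otimes^L$ denotes the cocartesian symmetric monoidal structure on $\lambdaCRan$). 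This already gives $\forgetlambdap \circ L_{(p)} \simeq \forget_\lambda\bigbrace{(-) \otimes^L \Z_{(p)}}$, so it remains to move $\forget_\lambda$ past the coproduct.

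The second step uses that $\forget_\lambda \colon \lambdaCRan \to \CRan$ preserves small colimits, hence binary coproducts, by \cref{Rmk.: properties lambdaCRan}. Thus the canonical comparison map gives a natural equivalence $\forget_\lambda(A \otimes^L \Z_{(p)}) \simeq \forget_\lambda(A) \otimes^L \forget_\lambda(\Z_{(p)})$, where the right-hand coproduct is now taken in $\CRan$ and therefore agrees with $(-) \otimes^L_{\Z} (-)$ by \cref{Prop.: Properties of animated rings}(7). To finish I would observe that $\forget_\lambda(\Z_{(p)}) \simeq \Z_{(p)}$: the animated $\lambda$-ring $\Z_{(p)}$ is the image of the commutative ring $\Z_{(p)}$ under the embedding of \cref{Prop.: fully faithful embedding of lambdaCR}, and by the proof of that proposition the composite $\forget_\lambda \circ \iota$ agrees with $\iota \circ \forget$, so the underlying animated ring of $\Z_{(p)}$ is $\iota(\Z_{(p)})$. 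Chaining the equivalences yields $\forgetlambdap \circ L_{(p)} \simeq (-) \otimes^L_{\Z} \Z_{(p)} \circ \forget_\lambda$ as functors $\lambdaCRan \to \CRan$.

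I do not anticipate a genuine obstacle here: the argument is a formal manipulation of already-established adjunctions and colimit-preservation properties. The only point requiring a moment's attention is that the comparison with the coproduct is natural in $A$, but this is automatic from $\forget_\lambda$ preserving the relevant colimits. If desired, one can spell out naturality by writing $(-) \otimes^L \Z_{(p)}$ as a colimit functor and using preservation of colimits on the level of diagrams rather than objectwise.
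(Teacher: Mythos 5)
Your proposal is correct and is essentially the paper's own proof: identify $\iota_{(p)}\circ L_{(p)}$ with $(-)\otimes^L\Z_{(p)}$ via \cref{Prop.: localisations lambdap and lambdaQ} and then use that $\forget_\lambda$ preserves coproducts. The only difference is that you spell out the (true but implicit) identification $\forget_\lambda(\Z_{(p)})\simeq\Z_{(p)}$, which the paper leaves tacit.
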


\begin{proof}
	We have ${\forgetlambdap} \circ {L_{(p)}} \simeq {\forget_\lambda} \circ (-) \otimes^L \Z_{(p)}$ by \cref{Prop.: localisations lambdap and lambdaQ}. The claim follows as the forgetful functor from animated $\lambda$-rings to animated
	rings commutes with coproducts.
\end{proof}

\begin{lemma}[Free animated $\lambda$-$\Q$-algebras]\label{Lem.: left adjoint to forgetlambdaQ}
	The forgetful functor $\forgetlambdaQ$ admits a left adjoint which is given by the composite
	\begin{equation*}
		\freelambdaQ \colon \CRan \xrightarrow{L_\Q} \CRQan \xrightarrow{\Lan} (\CRQan)^{B\Nmult} \simeq \lambdaCRQan
	\end{equation*}
	in $\Catinf$. In particular, the free animated $\lambda$-$\Q$-algebra is discrete.
\end{lemma}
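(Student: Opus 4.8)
The plan is to argue exactly as in the proof of \cref{Lem.: left adjoint to forgetlambdap}, only one degree simpler: the equivalence $\lambdaCRQan \simeq (\CRQan)^{B\Nmult}$ of \cref{Cor.: animated lambda-Q-algebras} lets us factor $\forgetlambdaQ$ as a composite of two functors each of which manifestly admits a left adjoint, and composing the two adjunctions produces $\freelambdaQ$.

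First I would check that, transported along the equivalence of \cref{Cor.: animated lambda-Q-algebras}, the functor $\forgetlambdaQ$ becomes the composite
\[
	(\CRQan)^{B\Nmult} \xrightarrow{\ev} \CRQan \xrightarrow{\iota_\Q} \CRan,
\]
where $\ev$ is restriction along $\ast \to B\Nmult$ and $\iota_\Q$ is the inclusion of animated $\Q$-algebras into animated rings. This is immediate from the construction of that equivalence, since every functor in the pullback diagrams appearing there commutes with the respective forgetful functors to animated rings. Now $\ev$ admits the left adjoint $\Lan$ given by left Kan extension along $\ast \to B\Nmult$, and $\iota_\Q$ admits the left adjoint $L_\Q \simeq (-) \otimes^L \Q$ by the $\Q$-algebra version of \cref{Prop.: localisations lambdap and lambdaQ} recorded in the remark following it. Composing the two adjunctions yields $\freelambdaQ \simeq \Lan \circ L_\Q$, as claimed.

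For the last assertion I would use the pointwise colimit formula for left Kan extensions, just as in \cref{Lem.: left adjoint to forgetlambdap}: one gets
\[
	\ev\bigbrace{\freelambdaQ(\Z[X])} \simeq \ev\bigbrace{\Lan(L_\Q(\Z[X]))} \simeq \coprod_{\Nmult} \Q[X],
\]
the coproduct in $\CRQan$ being indexed over the countably infinite underlying set of $\Nmult$, which acts by permuting the factors. Since coproducts in $\CRQan$ are computed by $\otimes^L_{\Q}$, this is the polynomial $\Q$-algebra $\Q[x_n \,|\, n \in \Nmult]$, which is discrete; as $\forget_\lambda$ is conservative and discreteness of an animated $\lambda$-ring may be tested on its underlying animated ring, the free animated $\lambda$-$\Q$-algebra is discrete. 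There is no real obstacle here; the only point requiring attention is the identification of $\forgetlambdaQ$ with $\iota_\Q \circ \ev$ through the pullback squares of \cref{Cor.: animated lambda-Q-algebras}, which is pure bookkeeping.
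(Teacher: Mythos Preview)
Your proof is correct and follows essentially the same approach as the paper: both invoke \cref{Cor.: animated lambda-Q-algebras} to identify $\forgetlambdaQ$ with $\iota_\Q \circ \ev$, take left adjoints termwise, and compute the free object via the colimit formula for left Kan extensions. Your version simply spells out in more detail what the paper compresses into ``This follows immediately from \cref{Cor.: animated lambda-Q-algebras}''; the paper additionally records the explicit action of $m \in \Nmult$ on $\Q[x_n \mid n \geq 1]$ by $x_n \mapsto x_{mn}$.
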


\begin{proof}
	This follows immediately from \cref{Cor.: animated lambda-Q-algebras}. We compute $\freelambdaQ(\Z[X]) \simeq \Q[x_n |\, n\geq 1]$ by the colimit formula for pointwise left Kan extensions and observe that
	$m \in \Nmult$ acts via the $\Q$-algebra endomorphism sending $x_n$ to $x_{mn}$.
\end{proof}

\begin{remark}\label{Rmk.: freelambdaQ via p}
	The free animated $\lambda$-$\Q$-algebra functor of \cref{Lem.: left adjoint to forgetlambdaQ} admits a second description in terms of free animated $\lambda$-$\Z_{(p)}$-algebras for any prime $p$:
	\begin{equation*}
		\freelambdaQ \simeq L_\Q \circ \iota_{(p)} \circ \freelambdap.
	\end{equation*}
	This follows from \cref{Rmk.: animated lambda-Q-algebras are lambda-Zp-algebras} together with the equivalence $\forgetlambdaQ \simeq {\forgetlambdap} \circ \iota_\Q^{(p)}$.
\end{remark}

\begin{notation}[Free $\lambda$-rings]\label{Ordinary free lambda rings}
	We recall Joyal's description of the free $\lambda$-ring, see \cite[Theorem 1]{JoyalLambda}: Let $C(P) = \{ (p_1,\ldots,p_n) | \, n \geq 0, \text{$p_i > 0$ prime with $p_1 \leq \ldots \leq p_n$} \}$
	denote the set of non-decreasing sequences of positive primes. There exists a unique global $\delta$-structure (aka $\lambda$-structure) on the polynomial ring
	$\Z[X_\sigma | \,\sigma \in C(P)]$ such that $\delta_\sigma(X_0) = X_\sigma$ for any $\sigma \in C(P)$. Here, we write $X_0$ for the variable corresponding to the empty sequence and set
	$\delta_\sigma = \delta_{p_1} \circ \ldots \circ \delta_{p_n}$ for $\sigma = (p_1,\ldots,p_n)$ with $\delta_0 = \id$. Given a fixed prime $p$, the associated $p$-Frobenius lift $\phi^p$ on
	$\Z[X_\sigma | \,\sigma \in C(P)]$ is the unique ring homomorphism with $\phi^p(X_\sigma) = X_\sigma^p + p(\delta_p \circ \delta_\sigma)(X_0)$ for all $\sigma \in C(P)$. Here, the value
	$(\delta_p \circ \delta_\sigma)(X_0) \in \Z[X_\sigma | \,\sigma \in C(P)]$ is uniquely determined by the commutation formulas for the $\delta$-maps as spelled out in \cite{JoyalLambda}.
	Together with this structure, the ring $\Z[X_\sigma | \,\sigma \in C(P)]$ is the free $\lambda$-ring $\lambda\{x\}$ on the generator $x = X_0$.
\end{notation}

\begin{proposition}\label{Prop.: free lambda ring free p-local}
	The free $\lambda$-ring satisfies $L_{(p)}(\lambda\{x\}) \simeq \free_\lambda^{(p)}(\Z[X])$ in $\lambdaCRpan$ for any prime $p$.
\end{proposition}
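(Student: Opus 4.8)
The plan is to show that both sides are concentrated in degree $0$ and then to identify them inside the ordinary $1$-category of $\lambda$-$\Z_{(p)}$-algebras. First I would check discreteness. For $\free_\lambda^{(p)}(\Z[X])$ this is the final assertion of \cref{Lem.: left adjoint to forgetlambdap}. For $L_{(p)}(\lambda\{x\})$, \cref{Lem.: forgetlambdap commutes with Lp} identifies its underlying animated ring with $\forget_\lambda(\lambda\{x\}) \otimes^L \Z_{(p)}$; since $\forget_\lambda(\lambda\{x\}) \simeq \Z[X_\sigma \mid \sigma \in C(P)]$ is a polynomial ring over $\Z$, hence flat (see \cref{Ordinary free lambda rings}), this derived tensor product is the discrete ring $\Z_{(p)}[X_\sigma \mid \sigma \in C(P)]$. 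So both objects lie in the full subcategory $\tau_{\leq 0}(\lambdaCRpan) \subseteq \lambdaCRpan$ of animated $\lambda$-$\Z_{(p)}$-algebras with discrete underlying ring.

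Next I would record that the fully faithful embedding of \cref{Prop.: fully faithful embedding of lambdaCR}, sliced under $\Z_{(p)}$, identifies the $1$-category $(\lambdaCR)_{\Z_{(p)}/}$ of ordinary $\lambda$-$\Z_{(p)}$-algebras with $\tau_{\leq 0}(\lambdaCRpan)$, compatibly with the forgetful functors to rings: it clearly lands in discrete objects, and conversely, for any $B$ with $\forget_\lambda(B)$ discrete, the unit $B \to \iota_{(p)}\pi_0(B)$ of the adjunction $\pi_0 \dashv \iota_{(p)}$ (\cref{Prop.: pi0 is left adjoint to inclusion of lambdaCR}) becomes the $0$-truncation map of a discrete ring after applying the conservative functor $\forget_\lambda$, hence is an equivalence. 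In particular $\pi_0$ is a retraction of this embedding, and under the equivalence the functor $[\forgetlambdap(-)]$ restricts to the underlying-set functor on $(\lambdaCR)_{\Z_{(p)}/}$.

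It then remains to compute the two $\pi_0$'s. The object $\free_\lambda^{(p)}(\Z[X])$ corepresents $B \mapsto \Map_{\CRan}(\Z[X],\forgetlambdap(B)) \simeq [\forgetlambdap(B)]$; restricted to discrete $B$ this is the underlying-set functor on $(\lambdaCR)_{\Z_{(p)}/}$, so $\pi_0(\free_\lambda^{(p)}(\Z[X]))$ is the free ordinary $\lambda$-$\Z_{(p)}$-algebra on one generator by the Yoneda lemma. On the other hand, since $\pi_0 \colon \lambdaCRan \to \lambdaCR$ preserves colimits and $\iota_{(p)}L_{(p)}(\lambda\{x\}) \simeq \lambda\{x\} \otimes^L \Z_{(p)}$ (\cref{Prop.: localisations lambdap and lambdaQ}), we get $\pi_0(L_{(p)}(\lambda\{x\})) \simeq \lambda\{x\} \sqcup_{\Z} \Z_{(p)}$ in $\lambdaCR$, which is again the free $\lambda$-$\Z_{(p)}$-algebra on one generator because $(-)\sqcup_{\Z}\Z_{(p)}$ is the left adjoint to the forgetful functor $(\lambdaCR)_{\Z_{(p)}/} \to \lambdaCR$ and so sends the free $\lambda$-ring $\lambda\{x\}$ to the free $\lambda$-$\Z_{(p)}$-algebra. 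Hence the two $\pi_0$'s agree, and applying the embedding yields the equivalence $L_{(p)}(\lambda\{x\}) \simeq \free_\lambda^{(p)}(\Z[X])$ in $\lambdaCRpan$.

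I expect the only genuinely delicate point to be the second paragraph, namely verifying that the fully faithful functor $(\lambdaCR)_{\Z_{(p)}/} \hookrightarrow \lambdaCRpan$ has essential image exactly the discrete objects and is compatible with $\pi_0$ and with the forgetful functors to animated rings; this compatibility is what allows the underlying-set functor on $(\lambdaCR)_{\Z_{(p)}/}$ to be recognised as the restriction of $[\forgetlambdap(-)]$ and thereby pins down $\pi_0(\free_\lambda^{(p)}(\Z[X]))$. An alternative, more hands-on route would instead construct the comparison map adjoint to $X \mapsto X_0$, use conservativity of $\forgetlambdap$, and check on underlying rings that it carries the polynomial generator $m \cdot \delta_p^{\,n}(x_0)$ of $\coprod_{\langle p^c \rangle}(\delta\{x\} \otimes^L \Z_{(p)})$ to $\phi^m\bigl(\delta_p^{\,n}(X_0)\bigr)$; these elements, as $m$ ranges over $\langle p^c \rangle$ and $n \geq 0$, form a polynomial basis of $\Z_{(p)}[X_\sigma \mid \sigma \in C(P)]$ by unique factorisation together with Joyal's formulas recalled in \cref{Ordinary free lambda rings}.
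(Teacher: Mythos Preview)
Your argument is correct and takes a genuinely different route from the paper. The paper proceeds by the hands-on method you sketch in your final paragraph: it unwinds \cref{Lem.: left adjoint to forgetlambdap} to write $\free_\lambda^{(p)}(\Z[X])$ as the explicit polynomial ring $\Z_{(p)}[X_{(n,m)} \mid n \geq 0,\, m \in \langle p^c \rangle]$, constructs the map $X_{(n,m)} \mapsto \phi^m\delta_p^n(x_0)$ into $L_{(p)}(\lambda\{x\})$, and then checks injectivity and surjectivity on underlying $\Z_{(p)}$-algebras by hand using Joyal's presentation from \cref{Ordinary free lambda rings}. Your approach instead reduces to the $1$-category $(\lambdaCR)_{\Z_{(p)}/}$ by first establishing discreteness of both sides and then recognising each, via its universal property, as the free ordinary $\lambda$-$\Z_{(p)}$-algebra on one generator. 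This is cleaner and avoids any explicit manipulation of the generators $X_\sigma$; the trade-off is that the paper's proof produces the explicit isomorphism on generators, which may be of independent interest.

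Two small remarks. First, your use of the symbol $\iota_{(p)}$ for the sliced inclusion $(\lambdaCR)_{\Z_{(p)}/} \hookrightarrow \lambdaCRpan$ clashes with the paper's notation, where $\iota_{(p)}$ denotes the inclusion $\lambdaCRpan \hookrightarrow \lambdaCRan$ of \cref{Prop.: localisations lambdap and lambdaQ}; the intended meaning is clear from context, but you should rename it. Second, your identification of $\tau_{\leq 0}(\lambdaCRpan)$ with ordinary $\lambda$-$\Z_{(p)}$-algebras is essentially the content of \cref{Cor.: discrete animated lambda-rings} sliced under $\Z_{(p)}$, which in the paper is deduced \emph{after} \cref{Thm.: lambdaCRan is the animation}; your direct argument via the unit of $\pi_0 \dashv \iota$ and conservativity of $\forget_\lambda$ is independent of the main theorem and so is not circular, but it is worth flagging that you are proving a sliced version of \cref{Cor.: discrete animated lambda-rings} along the way.
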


\begin{proof}
	\Cref{Lem.: left adjoint to forgetlambdap} identifies the free animated $\lambda$-$\Z_{(p)}$-algebra with the discrete animated $\delta$-ring
	\begin{equation*}
		\Z_{(p)}[X_{(n,m)} | \, n \geq 0, m \in \langle p^c \rangle] \in (\deltaCRpan)^{B\langle p^c \rangle}
	\end{equation*}
	whose $p$-typical $\delta$-structure is uniquely determined by the $\Z_{(p)}$-algebra endomorphism $\phi^p$ defined by $X_{(n,m)} \mapsto X_{(n,m)}^p + pX_{(n+1,m)}$. Any prime
	$q$ unequal to $p$ acts via the $\Z_{(p)}$-algebra endomorphism $\phi^q$ sending $X_{(n,m)}$ to $X_{(n,qm)}$. Observe that $\phi^q$ commutes with $\phi^p$ and thus defines a morphism of
	$\delta$-rings. We claim that the map
	\begin{equation*}
		\Z_{(p)}[X_{(n,m)} | \, n \geq 0, m \in \langle p^c \rangle]  \to L_{(p)}(\lambda\{x_0\}), \,X_{(n,m)} \mapsto \phi^m \delta^n_p (x_0)
	\end{equation*}
	in $(\deltaCRpan)^{B\langle p^c \rangle}$ is an equivalence. It suffices to see that the map defines an isomorphism of underlying $\Z_{(p)}$-algebras. Using the description of the free $\lambda$-ring from
	\cref{Ordinary free lambda rings} gives $L_{(p)}(\lambda\{x_0\}) \simeq \Z_{(p)}[X_\sigma | \,\sigma \in C(P)]$ underlying in $\CRpan$. Injectivity holds since the expression $\phi^m \delta^n_p (x_0)$ contains
	the term $p^nm X_{\sigma(n,m)}$ for $\sigma(n,m)$ the unique increasing sequence of primes corresponding to the natural number $p^nm$ and the $X_\sigma$ are algebraically independent over $\Z_{(p)}$.
	Surjectivity can be shown by induction on the length of $\sigma$ using that $\phi^q(X_0) = X_0^q + qX_{(q)}$ with $q$ invertible in $L_{(p)}(\lambda\{x_0\})$.
\end{proof}

\begin{corollary}\label{Cor.: free lambda ring free rationally}
	The free $\lambda$-ring $\lambda\{x\}$ satisfies $L_{\Q}(\lambda\{x\}) \simeq \free_\lambda^{\Q}(\Z[X])$ in $\lambdaCRQan$.
\end{corollary}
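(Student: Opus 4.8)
The plan is to deduce this directly from the $p$-local computation in \cref{Prop.: free lambda ring free p-local} by pushing it forward along rationalization. Recall from \cref{Rmk.: freelambdaQ via p} that $\freelambdaQ \simeq L_\Q \circ \iota_{(p)} \circ \freelambdap$ for every prime $p$, so that $\freelambdaQ(\Z[X]) \simeq L_\Q\bigbrace{\iota_{(p)}\bigbrace{\freelambdap(\Z[X])}}$. The first step is to substitute the equivalence $\freelambdap(\Z[X]) \simeq L_{(p)}(\lambda\{x\})$ of \cref{Prop.: free lambda ring free p-local}, applying the functor $L_\Q \circ \iota_{(p)} \colon \lambdaCRpan \to \lambdaCRQan$ to it. This yields
\[
	\freelambdaQ(\Z[X]) \simeq L_\Q\bigbrace{\iota_{(p)}\bigbrace{L_{(p)}(\lambda\{x\})}} \simeq L_\Q\bigbrace{\lambda\{x\} \otimes^L \Z_{(p)}},
\]
where the second equivalence uses the identification $\iota_{(p)} \circ L_{(p)} \simeq (-)\otimes^L \Z_{(p)}$ of the idempotent monad on $\lambdaCRan$ from \cref{Prop.: localisations lambdap and lambdaQ}.

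It then remains to show that $L_\Q\bigbrace{\lambda\{x\} \otimes^L \Z_{(p)}} \simeq L_\Q(\lambda\{x\})$. Since $\iota_\Q$ is fully faithful and $\iota_\Q \circ L_\Q \simeq (-)\otimes^L\Q$ by \cref{Prop.: localisations lambdap and lambdaQ}, this reduces to producing an equivalence $\lambda\{x\} \otimes^L \Z_{(p)} \otimes^L \Q \simeq \lambda\{x\}\otimes^L\Q$ in $\lambdaCRan$. This last equivalence follows from associativity of the coproduct together with $\Q \otimes^L \Z_{(p)} \simeq \Q$, which is recorded in \cref{Rmk.: animated lambda-Q-algebras are lambda-Zp-algebras}. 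Concatenating all of these steps gives $L_\Q(\lambda\{x\}) \simeq \freelambdaQ(\Z[X])$, as desired.

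I do not expect a genuine obstacle here: all of the substantive content — in particular the combinatorial identification of the underlying rings and their $\Nmult$-actions — has already been carried out in the proof of \cref{Prop.: free lambda ring free p-local}, and what remains is a formal manipulation of idempotent localizations. The only point requiring attention is keeping track of the source and target categories of the various localization functors and checking that the passage from the $p$-local to the rational situation is exactly the one packaged in \cref{Rmk.: freelambdaQ via p}.
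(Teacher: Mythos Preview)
Your argument is correct and follows essentially the same route as the paper's own proof: both start from \cref{Rmk.: freelambdaQ via p}, plug in \cref{Prop.: free lambda ring free p-local}, rewrite $\iota_{(p)}L_{(p)}$ as $(-)\otimes^L\Z_{(p)}$, and then collapse the extra $\Z_{(p)}$ using $\Q\otimes^L\Z_{(p)}\simeq\Q$. Your write-up is slightly more explicit about the last step (passing through $\iota_\Q$), but there is no substantive difference.
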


\begin{proof}
	We have equivalences
	\begin{align*}
		\free_\lambda^{\Q}(\Z[X]) &\simeq L_{\Q}\iota_{(p)}\free_\lambda^{(p)}(\Z[X]) \\[1.2ex]
		& \simeq L_{\Q}\iota_{(p)} L_{(p)}(\lambda\{x\}) \\[1.2ex]
		& \simeq L_{\Q}(\lambda\{x\} \otimes^L \Z_{(p)})
	\end{align*}
	by \cref{Rmk.: freelambdaQ via p,Prop.: free lambda ring free p-local}. The equivalence $\Q \otimes^L \Z_{(p)} \simeq \Q$
	shows $L_{\Q}(\lambda\{x\} \otimes^L \Z_{(p)}) \simeq L_\Q(\lambda\{x\})$ as animated $\lambda$-$\Q$-algebras.
\end{proof}

\begin{proposition}[Free animated $\lambda$-rings]\label{Prop.: free animated lambda ring}
	The free $\lambda$-ring $\lambda\{x\}$ corepresents the functor
	\begin{equation*}
		\Map_{\CRan}(\Z[X],\forget_\lambda(-)) \colon \lambdaCRan \to \An,
	\end{equation*}
	i.e. it is the free animated $\lambda$-ring on a single generator $x$.
\end{proposition}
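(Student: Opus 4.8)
The plan is to deduce the proposition from the $p$-local and rational computations of \cref{Prop.: free lambda ring free p-local} and \cref{Cor.: free lambda ring free rationally} by means of the fracture square \cref{Lem.: fracture square in lambdaCRan}. By \cref{Rmk.: properties lambdaCRan} the forgetful functor $\forget_\lambda$ admits a left adjoint $\free_\lambda \colon \CRan \to \lambdaCRan$ (it is even monadic), so the functor $\Map_{\CRan}(\Z[X], \forget_\lambda(-))$ is already corepresented by $\free_\lambda(\Z[X])$. Hence the proposition is equivalent to the assertion that the canonical map $\psi \colon \free_\lambda(\Z[X]) \to \lambda\{x\}$ adjoint to the classifying map $x \colon \Z[X] \to \forget_\lambda(\lambda\{x\})$ of the generator $X_0$ (\cref{Ordinary free lambda rings}) is an equivalence; under the adjunction equivalence this is exactly the natural transformation $f \mapsto \forget_\lambda(f) \circ x$ one wants to be an equivalence. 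I would prove that $\psi$ is an equivalence.

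First I would apply the fracture square to $\psi$. Since \cref{Lem.: fracture square in lambdaCRan} is functorial in its argument, $\psi$ induces a map between the two pullback squares, and $\psi$ itself sits in the initial corner of each; so $\psi$ is an equivalence as soon as it becomes one after applying $L_{(p)}$ for every prime $p$ and after applying $L_\Q$. Indeed: the top-right corner $\prod_p\left(A \otimes^L \Z_{(p)}\right)$ rewrites as $\prod_p \iota_{(p)} L_{(p)}(A)$ by \cref{Prop.: localisations lambdap and lambdaQ}, so the induced map is an equivalence once each $L_{(p)}(\psi)$ is; the bottom-left corner $A \otimes^L \Q \simeq \iota_\Q L_\Q(A)$ likewise needs only $L_\Q(\psi)$; and the bottom-right corner is obtained from the top-right one by applying $(-) \otimes^L \Q$, which preserves equivalences.

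Next I would identify the localised maps. Since $L_{(p)} \dashv \iota_{(p)}$ and $\free_\lambda \dashv \forget_\lambda$, the composite $L_{(p)} \circ \free_\lambda$ is left adjoint to $\forget_\lambda \circ \iota_{(p)} = \forgetlambdap$, which is in turn the right adjoint of $\freelambdap$ by \cref{Lem.: left adjoint to forgetlambdap}; by uniqueness of adjoints $L_{(p)} \circ \free_\lambda \simeq \freelambdap$, and under this identification $L_{(p)}(\psi)$ becomes the map $\freelambdap(\Z[X]) \to L_{(p)}(\lambda\{x\})$ classifying the $p$-localised generator. Using the explicit coproduct presentation of $\freelambdap(\Z[X])$ from \cref{Lem.: left adjoint to forgetlambdap} together with \cref{Prop.: free animated delta ring}, this is precisely the comparison map shown to be an equivalence in \cref{Prop.: free lambda ring free p-local}. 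The same argument with $\Q$ in place of $\Z_{(p)}$ gives $L_\Q \circ \free_\lambda \simeq \freelambdaQ$ and identifies $L_\Q(\psi)$ with the equivalence of \cref{Cor.: free lambda ring free rationally} (alternatively one can factor this through the $p$-local case as in \cref{Rmk.: freelambdaQ via p}). Combining with the previous paragraph, $\psi$ is an equivalence.

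The main obstacle, and the only place where real care is needed, is the identification in the previous paragraph: one must check that $L_{(p)}(\psi)$ and $L_\Q(\psi)$ are \emph{literally} the comparison maps of \cref{Prop.: free lambda ring free p-local} and \cref{Cor.: free lambda ring free rationally}, not merely abstractly equivalent data between the same objects. This comes down to tracking the generator through the chain of adjunctions $\free_\lambda \dashv \forget_\lambda$, $L_{(p)} \dashv \iota_{(p)}$ and $\free_\delta \dashv \forget_\delta$, together with the shift-indexed coproduct presentation of \cref{Lem.: left adjoint to forgetlambdap}; concretely it suffices to observe that the unit $\Z[X] \to \forgetlambdap(\freelambdap(\Z[X]))$ carries $X$ to the generator $X_{(0,1)}$ used to build the comparison map in \cref{Prop.: free lambda ring free p-local}. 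Everything else in the argument is formal.
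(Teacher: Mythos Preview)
Your argument is correct and uses the same ingredients as the paper (the fracture square together with \cref{Prop.: free lambda ring free p-local} and \cref{Cor.: free lambda ring free rationally}), but the packaging is dual. The paper never introduces the map $\psi$; instead it fixes an arbitrary $A \in \lambdaCRan$, applies the fracture square \emph{to $A$}, and then rewrites each corner of the resulting pullback of mapping spaces $\Map_{\lambdaCRan}(\lambda\{x\},-)$ as the corresponding corner of the fracture square for $\Map_{\CRan}(\Z[X],\forget_\lambda(-))$, using the adjunctions $L_{(p)} \dashv \iota_{(p)}$ and $L_\Q \dashv \iota_\Q$ together with \cref{Lem.: forgetlambdap commutes with Lp}. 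Your version instead invokes the existence of $\free_\lambda$ from \cref{Rmk.: properties lambdaCRan}, reduces to a single map $\psi$ between two concrete objects, and applies the fracture square to $\psi$. This is arguably cleaner, and your explicit flagging of the one nonformal step (that $L_{(p)}(\psi)$ really is the comparison map of \cref{Prop.: free lambda ring free p-local}, verified by tracking the generator) is exactly the point the paper's naturality-in-$A$ argument handles implicitly.
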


\begin{proof}
	Fix an animated $\lambda$-ring $A$. The fracture square of \cref{Lem.: fracture square in lambdaCRan} induces a pullback square
	\[\begin{diag}[column sep=2ex]
		\Map_{\lambdaCRan}(\lambda\{x\},A) \ar[r] \ar[d] \ar[dr, phantom, "\lrcorner", near start] & \Map_{\lambdaCRan}\bigbrace{\lambda\{x\},\prod_p A \otimes^L \Z_{(p)}} \ar[d] \\[4ex]
		\Map_{\lambdaCRan}\bigbrace{\lambda\{x\},A \otimes^L \Q} \ar[r] & \Map_{\lambdaCRan}\bigbrace{\lambda\{x\},(\prod_p A \otimes^L \Z_{(p)}) \otimes^L \Q}
	\end{diag}\]
	of mapping anima. \Cref{Prop.: localisations lambdap and lambdaQ} together with \cref{Lem.: forgetlambdap commutes with Lp,Prop.: free lambda ring free p-local} give the following equivalences
	\begin{align*}
		\Map_{\lambdaCRan}\bigbrace{\lambda\{x\},\prod_p A \otimes^L \Z_{(p)}} &\simeq
		\prod_p \Map_{\lambdaCRan}(\lambda\{x\},A \otimes^L \Z_{(p)}) \\
		&\simeq \prod_p \Map_{\lambdaCRpan}(L_{(p)}\lambda\{x\},L_{(p)}A) \\
		&\simeq \prod_p \Map_{\CRan}(\Z[X],({\forgetlambdap} \circ L_{(p)})(A)) \\
		&\simeq \Map_{\CRan}\bigbrace{\Z[X],\prod_p \forget_\lambda (A) \otimes^L \Z_{(p)}}
	\end{align*}
	which are natural in $A \in \lambdaCRan$. Likewise, \cref{Cor.: free lambda ring free rationally} and the analogue of \cref{Lem.: forgetlambdap commutes with Lp} for $\forgetlambdaQ$ show that there
	exist natural equivalences
	\begin{align*}
		\Map_{\lambdaCRan}(\lambda\{x\},A \otimes^L \Q) &\simeq \Map_{\CRan}(\Z[X],\forget_\lambda( A) \otimes^L \Q) \\[1ex]
		\Map_{\lambdaCRan}\bigbrace{\lambda\{x\},(\prod_p A \otimes^L \Z_{(p)}) \otimes^L \Q} &\simeq \Map_{\CRan}\bigbrace{\Z[X],\bigbrace{\prod_p (\forget_\lambda(A) \otimes^L \Z_{(p)}} \otimes^L \Q}
	\end{align*}
	of mapping anima. The claim now follows immediately from \cref{Lem.: fracture square in lambdaCRan} together with the fact that $\forget_\lambda$ preserves small limits and colimits.
\end{proof}

\begin{theorem}[Animation]\label{Thm.: lambdaCRan is the animation}
	The category $\lambdaCRan$ of animated $\lambda$-rings is equivalent to the animation $\Ani(\lambdaCR)$ of the ordinary category of $\lambda$-rings.
\end{theorem}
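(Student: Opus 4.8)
The plan is to reproduce the argument that proves \cref{Thm.: animated delta rings are the animation}, feeding in \cref{Prop.: free animated lambda ring} as the crucial input. Let $\lambdaPoly \subseteq \lambdaCR$ be the full subcategory of free $\lambda$-rings on finitely many generators and their retracts; since $\lambdaCR$ is a $1$-category with a sufficient supply of compact projectives, these are exactly its compact projective objects, so $\Ani(\lambdaCR) \simeq \mathcal{P}_\Sigma(\lambdaPoly)$. Let $f \colon \lambdaPoly \to \lambdaCRan$ be the fully faithful functor of \cref{Prop.: fully faithful embedding of lambdaCR} and let $F \colon \mathcal{P}_\Sigma(\lambdaPoly) \to \lambdaCRan$ be its left Kan extension. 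By \cite[5.5.8.22]{HTT} it suffices to show that the essential image of $f$ consists of compact projective objects which generate $\lambdaCRan$ under sifted colimits.

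First I would record that $\forget_\lambda$ admits a left adjoint $\freelambda$ and exhibits $\lambdaCRan$ as monadic over $\CRan$, which is contained in \cref{Rmk.: properties lambdaCRan} (using that $\forget_\lambda$ is conservative and preserves all colimits). Since $\Z[X]$ and its finite coproducts are compact projective generators of $\CRan$ and the right adjoint $\forget_\lambda$ preserves sifted colimits, $\freelambda$ carries finitely generated polynomial $\Z$-algebras to compact projective objects of $\lambdaCRan$. Now \cref{Prop.: free animated lambda ring} identifies $f(\lambda\{x\}) \simeq \freelambda(\Z[X])$ by Yoneda, and since $f$ preserves finite coproducts (\cref{Prop.: fully faithful embedding of lambdaCR}) while the left adjoint $\freelambda$ preserves arbitrary coproducts, $f$ sends $\lambda\{x_1,\dots,x_n\}$ to $\freelambda(\Z[x_1,\dots,x_n])$. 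Hence the essential image of $f$ consists of retracts of such objects and is therefore made up of compact projective objects of $\lambdaCRan$.

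Next I would check generation. Let $\mathcal{C} \subseteq \lambdaCRan$ be the smallest full subcategory containing the essential image of $f$ and closed under sifted colimits. By \cref{Prop.: Properties of animated rings} every animated ring is a sifted colimit of objects of $\Poly$, and $\freelambda$ preserves sifted colimits, so $\mathcal{C}$ contains the whole essential image of $\freelambda$. Applying \cite[4.7.3.14]{HA} to the monadic adjunction $\freelambda \dashv \forget_\lambda$ shows that every animated $\lambda$-ring is generated under geometric realizations by the essential image of $\freelambda$, hence lies in $\mathcal{C}$. Thus $\mathcal{C} = \lambdaCRan$, and $F$ is an equivalence.

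I do not expect a genuine obstacle at this point: the substantive content is \cref{Prop.: free animated lambda ring}, whose proof runs through the fracture square of \cref{Lem.: fracture square in lambdaCRan} and the computation of free animated $\lambda$-$\Z_{(p)}$- and $\lambda$-$\Q$-algebras. The only points that demand care here are that $\forget_\lambda$ is indeed monadic and colimit-preserving, so that \cite[5.5.8.22]{HTT} and the monadic bar resolution apply — both established in \cref{Rmk.: properties lambdaCRan} — and the routine bookkeeping identifying $\Ani(\lambdaCR)$ with $\mathcal{P}_\Sigma(\lambdaPoly)$; neither is a real difficulty.
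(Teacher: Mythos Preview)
Your proposal is correct and follows essentially the same route as the paper: the paper's proof simply points to the argument of \cref{Thm.: animated delta rings are the animation}, replacing \cref{Prop.: free animated delta ring} by \cref{Prop.: free animated lambda ring} and \cref{Cor.: fully faithful embedding of deltaCR} by \cref{Prop.: fully faithful embedding of lambdaCR}, and then invokes \cite[5.5.8.22]{HTT} exactly as you do. Your additional bookkeeping about retracts and the explicit appeal to monadicity from \cref{Rmk.: properties lambdaCRan} is fine and matches how the $\delta$-ring argument unwinds.
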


\begin{proof}
	The proof is essentially identical to the case of animated $\delta$-rings, see the proof of \cref{Thm.: animated delta rings are the animation} for details: \cite[5.5.8.22]{HTT} implies that the left derived functor
	 $F \colon \mathcal{P}_\Sigma(\lambdaPoly) \to \lambdaCRan$ of the fully faithful inclusion $\lambdaCR \to \lambdaCRan$ of \cref{Prop.: fully faithful embedding of lambdaCR} is an equivalence
	of $\infty$-categories. This relies on the description of free animated $\lambda$-rings given in \cref{Prop.: free animated lambda ring}.
\end{proof}

\begin{notation}
	Let $A$ be an ordinary $\lambda$-ring. We write $\lambdaPolyA$ for the category of free $\lambda$-$A$-algebras on finitely many generators and set $\Ani(\lambdaCR_A) = \mathcal{P}_\Sigma(\lambdaPolyA)$.
\end{notation}

\begin{corollary}[Relative animation]
	There exists an equivalence $\lambdaCRAan \simeq \Ani(\lambdaCR_A)$ of $\infty$-categories for any ordinary $\lambda$-ring $A \in \lambdaCR$.
\end{corollary}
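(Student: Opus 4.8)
The plan is to transcribe the proof of \cref{Thm.: lambdaCRan is the animation} to the relative setting, identifying $\lambdaCRAan = (\lambdaCRan)_{A/}$ with $\mathcal{P}_\Sigma(\lambdaPolyA)$ by means of \cite[5.5.8.22]{HTT}. First I would record the basic structure of the undercategory. Since $\lambdaCRan$ is presentable, so is $(\lambdaCRan)_{A/}$, and the forgetful functor $(\lambdaCRan)_{A/} \to \lambdaCRan$ is conservative and preserves all weakly contractible colimits, in particular sifted colimits. Composing with $\forget_\lambda$, which is conservative and preserves sifted colimits by \cref{Rmk.: properties lambdaCRan}, yields a conservative functor $\forget_\lambda^A \colon (\lambdaCRan)_{A/} \to \CRan$ that preserves sifted colimits and admits a left adjoint $\freelambda^A = A \otimes^L \freelambda(-)$; here $A$ is regarded as an animated $\lambda$-ring via \cref{Prop.: fully faithful embedding of lambdaCR}. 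By the Barr-Beck-Lurie theorem \cite[4.7.3.5]{HA}, the functor $\forget_\lambda^A$ exhibits $(\lambdaCRan)_{A/}$ as monadic over $\CRan$.

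Next I would identify the generators. Set $P_n = \freelambda(\Z[x_1,\dots,x_n])$. Since $\freelambda$ is a left adjoint it preserves coproducts, so $P_n$ is the $n$-fold coproduct of $\freelambda(\Z[X]) \simeq \lambda\{x\}$ (\cref{Prop.: free animated lambda ring}), which by \cref{Prop.: fully faithful embedding of lambdaCR} is the ordinary free $\lambda$-ring $\lambda\{x_1,\dots,x_n\}$ regarded inside $\lambdaCRan$. I claim that the full subcategory of $(\lambdaCRan)_{A/}$ spanned by the objects $A \otimes^L P_n$ ($n \geq 0$) is equivalent, as an $\infty$-category with finite coproducts, to $\lambdaPolyA$. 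Indeed $\forget_\lambda(A \otimes^L P_n) \simeq A \otimes^L_{\Z} \Z[x_1,\dots,x_n] \simeq A[x_1,\dots,x_n]$ is discrete, polynomial rings being flat over $\Z$, so $A \otimes^L P_n$ lies in the essential image of $\iota \colon \lambdaCR \to \lambdaCRan$ by \cref{CorIntro.: discrete animated lambda-rings}; comparing maps out of it, using full faithfulness of $\iota$ together with the coproduct decomposition, shows that it represents the free $\lambda$-$A$-algebra on $n$ generators, and the identity $A \otimes^L P_n \otimes^L P_m \simeq A \otimes^L P_{n+m}$ shows that finite coproducts are preserved. Moreover each $A \otimes^L P_n$ is compact projective in $(\lambdaCRan)_{A/}$: by adjunction $\Map_{(\lambdaCRan)_{A/}}(A \otimes^L P_n, -) \simeq \Map_{\CRan}(\Z[x_1,\dots,x_n], \forget_\lambda^A(-))$, and $\Z[x_1,\dots,x_n]$ is a compact projective object of $\CRan$ while $\forget_\lambda^A$ preserves sifted colimits.

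It remains to check that $\lambdaPolyA$ generates $(\lambdaCRan)_{A/}$ under sifted colimits. Let $\C \subseteq (\lambdaCRan)_{A/}$ be the smallest full subcategory containing $\lambdaPolyA$ and closed under sifted colimits. Since every animated ring is a sifted colimit of finitely generated polynomial $\Z$-algebras (\cref{Prop.: Properties of animated rings}) and $\freelambda^A$ is a left adjoint, $\C$ contains $A \otimes^L \freelambda(R)$ for every $R \in \CRan$; by the monadicity established above together with \cite[4.7.3.14]{HA}, every object of $(\lambdaCRan)_{A/}$ is the geometric realization of a simplicial object all of whose terms have this free form, and hence lies in $\C$. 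With compact projectivity and closure under finite coproducts in hand, \cite[5.5.8.22]{HTT} now identifies the left Kan extension $\mathcal{P}_\Sigma(\lambdaPolyA) \to (\lambdaCRan)_{A/}$ along the inclusion with an equivalence, which is the asserted $\Ani(\lambdaCR_A) \simeq \lambdaCRAan$.

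The step that needs the most care is the identification of $\lambdaPolyA$ with the full subcategory of $(\lambdaCRan)_{A/}$ on the objects $A \otimes^L P_n$; concretely, one must verify that $\iota$ carries the pushout defining the free $\lambda$-$A$-algebra on $n$ generators to the coproduct $A \otimes^L P_n$ formed in $\lambdaCRan$, which is where the discreteness of $A[x_1,\dots,x_n]$ and \cref{CorIntro.: discrete animated lambda-rings} enter. The remaining steps are a routine transcription of the absolute case treated in \cref{Sect.: Free animated lambda rings}.
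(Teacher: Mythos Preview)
Your argument is correct in outline and arrives at the same conclusion, but it takes a different route from the paper. The paper's proof is a two-liner: the absolute equivalence $\lambdaCRan \simeq \Ani(\lambdaCR)$ of \cref{Thm.: lambdaCRan is the animation} induces an equivalence on slices $\lambdaCRAan \simeq \Ani(\lambdaCR)_{A/}$, and then one invokes \cite[25.1.4.2]{SAG} (applied verbatim with $\lambda$-rings in place of ordinary rings) to identify $\Ani(\lambdaCR)_{A/} \simeq \mathcal{P}_\Sigma(\lambdaPolyA)$. What you do instead is to rerun the compact-projective-generator argument of \cite[5.5.8.22]{HTT} directly in the slice, which is essentially a reproof of the relevant instance of \cite[25.1.4.2]{SAG}. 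Your approach is more self-contained; the paper's buys brevity by outsourcing the relative step to a known result.

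One slip to fix: the formula $\forget_\lambda(A \otimes^L P_n) \simeq A \otimes^L_{\Z} \Z[x_1,\dots,x_n]$ is wrong. The underlying commutative ring of $P_n = \lambda\{x_1,\dots,x_n\}$ is a polynomial $\Z$-algebra on \emph{countably infinitely} many variables (see \cref{Ordinary free lambda rings} and the proof of \cref{Prop.: fully faithful embedding of lambdaCR}), not $\Z[x_1,\dots,x_n]$. This does not affect your conclusion that $A \otimes^L P_n$ is discrete, since a polynomial $\Z$-algebra on any set of variables is flat, nor does it affect the identification of $A \otimes^L P_n$ with the free $\lambda$-$A$-algebra on $n$ generators, which you establish separately by adjunction. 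But the sentence as written should be corrected.
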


\begin{proof}
	Fix $A \in \lambdaCR$. The equivalence $\lambdaCRan \simeq \Ani(\lambdaCR)$ of \cref{Thm.: lambdaCRan is the animation} induces an equivalence $\lambdaCRAan \simeq \Ani(\lambdaCR)_{A/}$
	of slice $\infty$-categories by \cite[1.2.9.3]{HTT} and \cite[2.4.5]{HTT}. It thus suffices to show that there exists an equivalence $\Ani(\lambdaCR)_{A/} \simeq \Ani(\lambdaCR_A)$.
	This follows from \cite[25.1.4.2]{SAG} applied to the unique morphism $\Z \to A$ of $\lambda$-rings. Here we use that both the statement and proof of \cite[25.1.4.2]{SAG} carry over verbatim to the setting of $\lambda$-rings.
\end{proof}

\begin{corollary}\label{Cor.: discrete animated lambda-rings}
	The category $\lambdaCR$ of ordinary $\lambda$-rings is equivalent to the full subcategory $\tau_{\leq 0}(\lambdaCRan)$ of $0$-truncated objects in $\lambdaCRan$. These are precisely those animated $\lambda$-rings
	whose underlying animated ring is discrete.
\end{corollary}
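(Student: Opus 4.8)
The plan is to deduce the statement formally from three ingredients already in place: the presentation $\lambdaCRan \simeq \Ani(\lambdaCR) = \mathcal{P}_\Sigma(\lambdaPoly)$ of \cref{Thm.: lambdaCRan is the animation}, the fully faithful inclusion $\iota \colon \lambdaCR \to \lambdaCRan$ with left adjoint $\pi_0$ computed on underlying animated rings (\cref{Prop.: fully faithful embedding of lambdaCR,Prop.: pi0 is left adjoint to inclusion of lambdaCR}), and the identification of the free animated $\lambda$-ring on one generator in \cref{Prop.: free animated lambda ring}. The heart of the matter is the second assertion: an animated $\lambda$-ring $A$ is $0$-truncated in $\lambdaCRan$ precisely when its underlying animated ring $\forget_\lambda(A)$ is discrete. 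Granting this, the equivalence $\lambdaCR \simeq \tau_{\leq 0}(\lambdaCRan)$ drops out: the functor $\iota$ is fully faithful and lands in $\tau_{\leq 0}(\lambdaCRan)$ because $\forget_\lambda(\iota(B))$ is the discrete ring underlying $B$ (by the identity $\forget_\lambda \circ \iota \simeq \iota \circ \forget$ from the proof of \cref{Prop.: fully faithful embedding of lambdaCR}); and $\iota$ hits every $0$-truncated object, since for such an $A$ the unit $A \to \iota(\pi_0(A))$ becomes, after applying the conservative functor $\forget_\lambda$, the truncation map $\forget_\lambda(A) \to \pi_0(\forget_\lambda(A))$ in $\CRan$, which is an equivalence exactly because $\forget_\lambda(A)$ is discrete.

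To prove the truncatedness criterion I would first argue that $\lambda\{x\}$ generates $\lambdaCRan$ under colimits. Indeed, $\forget_\lambda$ is monadic (\cref{Rmk.: properties lambdaCRan}) and, just as in \cref{Cor.: Free delta functor} for $\deltaCRan$, \cref{Prop.: free animated lambda ring} together with the fact that $\Z[X]$ generates $\CRan$ under colimits produces a left adjoint $\free_\lambda$ with $\free_\lambda(\Z[X]) \simeq \lambda\{x\}$; since $\free_\lambda$ preserves colimits, its essential image is generated under colimits by $\lambda\{x\}$, and \cite[4.7.3.14]{HA} shows $\lambdaCRan$ is generated by this essential image under geometric realizations. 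Because the class of objects $X$ with $\Map_{\lambdaCRan}(X, A)$ $0$-truncated is closed under colimits, it follows that $A$ is $0$-truncated if and only if $\Map_{\lambdaCRan}(\lambda\{x\}, A)$ is $0$-truncated. By \cref{Prop.: free animated lambda ring} the latter space is $\Map_{\CRan}(\Z[X], \forget_\lambda(A)) = [\forget_\lambda(A)]$, the underlying anima of $\forget_\lambda(A)$, which is $0$-truncated if and only if $\pi_i(\forget_\lambda(A)) = 0$ for all $i > 0$, i.e.\ $\forget_\lambda(A)$ is discrete. This establishes the criterion.

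The whole argument is formal given \cref{Thm.: lambdaCRan is the animation} and \cref{Prop.: free animated lambda ring}; there is no genuine obstacle, but the one step deserving care is the reduction of $0$-truncatedness in $\lambdaCRan$ to a statement about the single mapping space out of $\lambda\{x\}$, which hinges on $\lambda\{x\}$ generating $\lambdaCRan$ under colimits and hence on the monadicity of $\forget_\lambda$. One could alternatively bypass this reduction by invoking directly that the $0$-truncated objects of $\mathcal{P}_\Sigma(\lambdaPoly)$ are the product-preserving $\Set$-valued presheaves on $\lambdaPoly^{\op}$, which is exactly $\lambdaCR$, but the mapping-space argument above is cleaner to combine with the already established \cref{Prop.: free animated lambda ring}.
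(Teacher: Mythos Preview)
Your proof is correct, but it reverses the logical order of the paper's argument. The paper first establishes the equivalence $\lambdaCR \simeq \tau_{\leq 0}(\lambdaCRan)$ in one line by invoking the general fact \cite[5.5.8.26]{HTT} that $\tau_{\leq 0}\mathcal{P}_\Sigma(\C) \simeq \Fun^{\prod}(\C^{\op},\tau_{\leq 0}\An)$, applied to $\C = \lambdaPoly$ via \cref{Thm.: lambdaCRan is the animation}; only afterwards does it deduce the characterisation of $0$-truncated objects in terms of the underlying animated ring, via the unit of the $(\pi_0 \dashv \iota)$ adjunction. You instead prove the truncatedness criterion first, by using \cref{Prop.: free animated lambda ring} to reduce $0$-truncatedness of $A$ to discreteness of the single mapping anima $\Map_{\lambdaCRan}(\lambda\{x\},A) \simeq [\forget_\lambda(A)]$, and then read off the equivalence with $\lambdaCR$. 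Amusingly, you mention the paper's route as your ``alternative'' at the end. The paper's approach is shorter and more formal; yours is more hands-on and makes explicit that the single free object $\lambda\{x\}$ already detects truncatedness, which is a pleasant by-product not stated in the paper.
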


\begin{proof}
	\Cref{Thm.: lambdaCRan is the animation} and \cite[5.5.8.26]{HTT} show that there exist the following equivalences
	\begin{equation*}
		\lambdaCR \simeq \Fun^{\prod}(\lambdaPoly^{\op},\tau_{\leq 0}\An) \simeq \tau_{\leq 0}\mathcal{P}_\Sigma(\lambdaPoly) \simeq \tau_{\leq 0}(\lambdaCRan)
	\end{equation*}
	of $\infty$-categories. It follows that the functor $\pi_0 \colon \lambdaCRan \to \lambdaCR$ of \cref{Prop.: pi0 is left adjoint to inclusion of lambdaCR} coincides with $0$-truncation.
	In particular, we see that an animated $\lambda$-ring $A$ is $0$-truncated if and only if the unit $A \to \pi_0(A)$ is an equivalence in $\lambdaCRan$. The second assertion now follows from
	observing that the forgetful functor $\forget_\lambda \colon \lambdaCRan \to \CRan$ is conservative and commutes with both $\pi_0$ and the inclusion.
\end{proof}

\newpage
\section{Animated $\lambda$-rings via global Witt vectors}\label{Sect.: Global animated Witt vectors}

The functor $W \colon \CR \to \CR$ of global Witt vectors extends to a comonad on commutative rings, see \cite[Proposition 18]{Hesselholt}. It was observed by Grothendieck that a $\lambda$-structure on a commutative ring $A$ can be repackaged into a ring homomorphism $A \to W(A)$ respecting the comonad structure. Stated more formally, the category $\lambdaCR$ is equivalent to the category $\Coalg(W)$ of coalgebras over $W$. The analogous result holds true for
$\delta$-rings and $p$-typical Witt vectors and has been shown in the animated setting by Bhatt and Lurie. In this section, we prove the equivalence $\lambdaCRan \simeq \Coalg(W)$ for animated $\lambda$-rings (\cref{Thm.: animated lambda-rings are coalgebras over W}). Most of the statements and their proofs carry over verbatim from \cite{BL}.

\begin{remark}
	There exist two equivalent versions of global Witt vectors which we don't distinguish notationally in this section. The first, which we refer to in the remainder of this section, is given by the big Witt ring $W_S(A)$ associated to the
	truncation set $S = \Nmult$, see \cite{Hesselholt}.
	The second version $W^{\Lambda}(A)$ is given by the abelian group $1+tA[[t]]$ of power series over $A$ with constant coefficient 1. Ring multiplication and the comonad structure on $W^{\Lambda}(A)$ are defined in terms of the universal
	polynomials appearing in the definition of a $\lambda$-ring. It is not difficult to see that the equivalence $\lambdaCR \simeq \Coalg(W)$ is best established using either Grothendieck's definition of a $\lambda$-ring and $W^\Lambda$ or
	Joyal's notion of a global $\delta$-ring together with $W_{\Nmult}$. The equivalence of both versions on underlying abelian groups $W_{\Nmult}(A) \simeq W^\Lambda(A)$ has been shown in \cite[Addendum 15]{Hesselholt}. This
	upgrades to an equivalence of comonads.
\end{remark}

\begin{notation}[Restriction]\label{Notation: restriction maps}
	We write $S_n$ for the truncation set $\{1,\ldots,n\}$. For any $n \geq 1$, the inclusion $S_n \subseteq S_{n+1}$ of truncation sets induces a natural transformation $W_{S_{n+1}} \to W_{S_n}$ of endofunctors on
	$\CR$ which we denote by $\res_n$, see \cite{Hesselholt}.
\end{notation}

\begin{notation}[Animated $n$-truncated Witt vectors]\label{Animated n-truncated Witt vectors}
	Let $n \geq 1$. We write $W_n^{\an} \colon \CRan \to \CRan$ for the animation of the functor $W_{S_n}$ of $n$-truncated global Witt vectors. We denote the animation of the restriction
	from \cref{Notation: restriction maps} again by $\res_n$.
\end{notation}

\begin{proposition}[Compare {\cite[A.3 and A.4]{BL}}]\label{Prop.: properties animated n-truncated Witt vectors}
	The functors $W_n^{\an}$ of \cref{Animated n-truncated Witt vectors} satisfy the following properties:
	\begin{enumerate}
	\item For any $n \geq 1$, there exists an equivalence $[W_n^{\an}(A)] \simeq [A]^n$ in $\An$ which is natural in $A \in \CRan$.
	\item For any $n \geq 1$, there exists an equivalence $W_n^{\an}(A) \simeq W_{S_n}(A)$ in $\CRan$ which is natural in $A \in \CR$.
	\end{enumerate}
\end{proposition}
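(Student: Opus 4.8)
The plan is to mimic, essentially verbatim, the arguments Bhatt--Lurie give for the $p$-typical truncated Witt vectors (\cite[A.3]{BL} = \cref{Prop.: underlying anima of Witt} and \cite[A.4]{BL} = \cref{Prop.: animated Witt vectors on CR}). The only input specific to the big Witt setting is the classical fact that for every commutative ring $A$ the set underlying $W_{S_n}(A)$ is $A^{S_n} = A^n$, the $n$ coordinate projections being the \emph{Witt components}, which are natural in $A$.

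For part (1) I would note that both $[-] \circ W_n^{\an}$ and $[-]^{n}$ are sifted-colimit-preserving functors $\CRan \to \An$: the first because $W_n^{\an}$ is an animation and $[-]$ preserves sifted colimits, the second because finite products commute with sifted colimits in $\An$. On $\Poly$ the Witt components assemble into the natural bijection between the set underlying $W_{S_n}(P)$ and $[P]^{n}$, and under the equivalence $\Fun^{\Sigma}(\CRan, \An) \simeq \Fun(\Poly, \An)$ of \cref{Prop.: Properties of animated rings}(2) this promotes uniquely to a natural transformation $[-]\circ W_n^{\an} \Rightarrow [-]^{n}$. Since the full subcategory of $\CRan$ on which a natural transformation between sifted-colimit-preserving functors is an equivalence is closed under sifted colimits, and $\Poly$ generates $\CRan$ under sifted colimits (\cref{Prop.: Properties of animated rings}(3)), this transformation is an equivalence everywhere.

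For part (2), the first and crucial step is to verify that $W_{S_n}\colon \CR \to \CR$ preserves filtered colimits and reflexive coequalizers, hence all sifted colimits. The forgetful functor $\CR \to \Set$ is monadic, and the free commutative ring monad $T$ preserves filtered colimits and reflexive coequalizers --- for the latter one decomposes $T(X)$ as $\bigoplus_{k \ge 0}\mathbf{Z}\cdot\mathrm{Sym}^{k}(X)$ and uses that reflexive coequalizers commute with finite products in $\Set$ (here reflexivity is genuinely needed) --- so $\CR \to \Set$ creates both. Since $W_{S_n}$ becomes $(-)^{n}$ after forgetting to $\Set$, and $(-)^{n}$ preserves filtered colimits and reflexive coequalizers for the same product-commutation reason, $W_{S_n}$ preserves them too. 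I expect this monadicity bookkeeping to be the part requiring the most care; everything after it is formal.

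Granting this, I would fix $R \in \CR$ and use \cref{Prop.: Properties of animated rings}(3) to present $\iota R$ as the colimit of a sifted diagram $j/\iota R \to \Poly$; as $\iota$ is fully faithful this slice is the ordinary $\Poly/R$, and applying $\pi_0$ identifies the colimit of the same diagram taken in $\CR$ with $R$. Because $W_n^{\an}$ preserves sifted colimits and restricts to $\iota \circ W_{S_n}$ on $\Poly$, one gets $W_n^{\an}(\iota R) \simeq \colim_{P \in j/\iota R}\iota W_{S_n}(P)$. Part (1) shows $[W_n^{\an}(\iota R)] \simeq [\iota R]^{n}$ is a discrete anima, so $W_n^{\an}(\iota R)$ is a discrete animated ring (the underlying-anima functor is conservative and detects discreteness, cf. \cref{Rmk.: Underlying spectrum} and \cref{Prop.: Properties of animated rings}(4)), hence equivalent to $\iota\pi_0 W_n^{\an}(\iota R)$. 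Finally $\pi_0$ preserves colimits and $W_{S_n}$ preserves the sifted colimit computing $R$, so $\pi_0 W_n^{\an}(\iota R) \simeq \colim_{P}W_{S_n}(P) \simeq W_{S_n}(R)$; all of this being functorial in $R$, this yields the asserted natural equivalence $W_n^{\an}\circ\iota \simeq \iota\circ W_{S_n}$.
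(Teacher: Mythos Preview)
Your proposal is correct and is precisely the argument the paper has in mind: the paper's own proof consists solely of the sentence ``See the proofs of \cite[A.3 and A.4]{BL}'', and what you have written is a faithful reconstruction of those arguments adapted to the truncation sets $S_n$. The only place one could ask for more is the naturality in $R$ at the end of part~(2), but this is routine once the pointwise identification is in place.
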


\begin{proof}
	See the proofs of \cite[A.3 and A.4]{BL}.
\end{proof}

\begin{definition}[Global animated Witt vectors]\label{Def.: global animated Witt vectors}
	In view of \cref{Prop.: properties animated n-truncated Witt vectors}, we write $W_n$ for the functors of animated $n$-truncated Witt vectors in the following. We define the functor $W$ of global animated Witt vectors as the
	limit of the diagram
	\begin{equation*}
		\ldots \to W_{n+1} \to W_n \to \ldots \to W_1
	\end{equation*}
	in $\End(\CRan)$ with transition maps given by the restrictions $\res_n$ of \cref{Animated n-truncated Witt vectors}.
\end{definition}

\begin{proposition}\label{Prop.: global Witt vectors are a product}
	There exists an equivalence $[W] \simeq \prod_{n \geq 1} [-]$ of functors $\CRan \to \An$.
\end{proposition}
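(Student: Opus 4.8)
The plan is to reduce the statement to the identification of $[W_n(-)]$ from \cref{Prop.: properties animated n-truncated Witt vectors}(1) together with the fact that the underlying anima functor $[-]\colon\CRan\to\An$ preserves small limits. Since limits in $\End(\CRan)=\Fun(\CRan,\CRan)$ are computed pointwise in the target, \cref{Def.: global animated Witt vectors} gives a natural equivalence $W(A)\simeq\lim_n W_n(A)$ in $\CRan$, and hence
\[
	[W(A)]\;\simeq\;[\lim_n W_n(A)]\;\simeq\;\lim_n [W_n(A)]
\]
naturally in $A$, as $[-]$ commutes with the tower limit.

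Next I would upgrade the equivalences $[W_n(A)]\simeq [A]^n$ of \cref{Prop.: properties animated n-truncated Witt vectors}(1), which are given by the animated Witt components $(\comp_1,\dots,\comp_n)$, to an equivalence of towers in the index $n$. Concretely, under these identifications the transition map $[W_{n+1}(A)]\to[W_n(A)]$ induced by $\res_n$ corresponds to the projection $[A]^{n+1}\to[A]^n$ forgetting the last coordinate; equivalently, one needs $\comp_m\circ\res_n\simeq\comp_m$ for $1\le m\le n$. For the ordinary big Witt functors $W_{S_{n+1}}\to W_{S_n}$ associated with the inclusion $S_n\subseteq S_{n+1}$ of truncation sets this is immediate from the definition of the restriction map, and since $W_n$, $\res_n$, the component functors $\comp_m$, and $[-]$ all preserve sifted colimits while $\CRan$ is generated under sifted colimits by $\Poly$, the identity propagates from $\Poly$ to all animated rings.

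It then remains to observe that the limit of the tower of coordinate projections $\cdots\to[A]^{n+1}\to[A]^n\to\cdots\to[A]$ is the countable product $\prod_{n\ge1}[A]$, with the projection onto the $n$-th factor induced by $[W(A)]\to[W_n(A)]\xrightarrow{\comp_n}[A]$. Assembling the previous two paragraphs yields the desired natural equivalence $[W]\simeq\lim_n[-]^n\simeq\prod_{n\ge1}[-]$ of functors $\CRan\to\An$.

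The main obstacle is the compatibility in the second paragraph: one must check that the Witt-component equivalence of \cref{Prop.: properties animated n-truncated Witt vectors}(1) is natural not merely in the ring $A$ but also in the truncation index $n$ with respect to the restriction maps. Once this is in hand the argument is purely formal.
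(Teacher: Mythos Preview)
Your argument is correct and follows essentially the same route as the paper: both reduce to the identification $[W_n]\simeq[-]^n$ of \cref{Prop.: properties animated n-truncated Witt vectors}(1), check that these equivalences are compatible with the restriction maps $\res_n$ (your compatibility square is exactly the one the paper writes down), and then invoke that $[-]$ preserves small limits to pass to the inverse limit. The only difference is cosmetic: you spell out the verification of $\comp_m\circ\res_n\simeq\comp_m$ by reduction to $\Poly$ via sifted colimits, whereas the paper simply asserts the commuting square.
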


\begin{proof}
	By \cref{Prop.: properties animated n-truncated Witt vectors}, there exist natural equivalences $[W_n] \simeq [-]^n$ for all $n \geq 1$ which are compatible with 
	restriction: For each animated ring $A$, the diagram
	\[\begin{diag}
		{[W_{n+1}(A)]} \ar[r, "\sim"] \ar[d, "{[\res_n]}"'] & {[A]}^{n+1}\ar[d] \\[3ex]
		{[W_n(A)]} \ar[r, "\sim"] & {[A]}^{n}
	\end{diag}\]
	in $\An$ commutes. Here, the vertical map on the right is induced by the first $n$ projections. The claim thus follows from the definition of $W$ together with the fact that $[-] \colon \CRan \to \An$ preserves small limits.
\end{proof}

\begin{corollary}[Properties of global animated Witt vectors]\label{Cor.: properties of global animated Witt vectors}\phantom{a}\par
	\begin{enumerate}
	\item The functor $W$ of global animated Witt vectors preserves geometric realizations. In particular, $W \simeq \Lan_{\iota}(\restr{W}{\CR})$ is left Kan extended from its restriction to commutative rings.
	\item The functor $W$ coincides with ordinary global Witt vectors on $\CR$.
	\end{enumerate}
\end{corollary}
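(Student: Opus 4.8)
I would prove part~(2) first, as it is independent of part~(1). The plan is to unwind $W\circ\iota$: limits of functors are computed pointwise, so restriction along $\iota$ commutes with the limit defining $W$ in \cref{Def.: global animated Witt vectors}, giving $W\circ\iota\simeq\lim_n(W_n\circ\iota)$. By \cref{Prop.: properties animated n-truncated Witt vectors}(2) each $W_n\circ\iota$ is naturally equivalent to $\iota\circ W_{S_n}$, and since $\iota\colon\CR\to\CRan$ preserves small limits by \cref{Prop.: Properties of animated rings}(4), this gives $W\circ\iota\simeq\iota\circ\bigl(\lim_n W_{S_n}\bigr)$. It then remains to identify $\lim_n W_{S_n}$ with the classical global Witt vector functor $W_{\Nmult}\colon\CR\to\CR$; this is the elementary fact that the big Witt ring of a commutative ring $A$ is the inverse limit of the finite-truncation Witt rings $W_{S_n}(A)$ along the restriction maps (on underlying sets $W_{\Nmult}(A)=A^{\Nmult}=\lim_n A^{S_n}$, compatibly with the ring structure), cf.~\cite{Hesselholt}. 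This yields $\restr{W}{\CR}\simeq\iota\circ W_{\Nmult}$, which is part~(2).

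For part~(1), I would first observe that the ``in particular'' clause is formal given the first assertion: by \cref{Prop.: Properties of animated rings}(6) the counit $\Lan_\iota(\iota^\ast W)\to W$ is an equivalence as soon as $W$ preserves geometric realizations, so it suffices to prove that $W$ does. Let $A_\bullet\colon\Delta^{\op}\to\CRan$ be a simplicial animated ring with colimit $A$; I must show the canonical map $\colim_{\Delta^{\op}}W(A_\bullet)\to W(A)$ is an equivalence. Since $[-]\simeq\Omega^\infty\circ[-]_{\mathbf{E}_\infty}$ is conservative, it is enough to check this after applying $[-]$. Using that $[-]$ preserves sifted colimits together with the natural decomposition $[W(-)]\simeq\prod_{n\geq1}[-]$ of \cref{Prop.: global Witt vectors are a product}, the source of the map is carried to $\colim_{\Delta^{\op}}\prod_{n\geq1}[A_\bullet]$, the target to $\prod_{n\geq1}[A]\simeq\prod_{n\geq1}\colim_{\Delta^{\op}}[A_\bullet]$, and the map itself to the canonical interchange. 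So the problem reduces to showing that geometric realizations commute with countable products of the simplicial anima $[A_\bullet]$.

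The hard part will be this last interchange, and here the temptation is to run the whole argument inside $\Sp_{\geq0}\simeq\CGrp(\An)$, where geometric realizations do commute with small products (the standard fact invoked in the proof of \cref{Prop.: Properties of animated rings}(5)). The obstruction is that the decomposition $[W(-)]\simeq\prod_{n\geq1}[-]$ only holds at the level of anima and genuinely fails on underlying abelian groups---for example the underlying group of $W_{S_2}(\mathbf{F}_2)$ is $\Z/4$, not $(\mathbf{F}_2)^2$---while in $\An$ infinite products do not commute with geometric realizations. The resolution is that the simplicial anima $[A_\bullet]$ lifts, via $[-]_{\mathbf{E}_\infty}$, to a simplicial object of $\Sp_{\geq0}$, and both the countable product $\prod_{n\geq1}[A_\bullet]$ and its geometric realization are obtained by applying $\Omega^\infty\colon\Sp_{\geq0}\to\An$ to the corresponding product, respectively geometric realization, in $\Sp_{\geq0}$, because this forgetful functor preserves small products and sifted colimits. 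Since geometric realizations commute with small products in $\Sp_{\geq0}$, the interchange follows after applying $\Omega^\infty$. Threading this needle---invoking the colimit--product interchange in $\Sp_{\geq0}$, where it holds, even though the only product formula available for $W$ lives in $\An$---is the crux.
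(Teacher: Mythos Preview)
Your proposal is correct and follows essentially the same route as the paper: for (2) you pass the limit defining $W$ through $\iota$ using that $\iota$ preserves small limits and then identify $\lim_n W_{S_n}\simeq W_{\Nmult}$; for (1) you reduce via conservativity of $[-]$ and \cref{Prop.: global Witt vectors are a product} to showing that $\prod_{n\geq 1}[-]$ preserves geometric realizations, which you obtain from the factorization $\prod_{n\geq 1}[-]\simeq\Omega^\infty\circ\prod_{n\geq 1}[-]_{\mathbf{E}_\infty}$ through $\Sp_{\geq 0}$, exactly as the paper does. Your explicit remark that the product decomposition of $[W(-)]$ fails on underlying abelian groups is a helpful clarification of why the detour through $\Omega^\infty$ is necessary, though the paper leaves this implicit.
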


\begin{proof}
	The functor $[-] \colon \CRan \to \An$ is conservative and preserves geometric realisations. Thus, by \cref{Prop.: global Witt vectors are a product} it suffices to show that the functor $\prod_{n \geq 1} [-]$ commutes with
	geometric realisations. This holds true by \cref{Rmk.: Underlying spectrum} which gives a factorisation $\prod_{n \geq 1} [-] \simeq \Omega^\infty \circ \prod_{n \geq 1} [-]_{\mathbf{E}_\infty}$ into geometric realisation preserving functors.
	The second part of (1) now follows from \cref{Prop.: Properties of animated rings}. For (2), observe that the equivalences of \cref{Prop.: properties animated n-truncated Witt vectors} (2) are compatible with restriction and hence
	induce an equivalence $W(A) \xrightarrow{\sim} \lim_n \iota W_{S_n}(A)$ in animated rings for any ordinary commutative ring $A$. The claim follows since the inclusion $\iota \colon \CR \to \CRan$ preserves small limits and
	$W_{\Nmult}(A) \simeq \lim_n W_{S_n}(A)$.
\end{proof}

\begin{theorem}[Compare {\cite[A.23]{BL}}]\label{Thm.: animated lambda-rings are coalgebras over W}\phantom{a}\par
	\begin{enumerate}
	\item The forgetful functor $\forget_\lambda \colon \lambdaCRan \to \CRan$ admits a right adjoint $\cofreelambda$ which preserves geometric realisations.
	\item There exists an equivalence $\lambdaCRan \simeq \Coalg({\forget_\lambda} \circ \cofreelambda)$ of $\infty$-categories.
	\item The functor underlying the comonad ${\forget_\lambda} \circ \cofreelambda$ on $\CRan$ is equivalent to the functor $W$ of global animated Witt vectors.
	\end{enumerate}
\end{theorem}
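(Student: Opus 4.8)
The plan is to follow the argument of \cite[A.23]{BL}.

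Parts (1) and (2), apart from the geometric-realisation assertion in (1), are essentially already in \cref{Rmk.: properties lambdaCRan}: since $\forget_\lambda$ preserves small colimits and $\lambdaCRan$ and $\CRan$ are presentable, the adjoint functor theorem produces the right adjoint $\cofreelambda$; and since $\forget_\lambda$ is in addition a right adjoint (the pullback square \cref{Eq.: animated lambda rings via delta rings} lies in $\Prr$) and conservative, the comonadic Barr--Beck--Lurie theorem \cite[4.7.3.5]{HA} gives $\lambdaCRan \simeq \Coalg(T)$ for the comonad $T := {\forget_\lambda}\circ\cofreelambda$ on $\CRan$.

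The heart of the matter is to compute the underlying anima of $T$. By the adjunctions $\freelambda \dashv \forget_\lambda \dashv \cofreelambda$ we have ${[-]}\circ T \simeq \Map_{\CRan}(\Z[X], \forget_\lambda\cofreelambda(-)) \simeq \Map_{\CRan}(\forget_\lambda\freelambda(\Z[X]),-)$, and by \cref{Prop.: free animated lambda ring} the animated ring $\forget_\lambda\freelambda(\Z[X])$ is the underlying ring of the ordinary free $\lambda$-ring $\lambda\{x\}$, that is, a polynomial $\Z$-algebra on the countable set $C(P)$ of non-decreasing finite sequences of primes (\cref{Ordinary free lambda rings}); hence it is of the form $\coprod_{C(P)}\Z[X]$ in $\CRan$ by \cref{Inclusion CR into CRan}. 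Therefore ${[-]}\circ T \simeq \prod_{C(P)} [-]$. This has two consequences. First, $T$ carries discrete animated rings to discrete ones, since for a discrete commutative ring $A$ the anima $[T(A)] \simeq [A]^{C(P)}$ is again discrete, and an animated ring is discrete exactly when its underlying anima is. Second, using ${[-]} \simeq \Omega^\infty \circ {[-]_{\mathbf{E}_\infty}}$ (\cref{Notation: underlying comm group functor}) we get ${[-]}\circ T \simeq \Omega^\infty \circ \prod_{C(P)}{[-]_{\mathbf{E}_\infty}}$, which preserves geometric realisations because $\Omega^\infty$ and $[-]_{\mathbf{E}_\infty}$ do and small products commute with geometric realisations in connective spectra (as in the proofs of \cref{Projective objects} and \cref{Cor.: properties of global animated Witt vectors}). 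As $[-]$ is conservative and preserves geometric realisations, it follows that $T$ preserves geometric realisations, and hence so does $\cofreelambda$ (because $\forget_\lambda$ reflects them), which completes (1).

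For (3) I would show that $T$ agrees with $W$ as an endofunctor of $\CRan$; then, transporting the comonad structure along this equivalence, $\lambdaCRan \simeq \Coalg(T) \simeq \Coalg(W)$. Both $T$ and $W$ preserve geometric realisations — for $W$ by \cref{Cor.: properties of global animated Witt vectors}(1) — so by \cref{Universal property of iota} both are left Kan extended from $\CR$, and it suffices to identify their restrictions to $\CR$. That of $W$ is the functor $W_{\Nmult}$ of ordinary global Witt vectors, by \cref{Cor.: properties of global animated Witt vectors}(2). For $T$: since $T$ preserves discreteness, restricting the equivalence $\lambdaCRan \simeq \Coalg(T)$ to the objects whose underlying ring is discrete identifies — using \cref{Cor.: discrete animated lambda-rings}, which says these are exactly the ordinary $\lambda$-rings — the category $\lambdaCR$ with $\Coalg({\restr{T}{\CR}})$, compatibly with the forgetful functors to $\CR$. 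Grothendieck's theorem, recalled in \cref{Sect.: Global animated Witt vectors}, presents the same category as $\Coalg(W_{\Nmult})$ over $\CR$, so ${\restr{T}{\CR}} \simeq W_{\Nmult} \simeq {\restr{W}{\CR}}$, and (3) follows.

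There is no single difficult step here; the point that wants care is the verification in (3) that $T$ preserves discreteness — which is what lets one reduce everything on ordinary rings to the classical equivalence $\lambdaCR \simeq \Coalg(W_{\Nmult})$ — and then the reassembly over $\CRan$ through \cref{Universal property of iota}. Both rest on the identification ${[-]}\circ T \simeq \prod_{C(P)}[-]$; an alternative, closer to the computations already carried out in the paper, would instead route this through \cref{Prop.: global Witt vectors are a product}, \cref{Lem.: left adjoint to forgetlambdap} and the fracture square \cref{Lem.: fracture square in lambdaCRan}.
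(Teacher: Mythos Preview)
Your proposal is correct, and for (1) and (2) it is essentially the paper's argument in different packaging: the paper shows $\cofreelambda$ preserves geometric realisations by noting that $\Map_{\lambdaCRan}(X,\cofreelambda(-)) \simeq \Map_{\CRan}(\forget_\lambda X,-)$ for the compact projective generators $X \in \lambdaPoly$, and that $\forget_\lambda X$ is a countable coproduct of copies of $\Z[X]$ and hence projective in $\CRan$ --- which is exactly your identification $[T(-)] \simeq \prod_{C(P)}[-]$, just phrased without singling out the underlying-anima functor.

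For (3) your route is genuinely different. The paper writes down an explicit comparison map: for $R \in \CR$ it takes the adjoint $\alpha_R \colon \iota W(R) \to \cofreelambda(\iota R)$ of the restriction $\iota W(R) \to \iota R$ (using the ordinary $\lambda$-structure on $W(R)$), and checks $\alpha_R$ is an equivalence by passing through the $\pi_0 \dashv \iota$ adjunction of \cref{Prop.: pi0 is left adjoint to inclusion of lambdaCR} to reduce to the classical statement that $W(R)$ is the cofree $\lambda$-ring on $R$. Your argument instead infers $\restr{T}{\CR} \simeq W_{\Nmult}$ abstractly from the fact that both comonads have $\lambdaCR$ as their coalgebra category over $\CR$, then left-Kan-extends. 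This is cleaner in that it avoids building any explicit natural transformation, but it uses a touch more machinery (uniqueness of the comonad from a comonadic functor) and does not by itself identify the comonad structure on $W$ with any a priori given one --- which the paper's map does implicitly. Both arguments reduce to the same classical input, namely $\lambdaCR \simeq \Coalg(W_{\Nmult})$; the paper's has the advantage of producing the comparison map on the nose.
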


\begin{proof}
	Existence of the right adjoint in (1) follows from the adjoint functor theorem. To see that $\cofreelambda$ preserves geometric realisations, recall that $\lambdaCRan$ is generated
	under colimits by the elements of $\lambdaPoly$ which are compact and projective. The claim now follows from observing that the left adjoint $\forget_\lambda$ sends any $X$ in $\lambdaPoly$ to a projective object in $\CRan$.
	Assertion (2) follows from the $\infty$-categorical Barr-Beck-Theorem.\\
	
	We give the argument presented in \cite[A.23]{BL} for (3): By (1), the composite $W' = {\forget_\lambda} \circ \cofreelambda$ preserves geometric realisations and is thus 
	left Kan extended from its restriction to commutative rings. By \cref{Cor.: properties of global animated Witt vectors}, it suffices to produce a natural equivalence $\beta \colon W \circ \iota \xrightarrow{\sim} {W'} \circ \iota$ of
	functors $\CR \to \CRan$.\\
	Given a commutative ring $R$, denote by $\alpha_R \colon \iota W(R) \to \cofreelambda(\iota R)$ the map of animated $\lambda$-rings which is adjoint to the restriction
	$\iota(\res) \colon \iota W(R) \to \iota(R)$ in $\CRan$. Here, we use that ordinary global Witt vectors carry a natural $\lambda$-ring structure and that forgetting (animated) $\lambda$-structures commutes with the
	inclusion into animated ($\lambda$-) rings.
	Let $\beta_R \colon W\iota (R) \simeq \forget_\lambda(\iota W(R)) \to W'\iota(R)$
	denote the map in $\CRan$ induced $\forget_\lambda(\alpha_R)$. Here, the first equivalence follows from \cref{Cor.: properties of global animated Witt vectors} (2). It is clear that the assignment
	$R \mapsto \beta_R$ is natural in $R$ and defines an equivalence if and only if $\alpha_R$ is an equivalence for any commutative ring $R$. This follows if the map
	\begin{equation*}
		\Map_{\lambdaCRan}(A,\iota W(R)) \to \Map_{\lambdaCRan}(A,\cofreelambda(\iota R)) \xrightarrow{\sim} \Map_{\CRan}(A,\iota R)
	\end{equation*}
	induced by $\alpha_R$ is an equivalence for any $A \in \lambdaCRan$. \Cref{Prop.: pi0 is left adjoint to inclusion of lambdaCR} shows that the above map is equivalent to the map
	\begin{equation*}
		\Map_{\lambdaCR}(\pi_0(A),W(R)) \to \Map_{\CR}(\pi_0(A),R)
	\end{equation*}
	induced by the restriction $\res_R \colon W(R) \to R$. This defines an equivalence since $W(R)$ is the ordinary cofree $\lambda$-ring with counit given by $\res_{(-)}$.
\end{proof}

%\newpage
%\appendix

%\section{Ordinary $\lambda$-rings}

\newpage

\bibliography{sources-animated-lambda}{}
\bibliographystyle{plain}

\end{document}